\numberwithin{equation}{section}
\newtheorem{theorem}[equation]{Theorem}
\newtheorem{thm}[equation]{Theorem}
\newtheorem{proposition}[equation]{Proposition}
\newtheorem{lemma}[equation]{Lemma}
\newtheorem{corollary}[equation]{Corollary}
\theoremstyle{remark}
\newtheorem{remark}[equation]{Remark}
\theoremstyle{definition}
\newtheorem{Def}[equation]{Definition} 
\newtheorem{definition}[equation]{Definition}
\newtheorem{example}[equation]{Example}
\def\XXint#1#2#3{{\setbox0=\hbox{$#1{#2#3}{\int}$}
	\vcenter{\hbox{$#2#3$}}\kern-.5\wd0}}
\newcommand{\N}{\mathbb N}
\newcommand{\R}{\mathbb R}
\newcommand{\Z}{\mathbb Z}
\newcommand{\Q}{\mathbb Q}
\newcommand{\g}{\mathfrak{g}}
\newcommand{\h}{\mathfrak{h}}
\newcommand{\HH}{\mathbb H}
\newcommand{\diam}{\operatorname{diam}}
\renewcommand{\ker}{\operatorname{Ker}}
\newcommand{\card}{\operatorname{Card}}
\newcommand\carset{1\negmedspace{\rm l}}
\newcommand{\Span}{\operatorname{span}}
\newcommand{\norm}[1]{\Vert#1\Vert}
\def\eps{\epsilon}
\begin{document} 

\title[BCP on graded groups]{Besicovitch Covering Property on graded groups and applications to measure differentiation}

\author{Enrico Le Donne}

\address[Le Donne]{Department of Mathematics and Statistics, P.O. Box 35,
FI-40014,
University of Jyv\"askyl\"a, Finland}
\email{ledonne@msri.org}

\author{S\'everine Rigot}
\address[Rigot]{Laboratoire de Math\'ematiques J.A. Dieudonn\'e UMR CNRS 7351,  Universit\'e Nice Sophia Antipolis, 06108 Nice Cedex 02, France}
\email{rigot@unice.fr}

\thanks{The first author acknowledges the support of the Academy of Finland project no.~288501. The second author is partially supported by ANR grants ANR-12-BS01-0014-01 and ANR-15-CE40-0018}

\renewcommand{\subjclassname} {\textup{2010} Mathematics Subject Classification}
\subjclass[]{28C15, 
49Q15, 
43A80. 
}

\keywords{Covering Theorems, Graded Groups, Homogeneous Distances, Differentiation of Measures, Carnot Groups, Sub-Riemannian Manifolds}

\begin{abstract}
We give a complete answer to which homogeneous groups admit homogeneous distances for which the Besicovitch Covering Property (BCP) holds. In particular, we prove that a stratified group admits homogeneous distances for which BCP holds if and only if the group has step 1 or 2. These results are obtained as consequences of a more general study of homogeneous quasi-distances on graded groups. Namely, we prove that a positively graded group admits continuous homogeneous quasi-distances satisfying BCP if and only if any two different layers of the associated positive grading of its Lie algebra commute. The validity of BCP has several consequences. Its connections with the theory of differentiation of measures is one of the main motivations of the present paper. As a consequence of our results, we get for instance that a stratified group can be equipped with some homogeneous distance so that the differentiation theorem holds for each locally finite Borel measure if and only if the group has step 1 or 2. The techniques developed in this paper allow also us to prove that sub-Riemannian distances on stratified groups of step 2 or higher never satisfy BCP. Using blow-up techniques this is shown to imply that on a sub-Riemannian manifold the differentiation theorem does not hold for some locally finite Borel measure.
\end{abstract}

\date{December 15, 2015}
\maketitle
\setcounter{tocdepth}{2}
\tableofcontents

\section{Introduction} \label{sect:intro}

Covering theorems are known to be among the fundamental tools in analysis and geometry. They reflect, in a certain sense, the geometry of the space and are commonly used to establish connections between local and global properties. All covering theorems are based on the same principle: from an arbitrary cover of a set, one tries to extract a subcover that is as disjointed as possible. Among classical covering theorems, the Besicovitch Covering Property (BCP), in which we are interested in the present paper, originates from works of A.~Besicovitch in connection with the theory of differentiation of measures in Euclidean spaces (\cite{Besicovitch_1}, \cite{Besicovitch_2}, see also~\cite[2.8]{Federer}, \cite{Preiss83} and Section~\ref{sect:differentiation-measures}).

The development of analysis and geometry on abstract metric spaces leads naturally to the question of the validity of suitable covering theorems on non-Euclidean spaces. Graded groups provide a natural framework for many developments. A graded group is a Lie group equipped with an appropriate family of dilations. A homogeneous quasi-distance on a graded group is a left-invariant quasi-distance that is one-homogeneous with respect to the family of dilations. Graded groups equipped with homogeneous quasi-distances naturally generalize finite dimensional normed vector spaces. Due to the presence of translations and dilations, they provide a setting where many aspects of classical analysis and geometry can be carried out. Beyond such a priori considerations, these spaces form an important framework because of their occurrences in many settings. There is a characterization of positively graduable Lie groups as connected locally compact groups admitting contractive automorphisms (\cite{Siebert}, see also Theorem~\ref{thm:siebert}). The interest of E.~Siebert for groups admitting contractive automorphisms was motivated by phenomenon appearing in probability theory on groups. Such groups have more generally been considered by various authors. In particular, homogeneous groups equipped with homogeneous quasi-distances as considered in~\cite{Folland-Stein} (see also Definition~\ref{def:homogeneous-groups}) fit within this framework. They have been considered mainly in connection with their applications in harmonic analysis, complex analysis of several variables, and study of some non-elliptic differential operators. We refer to ~\cite{Folland-Stein} and the references therein for a detailed presentation of these aspects. A class of homogeneous groups equipped with homogeneous distances of particular interest are stratified groups equipped with sub-Riemannian distances. They are also known as Carnot groups according to a terminology due to P.~Pansu. See Section~\ref{sec:nobcp-ccdist} where we use the more explicit terminology sub-Riemannian Carnot groups. One of their occurrences is as metric tangent spaces to sub-Riemannian manifolds where they play in some sense the role Euclidean spaces play in Riemannian geometry (see e.g.~\cite{Mitchell},~\cite{bellaiche}). These structures have also connections with optimal control theory (see e.g.~\cite{Montgomery},~\cite{Agrachev_Sachkov}).

In the present paper, we give a complete answer to which graded groups admit continuous homogeneous quasi-distances for which BCP holds, see Theorem~\ref{thm:intro-main-result}. Characterizations of homogeneous and stratified groups admitting homogeneous distances satisfying BCP follow as particular cases of our results on graded groups, see Corollary~\ref{cor:intro-homogeneous-groups} and Corollary~\ref{cor:intro-stratified-groups}. We also complete previous results about the non-validity of BCP for sub-Riemannian distances on stratified groups of step $\geq 2$, see Theorem~\ref{thm:intro-no:BCP:Carnot}. Finally, we give applications to measure differentiation which is one of the main motivations for this paper, see Theorem~\ref{thm:intro-preiss-improved}, Theorem~\ref{thm:intro-diff-homogeneous}, Corollary~\ref{cor:intro-diff-stratified}, and Theorem~\ref{thm:intro-subriemannian-diff-measures}.

To explain these results, we first recall the Besicovitch Covering Property (BCP) in the general quasi-metric setting. We refer to Section~\ref{sect:bcp-vs-wbcp} for a more detailed discussion about this covering property. See also Section~\ref{subsec:homogenous-qdist} for our conventions about quasi-metric spaces, which are the classical ones. A quasi-metric space $(X,d)$ satisfies BCP if there exists a constant $N\geq 1$ such that the following holds. For any bounded set $A\subset X$ and any family $\mathcal{B}$ of balls such that each point of $A$ is the center of some ball of $\mathcal{B}$, there is a finite or countable subfamily $\mathcal{F}\subset \mathcal{B}$ such that the balls in $\mathcal{F}$ cover $A$, and every point in $X$ belongs to at most $N$ balls in $\mathcal{F}$.

We briefly recall now the definitions of graded groups and homogeneous quasi-distances. We refer to Section~\ref{sect:graded-groups} for a complete presentation. A graded group is a simply connected Lie group $G$ whose Lie algebra $\g$ is endowed with a positive grading $\g=  \oplus_{t\in (0,+\infty)} V_t$ where $[V_s, V_t]\subset V_{s+t}$ for all $s,t >0.$ At the level of the Lie algebra, the associated dilation of factor $\lambda$ is defined as the unique linear map $\delta_\lambda:\g\to\g$ such that $\delta_\lambda (X) = \lambda^t X$ for all $X\in V_t$. Denoting also by $\delta_\lambda$ the unique Lie group homomorphism induced by this Lie algebra homomorphism, a quasi-distance $d$ on $G$ is said to be homogeneous if it is left-invariant and one-homogeneous with respect to the associated family of dilations (where the latter means $d(\delta_\lambda(p),\delta_\lambda(q)) = \lambda d(p,q)$ for all $p$, $q\in G$ and all $\lambda>0$).

To state the main results of this paper and for later convenience, we introduce the following definition that will be shown to algebraically characterize the validity of BCP for some homogeneous quasi-distances.

\begin{definition}[Graded groups with commuting different layers] \label{def:groups-commuting-layers}
Let $G$ be a graded group and let $\oplus_{t>0} V_t$ be the associated positive grading of its Lie algebra. We say that $G$ has {\em commuting different layers} if $[V_t,V_s] = \{0\}$ for all $t$, $s>0$ such that $t\not= s$.
\end{definition}

Our main results read as follows.

\begin{theorem} \label{thm:intro-main-result}
Let $G$ be a graded group. There exist continuous homogeneous quasi-distances on $G$ for which BCP holds if and only if $G$ has commuting different layers.
\end{theorem}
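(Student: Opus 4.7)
For the necessity direction (BCP implies commuting different layers), I argue by contrapositive. Assume $[V_s, V_t] \neq \{0\}$ for some $s \neq t$, pick $X \in V_s$ and $Y \in V_t$ with $Z := [X,Y] \neq 0$, and suppose toward a contradiction that a continuous homogeneous quasi-distance $d$ on $G$ satisfies BCP. The plan is to reduce to a two-step Heisenberg-type graded quotient $H$ obtained from the subalgebra generated by $X, Y$ by modding out the third and higher lower central series terms as well as all transverse directions. Under this reduction, $H$ is a three-dimensional graded group in which two weighted directions fail to commute, and it carries an induced continuous homogeneous quasi-distance for which BCP must still hold (BCP passes to graded subgroups, and then to graded quotients via the quotient quasi-distance). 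It then remains to rule out BCP on such an $H$ for \emph{any} continuous homogeneous quasi-distance, which I would do by constructing a family of balls along a curve in the $(X,Y)$-plane whose commutator-induced drift in the $Z$-direction forces any Besicovitch subfamily to have unbounded overlap, in the spirit of the non-BCP arguments for sub-Riemannian Heisenberg groups referenced in the paper.

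For the sufficiency direction (commuting different layers implies some BCP quasi-distance exists), I construct one explicitly. Let $t_1 < \cdots < t_k$ be the weights with $V_{t_i} \neq \{0\}$, choose a norm $\|\cdot\|_i$ on each $V_{t_i}$, and in exponential coordinates define
\[
\|g\|_\infty := \max_i \|\pi_{t_i}(\log g)\|_i^{1/t_i}, \qquad d(p, q) := \|p^{-1} q\|_\infty ,
\]
where $\pi_{t_i}$ denotes the projection onto $V_{t_i}$. Continuity, left-invariance, and one-homogeneity are immediate, the last because $\delta_\lambda$ acts on $V_{t_i}$ as $\lambda^{t_i}$, matched by the exponent $1/t_i$. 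For the quasi-triangle inequality I apply the Baker--Campbell--Hausdorff formula: since different layers commute, each iterated bracket contributing to the $V_{t_i}$-component of $\log(g \cdot h)$ has total weight exactly $t_i$ and involves only components $X_j, Y_j$ with $t_j \leq t_i$, hence is controlled in $\|\cdot\|_i$ by a constant times $\max(\|g\|_\infty, \|h\|_\infty)^{t_i}$; taking $1/t_i$-powers yields a uniform quasi-triangle constant. BCP for $d$ is then proved by induction on the number of layers: select balls first in the projection onto $V_{t_1}$, which by the commuting hypothesis carries the ordinary Euclidean geometry of $(V_{t_1}, \|\cdot\|_1)$; invoke the classical Besicovitch covering theorem there; then within each selected slice descend to the higher layers, with total overlap bounded by the product of the layer-wise constants.

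The principal obstacle I anticipate is in the necessity direction. Two points are delicate: first, the non-BCP conclusion for the Heisenberg-type quotient $H$ must hold for \emph{every} continuous homogeneous quasi-distance on $H$, not just canonical ones, so a flexible twisting argument is required using only left-invariance, one-homogeneity, and continuity; second, one must rigorously check that BCP transfers from $(G,d)$ to the reduced object $(H, d_H)$ together with the induced quasi-distance axioms, which for quotients is not automatic. The sufficiency direction, by contrast, is comparatively mechanical once the formula for $d$ is in hand, modulo the BCH bookkeeping for the quasi-triangle inequality and the inductive Besicovitch selection across layers.
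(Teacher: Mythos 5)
Your overall roadmap mirrors the paper's in one direction and departs from it in the other, but in both directions the central technical steps are missing or, as stated, do not work.

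\textbf{Necessity.} Your reduction — pass to the subalgebra generated by $X\in V_s$, $Y\in V_t$, then to a three-dimensional weighted Heisenberg-type quotient — is essentially the paper's Proposition~\ref{prop:structure-algebra-noncommuting-layers}, and the transfer of BCP through subsets and quotients (submetries) is indeed established in the paper via Propositions~\ref{prop:BCP-subset},~\ref{prop:bcp-submetry}, and~\ref{prop:submetries-morphism-graded-algebra}. But the crux, which you correctly flag as the ``principal obstacle'', is ruling out BCP on the non-standard Heisenberg group for \emph{every} continuous homogeneous (or self-similar) quasi-distance. Your plan to argue ``in the spirit of the non-BCP arguments for sub-Riemannian Heisenberg groups'' is precisely the route the paper explicitly says fails: the authors stress that Theorem~\ref{thm:heisenberg-case} is ``not a technical modification of the arguments in~\cite{LeDonne_Rigot_rmknobcp}'' but ``requires a completely new approach''. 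The actual proof starts from an abstract consequence of WBCP (Lemma~\ref{lem:condition-for-bcp}: every boundary point can flow slightly inward along $p\cdot\delta_\lambda(p^{-1})$), bootstraps this into a rigid geometric constraint on the unit ball (flow lines of $-X$ must be trapped, Lemmas~\ref{lem:main1}--\ref{lem:main6}), and then extracts arbitrarily large Besicovitch families via a dilation argument (Lemma~\ref{lem:main7}). Nothing here resembles the ``curve in the $(X,Y)$-plane with $Z$-drift'' approach, and because the unit ball is a priori only a compact symmetric star-shaped set, the drift argument cannot be made quantitative without this prior structural information.

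\textbf{Sufficiency.} Your explicit $\max$-type quasi-distance $d(p,q)=\|p^{-1}q\|_\infty$ is a perfectly legitimate continuous homogeneous quasi-distance (one can bypass the BCH bookkeeping entirely and use the unit-ball characterization of Example~\ref{ex:qdist-unitball}: the cylinder $K=\{g:\|\pi_{t_i}(\log g)\|_i\le 1\ \forall i\}$ is compact, symmetric, contains $e$ in its interior, and $\{\lambda:\delta_{1/\lambda}(p)\in K\}=[\max_i\|\pi_{t_i}(\log p)\|_i^{1/t_i},\infty)$ is a closed interval). The gap is in the BCP claim. ``Project onto $V_{t_1}$, invoke the Euclidean Besicovitch theorem, then descend layer by layer'' does not control overlap: a large Besicovitch family may have all centers $p_i$ with $\pi_{t_1}(\log p_i)$ nearly collinear, or even identical, so the projected balls give no bound at all, and the separation must come from higher layers whose coordinates get twisted by commutator terms depending on the first-layer components. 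This twist is exactly the difficulty that the paper's Theorem~\ref{thm:BCP-free-step2} confronts, using an angular pigeonholing combined with a subdivision into parabolic regions $\mathcal P_a$, and the argument exploits the \emph{curvature} of the Euclidean unit ball (Lemmas~\ref{lemma:Aq}--\ref{lemma:inbetween}). For your cylindrical unit ball, the flat top and bottom faces lack this curvature, and neither the paper nor its references establish BCP for it; since BCP is not biLipschitz invariant (Theorem~\ref{thm:destroybcp}), the validity of BCP for the Euclidean ball tells you nothing about the cylinder. Even if the $\max$-distance does satisfy BCP, the given sketch is not a proof. The paper also routes sufficiency through a structure theorem (Proposition~\ref{prop:structure-commuting-layers}: $G$ is a direct product of powers of stratified groups of step $\le 2$), proves BCP on free step-2 groups, transfers it via submetries, and then handles products by a Ramsey argument (Theorem~\ref{thm:WBCP-productspaces}); your direct induction on layers skips all of this machinery without a substitute.

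In short: the necessity outline is structurally close to the paper but is missing its technical heart, and the suggested way to supply it is one the paper rules out; the sufficiency construction is genuinely different from the paper's but the BCP argument it rests on does not hold up.
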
 

Homogeneous groups are those graded groups that can be equipped with homogeneous distances, i.e., homogeneous quasi-distances that satisfy the triangle inequality. Equivalently, all layers $V_t$ with $t<1$ of the associated positive grading of their Lie algebra are $\{0\}$, see Definition~\ref{def:homogeneous-groups} and Proposition~\ref{prop:homogeneous-groups}. For such groups, we prove that homogeneous distances are continuous, see Corollary~\ref{cor:continuity-homogeneous-distance}, and we get the following corollary.

\begin{corollary} \label{cor:intro-homogeneous-groups}
Let $G$ be a homogeneous group. There exist homogeneous distances on $G$ for which BCP holds if and only if $G$ has commuting different layers.
\end{corollary}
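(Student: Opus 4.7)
The proof splits into two implications, both invoking Theorem~\ref{thm:intro-main-result}.

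For the forward implication, suppose $d$ is a homogeneous distance on $G$ satisfying BCP. Since $d$ satisfies the triangle inequality, it is in particular a homogeneous quasi-distance, and by Corollary~\ref{cor:continuity-homogeneous-distance} it is continuous. Theorem~\ref{thm:intro-main-result} then immediately gives that $G$ has commuting different layers. This direction is essentially formal.

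For the reverse implication, suppose $G$ has commuting different layers. Theorem~\ref{thm:intro-main-result} already provides a continuous homogeneous quasi-distance $d_0$ on $G$ satisfying BCP. The task is to upgrade $d_0$ to a homogeneous \emph{distance} satisfying BCP. The plan is to examine the explicit construction used in the proof of Theorem~\ref{thm:intro-main-result}, which I expect to be of the form
\[
d(p,q) = \max_{t} c_t \, \bigl\| \pi_t\bigl(\log(q^{-1}p)\bigr) \bigr\|_t^{1/t},
\]
for suitable projections $\pi_t$ associated with the layer decomposition and norms $\|\cdot\|_t$. The commuting different layers condition makes the Baker-Campbell-Hausdorff formula split cleanly across the chains $V_t, V_{2t}, V_{3t}, \dots$, so that such an $\ell^{\infty}$-type expression is a well-behaved homogeneous quasi-distance for which the BCP argument of Theorem~\ref{thm:intro-main-result} applies. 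The subadditivity of $d$ then follows from the fact that every exponent $1/t$ appearing is at most $1$, which is exactly the condition that $V_t = \{0\}$ for $t < 1$, i.e., that $G$ is a homogeneous group (see Definition~\ref{def:homogeneous-groups} and Proposition~\ref{prop:homogeneous-groups}).

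The main obstacle is ensuring that the specific construction that achieves BCP in Theorem~\ref{thm:intro-main-result} is simultaneously subadditive in the homogeneous group case. The expectation is that the constructive proof of Theorem~\ref{thm:intro-main-result} already yields a distance when restricted to graded groups all of whose nonzero layers have degree $t \geq 1$; once this is observed, the corollary is immediate. An alternative route, namely modifying $d_0$ afterwards into a distance (for instance by passing to a bi-Lipschitz equivalent one), is unappealing since BCP is not preserved under bi-Lipschitz changes of the quasi-distance, so the argument must essentially stay within the construction used to establish Theorem~\ref{thm:intro-main-result}.
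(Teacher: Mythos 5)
Your forward implication is correct and matches the paper: a homogeneous distance is in particular a continuous homogeneous quasi-distance by Corollary~\ref{cor:continuity-homogeneous-distance}, so Theorem~\ref{thm:intro-main-result} applies directly.

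Your strategic observation for the reverse implication is also sound: the paper's constructive proof of the existence part of Theorem~\ref{thm:intro-main-result} does indeed already produce a genuine distance once $G$ is homogeneous, and this is exactly how the paper arrives at Corollary~\ref{cor:sec-bcp-homogeneous-groups-commuting-layers}. However, your guessed formula
\[
d(p,q) = \max_{t} c_t \, \bigl\| \pi_t\bigl(\log(q^{-1}p)\bigr) \bigr\|_t^{1/t}
\]
is not that construction and would not carry the argument. A layer-wise gauge of this kind has a box-shaped unit ball, whereas the entire BCP proof in Section~\ref{subsect:BCP-free-step2} is tailored to Hebisch--Sikora quasi-distances whose unit ball is a \emph{Euclidean ball} in exponential coordinates (Example~\ref{ex:hebisch-sikora-distance}); nothing in the paper establishes BCP for a gauge of your proposed shape, and such gauges are close relatives of the Cygan--Kor\'anyi distance, for which BCP is known to fail. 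The paper's actual route is: decompose $G$ as a direct product of $t$-powers of stratified groups of step $\leq 2$ (Proposition~\ref{prop:structure-commuting-layers}); put a Hebisch--Sikora distance on each stratified factor (Theorems~\ref{thm:BCP-free-step2} and~\ref{thm:BCP-stratified-step2}, via Proposition~\ref{prop:submetries-morphism-graded-algebra}); snowflake by $d^{1/t}$ on the $t$-power (Proposition~\ref{prop:BCP-kpower}); and combine with the $\max$ distance on the product (Theorem~\ref{thm:WBCP-productspaces}). That this yields a distance, and not merely a quasi-distance, in the homogeneous case is a consequence of the structure decomposition itself: every power exponent $t$ appearing satisfies $t\geq 1$ because the lowest nonzero degree of a homogeneous grading is $\geq 1$, so each factor carries an actual distance, and the $\max$ of distances is a distance. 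The subadditivity you were worried about is therefore not verified a posteriori on a single explicit formula; it is built in by the factor-by-factor construction, and skipping the structure theorem and the Euclidean-ball choice leaves the BCP half of the argument unsupported.
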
 

An important class of homogeneous groups are stratified groups, which are those for which the degree-one layer of the associated positive grading generates the Lie algebra, see Definitions~\ref{def:stratified-algebras} and~\ref{def:graded-stratified-groups}. A stratified group  has commuting different layers if and only if it has (nilpotency) step 1 or 2. For such groups, Corollary~\ref{cor:intro-homogeneous-groups} reads then as follows.

\begin{corollary} \label{cor:intro-stratified-groups}
Let $G$ be a stratified group. There exist homogeneous distances on $G$ for which BCP holds if and only if $G$ is of step $\leq 2$.
\end{corollary}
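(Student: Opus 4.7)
My plan is to derive Corollary~\ref{cor:intro-stratified-groups} directly from Corollary~\ref{cor:intro-homogeneous-groups} by verifying the algebraic claim, stated in the paragraph preceding it, that a stratified group has commuting different layers if and only if its nilpotency step is at most $2$. Since every stratified group is in particular a homogeneous group, Corollary~\ref{cor:intro-homogeneous-groups} applies once this equivalence is in hand, and the result follows immediately.

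For the algebraic equivalence, let $\g = V_1 \oplus V_2 \oplus \cdots \oplus V_s$ be the stratification of the Lie algebra of $G$, so that $V_1$ generates $\g$ and $V_{j+1} = [V_1, V_j]$ for all $j \geq 1$, with the convention $V_{s+1} = \{0\}$ and $V_s \ne \{0\}$. If $s = 1$ there is nothing to check; if $s = 2$ the only pair of distinct layers is $(V_1, V_2)$, and $[V_1, V_2] \subset V_3 = \{0\}$ by the grading, so $G$ has commuting different layers. This settles the ``if'' direction.

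For the converse, suppose $G$ has commuting different layers and assume, for contradiction, that $s \geq 3$. Then by the commuting layers hypothesis we have $[V_1, V_2] = \{0\}$, but the defining stratification property gives $V_3 = [V_1, V_2] = \{0\}$. A trivial induction using $V_{j+1} = [V_1, V_j]$ then yields $V_j = \{0\}$ for every $j \geq 3$, contradicting $V_s \ne \{0\}$ with $s \geq 3$. Hence $s \leq 2$, as required.

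The statement is essentially bookkeeping on top of Corollary~\ref{cor:intro-homogeneous-groups}, so there is no real obstacle; the only subtlety worth flagging is that in a stratified algebra the brackets $[V_1, V_j]$ \emph{equal} $V_{j+1}$ rather than being merely contained in it, and it is this equality that propagates the commutation of $V_1$ and $V_2$ down the whole tail of the stratification.
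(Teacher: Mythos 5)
Your proof is correct and follows essentially the same route as the paper: the paper observes (without spelling out the easy algebraic argument) that a stratified group has commuting different layers precisely when its step is at most $2$, and then invokes Corollary~\ref{cor:intro-homogeneous-groups}. Your write-up simply makes the paper's asserted equivalence explicit, correctly exploiting the equality $V_{j+1} = [V_1, V_j]$ from the stratification to propagate $[V_1,V_2]=\{0\}$ to the vanishing of all higher layers.
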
 

One of the main motivations for studying the validity of BCP on graded groups is its connection with the theory of differentiation of measures. If $\mu$ is a locally finite Borel measure on a metric space $(X,d)$, we say that the differentiation theorem holds on $(X,d)$ for $\mu$ if
\begin{equation*}
\lim_{r\downarrow 0^+} \frac{1}{\mu(B_d(p,r))} \int_{B_d(p,r)} f(q) \, d\mu(q) = f(p)
\end{equation*}
for $\mu$-almost every $p\in X$ and all $f\in L_{\rm loc}^1(\mu)$. For homogeneous groups equipped with homogeneous distances, we prove the following characterization.

\begin{theorem} \label{thm:intro-preiss-improved}
Let $G$ be a homogeneous group and let $d$ be a homogeneous distance on $G$. The  differentiation theorem holds on $(G,d)$ for all locally finite Borel measures if and only if $(G,d)$ satisfies BCP.
\end{theorem}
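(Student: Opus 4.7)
The plan is to prove the two implications separately. The easy direction is BCP $\Rightarrow$ differentiation, which holds in great generality. The hard direction is the converse, which requires the homogeneity of $G$ in an essential way.

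For the forward direction, I would run the classical Besicovitch--Lebesgue argument: given any locally finite Borel measure $\mu$ on $(G,d)$, BCP yields a weak-type $(1,1)$ bound (with constant depending only on the BCP constant $N$) for the centered maximal operator $M_\mu f(p) = \sup_{r > 0} \mu(B_d(p,r))^{-1} \int_{B_d(p,r)} |f| \, d\mu$ on $L^1(\mu)$. The density of compactly supported continuous functions in $L^1(\mu)$, which requires balls to be Borel and is secured by the continuity of homogeneous distances (Corollary~\ref{cor:continuity-homogeneous-distance}), then yields the differentiation theorem via the usual approximation argument.

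For the reverse direction, I would argue by contrapositive: assuming BCP fails on $(G,d)$, the goal is to build a locally finite Borel measure $\mu$ on $G$ and a Borel set $E$ such that the differentiation theorem fails for $f = \carset_E$. Failure of BCP yields, for each $N \geq 1$, a bounded family of balls $\{B_d(p_i^{(N)}, r_i^{(N)})\}$ violating the $N$-overlap condition. Using left-invariance and the dilations $\delta_\lambda$, these bad configurations can be translated to any base point and rescaled to arbitrarily small sizes. From them I would build a self-similar Cantor-type set $K$ via an iterated scheme and equip it with a Bernoulli-type probability measure $\mu$, arranged so that for $\mu$-a.e.\ $p \in K$ there is a sequence of scales $r_n \to 0$ along which $\mu(B_d(p,r_n))^{-1} \int_{B_d(p,r_n)} \carset_E \, d\mu$ oscillates.

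The main obstacle lies in this construction: one must orchestrate the iterated scheme so that bad configurations nest properly at each scale, the limit measure is Radon, and the failure of differentiation occurs on a set of positive $\mu$-measure. The crucial ingredient that makes this possible is precisely the homogeneity of $G$, which allows any local BCP-violating configuration to be reproduced around any center and at any scale. An alternative, perhaps cleaner, organization is to first invoke the abstract Preiss criterion from~\cite{Preiss83} --- which extracts a counterexample measure from the failure of a suitable Vitali-type covering property --- and then separately verify that, in the homogeneous-group setting, the relevant Vitali property is equivalent to BCP; this reduction is what sharpens the general Preiss statement into the clean ``if and only if'' of the theorem.
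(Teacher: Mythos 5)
Your forward direction (BCP $\Rightarrow$ differentiation via the centered maximal operator and a weak-type $(1,1)$ bound) is a valid, standard argument, though the paper instead obtains both directions from a single citation of Preiss's theorem. Your ``alternative, cleaner organization'' for the reverse direction is in fact the paper's route: invoke Preiss's characterization (Theorem~\ref{thm:preiss-diff}) --- the differentiation theorem holds for all locally finite Borel measures iff $d$ is $\sigma$-finite dimensional --- and then prove that, for a homogeneous distance on a homogeneous group, $\sigma$-finite dimensionality is equivalent to BCP. One small imprecision: Preiss's condition is not a Vitali-type covering property but a local, piecewise version of WBCP ($\sigma$-finite dimensionality); the distinction matters because the quantitative object to control is a family of mutually center-excluding balls with a common point, not a fine cover.

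The genuine gap is that you do not supply, even in outline, the argument for the key equivalence ``$\sigma$-finite dimensional $\Rightarrow$ BCP,'' which is where the paper's Proposition~\ref{prop:BCP-vs-finitedimensional} does all the work. Your intuition that ``homogeneity allows any BCP-violating configuration to be reproduced around any center and at any scale'' is in the right direction, but it is applied in the wrong direction: what you actually have is the \emph{good} (finite-dimensional) behavior only on a subset $A$ of $G$ and only at small scales, and you must propagate this to a global WBCP bound. The paper does this by taking $\mu$ to be the $m$-dimensional Hausdorff measure (a Haar measure, $m$-homogeneous under dilations), locating a $\mu$-density point $p$ of a piece $A$ on which $d$ is finite dimensional, and then, given any family of Besicovitch balls, dilating it down and translating it so that the centers are $\varepsilon$-close to points of a dilated copy $\delta_{R/r}(A)$; continuity of $d$ (Corollary~\ref{cor:continuity-homogeneous-distance}, via Remark~\ref{rmk:Besicovitch:open}) makes the Besicovitch-family property an open condition, so the perturbed family is still a Besicovitch family with centers in $\delta_{R/r}(A)$ and small radii, bounding its cardinality by the local constant $C^*$. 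Your primary plan (directly constructing a Cantor-type counterexample measure from failure of BCP) is in effect an attempt to reprove Preiss's theorem in this special setting, which is substantially harder than the route the paper takes and is not fleshed out; it also tacitly requires the same propagation step to get BCP-violations that are dense in a set of positive measure rather than merely present somewhere.
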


This characterization is a consequence of a characterization of the validity of the  differentiation theorem for all locally finite Borel measures on metric spaces due to D.~Preiss, taking the additional structure into account, namely, using left-translations and dilations, see Section~\ref{sect:differentiation-measures}. Together with Corollary~\ref{cor:intro-homogeneous-groups} and Corollary~\ref{cor:intro-stratified-groups}, we get the following results.

\begin{theorem} \label{thm:intro-diff-homogeneous}
Let $G$ be a homogeneous group. There exists some homogeneous distance $d$ on $G$ such that the differentiation theorem holds on $(G,d)$ for all locally finite Borel measures if and only if $G$ has commuting different layers.
\end{theorem}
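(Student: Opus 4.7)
The plan is to obtain Theorem~\ref{thm:intro-diff-homogeneous} as a direct corollary of combining Theorem~\ref{thm:intro-preiss-improved} with Corollary~\ref{cor:intro-homogeneous-groups}. Both ingredients are already available by the time we reach this statement: Theorem~\ref{thm:intro-preiss-improved} characterizes, for a fixed homogeneous distance $d$ on a homogeneous group $G$, the validity of the differentiation theorem for all locally finite Borel measures as equivalent to BCP for $(G,d)$; and Corollary~\ref{cor:intro-homogeneous-groups} characterizes the existence of \emph{some} homogeneous distance satisfying BCP as equivalent to the algebraic condition that $G$ has commuting different layers.

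For the forward direction, I would start from a homogeneous distance $d$ on $G$ for which the differentiation theorem holds for every locally finite Borel measure. Applying Theorem~\ref{thm:intro-preiss-improved} to the pair $(G,d)$ gives that $(G,d)$ satisfies BCP. In particular, there exists a homogeneous distance on $G$ for which BCP holds, so Corollary~\ref{cor:intro-homogeneous-groups} forces $G$ to have commuting different layers.

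For the reverse direction, I would assume that $G$ has commuting different layers, so that Corollary~\ref{cor:intro-homogeneous-groups} produces a homogeneous distance $d$ on $G$ for which $(G,d)$ satisfies BCP. Applying Theorem~\ref{thm:intro-preiss-improved} to this specific $d$ then yields that the differentiation theorem holds on $(G,d)$ for every locally finite Borel measure, which is exactly the conclusion.

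There is no real obstacle here — the statement is a quantifier-over-distances version of the equivalence BCP $\iff$ differentiation theorem, and both passes (existential $d$ going into Theorem~\ref{thm:intro-preiss-improved} and then into Corollary~\ref{cor:intro-homogeneous-groups}, and conversely) are immediate once one is careful to use the same distance $d$ throughout each direction. The only thing to double-check is that Theorem~\ref{thm:intro-preiss-improved} is applied with the same homogeneous distance that is being asserted to exist, so that no change of metric takes place between the two inputs; this is automatic from the way the two statements are phrased.
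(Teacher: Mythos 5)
Your proof is correct and matches the paper exactly: the authors also obtain Theorem~\ref{thm:intro-diff-homogeneous} directly by combining Theorem~\ref{thm:intro-preiss-improved} (BCP $\iff$ differentiation theorem for a fixed homogeneous distance) with Corollary~\ref{cor:intro-homogeneous-groups} (existence of a homogeneous distance satisfying BCP $\iff$ commuting different layers). Your remark about keeping the same distance $d$ throughout each implication is the only subtlety, and you handled it correctly.
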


\begin{corollary} \label{cor:intro-diff-stratified}
Let $G$ be a stratified group. There exists some homogeneous distance $d$ on $G$ such that the differentiation theorem holds on $(G,d)$ for all locally finite Borel measures if and only if $G$ is of step $\leq 2$.
\end{corollary}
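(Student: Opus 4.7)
The plan is to derive this corollary directly from Theorem~\ref{thm:intro-diff-homogeneous}, reducing the question to the algebraic condition of having commuting different layers, and then to verify that, for stratified groups, this condition is equivalent to the step being at most $2$. This is the same reduction that passes from Corollary~\ref{cor:intro-homogeneous-groups} to Corollary~\ref{cor:intro-stratified-groups}, so the argument is essentially a repetition of that one.

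First, I would invoke Theorem~\ref{thm:intro-diff-homogeneous}: there exists a homogeneous distance $d$ on $G$ for which the differentiation theorem holds for all locally finite Borel measures on $(G,d)$ if and only if $G$ has commuting different layers in the sense of Definition~\ref{def:groups-commuting-layers}. It therefore suffices to show that a stratified group $G$ has commuting different layers if and only if $G$ is of step at most $2$.

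Write the associated positive grading of the Lie algebra as $\g = V_1 \oplus V_2 \oplus \cdots \oplus V_s$ with $V_s \neq \{0\}$, where $s$ is the step of $G$ and where, by definition of a stratified group, $V_1$ generates $\g$ and hence $V_{i+1} = [V_1, V_i]$ for every $i \geq 1$, with the convention $V_{s+1} = \{0\}$. If $s = 1$, the grading has only one nonzero layer so there are no distinct layers to test and the condition is trivially satisfied. If $s = 2$, then the only nontrivial bracket between different layers is $[V_1, V_2]$, which lies in $V_3 = \{0\}$, so $G$ has commuting different layers. Conversely, if $s \geq 3$, then $[V_1, V_{s-1}] = V_s \neq \{0\}$ while $V_1 \neq V_{s-1}$ (since $1 \neq s-1$), so $G$ does not have commuting different layers.

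Combining these two equivalences yields the corollary. No step of the argument is a genuine obstacle: the nontrivial content is entirely contained in Theorem~\ref{thm:intro-diff-homogeneous}, and the remaining verification is the elementary observation that the generation property of the degree-one layer forces the iterated brackets $[V_1, V_{s-1}]$ to coincide with the top layer $V_s$.
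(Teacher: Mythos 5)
Your proposal is correct and matches the paper's route exactly: the paper also obtains this corollary by combining Theorem~\ref{thm:intro-preiss-improved} (equivalently Theorem~\ref{thm:intro-diff-homogeneous}) with the observation, stated just before Corollary~\ref{cor:intro-stratified-groups}, that a stratified group has commuting different layers if and only if its step is $1$ or $2$. Your verification of that algebraic equivalence via $[V_1,V_{s-1}]=V_s$ is the intended one.
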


To put our results in perspective, let us first recall that it is well-known since the works of Besicovitch in the 40's that BCP holds in Euclidean spaces, and more generally in finite dimensional normed vector spaces. It is also known that the Riemannian distance on a Riemannian manifold of class $C^2$ satisfies a property that generalizes BCP, see~\cite[2.8]{Federer} and Sections~\ref{sect:differentiation-measures} and \ref{sec:nobcp-ccdist}. On the contrary, it is also well-known that BCP does not hold on infinite dimensional  normed vector spaces. Until recently only few results were known for graded groups equipped with homogeneous quasi-distances. It was proved independently and at the same time in~\cite{Sawyer_Wheeden} and~\cite{Koranyi-Reimann95} that BCP does not hold on the Heisenberg groups equipped with the Cygan-Kor\'anyi distance. Later on it was proved in~\cite{Rigot} that BCP also fails for sub-Riemannian distances on stratified groups under some regularity assumptions on the sub-Riemannian distance. After these negative answers about the validity of BCP, it was commonly believed that there would probably not exist homogeneous quasi-distances satisfying BCP on graded groups.

Let us now recall that any two homogeneous distances on a homogeneous group, and more generally any two homogeneous quasi-distances on a graded group, are biLipschitz equivalent. However, it turns out that the validity of BCP is not stable under a biLipschitz change of quasi-distance.

\begin{theorem}[{\cite[Theorem 1.6]{LeDonne_Rigot_Heisenberg_BCP}}] \label{thm:destroybcp}
Let $(X,d)$ be a metric space. Assume that there exists an accumulation point in $(X,d)$. Then, for all $0<c<1$, there exists a distance $d_c$ on $X$ such that $c \, d \leq d_c \leq d$ and for which BCP does not hold.
\end{theorem}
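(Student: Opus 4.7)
The plan is to realize $d_c$ as the shortest-path distance obtained from $d$ by installing controlled ``shortcuts'' between the accumulation point and a sparse sequence converging to it. Fix an accumulation point $p_0\in X$, set $\lambda:=(1-c)/(1+c)\in(0,1)$, and use accumulation to choose pairwise distinct points $q_n\in X\setminus\{p_0\}$ with $\eps_n:=d(p_0,q_n)$ satisfying $\eps_{n+1}\leq\lambda\eps_n$ for all $n\geq 1$. Define a weight function on unordered pairs of points of $X$ by $w(\{p_0,q_n\}):=c\eps_n$ for every $n$, and $w(\{x,y\}):=d(x,y)$ for all other pairs. The candidate distance is
$$d_c(x,y):=\inf\Bigl\{\sum_{i=0}^{k-1}w(\{z_i,z_{i+1}\}) : k\geq 1,\ z_0=x,\ z_k=y\Bigr\}.$$

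Verifying that $d_c$ is a distance satisfying $cd\leq d_c\leq d$ is routine: symmetry, triangle inequality, and $d_c(x,x)=0$ follow from the definition; the bound $d_c\leq d$ comes from the one-step chain $(x,y)$; the bound $d_c\geq cd$ comes from the pointwise inequality $w\geq cd$ combined with the triangle inequality for $d$ summed along any chain. These bounds already yield $d_c(p_0,q_n)=c\eps_n$. The key computation, and the main technical step, is the identity $d_c(q_n,q_m)=c(\eps_n+\eps_m)$ for $n\neq m$. The upper bound follows from the two-step chain $q_n\to p_0\to q_m$. For the lower bound, consider an arbitrary chain $q_n=z_0,\ldots,z_k=q_m$ and split into cases. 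If some $z_j=p_0$, the chain breaks into two sub-chains joining $q_n$ and $q_m$ to $p_0$; each sub-chain's weight is among the terms in the infimum defining $d_c$, hence is at least $c\eps_n$ and $c\eps_m$ respectively, and they sum to at least $c(\eps_n+\eps_m)$. If the chain avoids $p_0$, no edge is a shortcut, so the chain weight equals $\sum d(z_i,z_{i+1})\geq d(q_n,q_m)\geq|\eps_n-\eps_m|$; assuming $\eps_m\leq\eps_n$, the sparsity $\eps_m/\eps_n\leq\lambda=(1-c)/(1+c)$ rewrites exactly as $\eps_n-\eps_m\geq c(\eps_n+\eps_m)$, closing the bound.

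Finally, these identities produce a Besicovitch-bad configuration at every level $N$. Taking centers $q_1,\ldots,q_N$ and radii $r_i:=c\eps_i+\tfrac{c}{2}\eps_N$, so that $r_i>c\eps_i$ but $r_i<c(\eps_i+\eps_N)$, the identity $d_c(p_0,q_i)=c\eps_i$ gives $p_0\in B_{d_c}(q_i,r_i)$ for every $i$, while $d_c(q_i,q_j)=c(\eps_i+\eps_j)\geq c(\eps_i+\eps_N)>r_i$ for $j\neq i$ shows that no center lies in any other ball. Setting $A:=\{q_1,\ldots,q_N\}$, the family $\{B_{d_c}(q_i,r_i)\}_{i=1}^N$ covers $A$ and no proper subfamily does, yet all $N$ balls meet at $p_0$, forcing multiplicity at least $N$ there. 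Since $N$ is arbitrary, BCP fails for $d_c$. The main obstacle throughout is the lower bound for $d_c(q_n,q_m)$: it is precisely the sparsity condition on $(\eps_n)$ that prevents non-shortcut chains from cheating and undercutting $c(\eps_n+\eps_m)$.
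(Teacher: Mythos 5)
Your proof is correct. It realizes $d_c$ as the shortest-path metric obtained from $d$ by installing shortcuts of length $c\,\eps_n$ between the accumulation point $p_0$ and a geometrically sparse sequence $q_n\to p_0$; the sparsity $\eps_{n+1}\le\lambda\eps_n$ with $\lambda=(1-c)/(1+c)$ is exactly what is needed so that any chain between $q_n$ and $q_m$ that avoids $p_0$ already costs at least $c(\eps_n+\eps_m)$, giving the clean identities $d_c(p_0,q_n)=c\eps_n$ and $d_c(q_n,q_m)=c(\eps_n+\eps_m)$; the Besicovitch family $\{B_{d_c}(q_i,\,c\eps_i+\tfrac{c}{2}\eps_N)\}_{i=1}^N$ then has all centers pairwise excluded while all balls meet at $p_0$, and since $N$ is arbitrary BCP fails. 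This is essentially the same strategy as in the cited Le Donne--Rigot argument (itself inspired by Preiss's Theorem~3): distort the metric only near an accumulation point while keeping biLipschitz control, and exploit the resulting degenerate ball geometry there; the shortest-path/weighted-graph packaging you use is a clean and self-contained way to carry out that distortion.
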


See also~\cite[Theorem~3]{Preiss83} from which Theorem~\ref{thm:destroybcp} is inspired. Notice that Theorem~\ref{thm:destroybcp} can be extended to quasi-distances. As a consequence the non-validity of BCP for some homogeneous quasi-distances cannot give any hint towards the existence or non-existence of some other homogeneous quasi-distance satisfying BCP. Since for many purposes the choice of a specific quasi-distance up to biLipschitz equivalence does not really matter, the question of the existence of some homogeneous quasi-distance for which BCP holds on a graded group remained meaningful.

The present paper follows two previous papers,~\cite{LeDonne_Rigot_Heisenberg_BCP} and~\cite{LeDonne_Rigot_rmknobcp}. The existence of some homogeneous distances that satisfy BCP on the Heisenberg groups is proved in~\cite{LeDonne_Rigot_Heisenberg_BCP}. On the contrary it is proved in~\cite{LeDonne_Rigot_rmknobcp} that natural analogues of these distances on stratified groups of step $\geq 3$ do not satisfy BCP. These two cases strongly suggested that the structure of the dilations, which comes from the structure of the grading of the Lie algebra, plays a crucial role. Theorem~\ref{thm:intro-main-result} characterizes precisely in which sense the structure of the grading plays a role for our purposes.

Let us now say few words about the proof of Theorem~\ref{thm:intro-main-result}. Among the simplest examples of positively graduable groups are the Abelian ones, the Heisenberg groups, and free-nilpotent groups of step 2, see Example~\ref{ex:abelian},~Example~\ref{ex:heisenberg}, and Section~\ref{subsect:BCP-free-step2}. They play a key role in our proof of Theorem~\ref{thm:intro-main-result}. A first step is indeed the study of the validity or non-validity of BCP for homogeneous quasi-distances relatively to various possible positive gradings on these groups.

To prove the existence of homogeneous quasi-distances satisfying BCP on graded groups with commuting different layers, we first prove that Hebisch-Sikora's quasi-distances satisfy BCP on stratified free-nilpotent groups of step 2, see Theorem~\ref{thm:BCP-free-step2}. Hebisch-Sikora's quasi-distances are those homogeneous quasi-distances whose unit ball centered at the identity is a Euclidean ball, see Examples~\ref{ex:hebisch-sikora-distance} and~\ref{ex:qdist-hebsich_sikora}. Theorem~\ref{thm:BCP-free-step2} extends Theorem~1.14 in~\cite{LeDonne_Rigot_Heisenberg_BCP} to stratified free-nilpotent groups of step 2 and any rank $r\geq 2$. Theorem~1.14 in~\cite{LeDonne_Rigot_Heisenberg_BCP} gives indeed the conclusion for the stratified first Heisenberg group, i.e., the stratified free-nilpotent group of step 2 and rank 2. The proof of Theorem~\ref{thm:BCP-free-step2} is in spirit inspired by the proof of this previous result but requires a slightly different approach.

To prove the non-existence of continuous homogeneous quasi-distances satisfying BCP on graded groups for which there exists two different layers that do not commute, we first consider the case of the non-standard Heisenberg groups, see Theorem~\ref{thm:heisenberg-case}. A non-standard Heisenberg group is the first Heisenberg group viewed as a graded group whose Lie algebra is endowed with a positive grading that is not a stratification, see Example~\ref{ex:heisenberg}. As already mentioned, the fact that there does not exist continuous homogeneous quasi-distances satisfying BCP on non-standard Heisenberg groups was suggested by Theorem~1.6 in~\cite{LeDonne_Rigot_rmknobcp}. However, we stress that the proof of Theorem~\ref{thm:heisenberg-case} is not a technical modification of the arguments in~\cite{LeDonne_Rigot_rmknobcp}. It requires indeed a completely new approach, see Section~\ref{sect:nobcp}.

In both cases, the general conclusion, see Theorems~\ref{thm:sec-BCP-commuting-layers} and~\ref{thm:sect-nobcp-self-similar-noncommuting}, follows using structure properties about graded groups, using submetries, that plays a central role here, and using some constructions on metric spaces that preserve the validity of BCP. These tools are given in Sections~\ref{sect:graded-groups} and~\ref{sect:bcp-vs-wbcp}.

In Section~\ref{sect:graded-groups} we establish preliminary results about graded groups. In Sections~\ref{subsect:graded-groups} and~\ref{subsect:dilations} we fix the definitions and the terminology we shall use throughout the paper for graded and stratified Lie algebras and Lie groups and for the associated families of dilations. Section~\ref{subsect:examples} is devoted to various examples and to the description of some constructions on graded groups for later use. In particular the Heisenberg groups and various positive gradings of their Lie algebra to be used later in the paper are given in Example~\ref{ex:heisenberg}. In Section~\ref{subsect:structure-graded-groups}, we prove some structure properties for graded groups. Proposition~\ref{prop:structure-commuting-layers} gives a description of graded groups with commuting different layers. Proposition~\ref{prop:structure-algebra-noncommuting-layers} explains how every graded group with some different layers not commuting gives rise a non-standard Heisenberg group. Section~\ref{subsec:homogenous-qdist} is devoted to homogeneous quasi-distances. The meaning of the terminology homogeneous groups we use in this paper is in particular given in~Definition~\ref{def:homogeneous-groups}. We stress that working with quasi-distances rather than with distances naturally occurs in applications but may lead to  topological issues. In our setting, we prove that homogeneous quasi-distances on graded groups induce the manifold topology, see Proposition~\ref{prop:homogeneous-quasi-distance-topology}. As a consequence, homogeneous distances on homogeneous groups are continuous, see Corollary~\ref{cor:continuity-homogeneous-distance}. These results seem not to have been previously noticed in the literature and may be of independent interest. On the contrary we stress that homogeneous quasi-distances may or may not be continuous and  Proposition~\ref{prop:continuity-quasi-dist} characterizes continuous homogeneous quasi-distances.

In Section~\ref{sect:bcp-vs-wbcp} we first recall general facts about the Besicovitch Covering Property (BCP) and one of its variants which we call the Weak Besicovitch Covering Property (WBCP). We raise that these two variants are not equivalent in general metric spaces, see Example~\ref{ex:wbcp-but-nobcp}. However, in our setting, and more generally on doubling metric spaces, BCP and WBCP are equivalent, see Proposition~\ref{prop:BCP-WBCP-Doubling}. It turns out that WBCP is for our purposes technically more convenient to work with. Next, we consider some constructions that preserve the validity of (W)BCP. In particular products of metric spaces are considered in Theorem~\ref{thm:WBCP-productspaces}. The role of surjective morphisms of Lie algebra and submetries is given in Propositions~\ref{prop:bcp-submetry} and~\ref{prop:submetries-morphism-graded-algebra}. Results in this section will be used together with the structure properties proved in Section~\ref{subsect:structure-graded-groups} to deduce Theorem~\ref{thm:intro-main-result} from the particular cases mentioned above.

Sections~\ref{sect:BCP-commutative-layers} and~\ref{sect:nobcp} are devoted to the proof of Theorem~\ref{thm:intro-main-result} together with Corollaries~\ref{cor:intro-homogeneous-groups} and~\ref{cor:intro-stratified-groups}. In Section~\ref{sect:BCP-commutative-layers} we prove the existence of continuous homogeneous quasi-distances for which BCP holds on graded groups with commuting different layers, following the scheme already described above, see Theorem~\ref{thm:sec-BCP-commuting-layers}. In Section~\ref{sect:nobcp} we consider more general quasi-distances, called self-similar, which are only required to be one-homogeneous with respect to some dilation, see Definition~\ref{def:selfsimilar-dist}. We prove that continuous self-similar quasi-distances do not satisfy BCP on graded groups for which there exist two different layers of the associated positive grading that do not commute, see Theorem~\ref{thm:sect-nobcp-self-similar-noncommuting}. Self-similar, rather than homogeneous, quasi-distances may occur naturally. We stress that in this case, additional topological issues have to be taken into account, see Section~\ref{subsect:self-similar-topology}.

In Section~\ref{sect:differentiation-measures} we give applications to measure differentiation on graded groups and we prove Theorem~\ref{thm:intro-preiss-improved}.

In Section~\ref{sec:nobcp-ccdist} we consider sub-Riemannian distances on stratified groups. We complete the already known results of~\cite{Rigot} with the following general negative answer. We refer to Definition~\ref{def:sub-riem-carnot-group} for the definition of sub-Riemannian Carnot groups.

\begin{theorem}\label{thm:intro-no:BCP:Carnot}
Let $(G,d)$ be a sub-Riemannian Carnot group of step $\geq 2$. Then BCP does not hold on $(G,d)$.
\end{theorem}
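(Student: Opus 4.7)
The plan is to separate the argument according to the step of $G$. When $G$ has step $\geq 3$, the theorem follows at once from Corollary~\ref{cor:intro-stratified-groups}: any sub-Riemannian Carnot distance is, by construction, left-invariant and one-homogeneous with respect to the canonical dilations of the stratification, hence a homogeneous distance in the sense of this paper. Since Corollary~\ref{cor:intro-stratified-groups} asserts that a stratified group of step $\geq 3$ admits no homogeneous distance satisfying BCP, BCP must fail on $(G,d)$ in this range of steps.

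The genuinely new content is the step $2$ case, which cannot be treated in the same way. A step $2$ stratified group has commuting different layers (the only pair of distinct layers is $V_1$ and $V_2$, and $[V_1,V_2]\subset V_3=\{0\}$), so Corollary~\ref{cor:intro-stratified-groups} actually guarantees the existence of homogeneous distances on $G$ for which BCP does hold, for instance the Hebisch-Sikora type distances built in Section~\ref{sect:BCP-commutative-layers}. Consequently, the failure of BCP for a sub-Riemannian $d$ cannot come from algebraic obstructions but must be traced to the specific geometry of sub-Riemannian balls. The strategy I would follow is to invoke the previous result of~\cite{Rigot}, which establishes the conclusion under some regularity assumption on the sub-Riemannian distance, and then to verify that this assumption is automatically satisfied on any step $2$ Carnot group. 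This is a reasonable expectation because on step $2$ Carnot groups the exponential map is a polynomial diffeomorphism, all sub-Riemannian geodesics are normal with no abnormal minimizers to worry about, and the distance from the identity is smooth outside the second layer, which is the kind of regularity needed to carry out the construction in~\cite{Rigot} of an explicit family of balls whose overlaps cannot be bounded by any Besicovitch constant.

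The hard part will be the step $2$ case. Here algebraic methods are of no use, since BCP is structurally compatible with the group, and one must instead exploit concrete sub-Riemannian geometry: the key phenomenon is that small sub-Riemannian balls have a cusp-like, strongly elongated shape in the direction transverse to the horizontal distribution, so that balls centered along a short curve running in the vertical direction end up containing each other's centers in a way that precludes the existence of any uniform constant $N$. Verifying that this mechanism is robust enough to operate on every step $2$ Carnot group, without invoking additional smoothness hypotheses beyond what step $2$ provides for free, is the technical heart of the argument and the point at which the techniques developed in Sections~\ref{sect:graded-groups}--\ref{sect:nobcp} of the present paper would be combined with the explicit ball constructions of~\cite{Rigot}.
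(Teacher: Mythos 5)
Your observation that the step $\geq 3$ case follows directly from Corollary~\ref{cor:intro-stratified-groups} is correct, and the paper even makes the same remark in passing: ``Note that in view of Corollary~\ref{cor:intro-homogeneous-groups}, we could have skipped this first step.'' However, the paper deliberately handles all steps uniformly by a quotient to step $2$, so that the resulting proof of Theorem~\ref{thm:intro-no:BCP:Carnot} does not depend on the main classification theorem.

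Where your plan diverges substantially, and where a real gap appears, is in the step $2$ case. You propose to verify that the regularity hypotheses of~\cite{Rigot} hold automatically on \emph{every} step $2$ Carnot group, citing the facts that the exponential map is a polynomial diffeomorphism, that there are no strictly abnormal minimizers, and an alleged smoothness of the distance from the identity outside the second layer. This last claim is far from obvious (smoothness can fail at cut points, and the structure of the cut locus is not under control for a general step $2$ Carnot group), and in any case you do not actually carry out the verification. The paper explicitly advertises Theorem~\ref{thm:intro-no:BCP:Carnot} as removing the regularity assumptions of~\cite{Rigot}, which indicates that these assumptions are precisely what is not known to hold automatically; your plan would therefore re-open the problem the paper set out to close. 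The paper's route is different and it is the key idea you are missing: starting from an arbitrary sub-Riemannian Carnot group of step $\geq 2$, one performs successive quotients by graded ideals (which, by Propositions~\ref{prop:bcp-submetry} and~\ref{prop:submetries-morphism-graded-algebra}, preserve the validity of BCP and the sub-Riemannian nature of the distance) to reduce to the case where the Lie algebra is a Heisenberg algebra $\h_n$ with a one-dimensional center equal to the second layer (Proposition~\ref{prop:no:BCP:Carnot}). Then Lemma~\ref{lem:no:BCP:Carnot} produces a geodesically closed subgroup isometric to a sub-Riemannian first Heisenberg group, so Proposition~\ref{prop:BCP-subset} transfers the hypothetical validity of BCP down to $\mathbb{H}^1$, contradicting Theorem~\ref{thm:nobcp-heisenberg}. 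That is, only the Heisenberg case of~\cite{Rigot} is ever invoked, precisely because it is the one case where the relevant regularity is available; no regularity verification on general step $2$ groups is attempted or needed. To repair your argument you would need to replace the heuristic about ``cusp-shaped balls'' with this explicit reduction to $\mathbb{H}^1$ (or, alternatively, actually prove that the hypotheses of~\cite{Rigot} hold for arbitrary step $2$ Carnot groups, which is a nontrivial and unaddressed claim).
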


The proof of this result is independent of Theorem~\ref{thm:intro-main-result} but uses some techniques developed in Section~\ref{sect:bcp-vs-wbcp}, in particular Proposition~\ref{prop:submetries-morphism-graded-algebra}. Using the fact that sub-Riemannian Carnot groups appear as metric tangent spaces to sub-Riemannian manifolds, we get the following consequence about measure differentiation on sub-Riemannian manifolds. We refer to Section~\ref{subsect:diff-mesaures-subriemannian-manifolds} for the definition of sub-Riemannian manifold.

\begin{theorem} \label{thm:intro-subriemannian-diff-measures}
Let $M$ be a sub-Riemannian manifold and let $d$ be its sub-Riemannian distance. Then there exists some locally finite Borel measure for which the differentiation theorem on $(M,d)$ does not hold.
\end{theorem}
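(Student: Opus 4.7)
The plan is a blow-up argument: the failure of BCP on sub-Riemannian Carnot groups of step $\geq 2$ from Theorem~\ref{thm:intro-no:BCP:Carnot} produces, via Theorem~\ref{thm:intro-preiss-improved}, a locally finite Borel measure on each such Carnot group for which the differentiation theorem fails, and this bad measure can be transferred to $M$ through the metric tangent correspondence.

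First, fix a point $p_0\in M$ at which the pointed metric tangent $(T_{p_0}M,d_{p_0},0)$ to $(M,d)$ is isometric to a sub-Riemannian Carnot group $(G_{p_0},d_{G_{p_0}},e)$ of step at least $2$. The existence of such points, generically, is given by the classical blow-up theorems of Mitchell and Bella\"iche: at regular points the tangent cone is a Carnot group whose stratification reflects the bracket-generating filtration of the horizontal distribution at $p_0$, and genuine sub-Riemannianity forces the step at such a regular point to be at least $2$.

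Second, apply Theorem~\ref{thm:intro-no:BCP:Carnot} to $(G_{p_0},d_{G_{p_0}})$ to conclude that BCP fails there, and invoke Theorem~\ref{thm:intro-preiss-improved}, which applies since $(G_{p_0},d_{G_{p_0}})$ is a homogeneous group equipped with a homogeneous distance, to obtain a locally finite Borel measure $\nu$ on $G_{p_0}$, a function $f\in L^1_{\mathrm{loc}}(\nu)$, and a Borel set $E\subset G_{p_0}$ with $\nu(E)>0$ at every point of which the averages of $f$ over $d_{G_{p_0}}$-balls fail to converge to the value of $f$ as the radius tends to zero. By translating and restricting we may further assume that $E$ lies in a small neighbourhood of $e$.

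Third, transfer $\nu$ to $M$ through a privileged chart $\phi$ at $p_0$ that realises the blow-up convergence $(M,r^{-1}d,p_0)\to(G_{p_0},d_{G_{p_0}},e)$ as $r\downarrow 0$: define $\mu$ on a small neighbourhood of $p_0$ as the pushforward of $\nu$ by $\phi^{-1}$, and extend by zero outside. For each $q\in E$ the small $d$-balls in $M$ around $\phi^{-1}(q)$ pull back under $\phi$ to sets sandwiched between $d_{G_{p_0}}$-balls of comparable radii around $q$, with comparison constants tending to $1$ as the radius tends to zero, so the non-convergence of averages for $(f,\nu)$ at $q$ transfers to non-convergence of the averages for $(f\circ\phi,\mu)$ at $\phi^{-1}(q)$, producing the required measure on $(M,d)$. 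The main obstacle is the uniformity of the blow-up in this last step: the convergence $(M,r^{-1}d,p_0)\to(G_{p_0},d_{G_{p_0}},e)$ must be controlled uniformly on a fixed neighbourhood of $p_0$ at all sufficiently small scales simultaneously, so that a single chart and a single measure $\mu$ suffice to exhibit failure of differentiation at positively many points at once. This uniform control is precisely what the theory of privileged coordinates for sub-Riemannian manifolds supplies, and is the technical heart of the argument, to be collected in Section~\ref{subsect:diff-mesaures-subriemannian-manifolds}.
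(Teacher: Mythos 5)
Your proposal goes in the opposite direction from the paper: you try to construct a bad measure on $M$ explicitly by pushing forward a bad measure from the tangent Carnot group, whereas the paper avoids constructing any measure at all. It instead shows that the sub-Riemannian distance $d$ is not $\sigma$-finite dimensional (Theorem~\ref{thm:subriemannian-not-finite-dimensional}), and then Theorem~\ref{thm:intro-subriemannian-diff-measures} follows \emph{abstractly} from Preiss's Theorem~\ref{thm:preiss-diff}. The failure of $\sigma$-finite dimensionality is a combinatorial statement about finite families of Besicovitch balls; such finite configurations of balls transfer well through pointed Gromov--Hausdorff convergence (this is exactly Lemma~\ref{lem:tangent-finite-dimensionality}, applied in the contrapositive after locating a Hausdorff-measure density point of one of the sets $M_n$ in the putative $\sigma$-finite-dimensional decomposition of $M$). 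The direction $M\to G$ is robust for this reason; the direction $G\to M$ that you attempt is not.

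The gap in your third step is genuine, and you have already put your finger on it without resolving it. For a point $q\in E$ at fixed positive $d_{G_{p_0}}$-distance $c$ from $e$, the nilpotent approximation at $p_0$ controls $d$ near $\phi^{-1}(q)$ only up to a bi-Lipschitz factor of size $1+O(c^{\delta})$ for some $\delta>0$; this factor does \emph{not} tend to $1$ as the radius $r\to 0$ with $q$ fixed, but only as $q\to e$. In other words, the blow-up at $p_0$ does not give an asymptotically isometric identification of the small $d$-balls around $\phi^{-1}(q)$ with the small $d_{G_{p_0}}$-balls around $q$; the tangent cone at $\phi^{-1}(q)$ is a different object from the tangent cone at $p_0$ restricted near $q$. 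Moreover, even a bi-Lipschitz perturbation with constant arbitrarily close to $1$ is enough to destroy non-differentiation: this is exactly the phenomenon captured by Theorem~\ref{thm:destroybcp} and the related observation that BCP, WBCP, and the differentiation property are highly sensitive to the precise shape of the balls and are not bi-Lipschitz invariants. For a singular measure $\nu$, the ratio $\nu(B')/\nu(B)$ between two comparable but not identical sets $B\subset B'$ can be arbitrarily large, so the inner and outer ball comparisons do not pin down the averages. Consequently, there is no straightforward way to conclude that the averages of $f\circ\phi$ against $\mu$ over $d$-balls around $\phi^{-1}(q)$ fail to converge.

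To repair the argument you would essentially have to redo what the paper does: replace the concrete measure transfer by Preiss's abstract criterion. Concretely, the correct route is to use Theorem~\ref{thm:preiss-diff} to reduce to showing that $d$ is not $\sigma$-finite dimensional on $M$; assume the contrary, reduce via the Hausdorff measure to a set $M_n$ of positive measure on which $d$ is finite dimensional; blow up at a density point of $M_n$ (using Theorem~\ref{thm:tangent-subriemannian} and \cite[Proposition~3.1]{LeDonne6}) to get a sub-Riemannian Carnot group of step $\geq 2$ as tangent cone of $(M_n,d)$; apply Lemma~\ref{lem:tangent-finite-dimensionality} to deduce that the Carnot distance $d_\infty$ is finite dimensional; and contradict this via Theorem~\ref{thm:intro-no:BCP:Carnot} and Proposition~\ref{prop:BCP-vs-finitedimensional}.
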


\subsection*{Acknowledgement}
The authors would like to thank Tapio Rajala 
   for fruitful conversations and improving feedback.





\section{Preliminaries on graded groups} \label{sect:graded-groups}

\subsection{Graded and stratified Lie algebras and Lie groups} \label{subsect:graded-groups}

All Lie algebras considered here are over $\R$ and finite-dimensional. 

\begin{Def}[Positively graduable Lie algebras] \label{def:graduable-algebras} A \emph{positive grading} of a Lie algebra $\g$ is a family $(V_t)_{t\in (0,+\infty)}$ of vector subspaces of $\g$, where all but finitely many of the $V_t$'s are $\{0\}$, such that 
$$\g=  \bigoplus_{t\in (0,+\infty)} V_t $$
and where $[V_s, V_t]\subset V_{s+t}$ for
all $s,t >0.$
Here
$
[V,W] := \Span\{[X,Y] ;\; X\in V,\ Y\in W\} .
$
We say that a Lie algebra is \emph{positively graduable} if it admits a positive grading. 
\end{Def}

A positively graduable Lie algebra may admit several positive gradings that are not isomorphic, see for instance Example~\ref{ex:heisenberg} (and Definition~\ref{def:morphism-graded-algebra} for the definition of isomorphisms of graded Lie algebras). We will use the terminology ``graded Lie algebra'' when considering a positively graduable Lie algebra equipped with a given positive grading as stated in the following definition.

\begin{Def}[Graded Lie algebras] \label{def:graded-algebras} We say that a Lie algebra is \textit{graded} when it is positively graduable and endowed with a given positive grading called the \textit{associated positive grading}.
\end{Def}

Given a positive
grading $\g=  \oplus_{t>0} V_t $ and given $t\in (0,+\infty)$, the subspace $V_t$ is called the \textit{degree-t layer} of the grading.  

Recall that, for a Lie algebra $\g$, the lower central serie is defined inductively by 
$\g^{(1)}=\g$, $\g^{(k+1)}=[\g,\g^{(k)}]$. 
A Lie algebra $\g$ is called {\em nilpotent} if $\g^{(s+1)}=\{0\}$ for some integer $s \geq 1$. We say that $\g$ is {\em nilpotent of step $s$} if $\g^{(s+1)}=\{0\}$ but $\g^{(s)}\neq \{0\}$. Positively graduable Lie algebras are nilpotent (\cite{Goodman_77}, \cite{Folland-Stein}). However, nilpotent Lie algebras that are not positively graduable do exist (\cite{Dyer}).

\begin{Def}[Stratifiable Lie algebras] \label{def:stratifiable-algebras} A \textit{stratification} of step $s$ of a Lie algebra $\g$ is a direct-sum decomposition
$$\g = V_1 \oplus V_2 \oplus \cdots \oplus V_s$$
for some integer $s\geq 1$ where $V_s \not= \{0\}$, $[V_1,V_j] = V_{j+1}$ for all integers $j\in \{1,\dots,s\}$, and where we have set $V_{s+1} := \{0\}$. We say that a Lie algebra is \textit{stratifiable} if it admits a stratification. 
\end{Def}

Equivalently, a stratifiable Lie algebra $\g$ is a positively graduable Lie algebra that admits a positive grading whose degree-one layer generates $\g$ as a Lie algebra. A stratification is uniquely determined by its degree-one layer $V_1$. Moreover, one has $\g = V_1 \oplus [\g,\g]$. Recall that the rank of a nilpotent Lie algebra is defined as $\dim \g - \dim [\g,\g]$. For a stratified Lie algebra it coincides with the dimension of the degree-one layer of any of its stratification. However, we stress that an arbitrary vector space $V$ of a stratifiable Lie algebra $\g$ that is in direct sum with $[\g,\g]$, i.e., satisfies $\g=V\oplus [\g,\g]$, may not generate a stratification, see Example~\ref{ex:graded-vs-stratified}. Note also that a positive grading of a stratifiable Lie algebra may not be a stratification, see Example~\ref{ex:heisenberg}.

Any two stratifications of a Lie algebra are isomorphic, see \cite{LeDonne:Carnot}. In particular they have equal step that we will call the \textit{step of the stratifiable Lie algebra}. Note that a stratifiable Lie algebra of step $s$ is nilpotent of step $s$. When we fix a given stratification of a stratifiable Lie algebra, we will use the terminology ``stratified Lie algebra'' as stated in the following definition. 

\begin{Def}[Stratified Lie algebras] \label{def:stratified-algebras}
We say that a Lie algebra is \textit{stratified} when it is stratifiable and endowed with a given stratification called the \textit{associated stratification}.
\end{Def}

\begin{Def}[Positively graduable, graded, stratifiable, stratified groups] \label{def:graded-stratified-groups} We say that a Lie group $G$ is a \emph{positively graduable} (respectively  \emph{graded}, {\em  stratifiable}, \emph{stratified}) \textit{group} if $G$ is a connected and simply connected Lie group whose Lie algebra is positively graduable (respectively graded, stratifiable, stratified). 
\end{Def}

For the sake of completeness, we will give in Theorem~\ref{thm:siebert} below a characterization of positively graduable groups in terms of existence of a contractive group automorphism. This characterization is due to E.~Siebert.

\subsection{Dilations on graded algebras and graded groups} \label{subsect:dilations}

\begin{Def}[Dilations on graded Lie algebras] \label{def:dilations-algebras}
Let 
$\g$ be a graded Lie algebra with associated positive grading $\g = \oplus_{t>0} V_t $. For $\lambda>0$, 
we define the \emph{associated dilation of factor $\lambda$} as the unique linear map $\delta_\lambda:\g\to\g$ such that $\delta_\lambda (X) = \lambda^t X$ for all $X\in V_t$.
\end{Def}

Dilations $\delta_\lambda:\g\to\g$ are Lie algebra automorphisms. Moreover, the family of dilations $(\delta_\lambda)_{\lambda>0}$ is a one-parameter group of Lie algebra automorphisms, i.e., $\delta_{\lambda}\circ\delta_{\eta}=\delta_{\lambda\eta}$ for all $\lambda, \eta >0$. 
	
Throughout this paper, given a Lie group homomorphism $\varphi:G\rightarrow H$, we will denote by $\varphi_*:\g \rightarrow \mathfrak{h}$ the associated Lie algebra homomorphism. Recall that, if $G$ is simply connected, given a Lie algebra homomorphism $\phi:\g \rightarrow \mathfrak{h}$, there exists a unique Lie group homomorphism $\varphi:G\rightarrow H$ such that $\varphi_* = \phi$ (see \cite[Theorem 3.27]{Warner}). This allows to define dilations on graded groups as stated in the following definition. 

\begin{Def}[Dilations on graded groups] \label{def:dilations_groups}
 	Let $G$ be a graded group with Lie algebra $\g$. 
	Let $\delta_\lambda:\g\to\g$ be the associated dilation of factor $\lambda>0$. 
	The \emph{associated dilation of factor $\lambda$ on $G$} is the unique Lie group automorphism, also denoted by $\delta_\lambda:G\to G$, such that $(\delta_\lambda)_*=\delta_\lambda$.
\end{Def}

For technical simplicity, we keep the same notation for both dilations on the Lie algebra $\g$ and the group $G$. There will be no ambiguity here. Indeed, graded groups being nilpotent and simply connected, the exponential map $\exp: \g \to G$ is a diffeomorphism (see \cite[Theorem~1.2.1]{Corwin-Greenleaf} or \cite[Proposition~1.2]{Folland-Stein}) and one has $\delta_\lambda\circ\exp = \exp\circ\,\delta_\lambda$ (see \cite[Theorem 3.27]{Warner}), hence dilations on $\g$ and dilations on $G$ coincide in exponential coordinates.

For the sake of completeness, we give now a equivalent characterization of positively graduable groups due to Siebert. If $G$ is a topological group with identity $e$ and $\tau:G\rightarrow G$ is a group automorphism, we say that $\tau$ is \textit{contractive} if, for all $g\in G$, one has $\lim_{k\to\infty}\tau^k(g)=e$. We say that $G$ is \textit{contractible} if $G$ admits a contractive automorphism.

For graded groups, associated dilations of factor $\lambda \in (0,1)$ are contractive automorphisms. Hence positively graduable groups are contractible. Conversely, E.~Siebert proved (see Theorem~\ref{thm:siebert} below) that if $G$ is a connected locally compact group and $\tau:G\rightarrow G$ is a contractive automorphism then $G$ is a connected and simply connected Lie group and $\tau$ induces a positive grading on the Lie algebra $\g$ of $G$. Note however that $\tau$ itself  may not be a dilation associated to the induced grading.

\begin{thm} [{\cite[Corollary~2.4]{Siebert}}] \label{thm:siebert} A topological group $G$ is a positively graduable Lie group if and only if $G$ is a connected locally compact contractible group.
\end{thm}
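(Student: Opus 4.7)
The forward direction is a direct construction from the grading. If $G$ is positively graduable with $\g=\oplus_{t>0}V_t$, then by definition $G$ is a connected simply connected Lie group and hence locally compact. For any fixed $\lambda\in(0,1)$, the dilation $\delta_\lambda:G\to G$ of Definition~\ref{def:dilations_groups} is a Lie group automorphism; on each $V_t$ it acts as multiplication by $\lambda^t$, so $\delta_\lambda^k\to 0$ pointwise on $\g$. The intertwining $\exp\circ\delta_\lambda=\delta_\lambda\circ\exp$ together with $\exp$ being a global diffeomorphism (since $\g$ is nilpotent and $G$ is simply connected) transfers this to $\delta_\lambda^k(g)\to e$ for every $g\in G$. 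Thus $\delta_\lambda$ is a contractive automorphism.

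For the converse, let $G$ be a connected locally compact group with contractive automorphism $\tau$. I would first upgrade $G$ to a Lie group. The idea is to enhance pointwise contraction of $\tau$ to uniform contraction on compact sets, using that for any compact symmetric neighborhood $U$ of $e$ one has $G=\bigcup_n \tau^{-n}(U)$ by contractivity, and then invoking the Baire category theorem on the locally compact space $G$. Uniform contraction on compacta lets one exhibit the ``no small subgroups'' property: any nontrivial subgroup $H$ contained in a sufficiently small neighborhood would remain inside arbitrarily contracted translates of that neighborhood, forcing $H=\{e\}$. By the Gleason--Yamabe and Montgomery--Zippin structure theorems, a connected locally compact NSS group is a Lie group. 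Simple connectedness then follows by lifting $\tau$ to the universal cover $\tilde G$: the lifted automorphism $\tilde\tau$ is still contractive and preserves the discrete kernel of the covering projection, so a contraction argument on a discrete subset forces this kernel to be trivial.

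Once $G$ is a simply connected Lie group, contractivity of $\tau$ forces every complex eigenvalue $\lambda$ of $\tau_*:\g\to\g$ to satisfy $|\lambda|<1$, and in particular every eigenvalue is nonzero. The automorphism identity $\tau_*[X,Y]=[\tau_* X,\tau_* Y]$ yields that the complexified generalized eigenspace decomposition $\g_\C=\oplus_\lambda \g_\lambda$ satisfies $[\g_\lambda,\g_\mu]\subset\g_{\lambda\mu}$. Since only finitely many eigenvalues occur and all have modulus bounded by some $r<1$, iterated brackets of length $k$ land in eigenspaces whose eigenvalue moduli are at most $r^k$; for $k$ large enough this is strictly smaller than the minimum $|\lambda_i|$ and the bracket must vanish, proving $\g$ is nilpotent. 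Grouping eigenspaces by $t_\lambda:=-\log|\lambda|>0$ and descending to the real form (using that $\g_{\bar\lambda}=\overline{\g_\lambda}$ makes each grouped subspace stable under complex conjugation) yields $\g=\oplus_{t>0}V_t$ with $[V_s,V_t]\subset V_{s+t}$ because $|\lambda\mu|=|\lambda|\,|\mu|$, which is a positive grading.

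The main obstacle is the passage from locally compact to Lie, which draws on the full strength of the solution to Hilbert's fifth problem and on the promotion of pointwise to uniform contraction via Baire category. Once these topological hurdles are cleared, the remaining steps are essentially linear algebra applied to the generalized eigenspace decomposition of $\tau_*$, together with a short covering-space argument for simple connectedness.
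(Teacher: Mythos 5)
The paper itself does not prove this theorem; it is stated as a citation to Siebert's work, so there is no ``paper's proof'' to compare against. Your forward direction and the final two stages of the converse (eigenvalue analysis of $\tau_*$ to build the grading, and triviality of $\pi_1$ via a $\tilde\tau$-invariant discrete central subgroup) are sound.

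However, there is a genuine gap in the middle step, where you upgrade $G$ from connected locally compact to Lie. You argue that uniform contraction on compacta gives ``no small subgroups'' because ``any nontrivial subgroup $H$ contained in a sufficiently small neighborhood would remain inside arbitrarily contracted translates of that neighborhood.'' This does not follow: an arbitrary small subgroup $H$ need not be $\tau$-invariant, so there is no reason $H$ (as opposed to $\tau^n(H)$) should end up inside $\tau^n(U)$. Uniform contraction moves sets, not fixed subgroups, and the NSS property cannot be extracted this way. The standard way around this is to use a \emph{characteristic} compact subgroup instead: by the structure theory of connected locally compact groups, $G$ possesses a unique maximal compact normal subgroup $K$ with $G/K$ Lie; $K$ is characteristic, hence $\tau$-invariant, so $\tau|_K$ is a contractive automorphism of a compact group, forcing $K=\{e\}$ (uniform contraction of $K$ onto itself is impossible unless $K$ is trivial), whence $G$ is already Lie. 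Once you replace the NSS step by this argument, the rest of your outline goes through.
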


\subsection{Examples} \label{subsect:examples}

We first introduce the definition of basis adapted to a positive grading for later use. 

\begin{definition} \label{def:basis-adapted}
Let $\g$ be a graded Lie algebra with associated positive grading given by $\g = \oplus_{t>0} V_t$. Let $n:= \dim \g$ and $0<  t_1 < \dots < t_l$ be such that $V_{t_i} \not= \{0\}$ for all $i=1,\dots,l$ and $V_t = \{0\}$ for all $t\not\in \{t_1,\dots,t_l\}$. We say that a basis $(X_1, \dots , X_n)$ of $\g$ is adapted to the positive grading if $(X_{m_{i-1}+1},\dots,X_{m_i})$ is a basis of $V_{t_i}$ for all $1\leq i \leq l$. Here $m_0 = 0$ and $m_i - m_{i-1} = \dim V_{t_i}$.
\end{definition}

\begin{example} [Abelian Lie algebras and Lie groups] \label{ex:abelian}
Abelian Lie algebras are stratifiable Lie algebras of step 1 and any direct-sum decomposition is a positive grading. 

In particular, if $\g$ is an Abelian $n$-dimensional Lie algebra, the trivial direct-sum decomposition $\g =  \oplus_{t>0} V_t$ where $V_1 = \g$ and $V_t = \{0\}$ for all $t\not=1$ gives the stratification. The connected and simply connected Lie group with Lie algebra $\g$ can be identified with $\R^n$ equipped with the Abelian group law. Associated dilations are the usual multiplication by a scalar positive real number given by $x  \mapsto \lambda x$ for $\lambda >0$.

More generally, for any real numbers $0< d_1 \leq \cdots \leq d_n$, the maps $$(x_1,\dots,x_n) \mapsto (\lambda^{d_1} x_1, \dots , \lambda^{d_n} x_n)~,$$
define a family of dilations on the Abelian group $\R^n$ associated to some positive grading of its Lie algebra.
\end{example}

\begin{example} [Heisenberg Lie algebras and Lie groups]\label{ex:heisenberg}
The $n$-th Heisenberg Lie algebra $\h_n$ is the $(2n+1)$-dimensional Lie algebra that admits a basis $(X_1,\dots,X_n,Y_1,\dots,Y_n,Z)$ where the only non-trivial bracket relations are $[X_j,Y_j] = Z$ for all $1\leq j \leq n$. We call such a basis a \textit{standard basis} of $\h_n$.

The $n$-th Heisenberg group $\HH ^n$ is the connected and simply connected Lie group whose Lie algebra is $\h_n$. Using exponential coordinates of the first kind, we write $p \in \HH^n$ as $p=\exp((\sum_{j=1}^n x_j X_j + y_j Y_j) + z Z) $ and we identify $p$ with $(x_1,\dots,x_n,y_1,\dots,y_n,z)$. Using the Baker-Campbell-Hausdorff formula, the group law is given by 
\begin{multline*}\label{e:law-heisenberg}
(x_1,\dots,x_n,y_1,\dots,y_n,z)\cdot (x_1',\dots,x_n',y_1',\dots,y_n',z') \\
= (x_1 + x_1', \dots, x_n + x_n',y_1+y_1',\dots , y_n + y_n', z+z'+\frac{1}{2} \sum_{j=1}^n (x_j y_j' - y_j x_j'))~.
\end{multline*}

Heisenberg Lie algebras are stratifiable of step 2. Using a standard basis, 
\begin{equation*} 
\h_n = V_1 \oplus V_2 \quad \text{where }\; V_1:=\Span\{X_j,Y_j;\; 1\leq j \leq n\},\;  V_2:=\Span{Z}
\end{equation*}
is a stratification.
Dilations associated to this stratification are given by
\begin{equation*} 
(x_1,\dots,x_n,y_1,\dots,y_n,z) \mapsto (\lambda x_1,\dots,\lambda x_n,\lambda y_1,\dots,\lambda y_n,\lambda^2 z)~.
\end{equation*}

Heisenberg Lie algebras also admit positive gradings that are not stratifications. We will in particular consider in this paper such gradings on the first Heisenberg Lie algebra. Namely, for $\alpha \in (1,+\infty)$, we call \textit{non-standard Heisenberg Lie algebra of exponent $\alpha$} the first Heisenberg Lie algebra equipped with the following \textit{non-standard grading of exponent} $\alpha$
\begin{equation*} 
\h_1 =W_1 \oplus W_\alpha \oplus W_{\alpha+1}
\end{equation*}
where $$ W_1:=\Span\{X_1\} ,\;\; W_\alpha:=\Span\{Y_1\},\;\; W_{\alpha+1}:=\Span\{Z\}~,$$
and where $(X_1,Y_1,Z)$ is a standard basis of $\h_1 $. Note that up to isomorphisms of graded Lie algebras (see Definition~\ref{def:morphism-graded-algebra}) and up to powers (see Example~\ref{ex:k-power}), these non-standard gradings give all the possible positive gradings of $\h_1$ that are not a stratification.
Dilations associated to the non-standard grading of exponent $\alpha$ are given by 
\begin{equation*} 
(x_1,y_1,z) \mapsto (\lambda x_1, \lambda^\alpha y_1,\lambda^{\alpha+1} z)~.
\end{equation*}
We will use the terminology \textit{non-standard Heisenberg group (of exponent $\alpha$)} when considering the first Heisenberg group as a graded group whose Lie algebra is endowed with the non-standard grading (of exponent $\alpha$).

\end{example}

\begin{example} \label{ex:graded-vs-stratified} There are examples of stratifiable Lie algebras $\g$ for which one can find a subspace $V$ in direct sum with $[\g,\g]$ but that does not generate a stratification. One can for instance consider the stratifiable Lie algebra $\g$ of step 3 generated by $e_1$, $e_2$ and $e_3$ and with the relation $[e_2,e_3]=0$. If  $V:=\Span\{e_1, e_2 + [e_1,e_2], e_3\}$, one has $\g = V \oplus [\g,\g]$ but $V$ does not generate a stratification of $\g$, see
\cite{LeDonne:Carnot}.

\end{example}

\begin{example} [Direct product of graded Lie groups] \label{ex:direct-product} The direct product of graded groups is positively graduable. If $G$ and $H$ are graded groups with associated positive grading of their Lie algebras given by $\g=\oplus_{t>0} V_t$ and $\h = \oplus_{t>0} W_t$ respectively, then $\oplus_{t>0} (V_t\oplus W_t)$ is a positive grading of the Lie algebra of $G \times H$. This can be extended to the direct product of finitely many graded groups in the obvious way. In the rest of this paper, we will always consider the direct product of graded groups as graded groups with associated positive grading given by the above mentioned grading.
\end{example}

\begin{example} [Power of graded Lie algebras and of graded groups] \label{ex:k-power}
Let $\g$ be a graded Lie algebra with associated positive grading $\g = \oplus_{s>0} V_s$ and let $t>0$ be a real positive number. Then $\g=\oplus_{s>0} W_s$ where $W_{ts}:=V_s$ is a positive grading of $\g$, which we call the \textit{$t$-power} of the initial positive grading. 

For $\lambda >0$, let $\delta_\lambda$ denote the dilation of factor $\lambda$ associated to the initial positive grading and $\tilde\delta_\lambda$ denote the dilation of factor $\lambda$ associated to the grading of its $t$-power. In exponential coordinates of the first kind associated to a basis adapted to these gradings, we have $\delta_\lambda (x_1,\dots,x_n) = (\lambda^{s_1} x_1,\dots,\lambda^{s_n} x_n)$ and $\tilde \delta_\lambda (x_1,\dots,x_n) = (\lambda^{ts_1} x_1,\dots,\lambda^{ts_n} x_n)$ for some $0<s_1 \leq \cdots \leq s_n$. 

If $G$ is a graded group, we call \textit{$t$-power of $G$} the group $G$ considered as the graded group whose Lie algebra is endowed with the $t$-power of the initial positive grading.
\end{example}

\subsection{Structure of graded algebras and graded groups} \label{subsect:structure-graded-groups}
We give in this section some results about the structure of graded algebras and groups to be used later in this paper. They may be more generally of independent interest. 

First, we consider graded groups with commuting different layers, see Definition~\ref{def:groups-commuting-layers}. Notice that for such graded groups, a layer of the positive grading may not commute with itself.

\begin{proposition} \label{prop:structure-commuting-layers}
Let $G$ be a graded group with commuting different layers. Then $G$ is the direct product of powers of stratified groups of step $\leq 2$.
\end{proposition}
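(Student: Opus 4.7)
The plan is to decompose $\g$ as a Lie-algebra direct product of graded subalgebras, each supported in at most two consecutive layers and hence of step $\leq 2$, and then to lift this decomposition to $G$ via the exponential map. The hypothesis that different layers commute forces the only possibly nonzero brackets to be of the form $[V_t, V_t] \subset V_{2t}$. In particular, the subalgebra generated by a single layer automatically satisfies $[V_t, [V_t, V_t]] \subset [V_t, V_{2t}] = \{0\}$, which is already a hint that step $\leq 2$ building blocks should appear. However, naively taking $V_t \oplus V_{2t}$ does not work because $V_{2t}$ will in general strictly contain $[V_t, V_t]$, so one must carefully extract the ``new'' part of each layer.

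Let $T := \{t > 0 : V_t \neq \{0\}\}$, which is finite. For each $t \in T$, I choose a vector-space complement $U_t$ of $[V_{t/2}, V_{t/2}]$ inside $V_t$ (with the convention $[V_{t/2}, V_{t/2}] = \{0\}$ when $t/2 \notin T$). A one-line Jacobi computation using $[V_{t/2}, V_t] = \{0\}$ shows that all brackets involving the $[V_{t/2}, V_{t/2}]$-component of $V_t$ vanish, so in fact $[V_t, V_t] = [U_t, U_t]$. Define
\[
\g_t := U_t \oplus [U_t, U_t] \subset V_t \oplus V_{2t}.
\]
From $V_t = U_t \oplus [U_{t/2}, U_{t/2}]$, summing over $t \in T$ gives the vector-space equality $\g = \bigoplus_{t \in T} \g_t$, and by the previous remark each $\g_t$ is a graded Lie subalgebra of step at most~$2$.

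The main step is to verify $[\g_s, \g_t] = \{0\}$ for $s \neq t$, which will promote the vector-space decomposition into a Lie-algebra direct product. Since $\g_s \subset V_s \oplus V_{2s}$ and $\g_t \subset V_t \oplus V_{2t}$, the commuting-layers hypothesis kills most cross-terms outright; the only cases to check are $s = 2t$ and $t = 2s$, which reduce to brackets of the two types $[[U_a, U_a], U_b]$ and $[[U_a, U_a], [U_b, U_b]]$ with $a \neq b$. Each of these is killed by a one-step Jacobi expansion that produces inner brackets lying in layers with distinct indices. I expect this to be the main technical obstacle, essentially the bookkeeping for the $s = 2t$ case where $\g_s$ and $\g_t$ both contribute to $V_{2t}$.

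Once the Lie-algebra direct product $\g = \bigoplus_{t \in T} \g_t$ is established, each factor has grading concentrated in the two degrees $t$ and $2t$, with $[\g_t, \g_t]$ either zero or equal to the degree-$2t$ part. Hence $\g_t$ is by inspection the $t$-power (in the sense of Example~\ref{ex:k-power}) of a stratified Lie algebra of step~$\leq 2$, with stratification layers of degrees $1$ and $2$. Because $G$ is nilpotent and simply connected, the exponential map is a diffeomorphism intertwining the gradings and dilations, so the Lie-algebra direct product lifts to a direct product decomposition of $G$ as graded groups, giving the claimed presentation of $G$ as a direct product of powers of stratified groups of step~$\leq 2$.
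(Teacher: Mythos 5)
Your argument is correct and amounts to the same decomposition the paper builds by induction: for each degree $t$ one chooses a complement $U_t$ of $[V_{t/2},V_{t/2}]$ in $V_t$, forms the ideal $U_t\oplus[U_t,U_t]$, and uses the Jacobi identity together with the commuting-layers hypothesis to kill all cross-brackets (the only nontrivial collisions being between degrees $2s$ and $t$ when $t=2s$, and between $2t$ and $s$ when $s=2t$, exactly as you flag). The paper peels these factors off one at a time via induction on $\dim\g$, while you exhibit the full Lie-algebra direct sum in one pass; the underlying decomposition and the key computations are identical.
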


See Example~\ref{ex:k-power} for the definition of powers of a graded group, Example~\ref{ex:direct-product} for the definition of direct product graded groups  and Definition~\ref{def:graded-stratified-groups} for the definition of stratified groups.

\begin{proof}
Let $\g$ denote the Lie algebra of $G$. Let $0<t_1<\cdots < t_m$ be such that $V_{t_k} \not= \{0\}$ for all $k=1,\dots,m$ and $V_t = \{0\}$ for all $t\not\in \{t_1,\dots,t_m\}$. We have $\g=V_{t_1} \oplus V_{t_2}\oplus \ldots \oplus V_{t_m}$. 

If $[V_{t_1},V_{t_1}] = \{0\}$ then $[V_{t_1},\g] = \{0\}$ since $[V_{t_1},V_s] = \{0\}$ for all $s\not= t_1$. It follows that $\mathfrak{h}:=V_{t_1}$ and $\mathfrak{h}':=V_{t_2} \oplus \ldots \oplus V_{t_m}$ are ideals of $\g$. Hence (see~\cite[p.388]{Pontryagin_topgrp}) $G$ is the direct product of $\exp(\mathfrak h)$ and $\exp(\mathfrak h')$. Moreover $\exp(\mathfrak{h})$ is the $t_1$-power of an Abelian stratified group. 

If $[V_{t_1},V_{t_1}] \not= \{0\}$, we set $\mathfrak h:= V_{t_1}\oplus [V_{t_1},V_{t_1}] \subseteq V_{t_1}\oplus V_{2t_1}$. Then we consider $V_{2t_1}'$ any complement of $[V_{t_1},V_{t_1}] $ in  $V_{2t_1}$, i.e.,
$[V_{t_1},V_{t_1}] \oplus V_{2t_1}' = V_{2t_1}$. We set $V_{t_1}'=\{0\}$, $V_t' = V_t$ for $t\not = t_1,2 t_1$ and $\mathfrak h':= \oplus_{t>0} V_t'$. We have 
$\g = \mathfrak h\oplus \mathfrak h'$. We prove now that $G$ is the direct product of $\exp(\mathfrak h)$ and $\exp(\mathfrak h')$. As before, to get the conclusion, we prove that both $\mathfrak h$ and $ \mathfrak h'$ are ideals of $\g$. 

To show that $\mathfrak h$ is an ideal, take $X\in \mathfrak h$ and $Y\in \g$. It is enough to consider the following 4 cases. First, assume $X\in V_{t_1}$ and $Y\in V_{t_1}$. Then $[X,Y]  \in  \mathfrak h$ by definition of $\mathfrak{h}$. Second, assume $X\in V_{t_1}$ and $Y\in V_{t}$, with $t\neq t_1$. Then $[X,Y] =0$ since $G$ has commuting different layers. Third, assume $X=[X_a, X_b]$ with 
$X_a, X_b\in V_{t_1}$ and $Y\in V_{2 t_1}$. By Jacobi's identity and since ${2 t_1}\neq t_1$ and $G$ has commuting different layers, we get
$[[X_a, X_b],Y] =[[X_a, Y], X_b] + [[Y,X_b], X_a] =0$. By bilinearity of the Lie bracket, it follows that $[X,Y]=0 \in \mathfrak{h}$ for all $X\in [V_{t_1},V_{t_1}]$ and $Y\in V_{2 t_1}$. Finally, assume $X\in [V_{t_1},V_{t_1}]$ and $Y\in V_{t}$ with $t\neq 2 t_1$. Then $[X,Y] =0$ since $G$ has commuting different layers. All together it follows that $[X,Y] \in \mathfrak{h}$ for all $X\in \mathfrak{h}$ and all $Y\in \g$ hence $\mathfrak{h}$ is an ideal.

To show that $\mathfrak h'$ is an ideal, we note that if $X\in V_t$ with $t>t_1$ and $Y\in V_s$ for some $s\geq t_1$, we have $t+s > 2t_1$ and hence $[X,Y]\in \oplus_{l>2t_1} V_l  = \oplus_{l>2t_1} V_l' \subset  \mathfrak h'$. Since $\mathfrak{h}' \subset \oplus_{t>t_1} V_t$ and by bilinearity of the Lie bracket, it follows that $[\mathfrak{h}',\g] \subset \mathfrak{h}'$ hence $\mathfrak h'$ is an ideal. It follows that $G$ is the direct product of $\exp(\mathfrak h)$ and $\exp(\mathfrak h')$. Moreover $\exp(\mathfrak h)$ is the $t_1$-power of a stratified group of step 2.

Finally, arguing by induction on the dimension of $\g$, we get the conclusion.
\end{proof}

We will next consider the case where there exist two different layers of the positive associated grading that do not commute. We first introduce the notions of morphisms of graded Lie algebras and of graded subalgebras. 

\begin{definition} [Morphism of graded Lie algebras] \label{def:morphism-graded-algebra}
Let $\g =\oplus_{t>0} V_t$ and $\h =\oplus_{t>0} W_t$ be graded Lie algebras. We say that $\phi:\g\rightarrow\h$ is a \textit{morphism (respectively isomorphism) of graded Lie algebras} if $\phi$ is a Lie algebra homomorphism (respectively isomorphism) such that $\phi(V_t) \subset W_t$ for all $t>0$.
\end{definition}

Let $\g =\oplus_{t>0} V_t$ be a graded Lie algebra with associated dilations $(\delta_\lambda)_{\lambda>0}$. We say that a Lie subalgebra $\hat{\g}$ of $\g$ is \textit{homogeneous} if $\delta_\lambda(\hat{\g}) = \hat{\g}$ for all $\lambda>0$. If $\hat{\g}$ is a homogeneous Lie subalgebra of $\g$, then $\oplus_{t>0} (V_t \cap \hat{\g})$ is a positive grading of $\hat{\g}$ called the \textit{induced positive grading} and associated dilations are the restriction of $\delta_\lambda$ to $\hat{\g}$.

\begin{definition} [Graded subalgebra] \label{def:graded-subalgebra}
Let $\g$ be a graded Lie algebra. We say that $\hat{\g}$ is a \textit{graded subalgebra of the graded algebra} $\g$ if $\hat{\g}$ is a homogeneous Lie subalgebra of $\g$ endowed with the induced positive grading.
\end{definition}

\begin{proposition} \label{prop:structure-algebra-noncommuting-layers}
Let $\g =\oplus_{t>0} V_t$ be a graded Lie algebra. Assume that $[V_t,V_s]\not = \{0\}$ for some $t<s$. Then there exist a graded subalgebra $\hat{\g}$ of $\g$ and a surjective morphism of graded Lie algebras from $\hat{\g}$ to $\h$ where $\h$ is the $t$-power of the non-standard Heisenberg Lie algebra of exponent $s/t$.
\end{proposition}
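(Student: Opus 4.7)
The plan is to build $\hat{\g}$ as the Lie subalgebra of $\g$ generated by a pair of homogeneous witnesses of the non-commutativity of $V_t$ and $V_s$, and to obtain $\h$ as the quotient of $\hat{\g}$ by the homogeneous ideal consisting of its components in degrees strictly greater than $s+t$.

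First, since $[V_t, V_s] \neq \{0\}$ there exist $X \in V_t$ and $Y \in V_s$ with $[X, Y] \neq 0$. I would let $\hat{\g}$ be the Lie subalgebra of $\g$ generated by $X$ and $Y$. Because $X$ and $Y$ are homogeneous and iterated Lie brackets of homogeneous elements are again homogeneous, $\hat{\g}$ is invariant under every dilation $\delta_\lambda$, hence is a graded subalgebra of $\g$ in the sense of Definition~\ref{def:graded-subalgebra}, with induced grading $\hat{\g} = \bigoplus_{r>0}(\hat{\g} \cap V_r)$.

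Next, I would describe $\hat{\g} \cap V_r$ in low degrees. Any iterated bracket in $X, Y$ using $a$ copies of $X$ and $b$ copies of $Y$ lies in $V_{at+bs}$, so $\hat{\g} \cap V_r$ is spanned by such brackets with $at+bs=r$. Using $0 < t < s$ together with the fact that any iterated bracket involving only $X$'s or only $Y$'s vanishes by antisymmetry, an elementary enumeration of the solutions $(a,b) \in \N \times \N$ with $a+b \geq 1$ of $at+bs=r$ yields $\hat{\g} \cap V_t = \Span\{X\}$, $\hat{\g} \cap V_s = \Span\{Y\}$, $\hat{\g} \cap V_{s+t} = \Span\{[X,Y]\}$ (nonzero by construction), and $\hat{\g} \cap V_r = \{0\}$ for every other $r \leq s+t$.

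Then I would set $\mathfrak{i} := \bigoplus_{r > s+t}(\hat{\g} \cap V_r)$. Since $[\hat{\g} \cap V_r, \hat{\g} \cap V_{r'}] \subset \hat{\g} \cap V_{r+r'}$ and $r + r' > s+t$ whenever $r > s+t$ and $r' > 0$, the space $\mathfrak{i}$ is a homogeneous Lie ideal of $\hat{\g}$. The quotient $\hat{\g}/\mathfrak{i}$ therefore inherits a positive grading whose only non-zero layers sit in degrees $t$, $s$, $s+t$, each one-dimensional and represented by the classes of $X$, $Y$, $[X,Y]$ respectively. The only non-trivial bracket relation in $\hat{\g}/\mathfrak{i}$ is the image of $[X,Y]$, so $\hat{\g}/\mathfrak{i}$ is isomorphic, as a graded Lie algebra, to the $t$-power of the non-standard Heisenberg Lie algebra of exponent $s/t$ (cf.~Example~\ref{ex:heisenberg} and Example~\ref{ex:k-power}, noting that $s/t \in (1,+\infty)$). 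Composing the canonical projection $\hat{\g} \to \hat{\g}/\mathfrak{i}$ with this isomorphism produces the desired surjective morphism of graded Lie algebras $\hat{\g} \to \h$.

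The only delicate point is the enumeration in the second step: one might a priori worry that when $s/t$ is a positive integer $\geq 2$, extra bracket monomials of degree $s$ or $s+t$ might contribute to $\hat{\g} \cap V_s$ or $\hat{\g} \cap V_{s+t}$; however, such monomials would necessarily involve only $X$'s and therefore vanish by antisymmetry of the Lie bracket. Once this bookkeeping is cleared, the construction of $\mathfrak{i}$ and the identification of the quotient with $\h$ are essentially formal.
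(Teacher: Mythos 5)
Your proof is correct and follows essentially the same route as the paper: generate $\hat{\g}$ by homogeneous elements $X\in V_t$, $Y\in V_s$ with $[X,Y]\neq 0$, observe that the induced grading of $\hat{\g}$ has one-dimensional layers $\Span\{X\}$, $\Span\{Y\}$, $\Span\{[X,Y]\}$ in degrees $t$, $s$, $s+t$ (with no other contributions below $s+t$, thanks to the vanishing of iterated brackets in a single generator), and then map onto the $t$-power non-standard Heisenberg algebra by killing everything of degree $>s+t$. The only cosmetic difference is that you package this last step as the quotient by the homogeneous ideal $\mathfrak{i}=\oplus_{r>s+t}(\hat{\g}\cap V_r)$, whereas the paper writes down the linear map $\phi$ on a basis directly and checks it is a Lie algebra homomorphism; these are the same morphism.
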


See Example~\ref{ex:heisenberg} for the definition of non-standard Heisenberg Lie algebras and Example~\ref{ex:k-power} for the definition of the $t$-power of a graded Lie algebra.

\begin{proof}
Let $X_1\in V_t$ and $X_2\in V_s$ be such that $[X_1,X_2]\not= 0$. Let $\hat{\g}$ denote the Lie subalgebra of $\g$ generated by $X_1$ and $X_2$. We have $\delta_\lambda(\hat{\g})  =\hat{\g}$ for all $\lambda >0$ where $\delta_\lambda$ are the associated dilations on $\g$. Hence $\hat{\g}$ is homogeneous. We endow it with the induced positive grading $\hat{\g} = \oplus_{u>0} \hat{V}_u$ where
$\hat{V}_u := V_u \cap \hat{\g}$ to make it a graded subalgebra of $\g$. We have
$$\hat{\g} = \hat{V}_t \oplus \hat{V}_s \oplus \hat{V}_{t+s} \oplus (\oplus_{u>t+s} \hat{V}_u)$$
with $\hat{V}_t = \Span\{X_1\}$, $\hat{V}_s = \Span\{X_2\}$ and $\hat{V}_{t+s} = \Span\{X_3\}$ where $X_3:=[X_1,X_2]$. 
Let $n:=\dim \hat{\g}$ and $(X_4,\dots,X_n)$ be a basis of $\oplus_{u>t+s} \hat{V}_u$ such that $(X_1,\dots,X_n)$ is a basis of $\hat{\g}$ adapted to its positive grading (see Definition~\ref{def:basis-adapted}).

Let $\h:= W_t \oplus W_s \oplus W_{s+t}$ be the $t$-power of the non-standard  Heisenberg Lie algebra of exponent $s/t$ (see Example~\ref{ex:heisenberg} and Example~\ref{ex:k-power}). Let $Y_1 \not = 0 \in W_t$, $Y_2 \not= 0 \in W_s$, and set $Y_3:=[Y_1,Y_2]$. Then $W_{s+t} = \Span \{Y_3\}$.

Let $\phi:\hat{g}\rightarrow \h$ be the linear map defined by $\phi(X_i):= Y_i$ for $i=1,2,3$ and $ \phi(X_i):=0$ for $i\geq 4$. It can easily be checked that $\phi$ is a Lie algebra homomorphism. It can also easily be checked that $\phi(\hat{V}_u) = W_u$ for all $u>0$. Here we set $W_u:=\{0\}$ for $u\not\in\{t,s,s+t\}$. Hence $\phi$ is a surjective morphism of graded Lie algebras.
\end{proof}

\begin{remark} \label{rmk:free-vs-morphism}
To conclude this section, let us mention the following general fact. For any graded Lie algebra $\g$, there exist a positive grading of a free-nilpotent Lie algebra $\mathfrak{f}$ and a surjective morphism $\phi:\mathfrak{f} \rightarrow \g$ of graded Lie algebras. We shall use this fact in  the simple case of stratified Lie algebras of step 2 where all what we need can be easily constructed by hand, see the proof of Theorem ~\ref{thm:BCP-stratified-step2}. We refer to~\cite[Chapter II]{Bourbaki_Lie},~\cite{Jacobson},~\cite{Serre_Lie},~\cite{Reutenauer},
~\cite{Drutu-Kapovich} for more details on the subject.
\end{remark}

\subsection{Homogeneous quasi-distances} \label{subsec:homogenous-qdist}

Given a nonempty set $X$, we say that $d:X\times X \rightarrow [0,+\infty)$ is a quasi-distance on $X$ if it is symmetric, $d(p,q) =0$ if and only if $p=q$, and there exists a constant $C\geq 1$ such that $d(p,q) \leq C(d(p,p') + d(p',q))$ for all $p$, $p'$, $q\in X$ (quasi-triangle inequality with multiplicative constant $C$). We call $(X,d)$ a quasi-metric space. When speaking of a ball $B$ in $(X,d)$, it will be understood that $B$ is a set of the form $B=B_{d}(p,r)$ for some $p\in X$ and some $r>0$ where $B_{d}(p,r) := \{q\in X;\; d(q,p)\leq r\}$. When $d$ satisfies the triangle inequality, i.e., the quasi-triangle inequality with a multiplicative constant $C=1$, then $d$ is a distance on $X$.

\begin{definition} [Homogeneous quasi-distances on graded groups]\label{def:homogeneous-dist2} 
Let $G$ be a graded group with associated dilations $(\delta_\lambda)_{\lambda>0} $.
We say that a quasi-distance $d$ on $G$ is 
 \textit{homogeneous} if $d$ is left-invariant, i.e., $d(p\cdot q,p\cdot q') = d(q,q')$ for all $p$, $q$, $q'\in G$, 
 and one-homogeneous with respect to all dilations $(\delta_\lambda)_{\lambda>0} $, i.e., $d(\delta_\lambda(p),\delta_\lambda(q)) = \lambda d(p,q)$ for all $p$, $q\in G$ and all $\lambda>0$.
\end{definition}

Note that, in this definition, we do not require any topological property, and in particular any continuity property, of a homogeneous quasi-distance with respect to the manifold topology on the group. We will discuss these topological issues below, see Proposition~\ref{prop:homogeneous-quasi-distance-topology}, Corollary~\ref{cor:continuity-homogeneous-distance}, and Proposition~\ref{prop:continuity-quasi-dist}. Let us stress that we will consider in Section~\ref{sect:nobcp} a more general class of quasi-distances, called self-similar quasi-distances in the present paper. Additional topological issues occur for self-similar quasi-distances. For the sake of clarity, we devote the present section to homogeneous quasi-distances. We postpone the discussion about topological properties of self-similar quasi-distances to Section~\ref{sect:nobcp}, see especially Section~\ref{subsect:self-similar-topology}.

Homogeneous quasi-distances on arbitrary graded groups do exist. One can for instance follow the arguments in~\cite[Chapter 1]{Folland-Stein}. Note however that our terminology is slightly different from the terminology adopted for graded groups in~\cite{Folland-Stein}. See also Example~\ref{ex:quasi-dist-graded} below for another construction of homogeneous quasi-distances on arbitrary graded groups. 

On the other hand, homogeneous distances do exist if and only if, for all $t<1$, degree-$t$ layers of the associated positive grading are $\{0\}$. These groups are called homogeneous in~\cite{Folland-Stein} and we will follow here this terminology.

\begin{definition} [Homogeneous groups]
\label{def:homogeneous-groups}
We say that $G$ is a homogeneous group if $G$ is a graded group whose associated positive grading $\oplus_{t>0} V_t$ of its Lie algebra is such that $V_t = \{0\}$ for all $t\in (0,1)$.
\end{definition}

As already mentioned, we have the following proposition.

\begin{proposition} \label{prop:homogeneous-groups}
Let $G$ be a graded group. There exists a homogeneous distance on $G$ if and only if $G$ is a homogeneous group.
\end{proposition}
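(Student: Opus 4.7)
The plan separates cleanly into the two implications, of which only the reverse is substantive; the forward direction reduces to a short quantitative obstruction.

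For the forward direction, I would suppose a homogeneous distance $d$ exists on $G$ and, aiming at a contradiction, pick $t\in(0,1)$ and $X\in V_t\setminus\{0\}$. The key point is that the one-parameter subgroup $\{\exp(sX):s\in\R\}$ furnishes the identity $\exp(X)=\exp(X/n)^n$ for every $n\in\N$. Iterating the triangle inequality $n$ times and using left-invariance of $d$ gives
\[
d(\exp(X),e)\leq n\, d(\exp(X/n),e).
\]
Because $X\in V_t$ is homogeneous of degree $t$, we have $\delta_{n^{-1/t}}(X)=n^{-1}X$, hence $\exp(X/n)=\delta_{n^{-1/t}}(\exp(X))$, so one-homogeneity of $d$ yields $d(\exp(X/n),e)=n^{-1/t}\, d(\exp(X),e)$. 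Combining,
\[
d(\exp(X),e)\leq n^{1-1/t}\, d(\exp(X),e).
\]
Since $t<1$ we have $1-1/t<0$, and letting $n\to\infty$ forces $d(\exp(X),e)=0$; hence $\exp(X)=e$ and $X=0$, contradicting the choice of $X$. Note that this argument crucially uses the genuine triangle inequality and would not go through for a quasi-distance, where the iterated bound would pick up an uncontrolled geometric factor.

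For the reverse direction, assume $V_t=\{0\}$ for every $t\in(0,1)$, so that every exponent appearing in the positive grading is at least $1$. The plan is to invoke the Hebisch--Sikora construction (already alluded to as Example~\ref{ex:hebisch-sikora-distance}): fix a basis of $\g$ adapted to the positive grading, let $B_E(0,r)\subset\g$ denote a symmetric Euclidean ball of radius $r>0$ in the corresponding coordinates, and set
\[
\rho(p):=\inf\{\lambda>0:\delta_{1/\lambda}(p)\in\exp(B_E(0,r))\}.
\]
The homogeneity $\rho(\delta_\lambda(p))=\lambda\rho(p)$ and the symmetry $\rho(p^{-1})=\rho(p)$ are immediate from the construction, and $d(p,q):=\rho(p^{-1}q)$ produces a left-invariant, one-homogeneous functional with $d(p,q)=0$ iff $p=q$. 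The substantive step is to verify, for $r$ small enough, the genuine triangle inequality $\rho(pq)\leq\rho(p)+\rho(q)$. This is exactly where the hypothesis that all degrees are $\geq 1$ enters: it guarantees that $\delta_\lambda$ acts as a linear contraction on $\g$ for every $\lambda\leq 1$, so via Baker--Campbell--Hausdorff one can check the triangle inequality on a sufficiently small Euclidean neighborhood of the identity and then propagate it to all of $G$ by dilation invariance.

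The forward implication is essentially a self-contained one-line computation; the substance of the proposition lives in the reverse implication, and specifically in the triangle-inequality verification for the Hebisch--Sikora gauge, which is the step where the hypothesis $V_t=\{0\}$ for $t<1$ is genuinely used.
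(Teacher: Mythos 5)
Your proof is correct and follows the same two-pronged route as the paper: the obstruction from a nontrivial layer $V_t$ with $t<1$ for the forward direction, and the Hebisch--Sikora construction for the converse. The only difference is that the paper states the forward implication as a bare assertion, while you supply the explicit iteration $d(\exp(X),e)\leq n\,d(\exp(X/n),e)=n^{1-1/t}d(\exp(X),e)$, which is a clean and legitimate justification of exactly the point the paper leaves implicit.
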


\begin{proof} If some degree-$t$ layer of the positive grading is non-trivial for some $t<1$, a map $d:G \times G \rightarrow [0,+\infty)$ that is left-invariant and one-homogeneous with respect to some non-trivial associated dilation cannot satisfy the triangle inequality (i.e., the quasi-triangle inequality with a multiplicative constant $C=1$). On  the other hand, W.~Hebisch and A.~Sikora proved in~\cite{Hebisch-Sikora} the existence of homogeneous distances on homogeneous groups. 
\end{proof}

Homogeneous distances considered by Hebisch and Sikora play a central role in Section~\ref{sect:BCP-commutative-layers} and we describe them below.

\begin{example} [Hebisch-Sikora's homogeneous distances on homogeneous groups]
\label{ex:hebisch-sikora-distance}
Let $G$ be a homogeneous group with identity $e$, associated positive grading of its Lie algebra given by $\g = \oplus_{t>0} V_t $ and associated dilations $(\delta_\lambda)_{\lambda >0}$. Let $n:= \dim \g$ and let $(X_1, \dots , X_n)$ be a basis of $\g$ adapted to the positive grading (see Definition~\ref{def:basis-adapted}). Using exponential coordinates of the first kind, we identify $p \in G$ with $(p_1,\dots,p_n)$ where $p = \exp (p_1 X_1 + \dots + p_n X_n)$. For $R>0$, we set $$A_R := \left\{p\in G;\; \sum_{i=1}^n p_i^2 \leq R^2\right\}$$ and $$d_R(p,q) := \inf\left\{\lambda >0;\; \delta_{1/\lambda} (p^{-1} \cdot q) \in A_R\right\}~.$$
Hebisch and Sikora proved in~\cite{Hebisch-Sikora} that there exists $R^*>0$ such that for all $0 < R < R^*$, $d_R$ defines a homogeneous distance on $G$. 
\end{example}

\begin{example} [Homogeneous quasi-distances on powers of graded groups]
\label{ex:homogeneous-qdist-kpower}
If $G$ is a graded group, $d$ a homogeneous quasi-distance on $G$ and $t\in (0,+\infty)$ then $d^{1/t}$ is a homogeneous quasi-distance on its $t$-power (see Example~\ref{ex:k-power} for the definition of powers of graded groups). Notice that when $G$ is a homogeneous group, $d$ a homogeneous distance and $t>1$, then $(G,d^{1/t})$ is a snowflake of $(G,d)$.
\end{example}

\begin{example} [Existence of homogeneous quasi-distances on graded groups]
\label{ex:quasi-dist-graded}
Let $G$ be a graded group with associated positive grading $\oplus_{s>0} V_s$ of its Lie algebra. All $t$-powers of $G$ where $t \min\{s>0;\; V_s \not = \{0\}\} \geq 1$ are homogeneous groups. If $d$ is a homogeneous distance on such a $t$-power of $G$, it follows from Example~\ref{ex:homogeneous-qdist-kpower} that $d^t$ is a homogeneous quasi-distance on $G$.
\end{example}

A quasi-distance $d$ on a set $X$ induces a topology on $X$ declaring a set $O$ to be open if and only if for all $x\in O$ there exists  $r>0$ such that $B_d(x,r)\subset O$. Equivalently a set $F$ is said to be closed if and only if for all sequences $(x_k)$ of points in $F$ such that $d(x_k,x)$ converges to $0$ for some $x\in X$, we have $x\in F$. In the following proposition, we prove that on a graded group the topology induced by a homogeneous quasi-distance and the manifold topology coincide.

\begin{proposition} \label{prop:homogeneous-quasi-distance-topology}
The topology induced by a homogeneous quasi-distance on a graded group coincides with the manifold topology of the group.
Moreover, a set is relatively compact 
if and only if it is bounded  with respect to the homogeneous quasi-distance.
\end{proposition}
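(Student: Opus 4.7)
The plan is to work in polar coordinates adapted to the dilations. I would fix an adapted basis $(X_1,\dots,X_n)$ of $\g$ (Definition~\ref{def:basis-adapted}), say with $X_j\in V_{\alpha_j}$, and use first-kind exponential coordinates to identify $p=\exp(\sum_jp_jX_j)$ with $(p_1,\dots,p_n)\in\R^n$, so that $\delta_\lambda$ acts diagonally with weight $\lambda^{\alpha_j}$ on the $j$-th coordinate. Setting $S:=\{p:\sum_jp_j^2=1\}$, which is compact in the manifold topology, a routine check (monotonicity of $\lambda\mapsto\sum_j\lambda^{2\alpha_j}s_j^2$ for fixed $s\in S$) shows that $(\lambda,s)\mapsto\delta_\lambda(s)$ is a homeomorphism from $(0,+\infty)\times S$ onto $G\setminus\{e\}$ for the manifold topology. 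In particular, a sequence $p_k=\delta_{\lambda_k}(s_k)$ converges to $e$ in the manifold topology if and only if $\lambda_k\to 0$.

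Set $\rho(p):=d(e,p)$. Left-invariance and symmetry turn the quasi-triangle inequality into $\rho(pq)\leq C(\rho(p)+\rho(q))$, and homogeneity gives $\rho(\delta_\lambda p)=\lambda\rho(p)$; on each one-parameter subgroup $\{\exp(tX_j)\}$ this specializes to $\rho(\exp(tX_j))=|t|^{1/\alpha_j}\rho(\exp(\pm X_j))$. Using second-kind exponential coordinates $\Psi(t_1,\dots,t_n):=\exp(t_1X_1)\cdots\exp(t_nX_n)$, which form a global diffeomorphism $\R^n\to G$ since $G$ is simply connected nilpotent, iterating the quasi-triangle inequality yields an estimate of the form
\begin{equation*}
\rho(\Psi(t))\leq C'\sum_{j=1}^n|t_j|^{1/\alpha_j}.
\end{equation*}
Combined with left-invariance, this will show that manifold convergence implies $d$-convergence, so the $d$-topology is coarser than the manifold topology.

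The hard part, since a homogeneous quasi-distance need not be jointly continuous, is the reverse inclusion, which I would reduce to the claim $m:=\inf_{s\in S}\rho(s)>0$. Assuming this fails, pick $s_k\in S$ with $\rho(s_k)\to 0$; by manifold-compactness of $S$, extract a subsequence with $s_k\to s_\infty\in S$. The inclusion already proved gives $d(s_k,s_\infty)\to 0$, and the quasi-triangle inequality forces $\rho(s_\infty)\leq C(\rho(s_k)+d(s_k,s_\infty))\to 0$, so $s_\infty=e\notin S$, a contradiction. With $m>0$ in hand, every $p=\delta_\lambda(s)$ satisfies $\rho(p)\geq m\lambda$; thus $d$-convergence $p_k\to e$ forces $\lambda_k\to 0$, hence manifold convergence by the polar homeomorphism, and left-invariance promotes this to all base points.

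For the compactness statement, I would use the upper bound to extract $M:=\sup_S\rho<\infty$. Then $B_d(e,R)\subseteq\{\delta_\lambda(s):\lambda\leq R/m,\ s\in S\}\cup\{e\}$ lies in a manifold-compact set, so (together with left-invariance of both topologies) every $d$-bounded set is manifold-relatively-compact. Conversely, any manifold-relatively-compact set sits inside $\{\delta_\lambda(s):\lambda\leq R'\}\cup\{e\}$ for some $R'$ by the polar homeomorphism, where $\rho\leq MR'$, so it is $d$-bounded.
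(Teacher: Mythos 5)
Your proof is correct and lands on the same essential mechanism as the paper's: continuity of $\rho=d(e,\cdot)$ at $e$ via second-kind exponential coordinates and the iterated quasi-triangle inequality, followed by a compactness argument on a sphere to get the reverse topology inclusion. Where you diverge is in packaging: you set up a genuine polar homeomorphism $(\lambda,s)\mapsto\delta_\lambda(s)$ from $(0,\infty)\times S$ onto $G\setminus\{e\}$ (with $S$ the Euclidean unit sphere in first-kind coordinates) and extract the explicit constants $m=\inf_S\rho>0$ and $M=\sup_S\rho<\infty$, from which both the topology comparison and the boundedness-vs.-relative-compactness equivalence fall out cleanly. The paper instead introduces the norm $\norm{p}=\sum_i|P_i(p)|$ in second-kind coordinates, uses the intermediate-value trick ``rescale $p_k$ so that $\norm{\delta_{\lambda_k}(p_k)}=\varepsilon$'' to hit the sphere $\{\norm{\cdot}=\varepsilon\}$, and runs a direct sequential contradiction three separate times (once for the coarse inclusion, once for the fine inclusion, once for boundedness). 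Your version is more modular---establishing $m>0$ once and reusing it---at the small cost of having to verify that the polar map really is a homeomorphism (your monotonicity observation for $\lambda\mapsto\sum_j\lambda^{2\alpha_j}s_j^2$ does this, and the continuity of the inverse deserves a word, which you correctly supply). Both arguments are valid; yours trades a few lines of set-up for a tighter logical structure downstream.
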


\begin{proof}
Let $G$ be a graded group with identity $e$, associated positive grading of its Lie algebra given by $\g = \oplus_{t>0} V_t $ and associated dilations $(\delta_\lambda)_{\lambda >0}$. Let $d$ be a homogeneous quasi-distance satisfying the quasi-triangle inequality with multiplicative constant $C$. 

To prove that the topology $\mathcal{T}_d$ induced by $d$ and the manifold topology $\mathcal{T}_m$ coincide, it is enough to show that $d(e,p)$ goes to 0 if and only if $p$ converges to $e$. Here, and in the rest of this proof, the latter convergence (and more generally the convergence of some sequence of points) means convergence with respect to the manifold topology.

First, we show that the quasi-distance $d(e,\cdot)$ from $e$ is continuous at $e$ with respect to $\mathcal{T}_m$. Since graded groups are nilpotent and simply connected, we can consider exponential coordinates of second kind with respect to a suitable choice of basis $(X_1, \ldots, X_n)$ of $\g$, see~\cite{Corwin-Greenleaf}. Namely, first for all $i=1,\ldots, n$, there exists $d_i>0$ such that $X_i\in V_{d_i}$, and consequently, $ \delta_\lambda (X_i)= \lambda^{d_i} X_i$. Second, there is a diffeomorphism $p \mapsto (P_1(p) ,  \ldots,   P_n(p) )$  from $G$ onto $\R^n$ such that for all $p\in G$, $$p= \exp (P_1(p) X_1)  \cdot \ldots \cdot \exp (P_n(p) X_n)~.$$ In particular $P_i(e)=0$ for all $i=1,\ldots, n$. Then, using the quasi-triangle inequality, the left-invariance and the homogeneity of the quasi-distance, we get
\begin{equation*}
\begin{split}
d(e,p)&= d(e,\exp (P_1(p) X_1)  \cdot\ldots\cdot \exp (P_n(p) X_n) )   \\
&\leq  \sum_{i=1}^n C^i\;  d(e, \exp (P_i(p) X_i) ) \\
&\leq \sum_{i=1}^n C^i\; d(e, \exp ( \delta_{|P_i(p)|^{1/d_i}} (\operatorname{sgn} (P_i(p))X_i) ) \\
&= \sum_{i=1}^n C^i\; | P_i(p) | ^{1/d_i}\; d(e, \operatorname{sgn} (P_i(p))\exp ( X_i) ),
\end{split}
\end{equation*}
where $\operatorname{sgn} (P_i(p))$ denotes the sign of $P_i(p)$. The last upper bound goes to 0 when $p$ converges to $e$ and this proves the claim.

Second, we show that if $(p_k)$ is a sequence for which $d(e,p_k)$ goes to 0 then $p_k$ converges to $e$. We set 
\begin{equation} \label{e:norm}
\norm{p }:= \sum_{i=1}^n | P_i(p) |~.
\end{equation}
Arguing by contradiction, up to a subsequence, there would exist $\eps>0$ such that 
$\norm{p_k} >\eps$ for all $k$. Since the map $\lambda\mapsto \norm{\delta_\lambda(q)}$ is continuous with respect to $\mathcal{T}_m$ for all $q\in G$, we get that, for all $k$, one can find $\lambda_k \in (0,1)$ such that 
$\norm{ \delta_{\lambda_k}(p_k)} =\eps$.
By compactness with respect to $\mathcal{T}_m$ of $\{p\in G;\; \norm{p}=\eps\}$, up to a subsequence,  we would get that 
$ \delta_{\lambda_k}(p_k)$ converges to some  $q\in G$ with $\norm{q}=\eps$. In particular $q\neq e$ and hence $d(e, q)>0$.
However,
\begin{eqnarray*}
0< d(e, q)&\leq & C (d(e, \delta_{\lambda_k}(p_k))  +  d(\delta_{\lambda_k}(p_k), q)) \\
&= & C (\lambda_k d(e,  p_k)  +  d(e,q^{-1}\cdot \delta_{\lambda_k} (p_k)) )\\
&\leq & C (d(e,  p_k) + d(e,q^{-1}\cdot \delta_{\lambda_k} (p_k)) )~.
\end{eqnarray*}
By continuity of $d(e,\cdot)$ at $e$ with respect to $\mathcal{T}_m$ and since $q^{-1}\cdot \delta_{\lambda_k}(p_k)$ converges to $e$, we have that $d(e,q^{-1}\cdot \delta_{\lambda_k}(p_k))$ goes to 0. Hence the last upper bound in the above inequalities goes to 0. This gives a contradiction and proves the claim. All together we get that both topologies coincide.

Relative compactness with respect to the manifold topology is equivalent to boundedness with respect to $\norm{\cdot}$. Hence, to show that relative compactness is equivalent to boundedness with respect to the quasi-distance, we show that boundedness with respect to $d$ and boundedness with respect to $\norm{\cdot}$ are equivalent. By contradiction, assume that one can find a sequence $(p_k)$ such that, for some $M>0$, $d(e,p_k)\leq M$ for all $k$, but $\norm{p_k}$ goes to $+\infty$. Arguing as above, one can find a positive sequence $(\lambda_k)$ converging to 0 such that $\norm{\delta_{\lambda_k} (p_k)} = 1$ for all $k$. On the other hand, since $d(e,\delta_{\lambda_k} (p_k)) =\lambda_k d(e,p_k) \leq M \lambda_k$, $d(e,\delta_{\lambda_k} (p_k))$ goes to 0. As shown before, this implies that $\delta_{\lambda_k} (p_k)$ converges to $e$ and gives a contradiction. One shows in a similar way that boundedness with respect to $\norm{\cdot}$ implies boundedness with respect to the quasi-distance.
\end{proof}

In the rest of this paper, when not specified, all topological properties on graded groups will be understood as defined with respect to the manifold topology, or, equivalently in view of Proposition~\ref{prop:homogeneous-quasi-distance-topology}, with respect to the topology induced by any homogeneous quasi-distance.

If $d$ is a distance on a set $X$, then $d$ is continuous on $X\times X$ with respect to the topology it induces and subsets of the form $\{y\in X;\; d(y,x) < r\}$, respectively $\{y\in X;\; d(y,x) \leq r\}$, are open, respectively closed, with respect to the topology induced by $d$. In particular, in view of Proposition~\ref{prop:homogeneous-quasi-distance-topology}, we have the following consequence.

\begin{corollary} \label{cor:continuity-homogeneous-distance}
Every homogeneous distance on a homogeneous group $G$ is continuous on $G\times G$ with respect to the manifold topology. 
\end{corollary}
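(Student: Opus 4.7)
The plan is essentially a two-line argument that combines a standard general-metric fact with Proposition~\ref{prop:homogeneous-quasi-distance-topology}. First I would recall the elementary observation that any distance $d$ on a set $X$ is automatically continuous as a map $X\times X\to [0,+\infty)$ with respect to the product topology induced by $d$ itself, since the triangle inequality yields the Lipschitz-type bound
\begin{equation*}
|d(p,q)-d(p',q')| \leq d(p,p')+d(q,q')
\end{equation*}
for all $p,p',q,q'\in X$. (This is the point where being a genuine distance, rather than merely a quasi-distance, is used; for a quasi-distance one only has an analogous estimate up to the multiplicative constant, and continuity can fail.)

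Next I would invoke Proposition~\ref{prop:homogeneous-quasi-distance-topology}: since $G$ is in particular a graded group and $d$ is a homogeneous quasi-distance on $G$, the topology on $G$ induced by $d$ coincides with the manifold topology. Consequently the product topology on $G\times G$ induced by $d$ coincides with the manifold product topology. Combining this with the previous step yields continuity of $d$ on $G\times G$ with respect to the manifold topology, which is the desired conclusion.

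There is no real obstacle here: all the work is already contained in Proposition~\ref{prop:homogeneous-quasi-distance-topology} (whose proof handles the topological subtleties caused by the potential non-continuity of quasi-distances) and in the triangle inequality, which is available precisely because $G$ is a \emph{homogeneous} group and $d$ is a genuine distance (the existence of such $d$ being guaranteed by Proposition~\ref{prop:homogeneous-groups}). Thus the corollary is immediate from the two cited facts.
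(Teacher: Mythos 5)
Your proof is correct and coincides with the paper's own argument: the paper likewise observes that a genuine distance is automatically continuous for the topology it induces (the role of the triangle inequality), and then appeals to Proposition~\ref{prop:homogeneous-quasi-distance-topology} to identify that topology with the manifold topology. Nothing to add.
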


On the contrary, we stress that, if $d$ is a quasi-distance on a set $X$, subsets of the form $\{y\in X;\; d(y,x) < r\}$, respectively $\{y\in X;\; d(y,x) \leq r\}$, may not be open, respectively closed, for the topology induced by $d$. Moreover, even when one of these properties holds, the quasi-distance may not be continuous on $X \times X$ with respect to the topology it induces, see Remark~\ref{rmk:continuity-dist} below. In particular, in view of Proposition~\ref{prop:homogeneous-quasi-distance-topology}, a homogeneous quasi-distance on a graded group $G$ may not be continuous on $G\times G$ with respect to the manifold topology. The first part of the proof of Proposition~\ref{prop:homogeneous-quasi-distance-topology} only says that the quasi-distance $d(e,\cdot)$ from $e$ is continuous at $e$ (or equivalently by left-invariance, the quasi-distance $ d(q,\cdot)$ from $q$ is continuous at $q$ for all $q\in G$). In other terms, it only says that a ball $B_d(p,r)$ contains its center $p$ in its interior. We show in the next proposition that a homogeneous quasi-distance on a graded group $G$ is continuous on $G\times G$ if and only if its spheres are closed, or equivalently its unit sphere centered at $e$ is closed . 

\begin{proposition} [Continuity of homogeneous quasi-distances]
 \label{prop:continuity-quasi-dist}
 A homogeneous quasi-distance $d$ on a graded group $G$ is continuous on $G\times G$ if and only if its unit sphere centered at $e$ is closed.
 \end{proposition}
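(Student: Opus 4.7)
The forward direction is immediate: if $d$ is continuous on $G\times G$, then $p\mapsto d(e,p)$ is continuous, so the unit sphere $S:=\{p\in G:d(e,p)=1\}$, being the preimage of the closed set $\{1\}$, is closed.

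For the converse, assume $S$ is closed. By left-invariance, $d(p,q)=d(e,p^{-1}\cdot q)$, and the group operations are continuous in the manifold topology, so it suffices to prove that $f:=d(e,\cdot)$ is continuous on $G$. Continuity at $e$ has already been established in the first part of the proof of Proposition~\ref{prop:homogeneous-quasi-distance-topology}. So fix $q\neq e$, let $r:=d(e,q)>0$, and let $p_k\to q$ in the manifold topology (equivalently, in the topology induced by $d$, by Proposition~\ref{prop:homogeneous-quasi-distance-topology}). Set $r_k:=d(e,p_k)\in(0,+\infty)$ for $k$ large enough; the goal is $r_k\to r$.

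The key observation is that, by one-homogeneity, $\delta_{1/r_k}(p_k)\in S$ whenever $r_k>0$. I first show $(r_k)$ is bounded. If $r_{k}\to+\infty$ along a subsequence, then in exponential coordinates of the second kind (as in the proof of Proposition~\ref{prop:homogeneous-quasi-distance-topology}) each coordinate of $\delta_{1/r_k}(p_k)$ is of the form $r_k^{-d_i}P_i(p_k)$, hence tends to $0$ because $p_k\to q$ keeps the $P_i(p_k)$ bounded. Therefore $\delta_{1/r_k}(p_k)\to e$ in the manifold topology; since $S$ is closed (in this topology, again by Proposition~\ref{prop:homogeneous-quasi-distance-topology}), this would force $e\in S$, contradicting $d(e,e)=0$.

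Knowing that $(r_k)$ is bounded, I extract any convergent subsequence $r_{k_j}\to r'\in[0,+\infty)$ and show $r'=r$. If $r'=0$, then $d(e,p_{k_j})\to 0$, which by Proposition~\ref{prop:homogeneous-quasi-distance-topology} forces $p_{k_j}\to e$, contradicting $p_k\to q\neq e$. If $r'>0$, then $\delta_{1/r_{k_j}}(p_{k_j})\to\delta_{1/r'}(q)$ in the manifold topology (the dilation action $(\lambda,p)\mapsto\delta_\lambda(p)$ is continuous on $(0,+\infty)\times G$); since $\delta_{1/r_{k_j}}(p_{k_j})\in S$ and $S$ is closed, we get $\delta_{1/r'}(q)\in S$, i.e.\ $d(e,q)=r'$, hence $r'=r$. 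Thus every convergent subsequence of $(r_k)$ has limit $r$, so $r_k\to r$, which completes the continuity of $f$ and hence of $d$. The only delicate step is the boundedness argument for $(r_k)$, which is where the choice of exponential coordinates of the second kind and the closedness of $S$ interact.
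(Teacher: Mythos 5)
Your proof is correct and, after the boundedness step, follows the paper's argument essentially verbatim: both extract a convergent subsequence $r_{k_j}\to r'$, rule out $r'=0$ via Proposition~\ref{prop:homogeneous-quasi-distance-topology}, and use $\delta_{1/r_{k_j}}(p_{k_j})\in S$ together with closedness of $S$ to force $r'=d(e,q)$.

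The one genuine difference is how you establish boundedness of $(r_k)$. The paper does this in one line from the quasi-triangle inequality, $\lambda_k = d(e,p_k)\le C\bigl(d(e,p)+d(p,p_k)\bigr)$, which is bounded. You instead argue by contradiction: if $r_k\to+\infty$ then $P_i(\delta_{1/r_k}(p_k))=r_k^{-d_i}P_i(p_k)\to 0$, so $\delta_{1/r_k}(p_k)\to e$, and closedness of $S$ would give $e\in S$, which is absurd. Both arguments are valid; yours is slightly longer but has the mild aesthetic advantage of deriving boundedness from the very hypothesis (closedness of $S$) you are already using, rather than invoking the quasi-triangle inequality a second time. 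Either way, the proof is sound and self-contained given the facts already proved in Proposition~\ref{prop:homogeneous-quasi-distance-topology}.
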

 
\begin{proof}
First, note that a homogeneous quasi-distance $d: G\times G \rightarrow [0,+\infty)$ is continuous on $G\times G$ if and only if the quasi-distance $ d(e,\cdot)$ from $e$ is continuous on $G$. If $d(e,\cdot)$ is continuous on $G$ then its unit sphere $S_d(e,1) := \{p\in G;\; d(e,p) = 1\}$ is obviously closed. Conversely assume that $S_d(e,1)$ is closed. We already know from the proof of Proposition~\ref{prop:homogeneous-quasi-distance-topology} that $d(e,\cdot)$ is continuous at $e$. To prove the continuity of $d(e,\cdot)$ at an arbitrary point $p$ with $p\not=e$, it is sufficient to show that for any sequence $(p_k)$ such that $d(p_k,p)$ goes to $0$, one can extract a subsequence whose quasi-distance to $e$ converges to $d(e,p)$. Set $\lambda_k:=d(e,p_k)$. We have $\lambda_k \leq C(d(e,p) + d(p,p_k))$ hence the sequence $(\lambda_k)$ is bounded. Up to a subsequence, one can thus assume that $\lambda_k$ converges to some $\lambda\geq 0$. If $\lambda = 0$, then $\lambda_k = d(e,p_k)$ goes to $0$, hence $p_k$ converges to $e$ (see Proposition~\ref{prop:homogeneous-quasi-distance-topology}) and we would have $p=e$. Since we are considering $p\not = e$, we thus have $\lambda >0$. Then $\norm{\delta_{1/\lambda_k} (p_k) - \delta_{1/\lambda} (p)}$ goes to 0 (remember~\eqref{e:norm} for the definition of $\norm{\cdot }$), i.e., $\delta_{1/\lambda_k} (p_k)$ converges to $\delta_{1/\lambda} (p)$. Since $\delta_{1/\lambda_k} (p_k) \in S_d(e,1)$ and $S_d(e,1)$ is closed we get that  $\delta_{1/\lambda} (p) \in S_d(e,1)$. Hence $\lambda = d(e,p)$ which concludes the proof.
\end{proof}

\begin{remark} \label{rmk:continuity-dist} In view of Proposition~\ref{prop:continuity-quasi-dist}, it is easy to construct examples of homogeneous quasi-distances that are not continuous. In such a case, the unit ball centered at the identity may or may not be closed. For instance, in $\R^2$ equipped with its trivial abelian stratification, consider the homogeneous quasi-distance $d_1$ whose unit ball centered at the origin is the union of the Euclidean closed unit disk centered at the origin with the interval $[-2,2]\times \{0\}$ (see Example~\ref{ex:qdist-unitball} below for a characterization of homogeneous quasi-distances on graded groups in terms of their unit ball). Its unit sphere at the origin is the union of two points $\{(-2,0), (2,0)\}$ with the set $\{(x,y)\in \R^2;\; x^2 +y^2 = 1\} \setminus \{(1,0),(-1,0)\}$, which is not closed. In this example, the unit ball (and hence any ball) is (are) closed and sets of the form $\{p\in\R^2;\; d_1(0,p) < r\}$ are not open. One can also consider the homogeneous quasi-distance $d_2$ whose unit ball centered at the origin is the Euclidean closed unit disk centered at the origin minus the segments $[-1,-1/2) \cup (1/2,1]$, which is not closed. Its unit sphere at the origin is the union of two points $\{(-1/2,0), (1/2,0)\}$ with the set $\{(x,y)\in \R^2;\; x^2 +y^2 = 1\} \setminus \{(1,0),(-1,0)\}$, which is not closed as well. However sets of the form $\{p\in\R^2;\; d_2(0,p) < r\}$ are open. In both examples, the quasi-distance from the origin is not continuous at points $p\in \R^{*} \times \{0\}$.
\end{remark}

We conclude this section with a characterization of homogeneous quasi-distances by means of their unit ball. Together with Proposition~\ref{prop:continuity-quasi-dist}, we get the existence of continuous quasi-distances on graded groups. Finally, we also recall a characterization of homogeneous distances by means of their unit ball. 

\begin{example} [Characterization of homogeneous quasi-distances by means of their unit ball] 
\label{ex:qdist-unitball}
Let $G$ be a graded group with identity $e$ and with associated dilations $(\delta_\lambda)_{\lambda>0} $. Let $d$ be a homogeneous quasi-distance on $G$. By Proposition~\ref{prop:homogeneous-quasi-distance-topology}, $e$ belongs to the interior of $B_d(e,1)$ and $B_d(e,1)$ is relatively compact. By left-invariance, $B_d(e,1)$ is symmetric, i.e., $p\in B_d(e,1)$ implies $p^{-1} \in B_d(e,1)$. Finally, it follows from the homogeneity of $d$ that, for all $p\in G$, the set $\{\lambda>0 ;\; \delta_{1/\lambda}(p) \in B_d(e,1)\}$ is a closed subinterval of $(0,+\infty)$ (for the relative topology on $(0,+\infty)$).

Conversely, assume that $K$ is a subset of $G$ that contains $e$ in its interior, $K$ is relatively compact, symmetric, and such that the set $\{\lambda>0 ;\; \delta_{1/\lambda}(p) \in K\}$ is a closed subinterval of $(0,+\infty)$ for all $p\in G$. Then $$d(p,q) := \inf\left\{\lambda >0;\; \delta_{1/\lambda}(p^{-1}\cdot q) \in K\right\}$$ defines a homogeneous quasi-distance on $G$. It is the homogeneous quasi-distance whose unit ball centered at $e$ is the set $K$.

For the sake of completeness, we give below a detailed proof of this claim. Although the general scheme of the proof is a classical one, we stress that some of the arguments use the topological properties proved in Proposition~\ref{prop:homogeneous-quasi-distance-topology}. For $p\in G$, we set
$$I(p):=\left\{\lambda >0; \; \delta_{1/\lambda}(p) \in K\right\}$$
and
 $$\rho(p) :=  \inf\left\{\lambda >0; \; \delta_{1/\lambda}(p) \in K\right\}~.$$
Note that since $K$ contains $e$ in its interior and $\delta_{1/\lambda}(p)$ converges to $e$ when $\lambda$ goes to $+\infty$, we have $I(p)\not = \emptyset$ and $\rho(p) <+\infty$ for all $p\in G$.
Obviously, one has $\rho(e)=0$. Conversely let $p\in G$ with $p\not = e$. Then $\norm{\delta_{1/\lambda}(p)}$ goes to $+\infty$ when $\lambda$ goes to 0 (remember~\eqref{e:norm} for the definition of $\norm{\cdot }$). Since $K$ is relatively compact, and hence bounded with respect to $\norm{\cdot}$, it follows that $\delta_{1/\lambda}(p) \not\in K$ for all $\lambda >0$ small enough. Hence $\rho(p) \not= 0$ and consequently, we get that 
\begin{equation} \label{e:qd1}
\rho(p) = 0 \quad \text{if and only if} \quad p=e~.
\end{equation}
Next, since $K$ is symmetric, we have $I(p) = I(p^{-1})$. Hence
\begin{equation} \label{e:qd2}
\rho(p) = \rho(p^{-1})~.
\end{equation}
Third, since $\delta_{\lambda}\circ\delta_{\eta}=\delta_{\lambda\eta}$ for all $\lambda, \eta >0$, one has $I(\delta_{\lambda}(p)) = \lambda I(p)$ for all $p\in G$ and all $\lambda >0$. Hence
\begin{equation} \label{e:qd3}
\rho(\delta_{\lambda}(p)) = \lambda \rho(p)~.
\end{equation}
Finally, the fact $\rho$ satisfies the quasi-triangle inequality, i.e., there exists some constant $C>0$ such that 
\begin{equation} \label{e:qd4}
\rho(p\cdot q ) \leq C \, (\rho(p) + \rho(q))
\end{equation}
for all $p$, $q\in G$, follows from the fact that $\rho$ is bi-Lipschitz equivalent to any homogeneous quasi-norm on $G$. Indeed let $d_0$ be a homogeneous quasi-distance on $G$ (remember that homogeneous quasi-distances on graded groups do exist, see Example~\ref{ex:quasi-dist-graded}) and set $\rho_0(p):=d_0(e,p)$. Since the topology induced by $d_0$ and the manifold topology coincide (see Proposition~\ref{prop:homogeneous-quasi-distance-topology}) and since $e$ belongs to the interior of $K$, there exists $\eta >0$, such that $p\in K$ as soon as $\rho_0(p) < \eta$. By homogeneity, it follows that $2\rho_0(p)/\eta \in I(p)$ for all $p\not= e$. Hence $\rho(p)\leq 2 \rho_0(p)/\eta$ for all $p\in G$. On the other hand, since $K$ is relatively compact, it is bounded with respect to $\rho_0$ by Proposition~\ref{prop:homogeneous-quasi-distance-topology}. Hence one can find $M>0$ such that $\rho_0(p) \leq M$ for all $p\in K$. It follows that for $p\not = e$, $\delta_{2M/\rho_0(p)}(p) \not\in K$ and hence $\rho_0(p) / 2M \not \in I(p)$. By assumption, $I(p)$ is closed subinterval of $(0,+\infty)$. Moreover, since $K$ contains $e$ in its interior and  $\delta_{1/\lambda}(p)$ converges to $e$ when $\lambda$ goes to $+\infty$, $I(p)$ is an unbounded closed subinterval of $(0,+\infty)$, i.e.,
$$I(p) = [\rho(p),+\infty)$$
for all $p\in G$ with $p\not=e$. It follows that $\rho_0(p) / 2M  \leq \rho(p)$ for all $p\in G$.  All together we get that one can find a constant $L>0$ such that 
$$L^{-1} \rho_0(p) \leq \rho(p) \leq L\; \rho_0(p)$$
for all $p\in G$. Then the fact that $\rho_0$ satisfies the quasi-triangle inequality implies that $\rho$ satisfies the quasi-triangle inequality as well. This proves \eqref{e:qd4}.

All together \eqref{e:qd1}, \eqref{e:qd2}, \eqref{e:qd3} and \eqref{e:qd4} imply that $d$ is a homogeneous quasi-distance on $G$. Finally, one has $\rho(p)\leq 1$ if and only if $1\in I(p)$, i.e., $p\in K$, hence $K=B_d(e,1)$.
\end{example}

\begin{example} [Existence of continuous homogeneous quasi-distances on graded groups] \label{ex:qdist-hebsich_sikora}
The characterization given in Example~\ref{ex:qdist-unitball} together with Proposition~\ref{prop:continuity-quasi-dist} gives an effective way to construct continuous homogeneous quasi-distances on arbitrary graded groups. In particular, one can extend Hebisch and Sikora's construction of Example~\ref{ex:hebisch-sikora-distance}. Namely, following the notations of Example~\ref{ex:hebisch-sikora-distance}, we get that, for all $R>0$, $d_R$ induces a homogeneous quasi-distance on an arbitrary graded group. It is the homogeneous quasi-distance whose unit ball centered at the identity is a Euclidean ball of radius $R$, using exponential coordinates of the first kind relative to some basis of the Lie algebra adapted to the positive grading. Moreover, its unit sphere centered at the identity is a Euclidean sphere of radius $R$ hence is closed. It follows that $d_R$ is continuous.
\end{example}

\begin{example} [Characterization of homogeneous distances on homogeneous groups]
\label{ex:dist-unitball}
We recall here a characterization, already contained in a slightly different form in~\cite{Hebisch-Sikora}, of homogeneous distances on homogeneous groups in terms of their unit ball. Let $G$ be a homogeneous group with associated dilations $(\delta_\lambda)_{\lambda>0} $ and with identity element $e$. If $d$ is a homogeneous distance on $G$, then $e$ belongs to the interior of $B_d(e,1)$, $B_d(e,1)$ is  compact and symmetric. Since $d$ satisfies the quasi-triangle inequality with a multiplicative constant $C=1$, we have $\delta_\lambda (p) \cdot \delta_{1-\lambda} (q) \in B_d(e,1)$ for all $p$, $q \in B_d(e,1)$ and all $\lambda \in [0,1]$.

Conversely, assume that $K$ is a subset of $G$ that contains $e$ in its interior, $K$ is compact, symmetric and such that $\delta_\lambda (p) \cdot \delta_{1-\lambda} (q) \in K$ for all $p$, $q \in K$ and all $\lambda \in [0,1]$. Then $$d(p,q) := \inf\left\{\lambda >0;\; \delta_{1/\lambda}(p^{-1}\cdot q) \in K\right\}$$ defines a homogeneous distance on $G$. It is the homogeneous distance whose unit ball centered at $e$ is the set $K$. We refer to~\cite{Hebisch-Sikora} for the proof of the fact that $d$ satisfies the quasi-triangle inequality with a multiplicative constant $C=1$ and to Example~\ref{ex:qdist-unitball} for all other properties that must be satisfied by a homogeneous distance.
\end{example}

\section{Besicovitch Covering Property}  \label{sect:bcp-vs-wbcp}

\subsection{BCP and WBCP}

Recall from the introduction the definition of the Besicovitch Covering Property in the general quasi-metric setting. See Section~\ref{subsec:homogenous-qdist} for the definition and our conventions about quasi-metric spaces. 

\begin{definition}[Besicovitch Covering Property] \label{def:bcp}
Let $(X,d)$ be a quasi-metric space. We say that $(X,d)$ satisfies the Besicovitch Covering Property (BCP) if there exists a constant $N\geq 1$ such that the following holds. Let $A$ be a bounded subset of $X$ and $\mathcal{B}$ be a family of balls such that each point of $A$ is the center of some ball of $\mathcal{B}$, then there is a finite or countable subfamily $\mathcal{F}\subset \mathcal{B}$ such that the balls in $\mathcal{F}$ cover $A$, and every point in $X$ belongs to at most $N$ balls in $\mathcal{F}$, that is, 
\begin{equation*}
\carset_A \leq \sum_{B \in \mathcal{F}} \carset_B \leq N,
\end{equation*}
where $\carset_A$ denotes the characteristic function of the set $A$.
\end{definition}

The Besicovitch Covering Property originates from works of Besicovitch in connection with the theory of differentiation of measures in Euclidean spaces (\cite{Besicovitch_1}, \cite{Besicovitch_2}, see also Section~\ref{sect:differentiation-measures}). Finite dimensional normed vector spaces satisfy BCP (see~\cite[Chapter 2.8]{Federer}) whereas infinite dimensional normed vector spaces do not satisfy BCP.

Definition~\ref{def:bcp} for BCP is a common and classical one, even though one can find various variants in the literature. In the Euclidean setting, these variants are equivalent. One of them, called in the present paper the Weak Besicovitch Covering Property (WBCP), see Definition~\ref{def:wbcp}, turns out to be equivalent to BCP in our setting of graded groups equipped with homogeneous quasi-distances, and more generally for doubling quasi-metric spaces, see Proposition~\ref{prop:BCP-WBCP-Doubling}. For our purposes, WBCP is actually technically more convenient to work with. 

For the sake of completeness, we discuss in more details in the rest of this section  the relationships between BCP and WBCP, first pointing out that BCP and WBCP may happen to be non equivalent for general quasi-metric spaces, see Example~\ref{ex:wbcp-but-nobcp}. This might be of independent interest, and, to our knowledge, cannot be found explicitly written  in the literature. We will next prove that for doubling quasi-metric spaces, and hence for graded groups equipped with homogeneous quasi-distances, BCP and WBCP are equivalent. We first introduce some convenient terminology.

\begin{definition}[Family of Besicovitch balls]\label{def:BesicovitchBalls}
Let $(X,d)$ be a quasi-metric space. We say that a family $\mathcal{B}:=\{B=B_d(x_B,r_B)\}$ of balls in $(X,d)$ is a {\em  family of Besicovitch balls} if $\mathcal{B}$ is a finite family of balls such that, for all $B$, $B'\in \mathcal{B}$ with $B\not=B'$, one has $x_B \not \in B'$, and for which $\bigcap_{B\in \mathcal{B}} B \not= \emptyset$.
\end{definition} 

\begin{definition}[Weak BCP] \label{def:wbcp}
Let $(X,d)$ be a quasi-metric space. We say that $(X,d)$ satisfies the Weak Besicovitch Covering Property (WBCP) if there exists a constant $Q\geq 1$ such that $\card \mathcal{B} \leq Q$ for every family $\mathcal{B}$ of Besicovitch balls in $(X,d)$.
\end{definition}

If $(X,d)$ satisfies BCP, then $(X,d)$ satisfies WBCP. One can indeed take $Q=N$ where $N$ is given by Definition \ref{def:bcp}. Conversely, as already mentioned, WBCP is in general strictly weaker than BCP, as the following example shows.

\begin{example} \label{ex:wbcp-but-nobcp} Here is an example of a metric space that does not satisfy BCP and for which $\card \mathcal{B} =1$ for every family $\mathcal{B}$ of Besicovitch balls. Let $X=\{x_1,x_2,\dots\}$ be a countable set of points. Let us define $d:X\times X \rightarrow [0,+\infty)$ as follows. We set $d(x_i,x_i)=0$ for all $i\geq 1$ and $$d(x_i,x_j) = 1 - \frac{1}{\max(i,j)} \quad \text{for } i\not=j\,.$$

We first check that $d$ defines a distance on $X$. The fact that $d(x,y)=0$ if and only if $x=y$ and $d(x,y)=d(y,x)$ are obvious from the definition. To prove the triangle inequality, let $j<i$ and $k\geq 1$ be fixed. If $k<i$, we have
$$d(x_i,x_j)  = 1-\frac{1}{i} \leq 1-\frac{1}{i} + 1-\frac{1}{\max(j,k)} = d(x_i,x_k) + d(x_k,x_j) \,.$$

\noindent If $i<k$, then $k\geq 3$, hence $i/(i+1) \leq 1 \leq k/2$ and so $1-1/i \leq 2(1-1/k)$. It follows that $$d(x_i,x_j) = 1-\frac{1}{i} \leq 2\left(1-\frac{1}{k}\right) = d(x_i,x_k) + d(x_k,x_j) \,.$$

\noindent Hence $d$ satisfies the triangle inequality.

We claim that BCP does not hold in $(X,d)$. Indeed, set $$r_i:=1-\frac{1}{i} \quad \text{for } i=1,2,\dots$$
and let us consider $A:=\{x_i;\; i\geq 2\}$ and the family $\mathcal{B}:=\{B_d(x_i,r_i);\, i \geq 2\}$. Since $d(x_j,x_i) = r_i$ for all $j<i$ and $d(x_j,x_i)=r_j>r_i$ for $j>i$, we have
$$B_d(x_i,r_i) = \{x_1,\dots,x_i\}\quad \text{for } i=2,3,\dots .$$
It follows that for any subfamily $\mathcal{F}\subset \mathcal{B}$ whose balls cover the set $A$, we have 
$$\sup\{i\geq 2;\; B_d(x_i,r_i) \in  \mathcal{F}\} = +\infty~,$$
that is, $\card{\mathcal{F}} = +\infty$. On the other hand, $x_1 \in \cap_{B\in \mathcal{F}} B$. In particular $x_1$ belongs to infinitely many balls in $\mathcal{F}$ which shows that $(X,d)$ does not satisfy BCP. 

Let us now check that $\card \mathcal{B} =1$ for every family $\mathcal{B}$ of Besicovitch balls and hence $(X,d)$ satisfies WBCP. By contradiction, assume that $\{B_d(x_{i_l},\rho_{i_l})\}_{l = 1}^k$ is a family of Besicovitch balls with $k\geq 2$. Assume with no loss of generality that $i_1<i_2<\cdots<i_k$. We have $d(x_{i_j},x_{i_k}) =r_{i_k}$ and $x_{i_j}  \not\in B_d(x_{i_k},\rho_{i_k})$ for all $j=1,\dots,k-1$, so $r_{i_k} >\rho_{i_k} $. It follows that $d(x_{i_k},x_l) = \max(r_{i_k},r_l) \geq r_{i_k}>\rho_{i_k}$  for all $l\not = i_k$ and thus $$B_d(x_{i_k},\rho_{i_k}) = \{x_{i_k}\} \subset X \setminus \bigcup_{j=1}^{k-1} B(x_{i_j},\rho_{i_j})$$
which contradicts the fact that $\cap_{l = 1}^k B(x_{i_l},\rho_{i_l}) \not= \emptyset$.
\end{example}

\begin{remark}\label{rmk:WBCP:cover} If $(X,d)$ satisfies WBCP then it satisfies a weak form of BCP that can be stated as follows. There is a constant $N\geq 1$ such that the following holds. Let $A$ be a bounded subset of $X$. Let $\mathcal{B}$ be a family of balls such that each point of $A$ is the center of some ball of $\mathcal{B}$ and such that either $\sup\{r_B;\; B\in \mathcal{B}\}=+\infty$ or $B\in \mathcal{B} \mapsto r_B$ attains only an isolated set of values in $(0,+\infty)$. Then there is a finite or countable subfamily $\mathcal{F}\subset \mathcal{B}$ such that the balls in $\mathcal{F}$ cover $A$, and every point in $X$ belongs to at most $N$ balls in $\mathcal{F}$ (see~\cite{Preiss83}). Note that in Example~\ref{ex:wbcp-but-nobcp}, the number 1 is an accumulation point of the set $\{r_i;\; i \geq 2\}$ in $(0,+\infty)$.
\end{remark}

As already mentioned, for doubling quasi-metric spaces, BCP and WBCP are equivalent. Let us recall the definition of doubling quasi-metric spaces.

\begin{definition}[Doubling quasi-metric space]
A quasi-metric space $(X,d)$ is said to be doubling if there is a constant $C\geq 1$ such that for each $r>0$, each ball in $(X,d)$ with radius $2r$ can be covered by a family of at most $C$ balls of radius $r$.
\end{definition}

\begin{proposition} \label{prop:BCP-WBCP-Doubling}
Let $(X,d)$ be a doubling quasi-metric space. Then $(X,d)$ satisfies BCP if and only if $(X,d)$ satisfies WBCP.
\end{proposition}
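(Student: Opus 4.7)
The direction BCP $\Rightarrow$ WBCP is immediate with $Q=N$. Given a family $\mathcal{B}=\{B_1,\dots,B_k\}$ of Besicovitch balls with common point $p\in\bigcap_i B_i$, apply BCP to the bounded set $A=\{x_{B_1},\dots,x_{B_k}\}$, covered by $\mathcal{B}$ with each element of $A$ a center. By the defining property of a Besicovitch family, each $x_{B_i}$ is contained in $B_j$ only when $j=i$, so any BCP-subcover of $A$ must equal $\mathcal{B}$ in full; the common point $p$ then forces $k\leq N$.

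For the converse, assume $(X,d)$ is doubling with constant $D$ and satisfies WBCP with constant $Q$. Given a bounded set $A\subset X$ and a family $\mathcal{B}$ with each $x\in A$ the center of some $B_x\in\mathcal{B}$, the plan is to well-order $\mathcal{B}$ by decreasing radius and run the classical greedy Besicovitch-Federer selection: include $B\in\mathcal{B}$ in $\mathcal{F}$ if and only if its center $x_B$ lies outside every previously selected ball. Then $\mathcal{F}$ covers $A$, since for any $x\in A$ either $B_x\in\mathcal{F}$ or $x=x_{B_x}$ was already inside a selected ball at the moment $B_x$ was examined. The construction enforces the following one-sided separation: whenever $B,B'\in\mathcal{F}$ with $r_B\geq r_{B'}$ and $B$ selected earlier than $B'$, one has $x_{B'}\notin B$, that is $d(x_B,x_{B'})>r_B$.

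To bound the multiplicity, I would fix $y\in X$ and list the balls of $\mathcal{F}$ containing $y$ in order of selection as $B_1,\dots,B_m$ with $r_1\geq\cdots\geq r_m$, and partition the indices into dyadic scales $I_k:=\{i:r_12^{-k-1}<r_i\leq r_12^{-k}\}$ for $k\geq 0$. Within a single $I_k$, the one-sided separation forces the centers $\{x_i:i\in I_k\}$ to be pairwise $r_12^{-k-1}$-separated, while the quasi-triangle inequality places all of them in a quasi-ball of radius at most $Cr_12^{-k}$ around $y$; the doubling property then yields $|I_k|\leq D'$ for a constant $D'$ depending only on $D$ and the quasi-triangle constant $C$. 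For the cross-scale control, pick one representative $B_{i_k}$ per non-empty $I_k$: if $k<k'$ then $r_{i_k}>r_12^{-k-1}\geq r_12^{-k'}\geq r_{i_{k'}}$, so $i_k<i_{k'}$ in the selection order, giving $x_{i_{k'}}\notin B_{i_k}$, and in turn $d(x_{i_k},x_{i_{k'}})>r_{i_k}>r_{i_{k'}}$ forces $x_{i_k}\notin B_{i_{k'}}$. Hence $\{B_{i_k}\}$ is a family of Besicovitch balls with common point $y$ in the sense of Definition~\ref{def:BesicovitchBalls}, so WBCP bounds the number of non-empty scales by $Q$, and altogether $m\leq QD'$, establishing BCP with constant $N=QD'$.

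The delicate points I expect to require care are two: first, the greedy selection must be carried out as a transfinite recursion when the radii accumulate at zero, though separability (which follows from doubling) ensures that the resulting $\mathcal{F}$ is countable; second, the cross-scale extraction of a genuine Besicovitch family from the merely one-sided greedy condition is the crux of the argument, and is what makes WBCP (rather than some weaker one-sided variant) the right hypothesis to combine with doubling.
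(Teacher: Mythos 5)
Your overall strategy is the same as the paper's: BCP $\Rightarrow$ WBCP is immediate; for the converse, one runs a greedy selection, bounds the within-scale multiplicity by doubling, and bounds the number of scales by WBCP. However, the implementation as written has a genuine technical gap at the step that makes the selection order compatible with radius.

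You ask to ``well-order $\mathcal{B}$ by decreasing radius and run the classical greedy selection.'' This is not achievable in general: the set of radii need not be well-ordered in the reverse direction (e.g., radii $1+1/n$ have no maximum once the largest few are removed), so there is no well-ordering of $\mathcal{B}$ in which larger radii always come first. You gesture at ``transfinite recursion,'' but that does not remove the obstruction; what is actually needed is a threshold rule, and this is precisely what the paper's proof does. The paper sets $M_1 := \sup r(x)$, accepts any remaining ball with radius $\geq M_1/2$ whose center is outside all previously selected balls until no such ball remains, then resets $M_2 := \sup$ of the remaining radii, and so on. This produces blocks $I_j$ in which radii lie in $[M_j/2,M_j]$, together with the key two-sided separation property (the paper's \eqref{e:p3}): centers of balls in distinct blocks do not lie in each other's balls.

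This matters for your cross-scale step. You pick one representative $B_{i_k}$ from each non-empty a posteriori dyadic scale $I_k$ around $y$ and deduce $x_{i_k}\notin B_{i_{k'}}$ and $x_{i_{k'}}\notin B_{i_k}$ from ``$r_{i_k}>r_{i_{k'}}$ implies $i_k<i_{k'}$ in the selection order.'' That implication is exactly what fails without a literal decreasing-radius order: with a threshold selection, two balls in adjacent dyadic scales $I_k, I_{k+1}$ can land in the same threshold block, inside which the selection order is arbitrary; then only one-sided separation is guaranteed between them, so $\{B_{i_k}\}$ need not be a Besicovitch family, and WBCP cannot be applied to it directly. Two standard repairs exist, both essentially what the paper does: (i) apply WBCP to the threshold blocks themselves (where \eqref{e:p3} gives two-sided separation), or (ii) pick representatives only from every other dyadic scale so that any two have radius ratio $>2$ and hence lie in distinct threshold blocks, at the cost of a factor $2$ in the final constant. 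With either fix the argument closes; without it, the cross-scale count is not justified.
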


As a classical fact, graded groups equipped with homogeneous quasi-distances are doubling. The next corollary hence follows.

\begin{corollary} \label{cor:bcp-wbcp-homogeneous-qdist}
Let $G$ be a graded group and let $d$ be a homogeneous quasi-distance on $G$. Then $(G,d)$ satisfies BCP if and only if $(G,d)$ satisfies WBCP.
\end{corollary}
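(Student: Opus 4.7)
The plan is to verify that the hypothesis of Proposition~\ref{prop:BCP-WBCP-Doubling} holds, namely that $(G,d)$ is doubling, after which the corollary is immediate. The text already flags doubling as a classical fact, so the only real task is to spell out why, using the structural features of graded groups and the topological results established earlier.

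To establish doubling, I would exploit the two symmetries of the space. First, left-translations are isometries of $(G,d)$, so it suffices to show that every ball of the form $B_d(e,2r)$ can be covered by a bounded number of balls of radius $r$, where $e$ is the identity. Second, for any $\lambda>0$ the dilation $\delta_\lambda$ sends $B_d(q,s)$ onto $B_d(\delta_\lambda(q),\lambda s)$. Consequently, producing a covering of $B_d(e,2r)$ by $N$ balls of radius $r$ reduces, upon applying $\delta_{2r}$ to a corresponding covering of $B_d(e,1)$, to finding a \emph{single} finite covering of $B_d(e,1)$ by balls of radius $1/2$, with the same $N$ for every $r>0$.

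The existence of such a finite covering is a compactness argument based on Proposition~\ref{prop:homogeneous-quasi-distance-topology}. That proposition gives two facts I need: the quasi-metric topology on $G$ coincides with the manifold topology (so $\overline{B_d(e,1)}$ is compact), and for each $q\in G$ the quasi-distance $d(q,\cdot)$ is continuous at $q$, which implies that $q$ belongs to the interior of $B_d(q,1/2)$. Therefore the family $\{\operatorname{int}(B_d(q,1/2)):q\in\overline{B_d(e,1)}\}$ is an open cover of the compact set $\overline{B_d(e,1)}$, from which a finite subcover of size $C$ can be extracted.

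Combining left-invariance, dilation equivariance, and this single covering of $B_d(e,1)$, every ball $B_d(p,2r)$ in $(G,d)$ can be covered by $C$ balls of radius $r$ with a doubling constant $C$ that is independent of both $p$ and $r$. Proposition~\ref{prop:BCP-WBCP-Doubling} then yields the equivalence of BCP and WBCP on $(G,d)$. I do not expect any serious obstacle; the only subtlety is that balls in a quasi-metric space need not be open, but Proposition~\ref{prop:homogeneous-quasi-distance-topology} resolves this uniformly by placing each center in the interior of its ball.
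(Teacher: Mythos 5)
Your proposal is correct and follows exactly the route the paper takes: invoke Proposition~\ref{prop:BCP-WBCP-Doubling} after observing that a graded group with a homogeneous quasi-distance is doubling. The paper dispatches the doubling property as a classical fact, whereas you supply the (sound) argument via left-invariance, dilation equivariance, and the compactness of $\overline{B_d(e,1)}$ from Proposition~\ref{prop:homogeneous-quasi-distance-topology}, so your write-up is simply a more self-contained version of the same proof.
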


For the sake of completeness, we give below a proof of Proposition~\ref{prop:BCP-WBCP-Doubling}. This proof follows closely the arguments of the proof of Theorem 2.7 in \cite{mattila} about the validity of BCP in Euclidean spaces, using the following well-known property of doubling quasi-metric spaces (see e.g.~\cite{LuukkainenSaksman} for more details about doubling (quasi-)metric spaces).

\begin{remark} \label{rmk:doubling}
If $(X,d)$ be a doubling quasi-metric space, then there are constants $c\geq 1$ and $s\geq 0$ such that if $x\in X$, $r>0$ and $\lambda \geq 1$, the cardinality of every set in $B_d(x,\lambda r)$ whose points are at least $r$ apart is at most $c\lambda^s$.
\end{remark}

\begin{proof}[Proof of Proposition \ref{prop:BCP-WBCP-Doubling}]
Since any quasi-metric space satisfying BCP also satisfies WBCP, we only need to prove that $(X,d)$ satisfies BCP when $(X,d)$ is a doubling quasi-metric space satisfying WBCP. 

Let $A$ be a bounded subset of $X$ and $\mathcal{B}$ be a family of balls such that each point of $A$ is the center of some ball of $\mathcal{B}$. For each $x\in A$ choose one ball $B_d(x,r(x))$ in $\mathcal{B}$. As $A$ is bounded, we claim that we may assume that
$$M_1:=\sup_{x\in A} r(x) < +\infty~.$$
Indeed, otherwise pick some point $x$ in $A$ with $r(x) \geq \diam A$. Then $\mathcal{F}:=\{B_d(x,r(x))\}$ is obviously a subfamily of $\mathcal{B}$ which shows that BCP holds in $(X,d)$.

Choose $x_1\in A$ with $r(x_1) \geq M_1/2$
and then inductively 
$$x_{j+1} \in A \setminus \bigcup_{i=1}^j B_d(x_i,r(x_i)) \quad \text{with } r(x_{j+1}) \geq M_1/2$$
as long as possible. Since $A$ is bounded and points $x_i$'s are at least $M_1/2$ apart, it follows from Remark \ref{rmk:doubling} that the process terminates and we get a finite sequence $x_1,\dots,x_{k_1}$.

Let $$M_2:=\sup \{r(x);\; x\in A \setminus \bigcup_{i=1}^{k_1} B_d(x_i,r(x_i))\}~,$$
and choose $$x_{k_1+1} \in A \setminus \bigcup_{i=1}^{k_1} B_d(x_i,r(x_i)) \quad \text{with } r(x_{k_1+1}) \geq M_2/2$$
and again inductively 
$$x_{j+1} \in A \setminus \bigcup_{i=1}^j B_d(x_i,r(x_i)) \quad \text{with } r(x_{j+1}) \geq M_2/2$$
as long as possible. 

Continuing this process we get a finite or infinite increasing sequence of integers $0=k_0 < k_1 < k_2 < \cdots$, a decreasing sequence of positive numbers $M_i$ with $2M_{i+1} \leq M_i$, and a sequence of balls $B_i := B_d(x_i,r(x_i)) \in \mathcal{B}$ with the following properties. If $I_j:=\{k_{j-1}+1,\dots,k_j\}$ for $j=1,2,\dots$, then 
\begin{align}
&M_j/2 \leq r(x_i) \leq M_j &&\text{for } i\in I_j, \label{e:p1}\\
&x_{j+1} \in A \setminus \bigcup_{i=1}^j B_i \quad &&\text{for }j=1,2,\dots \label{e:p2},\\
&x_i \in A \setminus \bigcup_{m\not= k} \bigcup_{j\in I_m} B_j &&\text{for } i\in I_k. \label{e:p3}
\end{align}

The first two properties \eqref{e:p1} and \eqref{e:p2} follow from the construction. To prove \eqref{e:p3}, let $m\not = k$, $i\in I_k$ and $j\in I_m$. If $m<k$, $x_i\not \in B_j$ by \eqref{e:p2}. If $k<m$, then, by construction $r(x_j)<r(x_i)$, and $x_j \not \in B_i$ by \eqref{e:p2}, and so $x_i\not\in B_j$.

Let us now check that this subfamily of balls satisfies the conditions for BCP to hold. If the sequence $k_0,k_1,\dots$ is finite, it follows immediately from the construction that the balls $B_i$'s cover $A$. If the sequence is infinite, then $M_j$ converges to $0$, \eqref{e:p1} implies that $r(x_i)$ converges to $0$, and it follows as well from the construction that $$A\subset \bigcup_{i=1}^{+\infty} B_i~.$$

To verify the other property for the validity of BCP, assume that a point $x\in X$ belongs to $p$ balls $B_{m_1},\dots,B_{m_p}$. Since WBCP holds in $(X,d)$ we have by \eqref{e:p3} that the indices $m_i$ can belong to at most $Q$ different blocks $I_j$, where $Q$ is given by Definition \ref{def:wbcp}, that is,
$$\card \{j;\; I_j \cap \{m_1,\dots,m_p\} \not= \emptyset\} \leq Q~.$$
To conclude, let us check that 
$$\card (I_j \cap \{m_1,\dots,m_p\}) \leq M \quad \text{for } j=1,2,\dots$$
for some constant $M$ depending only on the doubling constant of $(X,d)$. Let $j$ be fixed. The points $x_l$, $l\in I_j \cap \{m_1,\dots,m_p\}$, are at least $M_j/2$ apart by \eqref{e:p1} and \eqref{e:p2} and are all contained in  $B_d(x,M_j)$, and so the claim follows from Remark \ref{rmk:doubling}.
\end{proof}

\begin{remark} Note that the subfamily constructed in the previous proof satisfies the following additional property:  $B_d(x_i,r(x_i)/4) \cap B_d(x_j,r(x_j)/4) =\emptyset$ for all $i\not = j$. Indeed let $i<j$. Then $r(x_j) \leq 2 r(x_i)$ and $x_j \not \in B_i$ by \eqref{e:p2}, hence $d(x_j,x_i) > r(x_i) > r(x_i)/4 + r(x_j)/4$.
\end{remark}

\subsection{Preserving BCP}
\label{subsec:presersing-BCP}

Let us first recall that the validity of (W)BCP is not stable under a biLipschitz change of (quasi-)distance, see Theorem~\ref{thm:destroybcp}. More generally, the validity of (W)BCP might not be stable under natural operations on quasi-metric spaces. See for instance the example before Theorem~\ref{thm:WBCP-productspaces} about product of quasi-metric spaces. The fact that (W)BCP holds on a quasi-metric space $(X,d)$ depends indeed on the precise  shape of balls in $(X,d)$. We give in this section cases where the validity of (W)BCP is preserved, to be used later. This might be more generally of independent interest.

We begin with the following simple remark. If $d_1$ and $d_2$ are two quasi-distances on a space $X$ such that any ball with respect with $d_2$ is a ball with respect to $d_1$, with the same center but possibly with a different radius, then the validity of (W)BCP in $(X,d_1)$ implies the validity of (W)BCP in $(X,d_2)$. 

For instance if (W)BCP holds in $(X,d)$, then, for any $s >0$, $d^s$ defines a quasi-distance on $X$ and (W)BCP holds on $(X,d^s)$. Note that it is well-known that a metric space $(X,d)$ and its snowflakes $(X,d^s)$, $0<s<1$, have for many over purposes significantly different properties. For graded groups, we get the following proposition, to be used later.

\begin{proposition} \label{prop:BCP-kpower}
Let $G$ be a graded group and let $d$ be a homogeneous quasi-distance on $G$. Let $t>0$. If BCP holds on $(G,d)$ then BCP holds on the $t$-power of $G$ equipped with the homogeneous quasi-distance $d^{1/t}$.
\end{proposition}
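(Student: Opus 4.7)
The plan is to reduce the statement to the simple observation mentioned just before the proposition: if two quasi-distances $d_1$, $d_2$ on the same set $X$ have the property that every $d_2$-ball is a $d_1$-ball with the same center (possibly with a different radius), then the validity of BCP is transferred from $(X,d_1)$ to $(X,d_2)$. Indeed, BCP is a purely set-theoretic condition on families of balls and on boundedness, so if the classes of balls coincide as collections of subsets of $X$ and if boundedness in the two quasi-distances coincide, then a subfamily extracted by BCP for $d_1$ serves as a subfamily for $d_2$.

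First, I would recall from Example~\ref{ex:homogeneous-qdist-kpower} that $d^{1/t}$ is indeed a homogeneous quasi-distance on the $t$-power of $G$. The quasi-triangle inequality for $d^{1/t}$ follows from that of $d$ together with the elementary inequality $(a+b)^{1/t} \leq C_t (a^{1/t}+b^{1/t})$ for some constant $C_t\geq 1$, left-invariance is obvious since the underlying group law is unchanged, and homogeneity with respect to the $t$-power dilations $\tilde\delta_\lambda$ follows from the identity $\tilde\delta_\lambda = \delta_{\lambda^t}$ on the underlying set, giving
\begin{equation*}
d^{1/t}(\tilde\delta_\lambda(p),\tilde\delta_\lambda(q)) = d(\delta_{\lambda^t}(p),\delta_{\lambda^t}(q))^{1/t} = \bigl(\lambda^t\, d(p,q)\bigr)^{1/t} = \lambda\, d^{1/t}(p,q).
\end{equation*}

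Next, I would point out the crucial identification of balls as subsets of $G$: for every $p\in G$ and every $r>0$,
\begin{equation*}
B_{d^{1/t}}(p,r) = \{q\in G;\, d(p,q)^{1/t} \leq r\} = \{q\in G;\, d(p,q) \leq r^t\} = B_{d}(p,r^t).
\end{equation*}
In particular, every $d^{1/t}$-ball is a $d$-ball with the same center. Moreover, $\diam_{d^{1/t}}(A) = (\diam_d(A))^{1/t}$, so a set is bounded with respect to $d^{1/t}$ if and only if it is bounded with respect to $d$.

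Finally, let $A$ be a bounded subset of the $t$-power and let $\mathcal{B}$ be a family of $d^{1/t}$-balls each centered at some point of $A$. By the ball identification above, $\mathcal{B}$ is also a family of $d$-balls each centered at some point of $A$, and $A$ is bounded in $(G,d)$. Applying BCP for $(G,d)$ yields a finite or countable subfamily $\mathcal{F}\subset \mathcal{B}$ covering $A$ with pointwise multiplicity bounded by the constant $N$ associated to $(G,d)$; these covering and multiplicity properties are set-theoretic and so persist when $\mathcal{F}$ is regarded as a subfamily of $d^{1/t}$-balls. This establishes BCP on the $t$-power of $G$ equipped with $d^{1/t}$, with the same constant $N$. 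I do not foresee any real obstacle: the proof is essentially tautological once the identification $B_{d^{1/t}}(p,r) = B_d(p,r^t)$ is made explicit.
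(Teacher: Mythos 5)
Your argument is correct and is exactly the paper's reasoning: the identification $B_{d^{1/t}}(p,r) = B_d(p,r^t)$ is precisely the ``simple remark'' the paper places immediately before the proposition, from which the statement follows at once. Nothing is missing.
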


See Example~\ref{ex:k-power} for the definition of the $t$-power of a graded group and Example~\ref{ex:homogeneous-qdist-kpower} for homogeneous quasi-distances on $t$-powers.

Another simple remark is the fact that a subset of a quasi-metric space that satisfies (W)BCP also satisfies (W)BCP when equipped with the restricted quasi-distance. We state it below for later reference.

\begin{proposition} \label{prop:BCP-subset}
Let $(X,d_X)$ be a quasi-metric space. Let $Y\subset X$. If BCP (resp. WBCP) holds on $(X,d_X)$ then BCP (resp. WBCP) holds on $(Y,d_Y)$ where $d_Y$ denotes the quasi-distance $d_X$ restricted to $Y$.
\end{proposition}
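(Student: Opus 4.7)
The plan is to verify both parts by the natural lifting argument: a ball in $(Y,d_Y)$ is the trace on $Y$ of the corresponding ball in $(X,d_X)$ with the same center and radius, because for $y\in Y$ and $r>0$ one has $B_{d_Y}(y,r)=B_{d_X}(y,r)\cap Y$ (since $d_Y$ is simply the restriction of $d_X$).

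For the WBCP statement, I would start with an arbitrary family $\mathcal{B}=\{B_{d_Y}(y_i,r_i)\}_{i=1}^k$ of Besicovitch balls in $(Y,d_Y)$ and consider the lifted family $\widetilde{\mathcal{B}}:=\{B_{d_X}(y_i,r_i)\}_{i=1}^k$ in $(X,d_X)$. The centers $y_i\in Y\subset X$ are still well-defined, and the condition $y_j\notin B_{d_Y}(y_i,r_i)$ (i.e.\ $d_Y(y_i,y_j)>r_i$) translates, because $d_Y=d_X$ on $Y\times Y$, into $y_j\notin B_{d_X}(y_i,r_i)$. Moreover, any common point $y\in\bigcap_i B_{d_Y}(y_i,r_i)\subset Y$ is automatically a common point of the balls $B_{d_X}(y_i,r_i)$. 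Hence $\widetilde{\mathcal{B}}$ is a family of Besicovitch balls in $(X,d_X)$, so $\card\mathcal{B}=\card\widetilde{\mathcal{B}}\le Q$, where $Q$ is the WBCP constant of $(X,d_X)$.

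For the BCP statement, I would proceed symmetrically. Given a bounded $A\subset Y$ and a family $\mathcal{B}$ of $d_Y$-balls such that every point of $A$ is the center of some ball of $\mathcal{B}$, $A$ is also bounded in $(X,d_X)$ (boundedness is defined by the same quasi-distance values). Lift $\mathcal{B}$ to the family $\widetilde{\mathcal{B}}$ of $d_X$-balls with the same centers and radii; each point of $A$ is still the center of some ball of $\widetilde{\mathcal{B}}$. Apply BCP on $(X,d_X)$ with constant $N$ to extract a finite or countable subfamily $\widetilde{\mathcal{F}}\subset\widetilde{\mathcal{B}}$ covering $A$ with every point of $X$ lying in at most $N$ balls of $\widetilde{\mathcal{F}}$. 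Push back by setting $\mathcal{F}:=\{B\cap Y:B\in\widetilde{\mathcal{F}}\}\subset\mathcal{B}$; the balls of $\mathcal{F}$ still cover $A\subset Y$, and since $Y\subset X$, every point of $Y$ belongs to at most $N$ balls of $\mathcal{F}$.

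There is no real obstacle here beyond bookkeeping: the argument is essentially a verification that the notions of ball, center, radius, and boundedness are compatible with restriction of the ambient quasi-distance to a subset, which is immediate from the definitions recalled in Section~\ref{subsec:homogenous-qdist} and Definitions~\ref{def:bcp}, \ref{def:BesicovitchBalls}, \ref{def:wbcp}. The same constants $N$ and $Q$ that witness (W)BCP on $(X,d_X)$ also witness (W)BCP on $(Y,d_Y)$.
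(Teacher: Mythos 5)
Your proof is correct. The paper does not actually supply a proof of Proposition~\ref{prop:BCP-subset}, stating it only as a ``simple remark,'' so your argument fills in exactly the bookkeeping the authors left implicit. The two directions you give (lifting a $d_Y$-family of Besicovitch balls to a $d_X$-family of the same cardinality for WBCP; lifting a $d_Y$-cover, extracting a bounded-multiplicity $d_X$-subcover, and intersecting with $Y$ for BCP) are the natural ones, rely only on $B_{d_Y}(y,r)=B_{d_X}(y,r)\cap Y$, and correctly transfer the constants $Q$ and $N$ unchanged.
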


Given two quasi-metric spaces $(X,d_X)$ and $(Y,d_Y)$, there are many ways to define quasi-distances on $X\times Y$. If $(X,d_X)$ and $(Y,d_Y)$ both satisfy WBCP, then WBCP may fail for classical choices of quasi-distances on $X\times Y$, as shows the following example. We know that, for $s \geq 1$, $\R$ equipped with the snowflake distance $d_s(x,x'):=|x'-x|^{1/s}$ satisfies WBCP. Let $s>1$, $r \geq 1$, and let $d_{s,r}$ be the distance on $\R \times \R$ given by $d_{s,r}((x,y),(x',y')):= ( d_1(x,x')^r + d_s(y,y')^r)^{1/r}$. Then, if $r\in [1,s)$, WBCP does not hold on $(\R \times \R, d_{s,r})$. Indeed, $d_{s,r}$ is a left-invariant distance on $\R \times \R$ (equipped with the Abelian group law) and is one-homogeneous with respect to the dilations $\delta_\lambda(x,y):= (\lambda x, \lambda^s y)$. Its unit ball centered at the origin is given by $B_{d_{s,r}}(0,1) = \{ (x,y) \in \R \times \R;\; |x'-x|^r + |y'-y|^{r/s} \leq 1\}$. It follows from~\cite[Lemma~3.2]{LeDonne_Rigot_rmknobcp} that if WBCP holds in $(\R \times \R, d_{s,r})$, one would have $r\geq s$.

However, the following theorem, to be used later, shows that one can always find a quasi-distance satisfying WBCP on a product of quasi-metric spaces that satisfy WBCP.

\begin{theorem} \label{thm:WBCP-productspaces} Let $(X,d_X)$ and $(Y,d_Y)$ be two metric spaces. Assume that WBCP holds in  $(X,d_X)$ and in $(Y,d_Y)$. Then $X\times Y$ equipped with the max distance $$d_{X\times Y}((x,y),(x',y')):=  \max(d_X(x,x'),d_Y(y,y'))$$ satisfies WBCP.
\end{theorem}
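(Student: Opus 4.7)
Let $\mathcal{B}=\{B_i\}_{i=1}^k$ with $B_i=B_{d_{X\times Y}}((x_i,y_i),r_i)$ be a family of Besicovitch balls in $(X\times Y,d_{X\times Y})$, and let $(x^*,y^*)\in\bigcap_i B_i$. Since the max distance satisfies $B_{d_{X\times Y}}((x,y),r)=B_{d_X}(x,r)\times B_{d_Y}(y,r)$, my plan is to reduce to the WBCP hypothesis on each factor by a Ramsey-theoretic splitting argument. Let $Q_X$ and $Q_Y$ be the WBCP constants of $(X,d_X)$ and $(Y,d_Y)$; I will bound $k$ in terms of the Ramsey number $R=R(Q_X+1,Q_Y+1)$.

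First, reorder the balls so that $r_1\leq r_2\leq\cdots\leq r_k$. For each pair $i<j$ the Besicovitch condition gives $(x_i,y_i)\notin B_j$, which by the definition of the max distance means at least one of
\begin{equation*}
d_X(x_i,x_j)>r_j \qquad \text{or}\qquad d_Y(y_i,y_j)>r_j
\end{equation*}
holds. Color the edge $\{i,j\}$ of the complete graph on $\{1,\dots,k\}$ red in the first case and blue otherwise, breaking ties arbitrarily.

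By Ramsey's theorem, if $k\geq R$, one can extract either a red-monochromatic subset $S\subset\{1,\dots,k\}$ of size $Q_X+1$ or a blue-monochromatic subset of size $Q_Y+1$. Consider the red case (the blue case is symmetric). I claim $\{B_{d_X}(x_i,r_i)\}_{i\in S}$ is a family of Besicovitch balls in $(X,d_X)$: the point $x^*$ lies in every such ball since $d_X(x_i,x^*)\leq r_i$, and for any two distinct indices $i,j\in S$, taking $i<j$ by the ordering gives $d_X(x_i,x_j)>r_j\geq r_i$, which yields both $x_j\notin B_{d_X}(x_i,r_i)$ and $x_i\notin B_{d_X}(x_j,r_j)$. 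This contradicts WBCP on $(X,d_X)$ since $\card S=Q_X+1$. Hence $k\leq R-1$, which establishes WBCP on $(X\times Y,d_{X\times Y})$ with constant $R-1$.

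The only subtle point is verifying the \emph{symmetric} exclusion $x_i\notin B_{d_X}(x_j,r_j)$ and $x_j\notin B_{d_X}(x_i,r_i)$ from the one-sided Besicovitch information available for $i<j$ in the product space; this is exactly what the radius ordering $r_i\leq r_j$ affords, since the stronger inequality $d_X(x_i,x_j)>r_j$ automatically implies the weaker one $d_X(x_i,x_j)>r_i$. Everything else is a routine application of Ramsey's theorem.
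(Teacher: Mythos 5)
Your proof is correct and takes essentially the same approach as the paper: both reduce to the WBCP hypotheses on the factors by two-coloring the pairs according to which coordinate witnesses the Besicovitch separation and then invoking Ramsey's theorem. The only cosmetic difference is that you sort the radii and recover the two-sided exclusion $x_i\notin B_{d_X}(x_j,r_j)$ and $x_j\notin B_{d_X}(x_i,r_i)$ from the one-sided condition plus monotonicity, whereas the paper obtains it directly from $d_{X\times Y}(p_i,p_j)>\max(r_i,r_j)$.
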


\begin{proof}
Let $Q\in \N$ be such that $\card \mathcal{F} \leq Q$ for any family $\mathcal{F}$ of Besicovitch balls in $(X,d_X)$ or in $(Y,d_Y)$. Let $\mathcal{B}:=\{B_{d_{X\times Y}} (p_i,r_i)\}_{i=1}^N$ be a family of Besicovitch balls in $(X\times Y,d_{X\times Y})$. Let $i, j \in \{1,\cdots,N\}$, $i\not =j$ and let $p_i:=(x_i,y_i)$ and $p_j:=(x_j,y_j)$. By definition of families of Besicovitch balls and by definition of $d_{X\times Y}$, we have $d_{X\times Y}(p_i,p_j) = \max (d_X(x_i,x_j), d_Y(y_i,y_j)) > \max (r_i,r_j)$ hence $d_X(x_i,x_j) > \max (r_i,r_j)$  or $d_Y(y_i,y_j)> \max (r_i,r_j)$. In other terms, for any pair of indices $(i,j)$ with $i\not= j$, we have
\begin{equation} \label{e:besicovitch-in-X}
x_i \not \in B_{d_X}(x_j,r_j) \quad \text{and} \quad x_j \not \in B_{d_X}(x_i,r_i)
\end{equation}
or
\begin{equation} \label{e:besicovitch-in-Y}
y_i \not \in B_{d_Y}(y_j,r_j) \quad \text{and} \quad y_j \not \in B_{d_Y}(y_i,r_i)~.
\end{equation}

Let us consider the graph $\Gamma$ with $N$ vertices $i=1,\cdots,N$ and where $i$ is connected to $j$ if and only if $i\not = j$ and \eqref{e:besicovitch-in-X} holds. Then, for any complete subgraph $\gamma$ of $\Gamma$ (a complete graph is a graph where any two vertices are connected), $\{B_{d_X} (x_i,r_i)\}_{i\in \gamma}$ is a family of Besicovitch balls in $(X,d_X)$.
Let $\Gamma'$ be the complementary graph of $\Gamma$, that is, the graph with the same vertices as $\Gamma$ and where two vertices are connected in $\Gamma'$ if and only if they are not connected in $\Gamma$. Since \eqref{e:besicovitch-in-Y} holds whenever \eqref{e:besicovitch-in-X} does not, $\{B_{d_Y} (y_i,r_i)\}_{i\in \gamma'}$ is a family of Besicovitch balls in $(Y,d_Y)$ for any complete subgraph $\gamma'$ of $\Gamma'$.

As a special case of Ramsey's theorem  stated in the language of graph theory, there exists a function $f(k,l)$ such that for any given graph $\Gamma$ with $N\geq f(k,l)$ vertices, then either $\Gamma$ contains a complete subgraph of order $k$ or its complementary graph $\Gamma'$ contains a complete subgraph of order $l$ (the order of a complete graph is the number of its vertices). An upper bounded for $f(k,k)$ for $k\geq 3$ has been proved by P.~Erdos and G.~Szekeres (\cite{erdos-szekeres}), namely $f(k,k) < 4^{k-1}$.

Going back to the family of Besicovitch balls $\mathcal{B}$, it follows that if the numbers $N$ of balls in $\mathcal{B}$ is larger than $4^{Q}$ (we may assume with no loss of generality that $Q\geq 2$), there would exist either a family of Besicovitch balls in $(X,d_X)$ with cardinality $Q+1$ or a family of Besicovitch balls in $(Y,d_Y)$ with cardinality $Q+1$. This contradicts the fact that by assumption any family of Besicovitch balls in $(X,d_X)$ or in $(Y,d_Y)$ has cardinality at most $Q$. Hence $(X\times Y,d_{X\times Y})$ satisfies WBCP.
\end{proof}

Submetries, also known as metric submersions, will play a important role in our arguments. They are indeed well adapted tools for our purposes. We first recall the definition. 

\begin{definition}[Submetry]
Let $(X,d_X)$ and $(Y,d_Y)$ be quasi-metric spaces. We say that $\pi:X\rightarrow Y$ is a submetry if $\pi$ is a surjective map such that $\pi(B_{d_X}(p,r)) = B_{d_Y}(\pi(p),r)$ for all $p\in X$ and all $r>0$.
\end{definition}

We recall the following property of submetries related to WBCP.

\begin{proposition}[{\cite[Proposition~2.7]{LeDonne_Rigot_rmknobcp}}] \label{prop:bcp-submetry}
Let $(X,d_X)$ and $(Y,d_Y)$ be quasi-metric spaces. Assume that there exists a submetry from $(X,d_X)$ onto $(Y,d_Y)$. If $(X,d_X)$ satisfies WBCP then $(Y,d_Y)$ satisfies WBCP.
\end{proposition}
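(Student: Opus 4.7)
The plan is to lift any family of Besicovitch balls in $(Y,d_Y)$ to a family of Besicovitch balls in $(X,d_X)$ of the same cardinality, and then invoke the WBCP hypothesis on $X$. This will directly yield WBCP on $Y$ with the same Besicovitch constant.

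First I would record the elementary but crucial observation that every submetry is $1$-Lipschitz: given $p, q \in X$, set $r := d_X(p,q)$, so that $q \in B_{d_X}(p,r)$, whence $\pi(q) \in \pi(B_{d_X}(p,r)) = B_{d_Y}(\pi(p),r)$, giving $d_Y(\pi(p),\pi(q)) \leq d_X(p,q)$. Next, starting from a family of Besicovitch balls $\{B_{d_Y}(y_i, r_i)\}_{i=1}^N$ in $Y$, I would pick a common point $y_0 \in \bigcap_{i=1}^N B_{d_Y}(y_i, r_i)$ and, using surjectivity of $\pi$, choose some preimage $x_0 \in \pi^{-1}(y_0)$. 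For each $i$, the inequality $d_Y(y_0, y_i) \leq r_i$ together with the defining identity $\pi(B_{d_X}(x_0, r_i)) = B_{d_Y}(y_0, r_i)$ allows me to select a preimage $x_i$ of $y_i$ lying in $B_{d_X}(x_0, r_i)$.

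The claim is then that $\{B_{d_X}(x_i, r_i)\}_{i=1}^N$ is a family of Besicovitch balls in $(X,d_X)$. Indeed, by construction $x_0$ lies in the common intersection (using symmetry of $d_X$ to pass from $x_i \in B_{d_X}(x_0, r_i)$ to $x_0 \in B_{d_X}(x_i, r_i)$), and for $i \neq j$ the $1$-Lipschitz property gives $d_X(x_i, x_j) \geq d_Y(y_i, y_j) > r_j$, so $x_i \notin B_{d_X}(x_j, r_j)$. Applying WBCP in $X$ bounds $N$ by the Besicovitch constant $Q_X$, which yields WBCP on $Y$ with the same constant. There is no real obstacle here: the submetry hypothesis is tailored precisely to make this lifting work, and the argument is essentially a direct transfer of the Besicovitch configuration from $Y$ back to $X$.
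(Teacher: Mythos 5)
Your proof is correct and uses the natural lifting argument: every submetry is $1$-Lipschitz, and a Besicovitch configuration in $Y$ can be lifted, ball by ball around a lifted common point $x_0$, to a Besicovitch configuration in $X$ of the same cardinality. This is essentially the same approach as the cited proof in the reference paper.
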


Proposition~\ref{prop:bcp-submetry} will be used in the proof of our main results together with Proposition~\ref{prop:submetries-morphism-graded-algebra} below.

\begin{proposition} \label{prop:submetries-morphism-graded-algebra}
Let $\hat{G}$ and $G$ be graded groups with graded Lie algebra $\hat{\g}$ and $\g$ respectively. Assume that there exists a surjective morphism of graded Lie algebras $\phi:\hat{\g}\rightarrow\g$. Let $\varphi:\hat{G}\rightarrow G$ denote the unique Lie group homomorphism such that $\varphi_* = \phi$ and let $\hat{d}$ be a 
homogeneous distance, respectively a continuous homogeneous quasi-distance, on $\hat{G}$. Then
$$d(p,q):= \hat{d}(\varphi^{-1}(\{p\}),\varphi^{-1}(\{q\}))$$
defines a homogeneous distance, respectively a continuous homogeneous quasi-distance, on $G$ and $\varphi:(\hat{G},\hat{d})\rightarrow (G,d)$ is a submetry. 
\end{proposition}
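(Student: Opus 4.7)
The plan is first to extract the algebraic and topological features of the induced map $\varphi$. Since $\phi$ is surjective and $G$ is connected and simply connected, $\varphi\colon\hat{G}\to G$ is a surjective Lie group homomorphism, and since $\phi$ maps $V_t$ into $W_t$ for every $t>0$ it commutes with the dilations, so $\varphi\circ\hat{\delta}_\lambda=\delta_\lambda\circ\varphi$ for all $\lambda>0$. Let $K:=\ker\varphi$; this is a closed normal Lie subgroup of $\hat G$, and the fibers of $\varphi$ are the cosets $\hat p K$. Because $\hat d$ is left-invariant and $K$ is normal, I would reduce the two-sided infimum defining $d$ to the one-sided form
\[
d(p,q)=\inf\{\hat d(\hat p,\hat q') : \hat q'\in\varphi^{-1}(\{q\})\}
\]
for any fixed $\hat p\in\varphi^{-1}(\{p\})$. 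From this I would read off the symmetry of $d$, its left-invariance (using $\varphi^{-1}(\{g\cdot p\})=g^\ast\cdot\varphi^{-1}(\{p\})$ for any $g^\ast\in\varphi^{-1}(\{g\})$), and its one-homogeneity (using $\varphi^{-1}(\{\delta_\lambda(p)\})=\hat{\delta}_\lambda(\varphi^{-1}(\{p\}))$ together with one-homogeneity of $\hat d$). For positivity it suffices by left-invariance to show $d(e,g)=0\Rightarrow g=e$: choosing $\hat g_n\in\varphi^{-1}(\{g\})$ with $\hat d(\hat e,\hat g_n)\to 0$, Proposition~\ref{prop:homogeneous-quasi-distance-topology} gives $\hat g_n\to\hat e$ in $\hat G$, whence $g=\varphi(\hat g_n)\to e$ by continuity of $\varphi$.

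Next I would verify the (quasi-)triangle inequality. Given $p,r,q\in G$ and $\epsilon>0$, I fix a preimage $\hat p$ of $p$ and pick $\hat r_1\in\varphi^{-1}(\{r\})$ with $\hat d(\hat p,\hat r_1)\leq d(p,r)+\epsilon$, together with a preimage $\hat r_2$ of $r$ and some $\hat q_0\in\varphi^{-1}(\{q\})$ with $\hat d(\hat r_2,\hat q_0)\leq d(r,q)+\epsilon$. Since $\hat r_1$ and $\hat r_2$ lie in the same fiber, setting $\hat q_1:=\hat r_1\hat r_2^{-1}\hat q_0\in\varphi^{-1}(\{q\})$ and applying left-invariance of $\hat d$ gives $\hat d(\hat r_1,\hat q_1)=\hat d(\hat r_2,\hat q_0)\leq d(r,q)+\epsilon$. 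The (quasi-)triangle inequality for $\hat d$ then yields $d(p,q)\leq\hat d(\hat p,\hat q_1)\leq d(p,r)+d(r,q)+2\epsilon$ in the distance case, and the analogous bound with the multiplicative constant $C$ in the quasi-distance case; sending $\epsilon\to 0$ completes the argument, and $d$ is a homogeneous distance, respectively homogeneous quasi-distance.

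For the submetry property the key point is attainment of the infimum. Fixing $\hat p\in\varphi^{-1}(\{p\})$ and $q\in G$, set $r:=d(p,q)$ and choose $\hat q_n\in\varphi^{-1}(\{q\})$ with $\hat d(\hat p,\hat q_n)\to r$. For large $n$, $\hat q_n\in B_{\hat d}(\hat p,r+1)$, which is relatively compact by Proposition~\ref{prop:homogeneous-quasi-distance-topology}; since $\varphi^{-1}(\{q\})$ is closed by continuity of $\varphi$, one may extract $\hat q_n\to\hat q_\infty\in\varphi^{-1}(\{q\})$. Continuity of $\hat d$ (automatic in the distance case by Corollary~\ref{cor:continuity-homogeneous-distance}, by hypothesis in the quasi-distance case) gives $\hat d(\hat p,\hat q_\infty)=r$. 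Combined with the trivial inclusion $\varphi(B_{\hat d}(\hat p,r))\subset B_d(\varphi(\hat p),r)$, this yields $\varphi(B_{\hat d}(\hat p,r))=B_d(\varphi(\hat p),r)$ for every $r\geq 0$, so $\varphi$ is a submetry.

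Finally, in the continuous quasi-distance case it remains to prove continuity of $d$; by Proposition~\ref{prop:continuity-quasi-dist} it suffices to show $S_d(e,1)$ is closed. Let $g_n\in S_d(e,1)$ with $g_n\to g$. Using the submetry property combined with attainment, pick $\hat g_n\in\varphi^{-1}(\{g_n\})$ with $\hat d(\hat e,\hat g_n)=1$; then $\hat g_n\in S_{\hat d}(\hat e,1)\subset B_{\hat d}(\hat e,1)$, which is compact since $\hat d$ is continuous, so up to extraction $\hat g_n\to\hat g_\infty$ with $\hat d(\hat e,\hat g_\infty)=1$ and $\varphi(\hat g_\infty)=g$, giving $d(e,g)\leq 1$. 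To exclude $d(e,g)<1$, pick a minimizer $\hat g^\ast\in\varphi^{-1}(\{g\})$ with $\hat d(\hat e,\hat g^\ast)<1$; the open set $\{\hat h\in\hat G:\hat d(\hat e,\hat h)<1\}$ contains $\hat g^\ast$, and because $\varphi$ is an open map (being a surjective submersion of Lie groups), its image is an open neighborhood of $g$ on which $d(e,\cdot)<1$, contradicting $g_n\in S_d(e,1)$ for $n$ large. The hardest step is the submetry property, namely the attainment of the infimum, where continuity of $\hat d$ and relative compactness of closed balls in $(\hat G,\hat d)$ must be combined with closedness of the fibers to pass to the limit.
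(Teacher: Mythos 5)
Your proof is correct and follows essentially the same strategy as the paper's: reduce $d$ to the one-sided form $\inf_{\hat q\in\varphi^{-1}(\{q\})}\hat d(\hat p,\hat q)$ via left-invariance, obtain attainment of the infimum from relative compactness of balls (Proposition~\ref{prop:homogeneous-quasi-distance-topology}), closedness of fibers, and continuity of $\hat d$, and use openness of $\varphi$ to control $d$ from above. There are two minor tactical differences worth noting. First, you prove the (quasi-)triangle inequality directly with the substitution $\hat q_1=\hat r_1\hat r_2^{-1}\hat q_0$, whereas the paper delegates this to a cited proposition of \cite{LeDonne_Rigot_rmknobcp}; your self-contained version is a nice addition and the argument is correct. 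Second, for continuity of $d$ in the quasi-distance case, the paper proves directly that $d(e,p_k)\to d(e,p)$ for $p_k\to p$, showing the two inequalities separately (one via extraction of a convergent minimizing sequence, the other via openness of $\varphi$), while you instead invoke the criterion of Proposition~\ref{prop:continuity-quasi-dist} (continuity is equivalent to closedness of the unit sphere) and verify that $S_d(e,1)$ is closed, using exactly the same two ingredients (compactness/attainment and openness of $\varphi$) in a slightly rearranged form. Both routes are valid and of comparable length; the criterion-based route is perhaps more modular given that Proposition~\ref{prop:continuity-quasi-dist} is already available.
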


We stress that continuity of the quasi-distance $\hat{d}$, which means global continuity on $\hat{G}\times \hat{G}$, is necessary in order to get that $d$ is a quasi-distance on $G$, which turns out to be continuous on $G\times G$ as well, and also in order to get that $\varphi:(\hat{G},\hat{d})\rightarrow (G,d)$ is a submetry. Indeed, consider the homogeneous quasi-distance $d_2$ on $\mathbb{R}^2$ given in Remark~\ref{rmk:continuity-dist}, which is not globally continuos. The projection of $B_{d_2}(0,1)$ onto the $x$-axis is the open segment $I:=(-1,1)$. If $x\in \mathbb{R}^*$, we have $\{\lambda >0,\; x/\lambda \in I\} = (|x|, +\infty)$ which is not a closed subinterval of $(0,+\infty)$. It follows from Example~\ref{ex:qdist-unitball} that $I$ is not the unit ball of some homogeneous quasi-distance on $\mathbb{R}$. However, when $\hat{d}$ is a homogeneous distance, recall that $\hat{d}$ is continuous on $\hat{G}\times \hat{G}$ (see Corollary~\ref{cor:continuity-homogeneous-distance}). 

\begin{proof}[Proof of Proposition~\ref{prop:submetries-morphism-graded-algebra}]
First, we prove that $d$ defines a quasi-distance on $G$ and that $\varphi:(\hat{G},\hat{d})\rightarrow (G,d)$ is a submetry. By~\cite[Proposition~2.8]{LeDonne_Rigot_rmknobcp}, it is sufficient to prove that for all $p$, $q \in G$ and all $\hat{p}\in \varphi^{-1}(\{p\})$, one can find $\hat{q} \in \varphi^{-1}(\{q\})$ such that $d(p,q) = \hat{d}(\hat{p},\hat{q})$. Set $\hat{K}:=\ker \varphi$. We have $\varphi^{-1}(\{p\}) = \hat{K}  \cdot \hat{p}$ and $\hat{k}\cdot \varphi^{-1}(\{q\}) = \varphi^{-1}(\{q\})$ for all $\hat{k}\in \hat{K}$. By left-invariance of $\hat{d}$, it follows that
\begin{equation*}
\hat{d}(\hat{p},\varphi^{-1}(\{q\})) = \hat{d}(\hat{k}\cdot \hat{p} , \hat{k}\cdot \varphi^{-1}(\{q\}) = \hat{d}(\hat{k}\cdot \hat{p} ,  \varphi^{-1}(\{q\}))~.
\end{equation*} 
In other words, the function $\hat{p}'\in \varphi^{-1}(\{p\}) \mapsto \hat{d}(\hat{p}',\varphi^{-1}(\{q\}))$ is constant. Hence, by definition of $d$, we get
\begin{equation*}
d(p,q) = \hat{d}(\hat{p},\varphi^{-1}(\{q\})) = \inf_{\hat{q}\in \varphi^{-1}(\{q\})} \hat{d}(\hat{p},\hat{q})~.
\end{equation*}
Bounded sets with respect to $\hat{d}$ are relatively compact (see Proposition~\ref{prop:homogeneous-quasi-distance-topology}), the set $\varphi^{-1}(\{q\})$ is closed and $\hat{d}$ is assumed to be continuous, hence one can find $\hat{q}\in \varphi^{-1}(\{q\})$ such that $\hat{d}(\hat{p},\hat{q})= \inf_{\hat{q}'\in \varphi^{-1}(\{q\})} \hat{d}(\hat{p},\hat{q}') = d(p,q)$. This proves that $d$ defines a quasi-distance on $G$ and that $\varphi:(\hat{G},\hat{d})\rightarrow (G,d)$ is a submetry.

Note that if $\hat{d}$ satisfies the quasi-triangle inequality with multiplicative constant $C$, then $d$ satisfies the quasi-triangle inequality with the same multiplicative constant (see the proof of~\cite[Proposition 2.8]{LeDonne_Rigot_rmknobcp}). In particular if $\hat{d}$ is a distance on $\hat{G}$ then $d$ is a distance on $G$.

Next, one can easily check that $d$ is homogeneous. This follows from the fact that $ \hat{d}$ is a homogeneous quasi-distance together with the fact that $\phi$ is a surjective morphism of graded Lie algebra.

To conclude the proof, it remains to prove that $d$ is globally continuous on $G \times G$. Since $d$ is left-invariant, it is sufficient to prove that $d(e,\cdot)$ is globally continuous on $G$. Here $e$ denotes the identity in $G$ and below $\hat{e}$ will denote the identity in $\hat{G}$. Let $p \in G$ and let $(p_k)$ be a sequence converging  to $p$. First, let $\hat{p}_k \in \varphi^{-1}(\{p_k\})$ be such that $d(e,p_k) = \hat{d}(\hat{e},\hat{p}_k)$. Since the sequence $(p_k)$ is relatively compact, it is bounded with respect to $d$ (see Proposition~\ref{prop:homogeneous-quasi-distance-topology}). Hence $(\hat{p}_k)$ is bounded with respect to $\hat{d}$ and, once again by 
Proposition~\ref{prop:homogeneous-quasi-distance-topology}, relatively compact. Up to a subsequence, one can thus assume that $\hat{p}_k$ converges to some $\hat{p} \in \varphi^{-1}(\{p\})$. Since $\hat{d}$ is continuous, it follows that $\hat{d}(\hat{e},\hat{p}_k)$ converges to $\hat{d}(\hat{e},\hat{p})$ and 
one gets
$$d(e,p) \leq \hat{d}(\hat{e},\hat{p}) = \lim_{k\rightarrow + \infty} \hat{d}(\hat{e},\hat{p}_k) = \lim_{k\rightarrow + \infty} d(e,p_k)~.$$
Next, let $\hat{p}'\in \varphi^{-1}(\{p\})$ be such that $d(e,p) = \hat{d}(\hat{e},\hat{p}')$. Since $\varphi = \exp \circ \phi \circ \exp^{-1}$ is an open map (see~\cite[p.104]{Warner}), one can find a sequence $\hat{p}'_k \in \varphi^{-1}(\{p_k\})$ converging to $\hat{p}'$. Since $\hat{d}$ is assumed to be continuous, $\hat{d}(\hat{e},\hat{p}'_k)$ goes to $\hat{d}(\hat{e},\hat{p}')$. On the other hand, we have $d(e,p_k)\leq  \hat{d}(\hat{e},\hat{p}'_k)$. Hence $$\lim_{k\rightarrow + \infty} d(e,p_k) \leq \hat{d}(\hat{e},\hat{p}') = d(e,p)~.$$
All together we finally get that $\lim_{k\rightarrow + \infty} d(e,p_k) = d(e,p)$ which concludes the proof.\end{proof}

\section{Graded groups with commuting different layers} \label{sect:BCP-commutative-layers}

In this section we consider graded groups with commuting different layers, see Definition~\ref{def:groups-commuting-layers}, and we prove the following results.

\begin{theorem} \label{thm:sec-BCP-commuting-layers}
Let $G$ be a graded group with commuting different layers. There exist continuous homogeneous quasi-distances on $G$ for which BCP holds. 
\end{theorem}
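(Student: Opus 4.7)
The plan is to apply the structure theorem Proposition~\ref{prop:structure-commuting-layers}, which represents $G$ as a direct product $H_1 \times \cdots \times H_m$, where each $H_i$ is the $t_i$-power of a stratified group $G_i$ of step $\leq 2$. This reduces the task to (a)~building a suitable continuous homogeneous quasi-distance on each factor $H_i$, and (b)~combining these quasi-distances into one on $G$ in a way that preserves BCP and continuity.

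For (a), I would first construct a continuous homogeneous distance $d_{G_i}$ satisfying BCP on the underlying stratified group $G_i$ of step $\leq 2$. If $G_i$ has step 1 it is Abelian, isomorphic to some $\R^n$ with its standard grading, and the Euclidean norm provides such a distance by the classical Besicovitch theorem. If $G_i$ has step 2, let $F_r$ denote the free-nilpotent stratified group of step 2 of rank $r = \dim \g_i - \dim [\g_i,\g_i]$. By Remark~\ref{rmk:free-vs-morphism} the Lie algebra $\g_i$ is the image of $\mathfrak{f}_r$ under a surjective morphism of graded Lie algebras, and by Theorem~\ref{thm:BCP-free-step2} there is a Hebisch--Sikora distance $\hat{d}_i$ on $F_r$ satisfying BCP, which is continuous by Corollary~\ref{cor:continuity-homogeneous-distance}. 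Applying Proposition~\ref{prop:submetries-morphism-graded-algebra} to the associated Lie group homomorphism yields a continuous homogeneous distance $d_{G_i}$ on $G_i$ for which this homomorphism is a submetry, and Proposition~\ref{prop:bcp-submetry} then transfers BCP from $(F_r,\hat{d}_i)$ to $(G_i,d_{G_i})$. Finally, Proposition~\ref{prop:BCP-kpower} together with Example~\ref{ex:homogeneous-qdist-kpower} show that $d_i := d_{G_i}^{\,1/t_i}$ is a continuous homogeneous quasi-distance on $H_i$ satisfying BCP.

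For (b), I would equip $G$ with the max quasi-distance
\[
d((p_1,\dots,p_m),(q_1,\dots,q_m)) \;:=\; \max_{1\le i \le m} d_i(p_i,q_i).
\]
This $d$ is left-invariant and one-homogeneous with respect to the product family of dilations, hence is a homogeneous quasi-distance in the sense of Definition~\ref{def:homogeneous-dist2}, and it is continuous as the maximum of continuous functions. The proof of Theorem~\ref{thm:WBCP-productspaces} uses only the structure of balls together with the Erd\H{o}s--Szekeres Ramsey bound, and extends without modification to a finite product of quasi-metric spaces (with an adjusted quasi-triangle constant); thus WBCP is inherited from each $(H_i,d_i)$ by $(G,d)$. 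Since $(G,d)$ is doubling, Corollary~\ref{cor:bcp-wbcp-homogeneous-qdist} then gives BCP on $(G,d)$.

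The genuinely hard input is Theorem~\ref{thm:BCP-free-step2}, the existence of a BCP Hebisch--Sikora distance on every free-nilpotent stratified group of step 2; granting this, together with the algebraic Proposition~\ref{prop:structure-commuting-layers} and the submetry Proposition~\ref{prop:submetries-morphism-graded-algebra}, the present theorem is assembled mechanically. The only subtlety to monitor is that continuity of the quasi-distance survives each of the three operations in turn (submetric quotient, $t$-power, max-product), and that the product construction is applied at the quasi-metric, not just metric, level.
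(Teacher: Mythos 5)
Your proposal is correct and follows essentially the same route as the paper: decompose $G$ via Proposition~\ref{prop:structure-commuting-layers} into a direct product of $t_i$-powers of stratified groups of step $\leq 2$, build continuous homogeneous (quasi-)distances with BCP on each factor via Theorem~\ref{thm:BCP-free-step2}, Proposition~\ref{prop:submetries-morphism-graded-algebra}, Proposition~\ref{prop:bcp-submetry}, and Proposition~\ref{prop:BCP-kpower}, then glue with the max quasi-distance and invoke Theorem~\ref{thm:WBCP-productspaces} together with Corollary~\ref{cor:bcp-wbcp-homogeneous-qdist}. Your explicit remark that Theorem~\ref{thm:WBCP-productspaces} (stated in the paper for metric spaces) is actually being applied to quasi-metric factors, and that its proof uses only symmetry and the Ramsey bound and hence carries over verbatim, is a point the paper leaves implicit and is worth making; likewise your explicit verification that the max quasi-distance is left-invariant, one-homogeneous for the product dilations, and continuous fills in routine but unstated steps.
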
 

\begin{corollary} \label{cor:sec-bcp-homogeneous-groups-commuting-layers}
Let $G$ be a homogeneous group  with commuting different layers. There exist homogeneous distances on $G$ for which BCP holds.
\end{corollary}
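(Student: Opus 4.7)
The plan is to refine the general construction underlying Theorem~\ref{thm:sec-BCP-commuting-layers} so that, in the homogeneous-group case, the resulting homogeneous quasi-distance actually satisfies the triangle inequality.

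First, I would decompose $G$ via Proposition~\ref{prop:structure-commuting-layers} as a direct product $G = G_1 \times \cdots \times G_k$, where each $G_i$ is the $t_i$-power of a stratified group $H_i$ of step at most~$2$. Since $G$ is a homogeneous group, every non-zero layer of its grading sits in degree $\geq 1$; because the degree-one layer of $H_i$ becomes the degree-$t_i$ layer of the $t_i$-power $G_i$, this forces $t_i \geq 1$ for every $i$.

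Second, on each $H_i$ I would exhibit a homogeneous distance $d_i$ for which BCP holds. When $H_i$ is abelian (step~$1$), any Euclidean norm on the underlying vector space does the job by the classical Besicovitch theorem. When $H_i$ has step~$2$, I would invoke Remark~\ref{rmk:free-vs-morphism} to produce a surjective morphism of graded Lie algebras from a stratified free-nilpotent Lie algebra $\mathfrak f$ of step~$2$ onto the Lie algebra of $H_i$. Picking a Hebisch--Sikora distance on the corresponding free group $F$, Theorem~\ref{thm:BCP-free-step2} yields BCP on $F$; then Proposition~\ref{prop:submetries-morphism-graded-algebra} descends this distance to a homogeneous distance on $H_i$ and makes the induced Lie group homomorphism a submetry, and Proposition~\ref{prop:bcp-submetry} together with Corollary~\ref{cor:bcp-wbcp-homogeneous-qdist} transfers BCP from $F$ to $H_i$. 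This step-$2$ descent is the main obstacle; everything else in the argument is formal.

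Finally, I would pass from $H_i$ to $G_i$ and assemble the factors. Setting $\tilde d_i := d_i^{1/t_i}$ and using $t_i \geq 1$, the exponent $1/t_i$ lies in $(0,1]$, so snowflaking preserves the triangle inequality; by Example~\ref{ex:homogeneous-qdist-kpower} the map $\tilde d_i$ is a homogeneous distance on the $t_i$-power $G_i$, and BCP carries over by Proposition~\ref{prop:BCP-kpower}. On the product $G$ I would take the max distance
\[
d\bigl((p_1,\ldots,p_k),(q_1,\ldots,q_k)\bigr) := \max_{1 \leq i \leq k} \tilde d_i(p_i,q_i),
\]
which is manifestly a left-invariant homogeneous distance with respect to the product dilations. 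WBCP for $d$ follows by iterating Theorem~\ref{thm:WBCP-productspaces}, and Corollary~\ref{cor:bcp-wbcp-homogeneous-qdist} then upgrades WBCP to BCP, completing the argument.
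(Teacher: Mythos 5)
Your proposal is correct and takes essentially the same approach as the paper: decompose via Proposition~\ref{prop:structure-commuting-layers}, handle step~$1$ and step~$2$ factors with Euclidean distances and Theorem~\ref{thm:BCP-free-step2} descended through submetries respectively, pass to $t_i$-powers noting $t_i \geq 1$ so that the snowflake exponent $1/t_i \in (0,1]$ preserves the triangle inequality, and assemble factors with the max distance via Theorem~\ref{thm:WBCP-productspaces}. The only cosmetic difference is that you inline the argument of Theorem~\ref{thm:BCP-stratified-step2} rather than citing it, but the chain of lemmas invoked is identical.
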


The proof of Theorem~\ref{thm:sec-BCP-commuting-layers} and Corollary~\ref{cor:sec-bcp-homogeneous-groups-commuting-layers} is divided into three steps. First, we consider stratified free-nilpotent Lie groups of step 2. We prove that for such groups, some homogeneous (quasi-)distances that satisfy BCP are those whose unit ball centered at the origin coincides with a Euclidean ball centered at the origin in exponential coordinates of the first kind associated to a canonical choice of basis of the Lie algebra, see Theorem~\ref{thm:BCP-free-step2}. Next, we prove the existence of homogeneous distances satisfying BCP for stratified groups of step 2, see Theorem~\ref{thm:BCP-stratified-step2}. These homogeneous distances are induced by homogeneous distances satisfying BCP on stratified free-nilpotent Lie groups of step 2 via submetries. Finally, the general case follows from Theorem~\ref{thm:BCP-stratified-step2} together with the structure property given by Proposition~\ref{prop:structure-commuting-layers}, and Theorem~\ref{thm:WBCP-productspaces}, see Section~\ref{subsect:BCP-commuting-layers}.

\subsection{Free-nilpotent groups of step 2}
 \label{subsect:BCP-free-step2}

Let $r\geq 2$ be an integer. We denote by $\mathbb F_{r2}$ the stratified free-nilpotent Lie group of step 2 and rank $r$ whose Lie algebra $\mathfrak{f}_{r2}$ is endowed with a given stratification
$\mathfrak f_{r2} = V \oplus W$ where $[V,V] = W$ and where
\begin{equation*}
\dim V = r  \; ,\quad \dim W = \frac{r(r-1)}{2}~.
\end{equation*}

We set $n:= \dim \mathfrak{f}_{r2}$. We fix a basis $(X_1,\dots,X_r)$ of $V$ and we set $X_{ij}:=[X_i,X_j]$. Then $(X_{ij})_{1\leq i <j \leq r}$ is a basis of $W$. Using exponential coordinates of the first kind associated to the basis $(X_1,\dots,X_r,(X_{ij})_{1\leq i <j \leq r})$ adapted to the given stratification of $\mathfrak{f}_{r2}$, we write $p\in \mathbb F_{r2}$ as $p = \exp (\sum_{i=1}^r p_i X_i + \sum_{1\leq i <j \leq r} p_{ij} X_{ij})$ and we identify $p$ with $(p_1,\dots,p_r,(p_{ij})_{1\leq i <j \leq r})=[v_p, w_p]$ where $v_p := (p_1,\dots,p_r)$ and $w_p:=(p_{ij})_{1\leq i <j \leq r}$. 

The group law is given by $v_{p\cdot q} = v_p + w_q$ and $w_{p\cdot q} = ((p\cdot q)_{ij})_{1\leq i <j \leq r}$ where
\begin{equation} \label{e:grouplaw-Fr2}
 (p\cdot q)_{ij} = p_{ij}+q_{ij} +\dfrac{1}{2}( p_i q_j
- q_i p_j )
\end{equation}
for $1\leq i <j \leq r$. The identity element is the origin.

The associated dilations are given by 
\begin{equation*}
\delta_\lambda(p) = (\lambda v_p ,\lambda^2 w_p).
\end{equation*}

We denote by the same notations $\norm{\cdot}$ and $\langle \cdot,\cdot \rangle$ the Euclidean norm and scalar product in $\R^n$, $\R^r$ and $\R^{n-r}$ with respect to our choice of coordinates. Equivalently, we equip $\mathfrak{f}_{r2}$ with a Euclidean structure for which our chosen basis $((X_1,\dots,X_r),(X_{ij})_{1\leq i <j \leq r})$ is orthonormal and we consider on $V$ and $W$ the induced Euclidean structures.

For $R>0$, we consider the homogeneous quasi-distance $d$ on $\mathbb F_{r2}$ whose unit ball centered at the origin is given by 
\begin{equation}\label{eq:ball:R}
B_d(0,1) := \{ p\in \mathbb F_{r2}\,;\;
\norm{p}^2 \leq R^2\}~.
\end{equation}
Such a quasi-distance is well-defined and is continuous, see Example~\ref{ex:qdist-hebsich_sikora} (we drop here the index $R$ for simplicity of notations). Recall also that it follows from \cite{Hebisch-Sikora} (see also Example~\ref{ex:hebisch-sikora-distance}) that $d$ is a distance whenever $R<R^*$ for some $R^*>0$.

This section is devoted to the proof of the validity of BCP on $\mathbb F_{r2}$ equipped with such a quasi-distance.

\begin{theorem} \label{thm:BCP-free-step2}
Let $R>0$ be fixed. Let $d$ be the homogeneous quasi-distance on $\mathbb F_{r2}$ whose unit ball centered at the origin is given by~\eqref{eq:ball:R}. Then BCP  holds on $(\mathbb F_{r2},d)$.
\end{theorem}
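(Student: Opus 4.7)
The plan is to establish the Weak Besicovitch Covering Property (WBCP) on $(\mathbb F_{r2},d)$, which by Corollary~\ref{cor:bcp-wbcp-homogeneous-qdist} is equivalent to BCP since $(\mathbb F_{r2},d)$ is doubling. Let $\mathcal{B}=\{B_d(p_k,r_k)\}_{k=1}^N$ be a family of Besicovitch balls. By left-invariance of $d$, I translate so that $0\in\bigcap_k B_d(p_k,r_k)$, and by the dilations $\delta_\lambda$ I rescale so that $\max_k r_k=1$. Using the description~\eqref{eq:ball:R} of the unit ball together with the group law~\eqref{e:grouplaw-Fr2}, the conditions $0\in B_d(p_k,r_k)$ and $p_k\notin B_d(p_j,r_j)$ for $j\neq k$ translate into the algebraic inequalities
\begin{equation*}
r_k^2\|v_{p_k}\|^2+\|w_{p_k}\|^2\leq R^2 r_k^4\qquad\text{for all }k,
\end{equation*}
and, for all $j\neq k$,
\begin{equation*}
r_j^2\|v_{p_k}-v_{p_j}\|^2+\bigl\|w_{p_k}-w_{p_j}-\tfrac{1}{2}\,v_{p_j}\wedge v_{p_k}\bigr\|^2>R^2 r_j^4,
\end{equation*}
where $(v\wedge v')_{ab}:=v_av'_b-v_bv'_a$ for $a<b$ encodes the vertical component of $p_j^{-1}\cdot p_k$.

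Next I would argue by contradiction, assuming the existence of a sequence of configurations with $N_\nu\to+\infty$, each normalized as above. Since the centers lie in the compact Euclidean ball of radius $R$ in exponential coordinates, after pigeonholing on the radii (grouping centers with comparable $r_k$) and passing to subsequences, I would extract a limit configuration. In this limit, the strict pairwise separation inequalities degenerate to non-strict ones while the containment inequalities are preserved. The strict convexity of the Euclidean ball $\{\|\cdot\|\leq R\}$ renders the equality cases rigid, which forces distinct limiting centers to coincide in the horizontal layer $V$; combined with the explicit form of the coupling between $V$ and $W$, this yields a uniform a priori bound on the number of centers per scale and, summed over scales, produces a bound independent of $N$ contradicting $N_\nu\to+\infty$.

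The main obstacle, and where the approach must differ from the Heisenberg proof of \cite[Theorem~1.14]{LeDonne_Rigot_Heisenberg_BCP}, is the treatment of the wedge term $v_{p_j}\wedge v_{p_k}$ coupling horizontal and vertical coordinates. For $r=2$ this wedge is scalar-valued, so the extremal equality configurations in the limiting step can be classified directly. For $r\geq 3$ the wedge lives in $W\cong\R^{r(r-1)/2}$, so the rigidity analysis must instead be performed by projecting onto each two-dimensional horizontal subspace spanned by $(v_{p_j},v_{p_k})$ and controlling the resulting collection of Heisenberg-type constraints simultaneously. Making these projections compatible with one another, and then patching the per-plane bounds into a single uniform bound on $N$, is the delicate step that substitutes for the direct determinantal argument available in the rank-$2$ case.
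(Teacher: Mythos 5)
Your setup is correct: the reduction to WBCP via Corollary~\ref{cor:bcp-wbcp-homogeneous-qdist}, the normalization by left-translations and dilations, and the algebraic re-encoding of the containment and separation constraints using the group law and the form of the unit ball all check out. The genuine gap is in the compactness/contradiction step, which does not close. Passing to a subsequential limit of configurations with $N_\nu\to\infty$ produces a limit configuration with non-strict separations, but this gives no control whatsoever on $N_\nu$: arbitrarily many of the centers may collapse onto a single limit point, so even a correct rigidity statement about the limit does not bound the cardinality of the approximating families. The proposed "pigeonhole on radii / sum over scales" is also circular, since you have no a priori bound on the number of distinct scales that can appear in a Besicovitch family. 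Finally, "strict convexity of the Euclidean ball" is invoked to force limiting centers to coincide in the horizontal layer, but the unit ball being a Euclidean ball in exponential coordinates does not make the group metric Euclidean, so there is no direct strict-convexity rigidity available to transfer.

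The paper's proof takes a direct, explicit route with no contradiction and no compactness. The key observation is that the angles $\angle(v_p,v_q)$ and $\angle(w_p,w_q)$ between horizontal and vertical projections, and the parabolic regions $\mathcal P_a:=\{R\norm{w_p}>a\norm{v_p}^2\}$, are all invariant under dilations. A Besicovitch family (normalized so that $0$ is a common point and $r_i=d(0,p_i)$) is pigeonholed first by the direction of $v_{p_i}$, then by the direction of $w_{p_i}$, then into the three regions $\mathcal P_{a'}$, $\mathcal P_a\setminus\mathcal P_{a'}$, $\mathcal P_a^c$ with $a=0.9$, $a'=1.9$; Lemmas~\ref{lemma:away}, \ref{lemma:near2a}, \ref{lemma:inbetween} then show that any two centers falling in the same cell violate the Besicovitch separation. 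This gives the explicit bound $\card\mathcal B\le 3N^2$, where $N$ is a finite angle-packing constant. The wedge term you identify as the main obstacle is handled at once by the small-angle estimate~\eqref{eq:area}, which bounds every component $\lvert p_iq_j-q_ip_j\rvert$ by $\eps\norm{v_p}\norm{v_q}$ simultaneously for all $1\le i<j\le r$, so no projection onto two-dimensional planes and no patching of per-plane bounds is needed.
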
 

From now on in this section, we let $R>0$ be fixed and $d$ denotes the homogeneous quasi-distance on $\mathbb F_{r2}$ whose unit ball centered at the origin is given by~\eqref{eq:ball:R}. We set $B:=B_d(0,1)$. We begin with a series of remarks for later use. 

First, for $p\in \mathbb F_{r2}$, we define the function $A_p:\mathbb F_{r2}\rightarrow \R$ by
\begin{equation}\label{def:A}
A_p(q) := \norm{q}^2 - 2 \langle p,q\rangle
+ \sum_{1\leq i <j \leq r} (  p_{ij} - q_{ij})(p_i q_j - q_i p_j  ) +\frac{1}{4}(p_i q_j - q_i p_j )^2.
\end{equation}

\begin{lemma} 
\label{lemma:Aq}
Let $p \in \partial B$. Then $q\in B_d(p,1)$ if and only if $A_p(q)\leq 0$.
\end{lemma}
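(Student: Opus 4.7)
The plan is to unfold the condition $q\in B_d(p,1)$ into an inequality in exponential coordinates using left-invariance of $d$ and then to check that the resulting inequality is precisely $A_p(q)\leq 0$, using the hypothesis $p\in\partial B$ (i.e.\ $\|p\|^2=R^2$) to cancel a quadratic term.

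\textbf{Step 1 (Reduction via left-invariance).} By definition $d(p,q)=d(0,p^{-1}\cdot q)$, so $q\in B_d(p,1)$ if and only if $p^{-1}\cdot q\in B_d(0,1)$, i.e.\ $\|p^{-1}\cdot q\|^2\leq R^2$. Since we assume $p\in\partial B$ we have $\|p\|^2=R^2$, hence $q\in B_d(p,1)$ iff
\begin{equation*}
\|p^{-1}\cdot q\|^2-\|p\|^2\leq 0.
\end{equation*}

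\textbf{Step 2 (Inversion in exponential coordinates).} From the group law~\eqref{e:grouplaw-Fr2} and the fact that the identity is the origin, I check directly that $p^{-1}=-p$ componentwise: the horizontal components are obviously opposite, and substituting into the formula for $(p\cdot p^{-1})_{ij}$ the antisymmetry of $p_ip_j'-p_i'p_j$ (with $p'=-p$) makes that term vanish, so $(p^{-1})_{ij}=-p_{ij}$.

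\textbf{Step 3 (Computation of $p^{-1}\cdot q$).} Applying~\eqref{e:grouplaw-Fr2} again, the horizontal components of $p^{-1}\cdot q$ are $q_i-p_i$, and the vertical components are
\begin{equation*}
(p^{-1}\cdot q)_{ij}=q_{ij}-p_{ij}-\tfrac{1}{2}(p_iq_j-q_ip_j).
\end{equation*}

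\textbf{Step 4 (Squaring and matching).} Expanding $\|p^{-1}\cdot q\|^2$ as the sum of squares of the above expressions yields
\begin{align*}
\|p^{-1}\cdot q\|^2 &= \sum_i(q_i-p_i)^2 + \sum_{i<j}\bigl(q_{ij}-p_{ij}-\tfrac{1}{2}(p_iq_j-q_ip_j)\bigr)^2\\
&= \|p\|^2+\|q\|^2-2\langle p,q\rangle \\
&\quad +\sum_{i<j}\bigl[(p_{ij}-q_{ij})(p_iq_j-q_ip_j)+\tfrac{1}{4}(p_iq_j-q_ip_j)^2\bigr].
\end{align*}
Subtracting $\|p\|^2$ from both sides gives exactly $A_p(q)$ as defined in~\eqref{def:A}, so the condition $\|p^{-1}\cdot q\|^2\leq\|p\|^2$ becomes $A_p(q)\leq 0$, which is the claim.

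There is no real obstacle: the lemma is a coordinate computation, and the only subtlety is to read off $p^{-1}=-p$ carefully from the Baker--Campbell--Hausdorff group law in step~2 so that the cross term $\tfrac12(p_iq_j-q_ip_j)$ appears with the correct sign in step~3. The point of introducing $A_p$ in this form is that the hypothesis $p\in\partial B$ is exactly what eliminates the $\|p\|^2$ term and produces the homogeneous-of-degree-two-in-$q$ expression in~\eqref{def:A}, which will be convenient for comparing two such conditions $A_p(q)\leq 0$ and $A_q(p)\leq 0$ in subsequent Besicovitch arguments.
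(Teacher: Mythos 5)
Your proof is correct and follows exactly the approach the paper intends: unwind $q\in B_d(p,1)$ via left-invariance to $\|p^{-1}\cdot q\|^2\leq R^2=\|p\|^2$, compute $p^{-1}\cdot q$ from the group law~\eqref{e:grouplaw-Fr2}, and expand the squares to recognize $A_p(q)$. The paper's own proof is just a terser statement of the same computation, which you have carried out in full (and correctly, including the sign of the cross term in $(p^{-1}\cdot q)_{ij}$).
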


\begin{proof}
Let $p \in \partial B$. We have $\norm{p}^2 = \norm{v_p}^2+\norm{w_p}^2 = R^2$ and $q\in B_d(p,1)$ if and only if $\norm{p^{-1}\cdot q}^2 = \norm{v_{p^{-1}\cdot q }}^2 +\norm{w_{p^{-1}\cdot q }}^2 \leq R^2$.
Then the lemma follows from the specific form of the group law given in~\eqref{e:grouplaw-Fr2}.
\end{proof}

Next, we denote by $\angle ( \cdot,\cdot)$ the (non-oriented) angle $\in [0,\pi]$ between two vectors in a Euclidean space. In the proof of Theorem~\ref{thm:sec-BCP-commuting-layers}, we are going to look at points $p$, $q$ for which the angles  $\angle ( v_p,v_q)  $ and $\angle ( w_p,w_q)$ are small. These two angles are dilation invariant. Namely, we have
\begin{equation} \label{e:angle-dilation-invariant}
\angle ( v_{\delta_\lambda(p)},v_{\delta_\lambda(q)}) = \angle ( v_p,v_q)  \quad \text{and} \quad 
\angle ( w_{\delta_\lambda(p)},w_{\delta_\lambda(q)}) = \angle ( w_p,w_q)
\end{equation}
for all $p$, $q \in \mathbb F_{r2}$ and all $\lambda>0$.
Note that, on the contrary, the angle $\angle (p,q)$ is  not dilation invariant. 

\begin{lemma} \label{lem:small-angles}
For all $\eps>0$, there exists $\delta>0$ such that, for all $p$, $q\in \mathbb F_{r2}$, if $\angle ( v_p,v_q)<\delta $
and $\angle ( w_p,w_q)<\delta $, then 
\begin{equation}\label{eq:area}
|p_i q_j -   q_i p_j|    \leq \eps \norm{v_p} \norm{v_q}
\end{equation}
for all $1\leq i <j \leq r$,
\begin{equation}\label{eq:scalar1}
\langle v_p , v_q\rangle   \geq (1-\eps)  \norm{v_p} \norm{v_q} 
\end{equation}
and 
\begin{equation}\label{eq:scalar2}
\langle w_p , w_q\rangle   \geq (1-\eps)  \norm{w_p} \norm{w_q}~.
\end{equation}
\end{lemma}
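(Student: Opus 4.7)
The plan is to derive all three inequalities by elementary Euclidean geometry, separately from one another; the group structure of $\mathbb F_{r2}$ plays no role here. The second and third inequalities are essentially the definition of the cosine of an angle, while the first expresses that a single coordinate of the wedge product $v_p\wedge v_q$ is dominated by the norm of that wedge.

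For the inequalities $\langle v_p,v_q\rangle\geq(1-\eps)\norm{v_p}\norm{v_q}$ and $\langle w_p,w_q\rangle\geq(1-\eps)\norm{w_p}\norm{w_q}$, I would use the identity $\langle u,v\rangle=\norm{u}\norm{v}\cos\angle(u,v)$ valid in any Euclidean space. Pick $\delta_1\in(0,\pi/2)$ such that $\cos\delta_1\geq 1-\eps$: then as soon as $\angle(v_p,v_q)<\delta_1$, respectively $\angle(w_p,w_q)<\delta_1$, the desired bound follows, since the map $\theta\mapsto\cos\theta$ is decreasing on $[0,\pi/2]$.

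For $|p_iq_j-q_ip_j|\leq\eps\norm{v_p}\norm{v_q}$, I would invoke Lagrange's identity in $\R^r$,
\[
\sum_{1\leq i<j\leq r}(p_iq_j-q_ip_j)^2 \;=\; \norm{v_p}^2\norm{v_q}^2-\langle v_p,v_q\rangle^2 \;=\; \norm{v_p}^2\norm{v_q}^2\sin^2\angle(v_p,v_q),
\]
so that each individual summand on the left is bounded by $\norm{v_p}^2\norm{v_q}^2\sin^2\angle(v_p,v_q)$. Picking $\delta_2\in(0,\pi/2)$ with $\sin\delta_2\leq\eps$, the bound $\angle(v_p,v_q)<\delta_2$ then yields the desired inequality for every pair $1\leq i<j\leq r$.

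Taking $\delta:=\min(\delta_1,\delta_2)$ simultaneously enforces all three conclusions. There is no serious obstacle in this lemma; it is really a continuity statement for trigonometric functions at $0$. I note in passing that the hypothesis on $\angle(w_p,w_q)$ is used only for \eqref{eq:scalar2}, while \eqref{eq:area} and \eqref{eq:scalar1} follow from the hypothesis on $\angle(v_p,v_q)$ alone.
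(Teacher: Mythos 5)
Your proof is correct. For \eqref{eq:scalar1} and \eqref{eq:scalar2} you take essentially the same route as the paper: the cosine formula $\langle a,b\rangle=\norm{a}\norm{b}\cos\angle(a,b)$ together with continuity of cosine at $0$. For \eqref{eq:area} your argument differs, and is in fact cleaner and more careful than the paper's. The paper estimates $|p_iq_j-q_ip_j|$ as the area of the parallelogram spanned by the $2$-dimensional projections $(p_i,p_j)$ and $(q_i,q_j)$, and then asserts that the angle between these projected vectors is at most $\angle(v_p,v_q)$. This monotonicity of angle under coordinate projection is not true in general: for instance $v_p=(1,\eta,0)$ and $v_q=(1,0,\eta)$ in $\R^3$ have angle tending to $0$ as $\eta\to 0$, while their projections onto the $(2,3)$-plane, namely $(\eta,0)$ and $(0,\eta)$, are always orthogonal. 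The intended bound is of course still true, because when a projected angle is large the projected lengths must be correspondingly small, and Lagrange's identity
\[
\sum_{1\leq i<j\leq r}(p_iq_j-q_ip_j)^2 = \norm{v_p}^2\norm{v_q}^2-\langle v_p,v_q\rangle^2=\norm{v_p}^2\norm{v_q}^2\sin^2\angle(v_p,v_q)
\]
makes this compensation precise in a single line, bounding each nonnegative summand by the right-hand side. So your proof both shortcuts and repairs the paper's argument for \eqref{eq:area}. Your closing observation, that the hypothesis on $\angle(w_p,w_q)$ is used only for \eqref{eq:scalar2}, is also correct and worth noting.
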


\begin{proof}
To prove~\eqref{eq:area}, note that $|p_i q_j -   q_i p_j|$ 
represents the area of the planar quadrilateral generated by the 2-dimensional vectors 
$(p_i ,p_j)$ and $(q_i ,q_j)$.
For fixed length of these vectors, this area goes to zero when the angle between the two vectors goes to zero.
This angle is smaller than $\angle (v_p , v_q)$, and the length of the two vectors is bounded by 
$ \norm{v_p} $ and $ \norm{v_q}$, respectively. This implies~\eqref{eq:area}.  
To prove~\eqref{eq:scalar1} and \eqref{eq:scalar2}, note that if $a$ and $b$ are two vectors in a Euclidean space such that $\angle (a,b) < \pi / 2 $, then $\langle a,b\rangle $ is positive and 
represents the product between $\norm{a}$ and the norm of the orthogonal projection of $b$ onto $a$. In particular, if  
$\angle (a,b)$ goes to 0,
this product converges 
to  $\norm{a}\norm{b}$. 
\end{proof}

\begin{lemma} Let $\eps>0$ and $p$, $q \in B$ be such that \eqref{eq:area},  \eqref{eq:scalar1} and \eqref{eq:scalar2} hold. Let $A_p(q)$ be defined by~\eqref{def:A}. Then we have
\begin{align}
A_p(q) &\leq \norm{v_q}^2+  \norm{w_q}^2 - 2 (1-\eps)  \norm{v_p} \norm{v_q} - 2 (1-\eps)  \norm{w_p} \norm{w_q}  \label{e:bound1-Ap(q)}\\
& \phantom{\leq \norm{v_q}^2+  \norm{w_q}^2 - 2 (1-\eps)  \norm{v_p} \norm{v_q} } + 2 r^2 R \eps  \norm{v_p}  \norm{v_q} +\frac{r^2}{4}\eps^2 
R^2  \norm{v_p}  \norm{v_q} \notag \\
&\leq  \norm{v_q}^2+ \norm{w_q}^2 - 2 (1-\eps)  \norm{v_p} \norm{v_q} - 2 (1-\eps)  \norm{w_p} \norm{w_q}  \label{e:bound2-Ap(q)}\\
&\phantom{\leq \norm{v_q}^2+  \norm{w_q}^2 - 2 (1-\eps)  \norm{v_p} \norm{v_q}- 2 (1- } +2 r^2 R^2 \eps  \norm{v_q} +\frac{r^2}{4}\eps^2 
R^3 \norm{v_q}~. \notag
\end{align}
\end{lemma}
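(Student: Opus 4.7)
The proof is a direct computation starting from the definition~\eqref{def:A}. First, I would split the leading terms according to the layer decomposition, writing $\norm{q}^2 = \norm{v_q}^2 + \norm{w_q}^2$ and $\langle p,q\rangle = \langle v_p,v_q\rangle + \langle w_p,w_q\rangle$. Applying the lower bounds~\eqref{eq:scalar1} and~\eqref{eq:scalar2} immediately yields
\begin{equation*}
-2\langle p,q\rangle \leq -2(1-\eps)\norm{v_p}\norm{v_q} - 2(1-\eps)\norm{w_p}\norm{w_q},
\end{equation*}
which gives the first four terms on the right-hand side of~\eqref{e:bound1-Ap(q)}.

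Next I would estimate the remaining sum over pairs $i<j$. Using that $p,q\in B$ forces $|p_{ij}|, |q_{ij}| \leq \norm{w_p}, \norm{w_q} \leq R$, we obtain $|p_{ij} - q_{ij}| \leq 2R$. Combined with the area estimate~\eqref{eq:area}, each mixed term satisfies
\begin{equation*}
|(p_{ij}-q_{ij})(p_i q_j - q_i p_j)| \leq 2R\eps\norm{v_p}\norm{v_q},
\end{equation*}
and since there are $r(r-1)/2 \leq r^2$ pairs, the total contribution is bounded by $2r^2 R\eps\norm{v_p}\norm{v_q}$. For the quadratic correction, \eqref{eq:area} gives $\frac{1}{4}(p_iq_j - q_ip_j)^2 \leq \frac{\eps^2}{4}\norm{v_p}^2\norm{v_q}^2$, and using $\norm{v_p}\norm{v_q}\leq R^2$ to trade one factor of $\norm{v_p}\norm{v_q}$ for $R^2$, the sum is bounded by $\frac{r^2}{4}\eps^2 R^2 \norm{v_p}\norm{v_q}$. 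Collecting these yields~\eqref{e:bound1-Ap(q)}.

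Finally, \eqref{e:bound2-Ap(q)} follows by further bounding $\norm{v_p}\leq R$ in the two error terms of~\eqref{e:bound1-Ap(q)}, converting $\norm{v_p}\norm{v_q}$ into $R\norm{v_q}$ in each. There is no real obstacle here: every step is an application of Cauchy--Schwarz-type control together with the hypotheses of the lemma and the membership of $p,q$ in $B$; the only mild bookkeeping subtlety is choosing to keep one factor of $\norm{v_p}\norm{v_q}$ in~\eqref{e:bound1-Ap(q)} (needed later for sharp comparison) while freely bounding the other factor by $R^2$.
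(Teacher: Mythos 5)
Your proof is correct and follows the same route as the paper: split $\norm{q}^2$ and $\langle p,q\rangle$ into first- and second-layer parts, apply \eqref{eq:scalar1}–\eqref{eq:scalar2} to the cross terms, and control the remaining sum term-by-term via \eqref{eq:area} together with the bounds $|p_{ij}|,|q_{ij}|\le R$, $\norm{v_p}\norm{v_q}\le R^2$ and $\dim W\le r^2$. You simply make explicit the "simple observations" that the paper's proof leaves compressed, and the passage from \eqref{e:bound1-Ap(q)} to \eqref{e:bound2-Ap(q)} via $\norm{v_p}\le R$ is exactly as intended.
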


\begin{proof} By definition of $A_p(q)$, we have
\begin{multline*}
A_p(q)  = \norm{v_q}^2+ \norm{w_q}^2 - 2 \langle v_p,v_q\rangle - 2 \langle w_p,w_q\rangle \\
 + \sum_{1\leq i <j \leq r} (  p_{ij} - q_{ij} )(p_i q_j -   q_i p_j) +\frac{1}{4}(p_i q_j -   q_i p_j)^2~.
\end{multline*}
Then~\eqref{e:bound1-Ap(q)} and~\eqref{e:bound2-Ap(q)} follow from  \eqref{eq:area}, \eqref{eq:scalar1}
 and \eqref{eq:scalar2} together with 
the following simple observations. First, $\dim W \leq r^2$, which is used to bound the number of terms in the sum. Second, since $p$, $q \in B$, we have $\norm{v_p} ,  \norm{w_p}, \norm{v_q} ,  \norm{w_q} \leq R$ , which is used to bound some of the terms.
\end{proof} 

Next, we will consider in the proof of Theorem~\ref{thm:BCP-free-step2} the following parabolic regions. For $a>0$, we set 
\begin{equation*}
\mathcal P_a:= \{p\in\mathbb F_{r2} \; ;\;  R \norm{w_p} > a \norm{v_p}^2 \}.
\end{equation*}

These regions, as well as their complement, are invariant under dilations. Namely,
\begin{equation} \label{e:parabolic-dilation-invariant}
\delta_\lambda (\mathcal P_a) = \mathcal P_a \quad \text{and} \quad \delta_\lambda (\mathcal P_a^c) = \mathcal P_a^c
\end{equation}
for all $a>0$ and all $\lambda >0$.

\begin{lemma} We have the following bounds.

(i) Let $p\in B$. If $p \notin \mathcal P_a$,
then 
\begin{equation}\label{eq:bound21}
\norm{w_p} \leq  \frac{R}{2a}\left(\sqrt{1+4a^2} -1 \right)  .
\end{equation}

(ii) Let $p\in \partial B$. If $p\in \mathcal P_a$, 
then 
\begin{equation}\label{eq:bound1}
\norm{w_p} \geq  \frac{R}{2a}\left(\sqrt{1+4a^2} -1\right )  .
\end{equation}

(iii) Let $p\in \partial B$. If $p\notin \mathcal P_a$, 
then  
\begin{equation}\label{eq:bound22}
\norm{v_p} \geq \frac{R}{a}  \sqrt{ \frac{      \sqrt{1+4a^2}  -1   }{2} } .
\end{equation} 
\end{lemma}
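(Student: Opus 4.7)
The plan is to reduce each of the three bounds to a one-variable quadratic inequality. Writing $x := \|v_p\|$ and $y := \|w_p\|$, the unit-ball condition~\eqref{eq:ball:R} gives $p \in B \iff x^2 + y^2 \leq R^2$ (with equality on $\partial B$), and the defining condition of the parabolic region reads $p \in \mathcal{P}_a \iff Ry > ax^2$. All three assertions involve only these two scalar quantities, so the geometry of the group plays no role beyond that.

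For (i), combining $p \in B$ and $p \notin \mathcal{P}_a$ yields $x^2 + y^2 \leq R^2$ and $x^2 \geq Ry/a$, so
\begin{equation*}
y^2 + \frac{R}{a}\,y - R^2 \leq 0.
\end{equation*}
The positive root of the associated equation is $y_* := \frac{R}{2a}\bigl(\sqrt{1+4a^2}-1\bigr)$, whence $y \leq y_*$, which is exactly~\eqref{eq:bound21}. For (ii), the opposite assumptions $Ry > ax^2$ and $x^2 + y^2 = R^2$ give the reverse inequality $y^2 + Ry/a - R^2 > 0$, so $y > y_* \geq 0$, and in particular $y \geq y_*$, proving~\eqref{eq:bound1}.

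For (iii), from $p \notin \mathcal{P}_a$ we have $y \leq a x^2 / R$, hence $y^2 \leq a^2 x^4 / R^2$, and the boundary relation $x^2 + y^2 = R^2$ then forces
\begin{equation*}
\frac{a^2}{R^2}\,(x^2)^2 + x^2 - R^2 \geq 0.
\end{equation*}
The positive root of this quadratic in $x^2$ is $R^2(\sqrt{1+4a^2}-1)/(2a^2)$, so $x^2$ is at least that value, and taking square roots yields~\eqref{eq:bound22}. The whole argument is purely algebraic; the only mild care required is to track the direction of the inequalities and to pick the positive root in each case, so I do not expect any real obstacle.
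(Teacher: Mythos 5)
Your proof is correct and follows essentially the same route as the paper: reduce each claim to a quadratic inequality in $\|w_p\|$ or $\|v_p\|^2$ and solve for the positive root. The notation $x,y$ is a cosmetic variation, not a different argument.
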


\begin{proof}[Proof of $(i)$]
From the assumptions, we have
$$  \norm{w_p}^2 +   \dfrac{R}{a}  \norm{w_p}  \leq   \norm{w_p}^2 +\norm{v_p}^2\leq R^2.$$
Hence $ a \norm{w_p}^2 +   R \norm{w_p} - a R^2\leq 0$ and $\norm{w_p}\geq 0$, which is equivalent to 
\begin{equation*}
0 \leq \norm{w_p} \leq \frac{R}{2a} \left(-1 +\sqrt{1+4a^2} \right)
\end{equation*}
and gives~\eqref{eq:bound21}. \end{proof}

\begin{proof}[Proof of $(ii)$]
From the assumptions, we have
$$  \norm{w_p}^2 +   \dfrac{R}{a}  \norm{w_p} >   \norm{w_p}^2 +\norm{v_p}^2= R^2.$$
Hence $ a \norm{w_p}^2 +  R  \norm{w_p} -a R^2> 0$ and $\norm{w_p}\geq 0$, which is equivalent to 
\begin{equation*}
\norm{w_p} > \frac{R}{2a} \left(-1 +\sqrt{1+4a^2} \right)
\end{equation*}
and implies~\eqref{eq:bound1}. \end{proof}

\begin{proof}[Proof of $(iii)$]
From the assumptions, we have
$$  \dfrac{a^2}{R^2}   \norm{v_p}^4+ \norm{v_p}^2    \geq  
 \norm{w_p}^2 +\norm{v_p}^2= R^2.$$
Hence  $  a^2 \norm{v_p}^4 + R^2\norm{v_p}^2 - R^4\geq 0$, which is equivalent to 
\begin{equation*}
\norm{v_p}^2 \geq   \frac{R^2}{ 2a^2} \left(-1 + \sqrt{1+4a^2} \right) 
\end{equation*}
and gives~\eqref{eq:bound22}. \end{proof}

To prove Theorem~\ref{thm:BCP-free-step2}, we are going to partition $\mathbb{F}_{r2}$ into three disjointed regions,
$$\mathbb{F}_{r2} = \mathcal P_{a'} \sqcup (\mathcal P_a \setminus \mathcal P_{a'}) \sqcup \mathcal P_a^c~,$$
for some suitable choice of $0<a<a'$. 

The next three lemmas show that, for a suitable choice of $0<a<a'$, if $p$ and $q$ are two points for which the angles  $\angle ( v_p,v_q)  $ and $\angle ( w_p,w_q)$ are small and both belong to one of these regions, then either $q\in B_d(p,d(0,p))$ or $p\in B_d(q,d(0,q))$. We first consider the case where $p, q \in \mathcal P_a^c$.

\begin{lemma} \label{lemma:away}
Let $a=0.9$. There exists 
$\delta> 0$ such that,  
if $p, q \in \mathcal P_a^c$ are such that $\angle ( v_p,v_q)<\delta $ and $\angle ( w_p,w_q)<\delta $, then $q\in B_d(p,d(0,p))$ or $p\in B_d(q,d(0,q))$.
\end{lemma}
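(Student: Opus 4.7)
Without loss of generality (by the symmetry of the conclusion) I assume $d(0,p)\geq d(0,q)$ and aim to prove $q\in B_d(p,d(0,p))$. Setting $\lambda:=d(0,p)$—the case $\lambda=0$ being trivial—and $\tilde p:=\delta_{1/\lambda}(p)\in\partial B$, $\tilde q:=\delta_{1/\lambda}(q)\in B$, the dilation invariances \eqref{e:parabolic-dilation-invariant} and \eqref{e:angle-dilation-invariant} transfer the hypotheses on $p,q$ to $\tilde p,\tilde q$. By the left-invariance and homogeneity of $d$ it suffices to prove $d(\tilde p,\tilde q)\leq 1$, equivalently $A_{\tilde p}(\tilde q)\leq 0$ by Lemma~\ref{lemma:Aq}. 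The strategy will then be to bound $A_{\tilde p}(\tilde q)$ quantitatively, exploiting both the angular smallness via Lemma~\ref{lem:small-angles} and the parabolic constraint defining $\mathcal P_a^c$.

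For $\eps>0$ to be chosen later, I take the $\delta>0$ produced by Lemma~\ref{lem:small-angles}. Combining \eqref{e:bound2-Ap(q)} with $\norm{v_{\tilde p}},\norm{w_{\tilde p}}\leq R$ and the parabolic bound $\norm{w_{\tilde q}}\leq a\norm{v_{\tilde q}}^{2}/R\leq a\norm{v_{\tilde q}}$, valid in $\mathcal P_a^c\cap B$, one obtains
\begin{equation*}
A_{\tilde p}(\tilde q)\leq M+C_\eps\,\norm{v_{\tilde q}},
\end{equation*}
with $C_\eps\to 0$ as $\eps\to 0$ and
\begin{equation*}
M:=\norm{v_{\tilde q}}^2+\norm{w_{\tilde q}}^2-2\norm{v_{\tilde p}}\norm{v_{\tilde q}}-2\norm{w_{\tilde p}}\norm{w_{\tilde q}}.
\end{equation*}
The key algebraic observation is that, writing $P:=(\norm{v_{\tilde p}},\norm{w_{\tilde p}})$ and $Q:=(\norm{v_{\tilde q}},\norm{w_{\tilde q}})$ as points of $\R^2$, the relation $|P|^2=R^2$ (from $\tilde p\in\partial B$) implies $M=|P-Q|^2-R^2$.

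It remains to establish the quantitative estimate $M\leq -cR\,\norm{v_{\tilde q}}$ for some constant $c=c(a,R)>0$. Both $P$ and $Q$ lie in the planar region
\begin{equation*}
\Omega:=\{(x,y)\in\R^2 : x,y\geq 0,\; x^2+y^2\leq R^2,\; Ry\leq ax^2\},
\end{equation*}
while $|P|=R$ forces $P_1\geq x^*$, where $(x^*,y^*)$ is the unique point in the first quadrant with $(x^*)^2+(y^*)^2=R^2$ and $Ry^*=a(x^*)^2$. Writing $M=Q_1(Q_1-2P_1)+Q_2(Q_2-2P_2)\leq Q_1(Q_1-2P_1)+Q_2^2$, using $P_2Q_2\geq 0$, I would split according to whether $Q_1\leq x^*$ or $Q_1\geq x^*$: in the first case $Q_2^2\leq a^2Q_1^4/R^2$ together with $P_1\geq x^*$ and $Q_1\leq x^*$ yield $M\leq Q_1 x^*\bigl(a^2(x^*/R)^2-1\bigr)$; in the second case $Q_2^2\leq R^2-Q_1^2$ gives $M\leq R^2-2(x^*)^2$, hence $M\leq -\bigl(2(x^*/R)^2-1\bigr)RQ_1$ since $Q_1\leq R$. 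Each bound has the form $-c_iRQ_1$, and the positivity of $c_1,c_2$ reduces to the single condition $a<\sqrt{2}$, which is amply satisfied by $a=0.9$. Choosing finally $\eps$ small enough that $C_\eps<\min(c_1,c_2)R$ delivers $A_{\tilde p}(\tilde q)\leq 0$.

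The main obstacle will be the last geometric estimate on $M$. The equality $|P-Q|=R$ does occur, precisely at $Q=(0,0)$, corresponding to the boundary case $q=0$ where $d(p,q)=d(0,p)$; hence no bound strictly below $0$ is possible uniformly over $\Omega$, and one must extract a linear decay of the form $-cR\norm{v_{\tilde q}}$ measuring the distance of $Q$ from this exceptional point. The numerical value $a=0.9$ is chosen precisely to meet $a<\sqrt{2}$, equivalently both $a^2(x^*/R)^2<1$ and $2(x^*/R)^2>1$, the two positivity conditions required for $c_1$ and $c_2$ respectively.
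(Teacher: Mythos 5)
Your proof is correct and takes a genuinely different route from the paper's. The paper bounds each term of $A_p(q)$ in \eqref{e:bound2-Ap(q)} individually --- $\norm{v_q}^2 \leq R\norm{v_q}$ always, $\norm{w_q}^2 \leq \frac{R}{2a}(\sqrt{1+4a^2}-1)\, a\norm{v_q}$ via both \eqref{eq:bound21} and the parabolic constraint, $\norm{v_p}\geq x^*$ via \eqref{eq:bound22}, and drops $-\norm{w_p}\norm{w_q}$ --- and collects everything into a single explicit expression whose negativity is checked numerically at $a=0.9$ (this requires roughly $a<1$; e.g.\ $a=1$ fails the paper's inequality). Your argument instead recasts the main term $M$ as $|P-Q|^2-R^2$ for $P,Q$ in the two-dimensional norm-pair region $\Omega$, observes that $|P|=R$ and the parabolic constraint force $P_1\geq x^*$, and then extracts the linear decay in $Q_1=\norm{v_{\tilde q}}$ by splitting on $Q_1\lessgtr x^*$: in the first regime you use the quartic parabolic bound $Q_2^2\leq a^2Q_1^4/R^2$ tightly, and in the second you use $Q_2^2\leq R^2-Q_1^2$ rather than the paper's weaker $\norm{v_q}^2\leq R\norm{v_q}$. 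The two positivity requirements you obtain, $a^2(x^*/R)^2<1$ and $2(x^*/R)^2>1$, are each equivalent to $a<\sqrt 2$, a cleaner and strictly larger admissible range than the paper's; your approach thus buys both a sharper threshold and a more structural explanation of where the condition on $a$ comes from, at the modest cost of the two-case analysis. The conversion of the $\eps$-corrections into $C_\eps\norm{v_{\tilde q}}$ with $C_\eps\to 0$ uses $\norm{w_{\tilde q}}\leq a\norm{v_{\tilde q}}$, which is indeed available on $\mathcal P_a^c\cap B$, so the final absorption of $C_\eps$ into $\min(c_1,c_2)R$ closes the argument correctly.
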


\begin{proof}
The value $a$ has been chosen in such a way that 
\begin{equation*}
  1+ \dfrac{\sqrt{1+4 a^2}-1}{2}  - 2 \sqrt{\dfrac{\sqrt{1+4 a^2} -1}{2a^2}  }  <0.
\end{equation*}
Hence, we can fix some $\eps>0$ so that
$$1 + \frac{\sqrt{1+4a^2} -1}{2}
 - 2 (1-\eps)  \sqrt{ \frac{      \sqrt{1+4a^2}  -1   }{2a^2} }      + 2 r^2 R \eps +\frac{r^2}{4}\eps^2 
R^2 <0~.$$

 Let $p, q \in \mathcal P_a^c$. Assume that $\angle ( v_p,v_q)<\delta $ and $\angle ( w_p,w_q)<\delta $ where $\delta>0$ is given by Lemma~\ref{lem:small-angles}, i.e., is such that \eqref{eq:area}, \eqref{eq:scalar1}, and \eqref{eq:scalar2} hold for our choice of $\eps$. Using dilations together with~\eqref{e:angle-dilation-invariant} and~\eqref{e:parabolic-dilation-invariant}, and exchanging the role of $p$ and $q$ if necessary, one can assume with no loss of generality that $p \in \partial B$ and $q \in B$. 

Then let us prove that $q\in B_d(p,1)$. By Lemma~\ref{lemma:Aq}, this is equivalent to $A_p(q) \leq 0$. By~\eqref{e:bound2-Ap(q)}, we have
\begin{multline*}
A_p(q) \leq \norm{v_q}^2+ \norm{w_q}^2 - 2 (1-\eps)  \norm{v_p} \norm{v_q} - 2 (1-\eps)  \norm{w_p} \norm{w_q} \\
 \quad + 2 r^2 R^2 \eps  \norm{v_q} +\frac{r^2}{4}\eps^2 
R^3 \norm{v_q}~.
\end{multline*}
To bound the 1st term, we use that $\norm{v_q} \leq R$ since $q\in B$. To bound the 2nd one, we use that $q \notin \mathcal P_a$ both through~\eqref{eq:bound21} and the fact that $\norm{w_q}  <  \dfrac{a}{R} \norm{v_q}^2\leq  a\norm{v_q}$. In the 3rd term, we use that $p\notin ~\mathcal P_a$ through~\eqref{eq:bound22}. Since the 4th term is non positive, we get
\begin{equation*}
\begin{split}
A_p(q)  & \leq R\norm{v_q}  + \frac{R}{2a}(\sqrt{1+4a^2} -1 ) 
a
\norm{v_q} - 2 (1-\eps)\frac{R}{a}  \sqrt{ \frac{      \sqrt{1+4a^2}  -1   }{2} } \norm{v_q} \\
& \phantom{R\norm{v_q}  + \frac{R}{2a}(\sqrt{1+4a^2} -1 ) 
a
\norm{v_q} - 2 (1-\eps)\frac{R}{a}  \sqrt{ }\quad\quad} + 2r^2 R^2\eps   \norm{v_q} +\frac{r^2}{4}\eps^2 R^3  \norm{v_q} \\
& =  R \norm{v_q} 
    \left(1 + \frac{\sqrt{1+4a^2} -1}{2}
 - 2 (1-\eps)  \sqrt{ \frac{      \sqrt{1+4a^2}  -1   }{2a^2} }      + 2 r^2 R \eps +\frac{r^2}{4}\eps^2 
R^2    \right)  \\
&\leq 0 
\end{split}
\end{equation*}
by the choice of $\eps$.
\end{proof}

Next, we consider the case where $p, q \in \mathcal P_{a'}$.

\begin{lemma} \label{lemma:near2a}
Let $a'=1.9$. There exists $\delta> 0$ such that 
if $p, q \in \mathcal P_{a'}$ are such that $\angle ( v_p,v_q)<\delta $ and $\angle ( w_p,w_q)<\delta $, then $q\in B_d(p,d(0,p))$ or $p\in B_d(q,d(0,q))$.
\end{lemma}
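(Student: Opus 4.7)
The plan closely mirrors that of Lemma~\ref{lemma:away}, except that the roles of the $v$- and $w$-components now essentially swap: in $\mathcal P_{a'}$ the $w$-component dominates, so the leading negative contribution to $A_p(q)$ will come from $\langle w_p,w_q\rangle$ rather than from $\langle v_p,v_q\rangle$. The value $a' = 1.9$ will be pinned down by a single explicit algebraic inequality.

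I would fix $\eps > 0$ (to be tightened at the end) and apply Lemma~\ref{lem:small-angles} to obtain $\delta > 0$ such that the estimates \eqref{eq:area}, \eqref{eq:scalar1} and \eqref{eq:scalar2} hold whenever $\angle(v_p,v_q) < \delta$ and $\angle(w_p,w_q) < \delta$. Using the dilation invariance \eqref{e:angle-dilation-invariant} of the two angles together with the scale-invariance \eqref{e:parabolic-dilation-invariant} of $\mathcal P_{a'}$, and swapping $p$ and $q$ if necessary, I reduce to the case $p \in \partial B$ and $q \in B$; by Lemma~\ref{lemma:Aq} it then suffices to show $A_p(q) \le 0$.

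Starting from~\eqref{e:bound1-Ap(q)}, for $\eps$ small enough the coefficient $-2(1-\eps) + 2r^2 R \eps + \tfrac{r^2}{4}\eps^2 R^2$ of $\norm{v_p}\norm{v_q}$ is negative, so that whole term can be dropped. Next I feed in the parabolic bounds: from $q \in \mathcal P_{a'}$, $\norm{v_q}^2 \le R\norm{w_q}/a'$; from $\norm{w_q} \le R$ (since $q \in B$), $\norm{w_q}^2 \le R\norm{w_q}$; and from $p \in \partial B \cap \mathcal P_{a'}$ together with~\eqref{eq:bound1}, $\norm{w_p} \ge \beta := \tfrac{R}{2a'}(\sqrt{1+4a'^2} - 1)$. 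Combining these yields
\begin{equation*}
A_p(q) \le \norm{w_q}\left[\frac{R(1+a')}{a'} - 2(1-\eps)\beta\right].
\end{equation*}

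The value $a' = 1.9$ is chosen precisely so that at $\eps = 0$ the bracket equals $\tfrac{R}{a'}(2 + a' - \sqrt{1+4a'^2})$, which is strictly negative: this is equivalent to $(2+a')^2 < 1 + 4a'^2$, i.e.\ $3a'^2 - 4a' - 3 > 0$, and hence to $a' > (2+\sqrt{13})/3 \approx 1.869$. Since the bracket depends continuously on $\eps$, it stays negative for $\eps > 0$ small enough, which gives $A_p(q) \le 0$. The only real obstacle is purely bookkeeping: selecting a single $\eps$ that simultaneously makes the $\norm{v_p}\norm{v_q}$ coefficient non-positive and keeps the bracket negative; both are open conditions at $\eps = 0$, so this is immediate.
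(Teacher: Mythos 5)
Your proposal is correct and follows essentially the same route as the paper's proof: reduction to $p\in\partial B$, $q\in B$ via dilations, the bound~\eqref{e:bound1-Ap(q)}, the parabolic estimates $\norm{v_q}^2 \leq R\norm{w_q}/a'$ and \eqref{eq:bound1}, and the same defining inequality for $a'$ (your $\tfrac{R}{a'}(2+a'-\sqrt{1+4a'^2})<0$ is the paper's $\tfrac{1}{a'}+1-\tfrac{\sqrt{1+4a'^2}-1}{a'}<0$ after multiplying by $R$). The only cosmetic difference is that you discard the $\norm{v_p}\norm{v_q}$ term outright once its coefficient is seen to be non-positive, whereas the paper keeps both coefficients in the final display and checks them separately; this is an equivalent presentation.
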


\begin{proof}
The value $a'$ has been chosen in such a way that 
$$
 \dfrac{1}{a'} +1  
- \frac{ \sqrt{1+4a'^2} -1   }{a'}<0~.$$
Hence, we can fix some $\eps>0$ so that
\begin{equation*}
  \dfrac{1}{a'} +1  
-  (1-\eps)   \frac{ \sqrt{1+4a'^2} -1   }{a'}
 <0 
 \end{equation*}
and
\begin{equation*}
 -    2 (1-\eps) 
  +2 r^2 R \eps    +\frac{r^2}{4}\eps^2 
R^2 <0~.
\end{equation*}

 Let $p, q \in \mathcal P_{a'}$. Assume that $\angle ( v_p,v_q)<\delta $ and $\angle ( w_p,w_q)<\delta $ where $\delta>0$ is given by Lemma~\ref{lem:small-angles}, i.e., is such that \eqref{eq:area}, \eqref{eq:scalar1}, and \eqref{eq:scalar2} hold for our choice of $\eps$. Arguing as in the proof of Lemma~\ref{lemma:away}, one can assume with no loss of generality that $p \in \partial B$ and $q \in B$. 
 
 Let us prove that $q\in B_d(p,1)$. By Lemma~\ref{lemma:Aq}, this is equivalent to  $A_p(q)\leq 0$. By~\eqref{e:bound1-Ap(q)}, we have
\begin{multline*}
A_p(q) \leq \norm{v_q}^2+  \norm{w_q}^2 - 2 (1-\eps)  \norm{v_p} \norm{v_q} - 2 (1-\eps)  \norm{w_p} \norm{w_q}  \\
\quad +2 r^2 R \eps  \norm{v_p}  \norm{v_q} +\frac{r^2}{4}\eps^2 
R^2  \norm{v_p}  \norm{v_q}~.
\end{multline*}
To bound the 1st term, we use that $q\in \mathcal P_{a'} $, i.e., $  \norm{v_q}^2< \dfrac{R}{a'}   \norm{w_q} $. To bound the 2nd one, we use that $\norm{w_q} \leq R$ since $q\in B$. In the 4th term, we use that $p\in \mathcal P_{a'}$ through~\eqref{eq:bound1}. This gives
\begin{equation*}
\begin{split}
A_p(q) & \leq \dfrac{R}{a'}   \norm{w_q}  +  R \norm{w_q}  - 2 (1-\eps)  \norm{v_p} \norm{v_q} 
- 2 (1-\eps)   \frac{R}{2a'}\left(\sqrt{1+4a'^2} -1\right ) \norm{w_q} \\
&\phantom{\dfrac{R}{a'}   \norm{w_q}  +  R \norm{w_q}  - 2 (1-\eps)  \norm{v_p} \norm{v_q} 
- 2 \quad} +2 r^2 R \eps  \norm{v_p}  \norm{v_q} +\frac{r^2}{4}\eps^2 
R^2  \norm{v_p}  \norm{v_q} \\
&= R  \norm{w_q} \left( \dfrac{1}{a'} +1  
-  (1-\eps)   \frac{ \sqrt{1+4a'^2} -1   }{a'}
\right) \\
&\phantom{R  \norm{w_q}  \dfrac{1}{a'} +1  
-  (1-\eps)\quad \quad\quad \quad\quad \quad } + \norm{v_p} \norm{v_q} 
 \left(
 -    2 (1-\eps) 
  +2 r^2 R \eps    +\frac{r^2}{4}\eps^2 
R^2  \right) \\
&\leq 0
\end{split}
\end{equation*}
by the choice of $\eps$.
\end{proof}

Finally, we consider the case where $p$, $q\in \mathcal P_{a} \setminus \mathcal P_{a'}$.

\begin{lemma} \label{lemma:inbetween}
Let $a=0.9$ and $a'=1.9$. There exists 
$\delta> 0$ such that 
if $p, q \in \mathcal P_{a} \setminus \mathcal P_{a'}$ are such that $\angle ( v_p,v_q)<\delta $ and $\angle ( w_p,w_q)<\delta $, then $q\in B_d(p,d(0,p))$ or $p\in B_d(q,d(0,q))$.
\end{lemma}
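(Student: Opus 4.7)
The plan is to mimic the structure of the proofs of Lemmas~\ref{lemma:away} and~\ref{lemma:near2a}. First, I would verify that the numerical values $a=0.9$ and $a'=1.9$ have been chosen so that a certain strict inequality (involving $a$, $a'$, $R$, $r$ but no $\eps$) is negative; this inequality is the one obtained from the estimate for $A_p(q)$ after all terms have been bounded using the parabolic constraints. Once this strict inequality is in hand, I would fix $\eps>0$ small enough so that the perturbed counterpart (with the $\eps$-corrections coming from Lemma~\ref{lem:small-angles}) remains strictly negative, and then let $\delta>0$ be the corresponding constant provided by Lemma~\ref{lem:small-angles} so that~\eqref{eq:area}, \eqref{eq:scalar1} and \eqref{eq:scalar2} hold.

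Given $p,q\in \mathcal{P}_a\setminus\mathcal{P}_{a'}$ with $\angle(v_p,v_q)<\delta$ and $\angle(w_p,w_q)<\delta$, I would next use the dilation invariances~\eqref{e:angle-dilation-invariant} and~\eqref{e:parabolic-dilation-invariant} (and, if needed, exchange the roles of $p$ and $q$) to reduce to the case $p\in\partial B$ and $q\in B$. By Lemma~\ref{lemma:Aq}, showing $q\in B_d(p,1)$ amounts to proving $A_p(q)\leq 0$. Unlike in the two previous lemmas, both $\|v_p\|$ and $\|w_p\|$ are now bounded away from zero simultaneously: since $p\in\partial B\cap\mathcal{P}_a$, estimate~\eqref{eq:bound1} gives $\|w_p\|\geq \tfrac{R}{2a}(\sqrt{1+4a^2}-1)$, while since $p\in\partial B\setminus\mathcal{P}_{a'}$, estimate~\eqref{eq:bound22} gives $\|v_p\|\geq \tfrac{R}{a'}\sqrt{(\sqrt{1+4a'^2}-1)/2}$.

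Starting from the upper bound~\eqref{e:bound1-Ap(q)}, I would control each term as follows:
\begin{itemize}
\item $\|v_q\|^2 \leq \tfrac{R}{a}\|w_q\|$ since $q\in\mathcal{P}_a$;
\item $\|w_q\|^2 \leq R\|w_q\|$ since $q\in B$;
\item $-2(1-\eps)\|v_p\|\|v_q\|$ is bounded above using the lower bound on $\|v_p\|$ coming from~\eqref{eq:bound22};
\item $-2(1-\eps)\|w_p\|\|w_q\|$ is bounded above using the lower bound on $\|w_p\|$ coming from~\eqref{eq:bound1};
\item the remaining $\eps$-corrections $2r^2R\eps\|v_p\|\|v_q\|+\tfrac{r^2}{4}\eps^2 R^2\|v_p\|\|v_q\|$ are bounded using $\|v_p\|\leq R$.
\end{itemize}
Collecting terms and factoring out $\|w_q\|$ and $\|v_q\|$ respectively, the upper bound takes the form $\|w_q\|\,F(a,a',R)+\|v_q\|\,G(a,a',R,\eps)$ where the numerical choices $a=0.9$, $a'=1.9$ make $F$ negative and the smallness of $\eps$ keeps $G$ negative, yielding $A_p(q)\leq 0$.

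The main obstacle is purely computational: verifying that with $a=0.9$ and $a'=1.9$ the key quantity $F$ is indeed strictly negative, i.e., that the lower bounds on $\|v_p\|$ and $\|w_p\|$ available in the intermediate region are large enough to overcome the quadratic-in-$q$ contributions $\|v_q\|^2+\|w_q\|^2$. Once the pair $(a,a')$ is shown to work, the rest of the argument is a routine replica of the proofs of Lemmas~\ref{lemma:away} and~\ref{lemma:near2a}.
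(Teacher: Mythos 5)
Your overall scaffolding---choosing $\eps>0$ so that certain numerical inequalities stay strict, invoking Lemma~\ref{lem:small-angles}, reducing via dilations to $p\in\partial B$, $q\in B$, and showing $A_p(q)\leq 0$---mirrors the paper. However the specific bounding scheme you propose does \emph{not} close, and the reason is precisely the part you label ``the main obstacle.''

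With your allocation of the squared terms, the $\|w_q\|$ coefficient is (in the $\eps\to 0$ limit)
\[
R\left(\frac{1}{a} + 1 - \frac{\sqrt{1+4a^2}-1}{a}\right).
\]
For this to be negative you would need $2+a < \sqrt{1+4a^2}$, i.e., $3a^2 - 4a - 3 > 0$, i.e., $a \gtrsim 1.87$. With $a=0.9$ the coefficient is approximately $0.93R > 0$, so your $F(a,a',R)$ is \emph{positive}, and the factorization ``$\|w_q\|F + \|v_q\|G$ with both $F,G<0$'' fails. Pushing \emph{all} of $\|v_q\|^2$ (via $q\in\mathcal P_a$) and \emph{all} of $\|w_q\|^2$ (via $q\in B$) onto the $\|w_q\|$ bucket overloads the negative contribution from $\|w_p\|$: the lower bound $\|w_p\|\geq \tfrac{R}{2a}(\sqrt{1+4a^2}-1)$ cannot compensate $\tfrac{R}{a}+R$ when $a=0.9$.

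The idea you are missing is the \emph{splitting} trick that the paper uses. One splits each of $\|v_q\|^2$ and $\|w_q\|^2$ into two halves and steers the halves into different buckets: $\tfrac12\|v_q\|^2 \leq \tfrac{R}{2a}\|w_q\|$ (via $q\in\mathcal P_a$) goes to the $\|w_q\|$ side; $\tfrac12\|v_q\|^2\leq \tfrac{R}{2}\|v_q\|$ (via $\|v_q\|\leq R$) goes to the $\|v_q\|$ side; $\tfrac12\|w_q\|^2\leq \tfrac{R}{2}\|w_q\|$ (via $\|w_q\|\leq R$) goes to the $\|w_q\|$ side; and crucially, the remaining half of $\|w_q\|^2$ is bounded using $q\notin\mathcal P_{a'}$ \emph{twice}---once via $\|w_q\|\leq \tfrac{R}{2a'}(\sqrt{1+4a'^2}-1)$ (that is \eqref{eq:bound21}) and once via $\|w_q\|< a'\|v_q\|$---so that it lands on the $\|v_q\|$ side as $\tfrac{R}{4}(\sqrt{1+4a'^2}-1)\|v_q\|$. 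This balanced distribution halves the positive $\|w_q\|$ contribution (yielding $\tfrac{1}{2a}+\tfrac12$ rather than $\tfrac1a+1$) and puts real weight on the $\|v_q\|$ side, where the lower bound on $\|v_p\|$ from \eqref{eq:bound22} can do its work. Only then do both collected coefficients become negative with $a=0.9$, $a'=1.9$.
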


\begin{proof}
The values of $a $ and $a'$  have been chosen in such a way that 
\begin{equation*}
\dfrac{1}{2}+ \dfrac{\sqrt{1+4 a'^2}-1}{4}   -   \dfrac{2}{a'}
 \sqrt{\dfrac{ \sqrt{1+4 a'^2}  -1 }{2 } }   <0
\end{equation*}
and
\begin{equation*}
\frac{1}{2a}+\frac{1}{2}-\frac{\sqrt{4 a^2+1}-1}{a} <0~.
\end{equation*}
Hence, we can fix some $\eps>0$ so that
$$
   \dfrac{1}{2} +\dfrac{ \sqrt{1+4 a'^2}-1 }{4} 
  - 
  2 (1-\eps) 
  \dfrac{1}{a'}
 \sqrt{\dfrac{ \sqrt{1+4 a'^2}  -1 }{2 } } 
 +2 r^2 R \eps    +\frac{r^2}{4}\eps^2 
R^2   <0$$
and
$$\dfrac{1}{2a}
 + \dfrac{1}{2} 
- (1-\eps) 
\dfrac{\sqrt{4 a^2+1} -1}{a} <0~.$$ 

Let $p, q \in \mathcal P_{a} \setminus \mathcal P_{a'}$. Assume that $\angle ( v_p,v_q)<\delta $ and $\angle ( w_p,w_q)<\delta $ where $\delta>0$ is given by Lemma~\ref{lem:small-angles}, i.e., is such that \eqref{eq:area}, \eqref{eq:scalar1}, and \eqref{eq:scalar2} hold for our choice of $\eps$. Arguing as in the proof of Lemma~\ref{lemma:away}, one can assume with no loss of generality that $p \in \partial B$ and $q \in B$. 

Let us prove that $q\in B_d(p,1)$. By Lemma~\ref{lemma:Aq}, this is equivalent to  $A_p(q)\leq 0$. By~\eqref{e:bound2-Ap(q)}, we have
\begin{multline*}
A_p(q) \leq \dfrac{1}{2}\norm{v_q}^2+ \dfrac{1}{2} \norm{v_q}^2+ \dfrac{1}{2} \norm{w_q}^2+ \dfrac{1}{2}\norm{w_q}^2 - 2 (1-\eps)  \norm{v_p} \norm{v_q} \\
\quad - 2 (1-\eps)  \norm{w_p} \norm{w_q}
+2 r^2 R^2 \eps  \norm{v_q} +\frac{r^2}{4}\eps^2 
R^3 \norm{v_q} ~.
\end{multline*}
To bound the 1st term, we use that $q\in \mathcal P_{a} $, i.e., $\norm{v_q}^2< \dfrac{R}{a}   \norm{w_q} $. To bound the 2nd term, we use that $q\in B$, hence $\norm{v_q} \leq R$. To bound the 3rd term, we use that $q\not \in \mathcal P_{a'}$ through both~\eqref{eq:bound21} and the fact that $\norm{w_q}  <  \dfrac{a'}{R} \norm{v_q}^2\leq  a'\norm{v_q} $. In the 4th one, we use that $q\in B$, hence $\norm{w_q} \leq R$. In the 5th term, we use that $p\notin \mathcal P_{a'}$ through~\eqref{eq:bound22} and in the 6th one we use that $p\in \mathcal P_{a}$ through~\eqref{eq:bound1}. This gives
\begin{equation*}
\begin{split}
A_p(q)& \leq 
 \dfrac{R}{2a}\norm{w_q}+ \dfrac{R}{2} \norm{v_q}
 + \dfrac{R}{4a'} ( \sqrt{1+4 a'^2}-1)
 a'
 \norm{v_q}
 + \dfrac{R}{2}\norm{w_q}  
  \\
& \phantom{\dfrac{R}{2a}\norm{w_q}+ } - 2 (1-\eps) \dfrac{R}{a'}
 \sqrt{\dfrac{ \sqrt{1+4 a'^2}  -1 }{2 } }   \norm{v_q} 
- 2 (1-\eps) 
\dfrac{R}{2a} (\sqrt{4 a^2+1}-1) \norm{w_q} \\
&\phantom{\dfrac{R}{2a}\norm{w_q}+ \dfrac{R}{2} \norm{v_q}
 + \dfrac{R}{4a'} \quad \quad \quad  \quad \quad \quad \quad \quad \quad  \quad  \quad \quad\quad\;} + 2 r^2 R^2\eps  \norm{v_q} +\frac{r^2}{4}\eps^2 
R^3  \norm{v_q} \\
  & =
  R \norm{v_q}
  \left(
   \dfrac{1}{2} +\dfrac{ \sqrt{1+4 a'^2}-1 }{4} 
  - 2 (1-\eps) 
  \dfrac{1}{a'}
 \sqrt{\dfrac{ \sqrt{1+4 a'^2}  -1 }{2 } } 
 +2 r^2 R \eps    +\frac{r^2}{4}\eps^2 
R^2   
\right) \\
& \phantom{R \norm{v_q}
   \dfrac{1}{2} +\dfrac{ \sqrt{1+4 a'^2}-1 }{4} 
  - 2 (1-\eps) 
  \dfrac{1}{a'}
 \sqrt{ } }  +
  R \norm{w_q}   \left(
 \dfrac{1}{2a}
 + \dfrac{1}{2} 
- (1-\eps) 
\dfrac{\sqrt{4 a^2+1} -1}{a}  
\right) \\
&\leq 0
\end{split}
\end{equation*}
by the choice of $\eps$.
\end{proof}

We are now going to conclude the proof of Theorem~\ref{thm:BCP-free-step2}. 

\begin{proof}[Proof of Theorem~\ref{thm:BCP-free-step2}] Let $a = 0.9$, $a'=1.9$ and let $\delta>0$ be small enough so that Lemmas~\ref{lemma:away},~\ref{lemma:near2a} and~\ref{lemma:inbetween} hold. Let $N$ be an upper bound of the maximum number of vectors in a $\max(r,n-r)$-dimensional Euclidean space that pairwise make an angle larger than $\delta /2$. Such a bound exists and is finite by compactness of finite dimensional Euclidean spheres. 

We are going to prove that a family of Besicovitch balls in $(\mathbb{F}_{r2},d)$ cannot have a cardinality larger than $3N^2$. This will imply that WBCP, and hence BCP by Corollary~\ref{cor:bcp-wbcp-homogeneous-qdist}, holds in $(\mathbb{F}_{r2},d)$. Let $\{B_d(p_i,r_i)\}_{i\in I}$ be a family of Besicovitch balls. Using left-translations and shrinking balls if necessary, one can assume with no loss of generality that $0 \in \cap_{i\in I}B_d(p_i,r_i)$ and $r_i=d(0,p_i)$ for all $i\in I$. If $\card I > 
3N^2$, by the pigeonhole principle 
one can find $I_1\subseteq I$ with 
$\card I_1 \geq  \card I / N>
3N$
such that
 $\angle ( v_{p_i},v_{p_j})<\delta$
 for all $i, j \in I_1$. Then, once again by the pigeonhole principle, one can find 
  $I_2\subseteq I_1$ with 
$\card I_2 \geq  \card I_1 / N>
3$
such that
$\angle ( w_{p_i},w_{p_j})<\delta$
for all $i, j \in I_2$. Finally,
there exists at least 2 distinct points $p_i$ and $p_j$ with $i, j\in I_2$ that both belong either to
 $\mathcal P_{a'}$, or 
 $\mathcal P_{a} \setminus \mathcal P_{a'}$ or 
$\mathcal P_{a}^c$.
Then Lemmas~\ref{lemma:away}, ~ \ref{lemma:near2a} and~\ref{lemma:inbetween}, lead to a contradiction since by definition of a family of Besicovitch balls, we have $p_i\notin B_d(p_j,r_j) = B_d(p_j,d(0,p_j))$ for all $i\not= j \in I$.
\end{proof}

\subsection{Stratified groups of step 2}
\label{subsect:BCP-stratified-step-2} 

This section is devoted to the proof of Corollary~\ref{cor:sec-bcp-homogeneous-groups-commuting-layers} in the case of stratified groups of step 2 as restated below.

\begin{theorem} \label{thm:BCP-stratified-step2}
Let $G$ be a stratified group of step 2. There exist homogeneous distances on $G$ for which BCP holds.
\end{theorem}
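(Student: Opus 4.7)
The plan is to obtain the desired homogeneous distance on $G$ as the push-forward, via a submetry, of one of the Hebisch--Sikora quasi-distances on a free-nilpotent stratified group of step $2$ covered by Theorem~\ref{thm:BCP-free-step2}, and then invoke the fact that submetries preserve WBCP together with the BCP $\Leftrightarrow$ WBCP equivalence in the doubling setting.

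More precisely, let $\g = V_1 \oplus V_2$ be the associated stratification of the Lie algebra of $G$, and set $r := \dim V_1$, the rank of $G$. Pick a basis $(Z_1,\dots,Z_r)$ of $V_1$. Using the notations of Section~\ref{subsect:BCP-free-step2}, consider the free-nilpotent stratified Lie algebra $\mathfrak{f}_{r2} = V \oplus W$ of step $2$ and rank $r$, with basis $(X_1,\dots,X_r)$ of $V$ and induced basis $(X_{ij})_{1\leq i<j\leq r}$ of $W$. By the universal property of free-nilpotent Lie algebras (see Remark~\ref{rmk:free-vs-morphism}), the assignment $X_i \mapsto Z_i$ extends to a unique Lie algebra homomorphism $\phi: \mathfrak{f}_{r2} \to \g$; concretely, on the degree-$2$ layer $\phi$ is forced to send $X_{ij} = [X_i,X_j]$ to $[Z_i,Z_j] \in V_2$. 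Since $V_1$ generates $\g$ and $\g$ has step $2$, one has $\phi(V) = V_1$ and $\phi(W) = [V_1,V_1] = V_2$, so $\phi$ is surjective and a morphism of graded Lie algebras in the sense of Definition~\ref{def:morphism-graded-algebra}. Let $\varphi:\mathbb F_{r2} \to G$ be the induced surjective Lie group homomorphism with $\varphi_* = \phi$.

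Next, by Hebisch--Sikora's result recalled in Example~\ref{ex:hebisch-sikora-distance}, one can fix $R>0$ small enough so that the homogeneous quasi-distance $\hat d$ on $\mathbb F_{r2}$ whose unit ball at the origin is the Euclidean ball of radius $R$ (i.e.~the quasi-distance \eqref{eq:ball:R}) is in fact a \emph{distance}. By Theorem~\ref{thm:BCP-free-step2}, BCP holds on $(\mathbb F_{r2},\hat d)$. Since $\hat d$ is a homogeneous distance, Corollary~\ref{cor:continuity-homogeneous-distance} ensures its continuity, so Proposition~\ref{prop:submetries-morphism-graded-algebra} applies and yields a homogeneous distance $d$ on $G$, defined by
\begin{equation*}
d(p,q) := \hat d(\varphi^{-1}(\{p\}),\varphi^{-1}(\{q\})),
\end{equation*}
such that $\varphi:(\mathbb F_{r2},\hat d)\to (G,d)$ is a submetry.

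Finally, BCP on $(\mathbb F_{r2},\hat d)$ trivially implies WBCP on $(\mathbb F_{r2},\hat d)$, so Proposition~\ref{prop:bcp-submetry} transfers WBCP to $(G,d)$. As $(G,d)$ is a graded group equipped with a homogeneous quasi-distance, it is doubling, and Corollary~\ref{cor:bcp-wbcp-homogeneous-qdist} upgrades WBCP back to BCP on $(G,d)$, proving the theorem. I do not anticipate any real obstacle here: the only substantive ingredient is Theorem~\ref{thm:BCP-free-step2} itself, which is already established; the present proof is essentially a bookkeeping argument combining the free-nilpotent case with the submetry/WBCP machinery of Sections~\ref{sect:graded-groups} and~\ref{sect:bcp-vs-wbcp}.
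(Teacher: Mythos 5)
Your proposal is essentially identical to the paper's own proof: both choose a basis of $V_1$, use the universal property of the free-nilpotent step-$2$ Lie algebra to obtain a surjective morphism of graded Lie algebras $\mathfrak{f}_{r2}\to\g$, push a Hebisch--Sikora homogeneous distance on $\mathbb F_{r2}$ through the induced submetry via Proposition~\ref{prop:submetries-morphism-graded-algebra}, and conclude with Propositions~\ref{prop:bcp-submetry} and~\ref{prop:BCP-WBCP-Doubling}. The only difference is that you spell out a bit more explicitly why $\phi$ is a surjective morphism of graded Lie algebras; the substance is the same.
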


\begin{proof}
Let $G$ be a stratified group of step 2 and rank $r$ whose Lie algebra is endowed with a given stratification $\g = V_1 \oplus V_2$. Let $(Y_1,\dots,Y_r)$ be a basis of $V_1$. Let $\mathbb F_{r2}$ be the free-nilpotent Lie group of step 2 and rank $r$. With the conventions and notations of Section~\ref{subsect:BCP-free-step2}, let $\phi:\mathfrak f_{r2} \rightarrow \g$ denote the unique morphism of graded Lie algebras such that $\phi(X_i) = Y_i$ for $i=1,\dots,r$, which is surjective. Let $\varphi : \mathbb F_{r2} \rightarrow G$ denote the unique Lie group homomorphism such that $\varphi_* = \phi$. Let $d$ be a homogeneous distance on $\mathbb F_{r2}$ for which BCP holds. Such distances exist by Theorem~\ref{thm:BCP-free-step2} and~\cite{Hebisch-Sikora} (see also Example~\ref{ex:hebisch-sikora-distance}). Recall also that homogeneous distances are continuous. It follows from Proposition~\ref{prop:submetries-morphism-graded-algebra} that 
\begin{equation} \label{e:dist-BCP-statified-step2}
d_G(p,q):= d(\varphi^{-1}(\{p\}),\varphi^{-1}(\{q\})
\end{equation}
defines a homogeneous distance on $G$ and $\varphi : (\mathbb F_{r2},d) \rightarrow (G,d_G)$ is a submetry. To conclude the proof, the fact that WBCP holds on $(G,d_G)$ follows from Proposition~\ref{prop:bcp-submetry}. Hence BCP holds on $(G,d_G)$ by Corollary~\ref{cor:bcp-wbcp-homogeneous-qdist}.
\end{proof}  

\begin{remark} \label{rmk:BCP-step2-stratified} More generally, arguing as in the proof of Theorem~\ref{thm:BCP-stratified-step2}, one gets that, if $d$ is a continuous homogeneous quasi-distance satisfying BCP on $\mathbb F_{r2}$, then~\eqref{e:dist-BCP-statified-step2} defines a continuous homogeneous quasi-distance for which BCP holds on the stratified group $G$ of step 2 and rank $r$. In particular all homogeneous quasi-distances on $\mathbb F_{r2}$ whose unit ball centered at the origin are given by~\eqref{eq:ball:R} are continuous and hence induce on $G$ continuous homogeneous quasi-distances for which BCP holds. 
\end{remark}

\begin{remark} It can be checked that if $d$ is a homogeneous quasi-distance on $\mathbb F_{r2}$ whose unit ball centered at the origin is given by ~\eqref{eq:ball:R} for some $R>0$, then the unit ball centered at the origin for the quasi-distance $d_G$ given by~\eqref{e:dist-BCP-statified-step2} on the stratified group $G$ of step 2 can be described as a Euclidean ball centered at the origin in exponential coordinates of the first kind relative to a suitable choice of basis of $\g$ adapted to its stratification. More precisely, one can find a basis $(Z_1,\dots,Z_r)$ of $V_1$ and a basis $(Z_{r+1},\dots,Z_n)$ of $V_2$ such that, in exponential coordinates of the first kind relative to the basis $(Z_1,\dots,Z_n)$ of $\g$, we have
$$B_{d_G}(0,1) = \{p\in G;\; \sum_{i=1}^n p_i^2 \leq R^2\}~.$$
One can reasonably expect that for any choice of basis of $\g$ adapted to its stratification, homogeneous quasi-distances on $G$ whose unit ball centered at the origin is a Euclidean ball in exponential coordinates of the first kind relative to the chosen basis satisfy BCP. This would require technical modifications of our arguments and we do not wish here to go further about these technicalities.
\end{remark}

\subsection{Arbitrary groups with commuting different layers}

\label{subsect:BCP-commuting-layers}

We conclude in this section the proof of Theorem~\ref{thm:sec-BCP-commuting-layers} and Corollary~\ref{cor:sec-bcp-homogeneous-groups-commuting-layers}.

\begin{proof}[Proof of Theorem~\ref{thm:sec-BCP-commuting-layers} and Corollary~\ref{cor:sec-bcp-homogeneous-groups-commuting-layers}] 

Let $G$ be a graded group with commuting different layers. By Proposition~\ref{prop:structure-commuting-layers}, $G$ can be written as a direct product of powers of stratified groups of step $\leq 2$.

For an Abelian Lie group with trivial associated positive grading (i.e., stratified of step 1), the Euclidean distance (and more generally any distance induced by a norm) is a homogeneous distance that satisfies BCP. For a stratified group of step 2, we know by Theorem~\ref{thm:BCP-stratified-step2} that there exist homogeneous distances that satisfy BCP. Next, if $d$ is a homogeneous distance for which BCP holds on a graded group, then $d^{1/t}$ is a homogeneous quasi-distance on its $t$-power (see Example~\ref{ex:homogeneous-qdist-kpower}) that satisfies BCP by Proposition~\ref{prop:BCP-kpower}. In addition, since $d$ is a distance, it is continuous (see Corollary~\ref{cor:continuity-homogeneous-distance}), and hence $d^{1/t}$ is a continuous quasi-distance.

Hence, on each factor of the decomposition of $G$, there exist continuous homogeneous quasi-distances for which BCP, and hence WBCP, holds. Then Theorem~\ref{thm:sec-BCP-commuting-layers} follows from Theorem~\ref{thm:WBCP-productspaces}.

Note that if $G$ is a homogeneous group, all $t$-powers in its decomposition as a direct product are $t$-powers with $t\geq 1$. This implies the existence of homogeneous distances on $G$ for which BCP holds and proves Corollary~\ref{cor:sec-bcp-homogeneous-groups-commuting-layers}. 
\end{proof}

\section{Graded groups with two different layers not commuting}
 \label{sect:nobcp}
 
In this section we consider graded groups for which there are two different layers of the associated positive grading of their Lie algebra that do not commute, and we consider a more general class of quasi-distances defined as follows.

\begin{definition} [Self-similar quasi-distances on graded groups]\label{def:selfsimilar-dist}
Let $G$ be a graded group with associated dilations $(\delta_\lambda)_{\lambda>0} $. We say that a quasi-distance $d$ on $G$ is {\em self-similar} if it is left-invariant and one-homogeneous  with respect to some non-trivial dilation, i.e., if there exists $\lambda >0$, $\lambda\not=1$, such that $d(\delta_\lambda(p),\delta_\lambda(q)) = \lambda d(p,q)$ for all $p$, $q\in G$.
\end{definition}

Note that $d(\delta_\lambda(p),\delta_\lambda(q)) = \lambda d(p,q)$ implies $d(\delta_{\lambda^k}(p),\delta_{\lambda^k}(q)) = \lambda^k d(p,q)$ for all $k\in \Z$. In particular, the previous definition is equivalent to the existence of some $0<\lambda <1$ such that  $d(\delta_{\lambda^k}(p),\delta_{\lambda^k}(q)) = \lambda^k d(p,q)$ for all $p$, $q\in G$ and all $k\in\Z$.

We prove the following result.

\begin{theorem} \label{thm:sect-nobcp-self-similar-noncommuting}
Let $G$ be a graded group and let $\oplus_{t>0} V_t$ be the associated positive grading of its Lie algebra. Assume that $[V_t,V_s]\not = \{0\}$ for some $t\not= s$. Let $d$ be a self-similar quasi-distance on $G$ that is continuous with respect to the manifold topology. Then WBCP, and hence BCP, does not hold in $(G,d)$.
\end{theorem}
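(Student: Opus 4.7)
The plan is to argue by contradiction: assuming a continuous self-similar quasi-distance $d$ on $G$ satisfies WBCP, I will reduce to a non-standard Heisenberg group, where WBCP is known to fail (Theorem~\ref{thm:heisenberg-case}, proved separately and the main technical engine of the paper). Choosing $t<s$ with $[V_t,V_s]\neq\{0\}$, Proposition~\ref{prop:structure-algebra-noncommuting-layers} yields a graded subalgebra $\hat{\g}\subset\g$ and a surjective morphism of graded Lie algebras $\phi:\hat{\g}\to\h$, where $\h$ is the $t$-power of the non-standard Heisenberg Lie algebra of exponent $\alpha:=s/t$. Let $\hat{G}\subset G$ be the connected Lie subgroup corresponding to $\hat{\g}$; since $\hat{\g}$ is homogeneous, $\hat{G}$ is closed and invariant under every $\delta_\lambda$, and the restriction $\hat{d}$ of $d$ to $\hat{G}\times\hat{G}$ is still a continuous self-similar quasi-distance. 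Proposition~\ref{prop:BCP-subset} transfers WBCP to $(\hat{G},\hat{d})$.

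Next, let $H$ be the simply connected Lie group with Lie algebra $\h$, $\varphi:\hat{G}\to H$ the Lie group homomorphism with $\varphi_*=\phi$, and set $d_H(p,q):=\hat{d}(\varphi^{-1}(\{p\}),\varphi^{-1}(\{q\}))$. I would establish a self-similar analog of Proposition~\ref{prop:submetries-morphism-graded-algebra}: its proof in the homogeneous case uses only left-invariance, global continuity, and the homomorphism structure, together with the topological equivalence between boundedness and relative compactness furnished by Proposition~\ref{prop:homogeneous-quasi-distance-topology}. The self-similar counterpart of that topological equivalence is the subject of Section~\ref{subsect:self-similar-topology}, and with it in hand the same argument goes through verbatim: $d_H$ is a continuous, left-invariant, self-similar quasi-distance on $H$ (the dilation-equivariance of $\phi$ supplies self-similarity), and $\varphi$ is a submetry. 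Proposition~\ref{prop:bcp-submetry} then yields WBCP on $(H,d_H)$.

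Finally, I would transfer from $H$, the $t$-power of the non-standard Heisenberg group of exponent $\alpha$, to that group itself. Their dilation families are related by $\tilde{\delta}_\lambda=\delta_{\lambda^t}$ in coordinates adapted to the grading; so if $\lambda_0\neq 1$ realizes the self-similarity $d_H(\tilde{\delta}_{\lambda_0}p,\tilde{\delta}_{\lambda_0}q)=\lambda_0\, d_H(p,q)$, a direct calculation gives that $d':=(d_H)^t$ satisfies $d'(\delta_{\mu_0}p,\delta_{\mu_0}q)=\mu_0\, d'(p,q)$ with $\mu_0:=\lambda_0^t\neq 1$. Continuity and the quasi-triangle inequality survive the positive-power rescaling (up to an inessential multiplicative constant), and the balls of $d'$ coincide as sets with those of $d_H$, so WBCP persists; hence $d'$ is a continuous self-similar quasi-distance on the non-standard Heisenberg group of exponent $\alpha$ satisfying WBCP, contradicting Theorem~\ref{thm:heisenberg-case}. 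The chief obstacle of the whole argument is therefore the Heisenberg case itself, which demands a genuinely new construction of arbitrarily large families of Besicovitch balls exploiting the anisotropic scaling with exponents $1,\alpha,\alpha+1$, and in which the weakened hypothesis of invariance under only a single discrete dilation must replace the full one-parameter homogeneity used in earlier treatments.
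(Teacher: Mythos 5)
Your argument is correct and follows essentially the same route as the paper: reduce via Proposition~\ref{prop:structure-algebra-noncommuting-layers} to a graded subalgebra surjecting onto a power of the non-standard Heisenberg Lie algebra, restrict $d$ (Proposition~\ref{prop:BCP-subset}), push down through the submetry afforded by the self-similar variant of Proposition~\ref{prop:submetries-morphism-graded-algebra} (whose proof, as you note, needs the topological equivalence of Section~\ref{subsect:self-similar-topology}), use Proposition~\ref{prop:bcp-submetry}, strip the $t$-power via $d\mapsto d^{t}$, and contradict Theorem~\ref{thm:heisenberg-case}. Your handling of the final $t$-power step is actually spelled out in slightly more detail than the paper's one-line remark, but the logic and the supporting lemmas you invoke match the paper's proof precisely.
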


\begin{remark} Although  we will not use it here, it can be noticed that self-similar quasi-distances on graded groups are doubling, hence BCP and WBCP are equivalent in this context (see Proposition~\ref{prop:BCP-WBCP-Doubling}).
\end{remark}

Since homogeneous quasi-distances are in particular self-similar, we get the following corollary.

\begin{corollary} \label{cor:sec-nobcp-homogeneous-qdist-noncommuting}
Let $G$ be a graded group whose associated positive grading of its Lie algebra is given by $\oplus_{t>0} V_t$. Assume that $[V_t,V_s] \not= \{0\}$ for some $t\not=s$. Let $d$ be a continuous homogeneous quasi-distance on $G$. Then BCP does not hold in $(G,d)$.
\end{corollary}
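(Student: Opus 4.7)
The plan is to derive this corollary as an essentially immediate consequence of Theorem~\ref{thm:sect-nobcp-self-similar-noncommuting}, together with the elementary implication ``BCP $\Rightarrow$ WBCP''. The first step I would carry out is to observe that every homogeneous quasi-distance on $G$ is self-similar in the sense of Definition~\ref{def:selfsimilar-dist}. Indeed, the homogeneity condition requires $d(\delta_\lambda(p),\delta_\lambda(q)) = \lambda\, d(p,q)$ for every $\lambda>0$, and the hypothesis $[V_t,V_s]\neq\{0\}$ with $t\neq s$ forces both $V_t$ and $V_s$ to be non-trivial layers with distinct weights, so that $\delta_\lambda$ is a non-trivial Lie group automorphism for every $\lambda\in(0,+\infty)\setminus\{1\}$; picking, for instance, $\lambda=2$ provides the witness of self-similarity.

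With self-similarity established, I would apply Theorem~\ref{thm:sect-nobcp-self-similar-noncommuting} verbatim to $(G,d)$: its hypotheses are exactly continuity of $d$ with respect to the manifold topology (assumed), self-similarity of $d$ (just checked), and the existence of two different non-commuting layers of the positive grading (assumed). Its conclusion is that WBCP does not hold on $(G,d)$.

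The last step is to translate the failure of WBCP into the failure of BCP. I would invoke the implication recorded just after Definition~\ref{def:wbcp}: any quasi-metric space satisfying BCP with Besicovitch constant $N$ satisfies WBCP with cardinality bound $Q=N$. Contrapositively, the failure of WBCP on $(G,d)$ forces the failure of BCP on $(G,d)$, which is the desired conclusion. There is no genuine obstacle in this deduction; all of the substance is concentrated in the proof of Theorem~\ref{thm:sect-nobcp-self-similar-noncommuting} (which itself reduces, via the morphism/submetry machinery of Section~\ref{sect:bcp-vs-wbcp} and the structure result in Proposition~\ref{prop:structure-algebra-noncommuting-layers}, to the non-standard Heisenberg case). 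The present corollary is merely the specialization of that theorem to the more commonly used subclass of homogeneous quasi-distances.
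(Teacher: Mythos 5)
Your proposal is correct and coincides exactly with the paper's own derivation: the paper simply remarks that homogeneous quasi-distances are in particular self-similar and then invokes Theorem~\ref{thm:sect-nobcp-self-similar-noncommuting}, exactly as you do. The only superfluous step is your discussion of $[V_t,V_s]\neq\{0\}$ forcing $\delta_\lambda$ to be a non-trivial automorphism---in Definition~\ref{def:selfsimilar-dist} ``non-trivial dilation'' just means $\lambda\neq 1$, and homogeneity already gives the scaling identity for every $\lambda>0$, so taking $\lambda=2$ suffices without further comment.
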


Homogeneous distances on homogeneous groups are continuous with respect to the manifold topology, recall Corollary~\ref{cor:continuity-homogeneous-distance}. Hence, in such a case, one can drop the continuity assumption and we get the following corollary.

\begin{corollary} \label{cor:sec-nobcp-homogeneous-groups-noncommuting}
Let $G$ be a homogeneous group whose associated positive grading of its Lie algebra is given by $\oplus_{t\geq 1} V_t$. Assume that $[V_t,V_s] \not= \{0\}$ for some $t\not=s$. Let $d$ be a homogeneous distance on $G$. Then BCP does not hold in $(G,d)$.
\end{corollary}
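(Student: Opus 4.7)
The plan is to deduce this statement directly from Corollary~\ref{cor:sec-nobcp-homogeneous-qdist-noncommuting}, which already covers the case of continuous homogeneous quasi-distances, so the only thing to argue is that the continuity hypothesis is automatic for homogeneous distances on homogeneous groups.

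First I would observe that a homogeneous distance $d$ on $G$ is, in particular, a homogeneous quasi-distance in the sense of Definition~\ref{def:homogeneous-dist2}, with quasi-triangle constant $C=1$. Hence $d$ falls within the scope of Corollary~\ref{cor:sec-nobcp-homogeneous-qdist-noncommuting} as soon as one verifies continuity with respect to the manifold topology on $G \times G$. This is precisely the content of Corollary~\ref{cor:continuity-homogeneous-distance}: every homogeneous distance on a homogeneous group is continuous on $G \times G$ with respect to the manifold topology.

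With continuity granted, I would simply invoke Corollary~\ref{cor:sec-nobcp-homogeneous-qdist-noncommuting} applied to $d$: the hypothesis $[V_t,V_s]\neq \{0\}$ for some $t\neq s$ is identical in both statements, and the conclusion that BCP fails on $(G,d)$ is exactly what we want. This finishes the proof.

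There is essentially no obstacle here, since the substantive work has been carried out in Theorem~\ref{thm:sect-nobcp-self-similar-noncommuting} and in the topological preliminaries (Proposition~\ref{prop:homogeneous-quasi-distance-topology} together with the remark that balls of a genuine distance are automatically open/closed, yielding Corollary~\ref{cor:continuity-homogeneous-distance}). The role of this corollary is just to record that, in the distance (as opposed to quasi-distance) setting on a homogeneous group, no continuity hypothesis needs to be imposed by hand.
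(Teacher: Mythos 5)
Your proposal is correct and matches the paper's reasoning exactly: the paper also deduces this corollary from Corollary~\ref{cor:sec-nobcp-homogeneous-qdist-noncommuting} by invoking Corollary~\ref{cor:continuity-homogeneous-distance} to drop the continuity hypothesis. Nothing more is needed.
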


The proof of Theorem~\ref{thm:sect-nobcp-self-similar-noncommuting} is divided into two steps. First, we prove that there does not exist continuous self-similar quasi-distances that satisfy WBCP on the non-standard Heisenberg groups, see Theorem~\ref{thm:heisenberg-case}. Next, we deduce Theorem~\ref{thm:sect-nobcp-self-similar-noncommuting} from Theorem~\ref{thm:heisenberg-case} together with Proposition~\ref{prop:structure-algebra-noncommuting-layers} and the use of submetries via a generalization of Proposition~\ref{prop:submetries-morphism-graded-algebra} to continuous self-similar distances, see Proposition~\ref{prop:submetries-morphism-self-similar}. Let us stress that one of the main differences between homogeneous and self-similar quasi-distances are topological issues that we will explain in Section~\ref{subsect:nobcp-noncommuting-layers}.

\subsection{Non-standard Heisenberg groups} \label{subsect:nobcp-nonstandard-heis}

This section is devoted to the proof of Theorem~\ref{thm:heisenberg-case} below. To simplify notations, we denote here by $\h$ the first Heisenberg Lie algebra, by $(X,Y,Z)$ a standard basis of $\h$, and by $\HH$ the first Heisenberg group. Recall from Example~\ref{ex:heisenberg} that for $\alpha >1$, $\HH$ is called the non-standard  Heisenberg group of exponent $\alpha$ when considered as a graded group whose Lie algebra is endowed with the non-standard grading of exponent $\alpha$, i.e., the grading given by $\h =W_1 \oplus W_\alpha \oplus W_{\alpha+1}$ where $ W_1:=\Span\{X\}$, $W_\alpha:=\Span\{Y\}$,  $W_{\alpha+1}:=\Span\{Z\}$, and where the only non-trivial bracket relation is $[X,Y] = Z$.

\begin{thm} \label{thm:heisenberg-case}
Let $\alpha>1$. There does not exist continuous self-similar quasi-distances for which WBCP holds on the non-standard Heisenberg group of exponent $\alpha$.
\end{thm}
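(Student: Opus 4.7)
I argue by contradiction: assume that $(\HH,d)$ satisfies WBCP with some constant $Q\ge 1$, and construct, for every $N>Q$, a family of $N$ Besicovitch balls. Set $\rho := d(0,\cdot)$ and let $\lambda\in(0,1)$ be a self-similarity factor of $d$, so that $\rho(\delta_{\lambda^k}p)=\lambda^k\rho(p)$ for all $k\in\Z$ and all $p\in\HH$. The preliminary step is to transfer to the self-similar setting the analytic properties proved in Proposition~\ref{prop:homogeneous-quasi-distance-topology} for homogeneous quasi-distances: continuity of $d$ on $\HH\times\HH$ combined with the discrete identity above, applied iteratively on the Euclidean-compact "fundamental annulus" $\{p\in\HH:\lambda\le\rho(p)\le 1\}$, yields that $\rho$ is continuous, vanishes only at $0$, and that closed $d$-balls are Euclidean compact.

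After the standard normalization (left-translating so that the common point of the Besicovitch family is the identity, shrinking the radii to $\rho(p_i)$), the Besicovitch condition reads
\[
\rho(p_i^{-1}p_j)\;>\;\max\bigl(\rho(p_i),\rho(p_j)\bigr)\qquad\text{for all } i\ne j,
\]
and it suffices to exhibit $N$ points $p_1,\ldots,p_N\in\HH\setminus\{0\}$ satisfying it. The construction exploits the non-standard grading. Using the group law $(a,b,c)(a',b',c')=(a+a',\,b+b',\,c+c'+\tfrac12(ab'-ba'))$ and the weights $(1,\alpha,\alpha+1)$ with $\alpha>1$, I consider points of the form $p_i=\delta_{\lambda^{k_i}}(u)\cdot v_i$, where $u$ is a fixed reference point on the unit sphere $\{\rho=1\}$ with nonzero $X$-component, the $k_i$ form an increasing sequence of integers, and the $v_i$ are suitably chosen perturbations concentrated in the $Y$-direction. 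A direct computation in the Heisenberg group shows that $p_i^{-1}p_j$ acquires a $Z$-contribution driven by the commutator $[X,Y]=Z$, proportional to the $X$-component of $u$ times the $Y$-component of the $v_i$'s; because $Z$ has weight $\alpha+1$ and $\alpha>1$ makes the $Y$-weight strictly exceed the $X$-weight, this cross-term can be arranged to drive $\rho(p_i^{-1}p_j)$ strictly above $\max(\rho(p_i),\rho(p_j))$ for as many indices as desired, by choosing the gaps $k_{i+1}-k_i$ large and the $v_i$'s on a matching scale.

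The main obstacle is to convert this heuristic into sharp inequalities using only continuity and self-similarity with respect to the single factor $\lambda$; unlike in the homogeneous case, there is no one-parameter group of dilations available. The key tool is a compactness argument on the fundamental annulus: every $p\ne 0$ admits a unique decomposition $p=\delta_{\lambda^k}(q)$ with $q\in\{\lambda\le\rho\le 1\}$, and the uniform continuity of $\rho$ on this annulus transfers local estimates to arbitrary scales. The delicate point is to organize the $k_i$'s and the perturbations $v_i$ so that the required pairwise inequalities hold simultaneously for all $i\ne j$, and to verify that the construction genuinely uses the strict inequality $\alpha>1$: in the standard case $\alpha=1$, the cross-term vanishes in the relevant scaling regime, which is consistent with the existence, proved in \cite{LeDonne_Rigot_Heisenberg_BCP}, of continuous homogeneous quasi-distances satisfying BCP on the standard Heisenberg group.
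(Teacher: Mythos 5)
Your proposal sketches a direct construction and, by your own admission, leaves its two crucial steps unresolved: you write that ``the main obstacle is to convert this heuristic into sharp inequalities'' and that ``the delicate point is to organize the $k_i$'s and the perturbations $v_i$''—but these are exactly the points where the theorem would be proved, so what you have is a plan, not a proof. More importantly, the plan as stated cannot be completed without additional input. Take the simplest version of your ansatz, $v_i=0$, $p_i=\delta_{\lambda^{k_i}}(u)$. The Besicovitch requirement for $i<j$ is $\rho(p_i^{-1}p_j)>\rho(p_i)$, which after rescaling by $\lambda^{-k_i}$ becomes $\rho\bigl(u^{-1}\cdot\delta_{\lambda^{k_j-k_i}}(u)\bigr)>1$, i.e. $u\cdot\delta_{\lambda^{m}}(u^{-1})\notin B$ for $m=k_j-k_i$. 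The paper's Lemma~\ref{lem:condition-for-bcp} shows that WBCP forces precisely the \emph{opposite}: $u\cdot\delta_{\lambda^m}(u^{-1})\in B$ for arbitrarily large $m$. So the unperturbed family fails, and turning on the $v_i$'s does not obviously save it: whether a large $Z$-component of $p_i^{-1}p_j$ forces $\rho(p_i^{-1}p_j)>\rho(p_i)$ depends entirely on the shape of $B$, which at the outset is only known to be compact, symmetric, and a neighborhood of $0$. For some perfectly admissible $B$'s the cross-term you identify would be harmless.

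The paper's proof resolves this by running the argument in the reverse order. It does \emph{not} try to exhibit a Besicovitch family from scratch; instead it first extracts from WBCP a geometric constraint on $B$: Lemma~\ref{lem:condition-for-bcp} gives that $p\cdot\delta_\lambda(p^{-1})$ re-enters $B$ for arbitrarily small $\lambda$, and Lemmas~\ref{lem:main1}--\ref{lem:main5} bootstrap this into the statement that $\partial B$ contains a corner point $p$ and a three-dimensional region $R(p,v)$ with $R(p,v)\cap B=\{p\}$. Lemma~\ref{lem:main6} translates this into a region $\hat R(q,v)$ touching $B_d(q,1)$ only at the origin, and Lemma~\ref{lem:main7} shows that dilations of $\hat R(q,v)$ eventually absorb any fixed point in the relevant octant. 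Only at this final stage are Besicovitch families built, and they are built inductively using these ball-shape constraints: each new center $\delta_{\rho^{-k}}(q)$ sits where the previously chosen centers are excluded from its ball. This bootstrapping—using WBCP to constrain $B$, then exploiting the constrained $B$ to defeat WBCP—is the structural idea your proposal is missing. A direct construction of the kind you describe works in \cite{LeDonne_Rigot_rmknobcp} only because there the distance is fixed and its ball is known; the paper explicitly states that the present theorem, for arbitrary continuous self-similar quasi-distances, requires a different approach for exactly this reason.
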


In this statement, continuity of self-similar quasi-distances means continuity with respect to the manifold topology. We refer to Section~\ref{subsect:nobcp-noncommuting-layers} for the study of topological properties of self-similar quasi-distances.

From now on in this section, we fix $\alpha >1$. Following Example~\ref{ex:heisenberg}, we use exponential coordinates of the first kind, we write $p \in \HH$ as $p=\exp( x X + y Y + z Z) $ and we identify $p$ with $(x,y,z)$. Recall that dilations $(\delta_\lambda)_{\lambda>0}$ relative to the non-standard grading of exponent $\alpha$ are given by 
\begin{equation}
\label{e:inhom:dil}
\delta_\lambda (x,y,z) = (\lambda x, \lambda^\alpha y,   \lambda^{\alpha + 1} z)~.
\end{equation}

To prove Theorem~\ref{thm:heisenberg-case}, we argue by contradiction. We let $d$ be a self-similar quasi-distance on the non-standard Heisenberg group of exponent $\alpha$. Hence $d$ is left-invariant and, for some fixed $0<\rho<1$, we have
\begin{equation} \label{e:dist-self-similar}
d(\delta_{\rho^k}(p),\delta_{\rho^k}(q)) = \rho^k\, d(p,q) 
\end{equation}
for all $p$, $q\in \HH$ and all $k\in \Z$. 

Next, we assume that $d$ is continuous on $\HH\times \HH$ with respect to the manifold topology. We set $B:=B_d(0,1)$. The continuity of $d$ implies in particular that $B$ is closed and that its boundary $\partial B$ is given by $\partial B = \{p\in \HH;\; d(0,p)=1\}$.

Finally, arguing by contradiction, we assume that WBCP holds on $(\HH,d)$.

To get a contradiction, we first prove a series of lemmas, Lemma~\ref{lem:condition-for-bcp} to Lemma~\ref{lem:main7} below. The final conclusion will follow from these lemmas and is given at the end of this section.

First, the assumption about the validity of WBCP has the following consequence.

\begin{lemma} \label{lem:condition-for-bcp}
For all $p\in B$ and all $\overline{\lambda}>0$, there exists $0<\lambda<\overline\lambda$ such that $p\cdot \delta_{\lambda} (p^{-1}) \in B$.
\end{lemma}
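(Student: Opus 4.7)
The plan is to separate two cases depending on whether $p$ lies in the interior of $B$ or on its boundary. If $d(0,p)<1$, then since $\delta_\lambda(p^{-1})\to e$ in the manifold topology as $\lambda\to 0^+$, the point $p\cdot\delta_\lambda(p^{-1})$ converges to $p$; the assumed continuity of $d$ on $\HH\times\HH$ then forces $d(0,p\cdot\delta_\lambda(p^{-1}))\to d(0,p)<1$, so the conclusion holds for every $\lambda$ small enough. The substantive case is therefore $p\in\partial B$, i.e., $d(0,p)=1$.

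For this case I would argue by contradiction: assume there is $\overline\lambda>0$ with $p\cdot\delta_\lambda(p^{-1})\notin B$ for all $0<\lambda<\overline\lambda$, and set $c_k:=p\cdot\delta_{\rho^k}(p^{-1})$, so that $d(0,c_k)>1$ whenever $\rho^k<\overline\lambda$. The strategy is to contradict WBCP by exhibiting arbitrarily large families of Besicovitch balls of the form $\{B_d(c_{k_i},\rho^{k_i})\}_{i=1}^N$ for a suitably spaced sequence of indices $k_1<\cdots<k_N$.

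The needed estimates all follow from left-invariance, symmetry of $d$, and self-similarity along the geometric sequence $(\rho^k)$. First, $d(p,c_k)=d(e,\delta_{\rho^k}(p^{-1}))=\rho^k d(e,p^{-1})=\rho^k d(e,p)=\rho^k$, so every ball of the family contains the common point $p$. Second, the direct computation $c_j^{-1}\cdot c_k=\delta_{\rho^j}(p)\cdot\delta_{\rho^k}(p^{-1})=\delta_{\rho^j}(c_{k-j})$ for $j<k$ yields, via left-invariance and self-similarity, the identity $d(c_j,c_k)=\rho^j\, d(0,c_{k-j})$. Fixing $M\in\N$ with $\rho^M<\overline\lambda$ and choosing $k_1<\cdots<k_N$ with consecutive gaps exceeding $M$, one obtains $d(c_{k_a},c_{k_b})=\rho^{\min(k_a,k_b)}d(0,c_{|k_a-k_b|})>\rho^{\min(k_a,k_b)}$, which is exactly the Besicovitch non-center condition for both balls simultaneously.

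I do not expect any serious obstacle: the heart of the argument is the algebraic identity $c_j^{-1}c_k=\delta_{\rho^j}(c_{k-j})$, which collapses the comparison of all pairwise distances between the $c_{k_i}$'s to the single standing hypothesis $d(0,c_m)>1$. Taking $N$ strictly larger than the WBCP constant then produces the required contradiction.
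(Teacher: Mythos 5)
Your proof is correct and follows essentially the same approach as the paper: both contradict WBCP by exhibiting a family of Besicovitch balls with geometrically shrinking radii, centered at $\delta_{\rho^k}$-dilates of a boundary point, sharing a common point, and using self-similarity of $d$ along powers of $\rho$ to estimate pairwise distances. The only cosmetic difference is that you work directly with $c_k = p\cdot\delta_{\rho^k}(p^{-1})$ with common point $p$, whereas the paper first proves the equivalent claim that $d(p,\delta_{\rho^j}(p))\leq 1$ for arbitrarily large $j$ via the family $\{B_d(\delta_{\rho^{lk}}(p),\rho^{lk})\}_l$ with common point $0$, and then left-translates.
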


\begin{proof} We first prove that for all $p\in \partial B$, there exist arbitrarily large values of $j$ such that $d(p,\delta_{\rho^j} (p)) \leq 1$. By contradiction, assume that one can find $p\in \partial B$ and $k\geq 0$ such that $d(p,\delta_{\rho^j} (p)) > 1$ for all $j\geq k$. For $l\geq 0$, set $r_l:= \rho^{lk}$ and $q_l:=\delta_{r_l}(p)$. We have $d(0,q_l)=r_l$ by \eqref{e:dist-self-similar} hence $0 \in \cap_{l\geq 0} B_d(q_l,r_l)$. We also have 
\begin{equation*}
d(q_j, q_l) = d(\delta_{r_j}(p),\delta_{r_l}(p)) = r_j\, d(p,\delta_{\rho^{(l-j)k}}(p)) > r_j = \max (r_j,r_l)
\end{equation*}
for all $0\leq j< l$. Hence $q_j \not \in B_d(q_l,r_l)$ for all $j\not= l$. It follows that for all finite set $J\subset \N$, $\{B_d(q_j,r_j)\}_{j\in J}$ is a family of Besicovitch balls, which contradicts the validity of WBCP in $(\HH,d)$.

To conclude the proof, let $p\in \partial B$. Then $p^{-1} \in \partial B$ by left-invariance of $d$ and it follows that $d(p^{-1},\delta_{\rho^j} (p^{-1})) \leq 1$ for arbitrarily large values of $j$. Hence, by left-invariance of $d$, we get that $p\cdot \delta_{\lambda} (p^{-1}) \in B$ for arbitrarily small values of $\lambda$. If $p$ belongs to the interior of $B$, the claim follows from the continuity of the map $\lambda \mapsto p\cdot \delta_{\lambda} (p^{-1})$.
\end{proof}

As a consequence of Lemma~\ref{lem:condition-for-bcp}, we prove in Lemma~\ref{lem:main1} a geometric property of the unit ball $B$. Namely,  starting at a point $p\in \partial B$ with $x_p\not= 0$, there is segment that is all contained in $B$. This segment is a part of the flow line of the vector field $-X$ starting at $p\in \partial B$.
 
\begin{lemma} \label{lem:main1} For all $p = (x_p,y_p,z_p)\in \partial B$ with $x_p\not = 0$, the segment
\begin{equation*}
\hat\sigma_p := \left\{\left((1-t) x_p, y_p , z_p + t \,\frac{x_p y_p}{2}\right); \; t\in [0,1]\right\}
\end{equation*}
is contained in $B$.
\end{lemma}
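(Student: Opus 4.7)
The plan is to iterate Lemma~\ref{lem:condition-for-bcp} with vanishing step sizes, thereby discretizing the flow of the left-invariant vector field $-X$ starting at $p$, whose flow line at $p$ is precisely the segment $\hat\sigma_p$. A direct calculation from the Heisenberg group law and the dilation formula~\eqref{e:inhom:dil} yields, for $q=(x_q,y_q,z_q)$,
\begin{equation*}
q\cdot\delta_\lambda(q^{-1}) = \bigl((1-\lambda)x_q,\, (1-\lambda^\alpha)y_q,\, (1-\lambda^{\alpha+1})z_q + \tfrac{\lambda-\lambda^\alpha}{2}\, x_q y_q\bigr),
\end{equation*}
whereas the flow of $-X$ for time $\lambda x_q$ from $q$ is $\bigl((1-\lambda)x_q,\, y_q,\, z_q + \tfrac{\lambda}{2} x_q y_q\bigr)$. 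Since $\alpha>1$, the discrepancy between these two points is $O(\lambda^\alpha)=o(\lambda)$, so many small discrete steps should track the continuous flow with a vanishing cumulative error.

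To carry this out, fix $T\in(0,1]$, assume $x_p>0$ without loss of generality, and set $q_T:=\bigl((1-T)x_p,\, y_p,\, z_p+\tfrac{1}{2}Tx_py_p\bigr)$. For each large $N$, I would construct a finite sequence $p_0^{(N)}:=p,\ldots,p_{n_N}^{(N)}\in B$ via $p_{k+1}^{(N)}:=p_k^{(N)}\cdot\delta_{\lambda_{k+1}^{(N)}}((p_k^{(N)})^{-1})$, with $\lambda_{k+1}^{(N)}\in(0,1/N)$ supplied by Lemma~\ref{lem:condition-for-bcp}, stopping at the first index $n_N$ at which $x_{n_N}^{(N)}\leq(1-T)x_p$. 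Then $x_{n_N}^{(N)}\in[(1-1/N)(1-T)x_p,\,(1-T)x_p]$, which upon taking logarithms gives $\sum_i\lambda_i^{(N)}\leq-\ln((1-T)(1-1/N))$, uniformly in $N$. Since $\alpha>1$ and each $\lambda_i^{(N)}<1/N$, it follows that $\sum_i(\lambda_i^{(N)})^\alpha=O(N^{1-\alpha})\to 0$, so from the product formula $y_{n_N}^{(N)}=y_p\prod_i(1-(\lambda_i^{(N)})^\alpha)$ we obtain $y_{n_N}^{(N)}\to y_p$. For the $z$-coordinate, the telescoping identity $\sum_i\lambda_i^{(N)}x_{i-1}^{(N)}=x_p-x_{n_N}^{(N)}\to Tx_p$, combined with $y_i^{(N)}\to y_p$ uniformly in $i$ and the fact that the higher-order contributions $\sum_i|z_{i-1}^{(N)}|(\lambda_i^{(N)})^{\alpha+1}$ and $\sum_i(\lambda_i^{(N)})^\alpha|x_{i-1}^{(N)} y_{i-1}^{(N)}|$ both vanish as $N\to\infty$, yields $z_{n_N}^{(N)}\to z_p+\tfrac{1}{2}Tx_py_p$. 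Hence $p_{n_N}^{(N)}\to q_T$; since each $p_{n_N}^{(N)}\in B$ and $B$ is closed by continuity of $d$, we deduce $q_T\in B$, and as $T$ is arbitrary, $\hat\sigma_p\subset B$.

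The main obstacle is the finiteness of the stopping time $n_N$: Lemma~\ref{lem:condition-for-bcp} guarantees only the existence of \emph{some} $\lambda\in(0,1/N)$ with no lower bound, so a priori the $x$-coordinates $x_k^{(N)}$ might stagnate above $(1-T)x_p$. This scenario is ruled out by a compactness argument: were $n_N=\infty$, the bounded sequence $(p_k^{(N)})$ would cluster at some $p_\infty^{(N)}\in B$ with $x$-coordinate $\ell>(1-T)x_p$, and Lemma~\ref{lem:condition-for-bcp} at $p_\infty^{(N)}$ would produce $\mu>0$ with $p_\infty^{(N)}\cdot\delta_\mu((p_\infty^{(N)})^{-1})\in B$ of $x$-coordinate $(1-\mu)\ell<\ell$; a modest adaptation of the step-size cap near the limit point (or restarting the construction from $p_\infty^{(N)}$) then allows the iteration to proceed past $\ell$, contradicting the supposed stagnation.
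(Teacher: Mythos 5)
The underlying idea is exactly the paper's: discretize the flow of $-X$ using Lemma~\ref{lem:condition-for-bcp} and pass to a limit. Your coordinate computations are correct — the formula for $q\cdot\delta_\lambda(q^{-1})$, the telescoping identity $\sum_i\lambda_i x_{i-1}=x_p-x_{n}$, and the estimates $\sum_i\lambda_i^\alpha = O(N^{1-\alpha})$, $\sum_i\lambda_i^{\alpha+1}\to 0$ all check out and give $p_{n_N}^{(N)}\to q_T$ whenever $n_N<\infty$.

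The gap is exactly where you flag it, in the last paragraph, and your proposed repair does not close it. Lemma~\ref{lem:condition-for-bcp} supplies a step $\lambda\in(0,\overline\lambda)$ with no lower bound, so the discrete iteration may stagnate: $\sum_i\lambda_i^{(N)}$ converges and the sequence $(p_k^{(N)})_k$ converges to some $p_\infty\in B$ with $x$-coordinate $\ell>(1-T)x_p$. You then invoke Lemma~\ref{lem:condition-for-bcp} at $p_\infty$ to produce a step past $\ell$ and declare the stagnation contradicted. But this only pushes the problem down: the restarted iteration can again stagnate at some $\ell'<\ell$, then at $\ell''<\ell'$, and so on; there is nothing ruling out an infinite descending chain of stagnation points whose infimum stays above $(1-T)x_p$. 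Moreover, the phrase ``a modest adaptation of the step-size cap near the limit point'' glosses over a real issue: nearby iterates $q$ need not admit a step $\lambda$ bounded below, because $B$ can be arbitrarily thin near $p_\infty\cdot\delta_\mu(p_\infty^{-1})$, so continuity does not transfer the admissible step from $p_\infty$ to points approaching it.

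The paper resolves precisely this stagnation/regress problem by replacing finite sequences of iterates with Lipschitz \emph{curves} of uniformly bounded constant (concatenations of the arcs $\gamma_q(\lambda)=q\cdot\delta_\lambda(q^{-1})$, all confined to a cone $C_{p,\theta}$ around the flow line of $-X$). It then defines
$I:=\{t\in[0,s]:\ \exists\,\gamma:[0,t]\to C_{p,\theta}\ L\text{-Lipschitz},\ \gamma(0)=p,\ \gamma(t)\in E_{p,\theta}(t)\cap B\}$,
shows $I$ is nonempty, closed (Ascoli--Arzel\`a, using the uniform Lipschitz bound), and ``open to the right'' (via Lemma~\ref{lem:condition-for-bcp} at $\gamma(\sup I)$), forcing $\sup I=s$, and then lets $\theta\downarrow 0$. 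The uniform Lipschitz bound and the closedness of $I$ are what rigorously absorb all possible stagnation, including the iterated/transfinite kind; your sequence-only compactness argument lacks this machinery. To make your version correct you would essentially need to reproduce the paper's set-$I$ argument (or an equivalent well-ordering/Zorn step), at which point the curve formulation is the natural way to package it.
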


\begin{proof}
The segment $\hat\sigma_p$ is a part of the flow line of the vector field $-X$. For technical convenience, we will thus use in this proof exponential coordinates of the second kind in which $X$ is a constant vector field. Namely, for $p\in \HH$, we write $p = \exp(x_3(p) Z) \cdot \exp(x_2(p) Y) \cdot \exp(x_1(p) X)$ and we identify $p$ with $[x_1(p),x_2(p),x_3(p)]$. 
The relation between exponential coordinates of the second kind $[x_1(p),x_2(p),x_3(p)]$ and exponential coordinates of the first kind $(x_p,y_p,z_p)$ when $p$ is written as $\exp (x_p X + y_p Y +z_p Z)$ (which are used elsewhere in this section) is given by $x_1(p) = x_p$, $x_2(p) =y_p$ and $x_3(p) =  z_p + x_p y_p/2$. In exponential coordinates of the second kind, the group law is given by
\begin{equation*}
\begin{cases}
\, x_1(p\cdot p') = x_1(p) + x_1(p')\\
\, x_2(p\cdot p') = x_2(p) + x_2(p')\\
\, x_3(p\cdot p') =  x_3(p) + x_3(p') + x_1(p) \, x_2(p')~,
\end{cases}
\end{equation*}
the vector field $X$ is the constant vector field $X = \partial_{x_1}$, and  the segment $\hat\sigma_p$ writes as
$$\hat\sigma_p = \{[(1-t) x_1(p), x_2(p), x_3(p)]; \; t\in [0,1]\}~.$$

Let $\theta \in (0,\pi/2)$ be fixed. For $p = [x_1(p),x_2(p),x_3(p)]$ with $x_1(p) > 0$, we define $C_{p,\theta}$ as the portion of the half-cone with vertex $p$, axis $\{\exp(- x_1 X);\; x_1 \geq 0\}$ and aperture $2\theta$ contained in the half space $\{x_1 \geq 0\}$. If $p = [x_1(p),x_2(p),x_3(p)]$ with $x_1(p) < 0$, then $C_{p,\theta}$ is defined as the portion of the half-cone with vertex $p$, axis $\{\exp( x_1 X); x_1 \geq 0\}$ and aperture $2\theta$ contained in the half space $\{x_1 \leq 0\}$. Namely, if $x_1(p) > 0$, 
\begin{multline*}
 C_{p,\theta} := \{[x_1,x_2,x_3]\in \HH; \;  0 \leq x_1 \leq x_1(p) \text{ and } \; \\ ((x_2 - x_2(p))^2 + (x_3 - x_3(p))^2)^{1/2} \leq  (x_1(p) - x_1)\, \tan \theta \} 
\end{multline*}
and, if  $x_1(p) < 0$,
\begin{multline*}
 C_{p,\theta} := \{[x_1,x_2,x_3]\in \HH; \;  x_1(p) \leq x_1 \leq 0 \text{ and } \; \\ ((x_2 - x_2(p))^2 + (x_3 - x_3(p))^2)^{1/2} \leq  (x_1 - x_1(p))\, \tan \theta \} ~.
\end{multline*}

For $p\in \HH$,  we set 
\begin{equation*}
\gamma_p(\lambda) := p \cdot \delta_\lambda(p^{-1})~.
\end{equation*}
Noting that, in exponential coordinates of the second kind, $p^{-1} = [-x_1(p), -x_2(p), -x_3(p) + x_1(p) \, x_2(p)]$, we have 
\begin{equation} \label{e:coord-gamma}
 \begin{cases}
 \, x_1(\gamma_p(\lambda)) = (1-\lambda)\, x_1(p)\\
 \, x_2(\gamma_p(\lambda)) = (1-\lambda^\alpha)\, x_2(p)\\
 \, x_3(\gamma_p(\lambda)) = (1-\lambda^{\alpha+1})\, x_3(p) + (\lambda^{\alpha+1}-\lambda^\alpha) \, x_1(p) \, x_2(p)~.
 \end{cases}
\end{equation} 
If $x_1(p) \not = 0$, we get
\begin{equation*}
\frac{d}{d\lambda} \gamma_p(\lambda)_{|\lambda=0} = -x_1(p) \, X
\end{equation*}
hence $\gamma_p(\lambda) \in C_{p,\theta}$ for all $\lambda \geq 0$ small enough. Since $C_{q,\theta} \subset C_{p,\theta}$ for all $q \in C_{p,\theta}$ with $x_1(q) \not= 0$, it follows that, for all $q \in C_{p,\theta}$ with $x_1(q) \not= 0$,
\begin{equation} \label{e:gamma-inside-cone}
\gamma_q(\lambda) \in C_{p,\theta} \; \text{ for all } \lambda \geq 0 \text{ small enough.}
\end{equation}

For $p\in \HH$ with $x_1(p) \not= 0$ and $\theta \in (0,\pi/2)$ fixed, one can find $L_{p,\theta}>0$ such that all curves $(\gamma_q(\lambda))_{\lambda \in [0,1]}$ for $q \in C_{p,\theta}$ are $L_{p,\theta}$-Lipschitz. This can be easily checked from the explicit expression of $\gamma_q(\lambda)$, see \eqref{e:coord-gamma}.

For $t\in [0,1]$, we define $E_{p,\theta}(t)$ as the intersection of $C_{p,\theta}$ with the two dimensional plane $\{[x_1,x_2,x_3]\in \HH; x_1 = (1-t) x_1(p)\}$,
\begin{equation*}
E_{p,\theta}(t) := C_{p,\theta} \cap \{[x_1,x_2,x_3]\in \HH; \; x_1 = (1-t) x_1(p)\}~.
\end{equation*}

Let $p\in \partial B$ with $x_1(p) \not= 0$, $s\in (0,1)$ and $\theta \in (0,\pi/2)$ be fixed. We first prove that 
\begin{equation*}
E_{p,\theta}(s) \cap B \not= \emptyset~.
\end{equation*}
We set $L:= (1-s)^{-1} L_{p,\theta}$ and 
\begin{equation*}
I := \{t\in [0,s];\; \exists\, \gamma:[0,t] \rightarrow C_{p,\theta} \; \; L-\text{Lipschitz}, \, \gamma(0) = p, \; \gamma(t) \in E_{p,\theta}(t) \cap B\}~,
\end{equation*}
and we actually prove that $s\in I$ from which the claim follows.

First, $0\in I$ hence $I$ is nonempty. Second, $C_{p,\theta}$ and $E_{p,\theta}(t) \cap B$ being closed, $I$ is closed by Ascoli-Arzel\`a Theorem. Hence $\sup I \in I$. By contradiction, assume that $t:=\sup I <s$. Since $t\in I$, one can find a $L$-Lipschitz curve $\gamma:[0,t] \rightarrow C_{p,\theta}$ such that $\gamma(0) = p$ and $\gamma(t) \in E_{p,\theta}(t) \cap B$. Set $q:=\gamma(t)$. Since $q\in C_{p,\theta}$ with $x_1(q) \not = 0$, it follows from \eqref{e:gamma-inside-cone} that $\gamma_q(\lambda) \in C_{p,\theta}$ for all $\lambda \geq 0$ small enough. Since $q\in B$, it follows from Lemma~\ref{lem:condition-for-bcp} that $\gamma_q(\lambda) \in B$ for arbitrarily small positive values of $\lambda$. Hence, one can find $\overline\lambda >0$ such that $\gamma_q(\lambda) \in C_{p,\theta}$ for all $0\leq\lambda \leq \overline \lambda$ and $\gamma_q(\overline \lambda) \in B$. It follows that one can find $\eta>0$ such that the curve $c$ defined on $[0,t+\eta]$ by 
\begin{equation*}
\begin{cases}
c(u):=\gamma(u) \quad \quad \quad \; \text{ if } u\in [0,t] \\
c(u):= \gamma_q\left(\displaystyle\frac{u-t}{1-t}\right) \; \text{ if } u\in [t,t+\eta]
\end{cases}
\end{equation*}
satisfies $c([0,t+\eta]) \subset  C_{p,\theta}$ and $c(t+\eta) \in B$. Since $\gamma_q$ is $L_{p,\theta}$-Lipschitz, $c$ is $(1-t)^{-1} L_{p,\theta}$-Lipschitz, and hence $L$-Lipschitz, on $[t,t+\eta]$. Finally, by definition of $\gamma_q$ (recall \eqref{e:coord-gamma}) and since $x_1(q) = (1-t)x_1(p)$, we have 
\begin{equation*}
x_1(c(t+\eta)) = \left(1- \frac{\eta}{1-t}\right)\; x_1(q) = (1-(t+\eta))\; x_1(p)
\end{equation*}
and hence $c(t+\eta) \in E_{p,\theta}(t+\eta)$. This shows that $t+\eta \in I$, which gives a contradiction.

Hence, for $p\in \partial B$ with $x_1(p) \not= 0$ and $s\in (0,1)$, we have $E_{p,\theta}(s) \cap B \not = \emptyset$ for all $\theta\in (0,\pi/2)$. Letting $\theta \downarrow 0$ and since $B$ is closed, it follows that $[(1-s) x_1(p),x_2(p),x_3(p)] \in B$ for all $s\in (0,1)$. Using once again the fact that $B$ is closed, we finally get that the closed segment $\hat\sigma_p$ is contained in $B$ as wanted.
\end{proof}

Lemma~\ref{lem:main2} to Lemma~\ref{lem:main6} below are successive consequences of Lemma~\ref{lem:main1}. In addition to Lemma~\ref{lem:main1}, the only properties used to prove these lemmas are the left-invariance of the quasi-distance $d$ and topological properties of the unit ball $B$.

\begin{lemma} \label{lem:main2}
For all $p = (x_p,y_p,z_p)\in \partial B$ with $x_p\not = 0$, the segment
\begin{equation*}
\sigma_p := \left\{\left((1-t) x_p, y_p , z_p - t \,\frac{x_p y_p}{2}\right); \; t\in [0,1]\right\}
\end{equation*}
is contained in $B$.
\end{lemma}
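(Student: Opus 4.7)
The plan is to derive Lemma~\ref{lem:main2} from Lemma~\ref{lem:main1} by a symmetry argument, exploiting the fact that the unit ball $B$ is preserved under group inversion.

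First I would observe that $B$ is symmetric in the sense that $q \in B$ if and only if $q^{-1} \in B$. Indeed, since $d$ is symmetric as a quasi-distance and left-invariant, $d(0, q^{-1}) = d(q, 0) = d(0, q)$. In particular, $\partial B$ is also stable under inversion, so if $p = (x_p, y_p, z_p) \in \partial B$ with $x_p \neq 0$, then $p^{-1} = (-x_p, -y_p, -z_p)$ lies in $\partial B$ and has nonzero first coordinate.

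Next I would apply Lemma~\ref{lem:main1} to the point $p^{-1}$, obtaining that the segment
\begin{equation*}
\hat\sigma_{p^{-1}} = \left\{\left(-(1-t) x_p, -y_p, -z_p + t \frac{x_p y_p}{2}\right);\; t \in [0,1]\right\}
\end{equation*}
is contained in $B$. Using the Baker--Campbell--Hausdorff formula, inversion in exponential coordinates of the first kind is simply $(x,y,z)^{-1} = (-x,-y,-z)$, so the inverse of a generic point of $\hat\sigma_{p^{-1}}$ is precisely $((1-t) x_p, y_p, z_p - t \, x_p y_p/2)$, which traces out the segment $\sigma_p$ as $t$ ranges over $[0,1]$. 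Since $B$ is stable under inversion, this gives $\sigma_p \subset B$.

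There is no real obstacle here: the whole argument reduces to the symmetry $B = B^{-1}$ combined with the already-established Lemma~\ref{lem:main1}. The only point that requires a brief check is the coordinate identity $(x,y,z)^{-1} = (-x,-y,-z)$, which is immediate in exponential coordinates of the first kind because $\exp(-(xX+yY+zZ))$ is the group inverse of $\exp(xX+yY+zZ)$.
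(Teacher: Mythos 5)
Your proof is correct and takes essentially the same route as the paper: observe that left-invariance makes $B$ stable under inversion, apply Lemma~\ref{lem:main1} to $p^{-1}$, and note that $(\hat\sigma_{p^{-1}})^{-1} = \sigma_p$ in exponential coordinates of the first kind. The paper states this in two lines; you have simply unpacked the coordinate check, which is a welcome clarification but not a different argument.
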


\begin{proof}
By left-invariance of $d$, we have $q^{-1} \in B$ for all $q\in B$. Then it follows from Lemma~\ref{lem:main1} that for $p\in \partial B$ with $x_p\not = 0$, $\sigma_p = (\hat\sigma_{p^{-1}})^{-1} \subset B$.
\end{proof}

Given $p=(0,y_p,z_p) \in\HH$ and $w
 >0$, we set
\begin{equation*}
D(p,w) : = \{(-t,y_p,z_p -tw+u);\; t\geq 0,\; u\geq 0\}~.
\end{equation*}
It is the two dimensional region in the plane $\{y=y_p\}$ above the half-line starting at $p$ with direction $(-1,0,-w)$.

\begin{lemma} \label{lem:main3} Let $\overline y>0$ be such that $(0,\overline{y},z) \in B$ for some $z>0$. Set $\overline{z}:= \max(z>0;\; (0,\overline{y},z) \in B)$ and $\overline p:=(0,\overline{y}, \overline{z})$. Then $\overline p\in \partial B$ and for all $0< w  < \overline{y}/2$, we have
\begin{equation} \label{e:main3}
D(\overline p,w) \cap B = \{ \overline p\}~.
\end{equation}
\end{lemma}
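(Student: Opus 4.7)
The argument is by contradiction, with the strategy of pushing any point of $D(\overline p, w) \cap B$ distinct from $\overline p$ back onto the $x = 0$ axis via Lemma \ref{lem:main2}, thereby violating the maximality of $\overline z$.

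That $\overline p \in \partial B$ is immediate from closedness of $B$ together with maximality of $\overline z$. For the inclusion, I would write a putative violator as $q = (-t, \overline y, \overline z - tw + u)$ with $t \geq 0$, $u \geq 0$, $(t,u) \neq (0,0)$. If $t = 0$ then $u > 0$ and $q = (0, \overline y, \overline z + u) \in B$ directly violates the definition of $\overline z$, so I may focus on the case $t > 0$. The plan there is to slide $q$ upward along the vertical line $\{(-t, \overline y, z)\}$ until reaching a boundary point $q^* = (-t, \overline y, z^*) \in \partial B$ with $z^* \geq \overline z - tw + u$, and then apply Lemma \ref{lem:main2} at $q^*$ (legal since $x_{q^*} = -t \neq 0$). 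The endpoint of $\sigma_{q^*}$ at $s = 1$ sits at $(0, \overline y, z^* + \tfrac{t\overline y}{2})$, and the bound
$$z^* + \tfrac{t \overline y}{2} \;\geq\; \overline z + u + t\!\left(\tfrac{\overline y}{2} - w\right) \;>\; \overline z$$
follows from $t > 0$, $u \geq 0$, and $w < \overline y / 2$, contradicting maximality of $\overline z$.

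The main technical obstacle is producing $q^*$: I need the closed set $B \cap \{(-t, \overline y, z) : z \geq \overline z - tw + u\}$ to be bounded above. Using the decomposition $(-t, \overline y, z) = (-t, \overline y, 0) \cdot (0, 0, z)$, left-invariance, and the quasi-triangle inequality, this reduces to the one-variable claim that $d(0, (0, 0, z)) \to +\infty$ as $|z| \to +\infty$. The latter is obtained by iterating the self-similarity identity $d(0, (0, 0, \rho^{\alpha+1} z)) = \rho\, d(0, (0, 0, z))$ along negative powers of $\rho$, combined with the continuity of $d$ and the positivity of $d(0, (0, 0, z_0))$ for any $z_0 \neq 0$, which provides a uniform positive lower bound on a compact fundamental domain for the scaling action. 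Once $q^*$ is in hand, maximality of $z^*$ automatically places it on $\partial B$ and the argument concludes as above.
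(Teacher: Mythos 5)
Your proof is correct and takes essentially the same route as the paper's: parametrize a putative second point of $D(\overline p, w)\cap B$ as $q=(-t,\overline y, \overline z - tw + u)$, flow back to the plane $\{x=0\}$ via Lemma~\ref{lem:main2}, and contradict the maximality of $\overline z$. The only real difference is that the paper applies Lemma~\ref{lem:main2} directly to $q\in B$ (an interior point in general), relying implicitly on the fact that the proof of Lemma~\ref{lem:main1} only uses $p\in B$ rather than $p\in\partial B$, whereas you first slide $q$ upward to a boundary point $q^*$ so that Lemma~\ref{lem:main2} applies with its stated hypothesis; your extra boundedness argument via self-similarity is correct, though the bound is also available directly from the compactness of $B$ (continuity of $d$ together with Proposition~\ref{prop:self-similar-quasi-distance-topology}).
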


\begin{proof}
By contradiction, assume that there is some point $q \in D(\overline p,w) \cap B$ with $q\not = \overline p$. Then, by definition of $D(\overline p,w)$, we have $q=(-t,\overline{y},\overline{z} - tw+u)$ for some $u, t\geq 0$. Since $q\not = \overline p$ and by choice of $\overline{z}$, we have $t>0$ and hence $x_q=-t\not=0$. By Lemma~\ref{lem:main2}, it follows that $\sigma_q \subset B$. In particular, the end point $(0,\overline{y},\overline{z}-tw+u+t\overline{y}/2)$ of $\sigma_q$ belongs to $B$. By choice of $\overline{z}$, we must then have $-tw+u+t\overline{y}/2 \leq 0$, and 
hence
\begin{equation*}
t\, \left(w-\frac{\overline y}{2}\right) \geq u \geq 0~.
\end{equation*}
This contradicts the assumption $0< w  < \overline{y}/2$ and concludes the proof.
\end{proof}

Given $p=(0,y_p,z_p)\in \HH $ with $y_p, z_p>0$ and $v > 0$, we set 
$$S(p,v) := \{  (  0, (1-s) y_p , z_p + sv  + u) \,;\, s\in [0,1],  u \geq  0\}~.$$ 
It is the two dimensional region in the plane $\{x=0\}$ above the segment from the $z$-axis to $p$ with slope $-v/y_p$.

\begin{lemma} \label{lem:main4}
There exist $p=(0,y_p,z_p)\in \partial B$
with $y_p, z_p>0$ and $v > 0$ such that
 $$ S(p,v) \cap B = \{p\}$$
and such that, for all $0\leq y \leq y_p$, $(0,y,z) \in B$ for some $z>0$.
\end{lemma}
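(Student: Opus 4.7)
The plan is to reduce the claim to a one-dimensional perturbation argument on the upper envelope of $B$ in the plane $\{x=0\}$. First, I would use the continuity of $d$ at $0$ (together with the fact that $B$ is a $d$-neighborhood of $0$) to extract some $T>0$ such that $(0,y,z)\in B$ for all $y,z\in[0,T]$; this immediately secures the auxiliary condition that $(0,y,z)\in B$ for some $z>0$ whenever $y\in[0,T]$. On $[0,T]$, define the upper envelope $\phi(y):=\max\{z>0:(0,y,z)\in B\}$; the maximum is attained because the fiber is a compact subset of $\R$, closedness of $B$ yields that $\phi$ is upper semicontinuous on $[0,T]$, and $\phi(y)\geq T>0$ throughout.

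The main step is a linear perturbation. For $\mu>0$, set $\psi_\mu(y):=\phi(y)+\mu y$, which is upper semicontinuous on the compact interval $[0,T]$, so its argmax set is a nonempty closed subset of $[0,T]$. Choosing $\mu$ large enough that $\psi_\mu(T)>\psi_\mu(0)$ (for instance any $\mu>\max(0,(\phi(0)-\phi(T))/T)$), the value $0$ is excluded from the argmax set; hence the \emph{smallest} argmax $y_p$ lies in $(0,T]$. By minimality, $\psi_\mu(y)<\psi_\mu(y_p)$ for every $y\in[0,y_p)$, which rewrites as $\phi(y)<\phi(y_p)+\mu(y_p-y)$, the precise slope estimate needed to control $S(p,v)$.

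I would then set $z_p:=\phi(y_p)>0$, $v:=\mu y_p>0$, and $p:=(0,y_p,z_p)$; since $z_p$ is maximal, $p\in\partial B$. To check $S(p,v)\cap B=\{p\}$, I would just unravel definitions: any $(0,y,z)\in S(p,v)$ satisfies $y\in[0,y_p]$ and $z\geq z_p+v(1-y/y_p)=z_p+\mu(y_p-y)$. For $y<y_p$ the strict inequality above gives $\phi(y)<z_p+\mu(y_p-y)\leq z$, so $(0,y,z)\notin B$ by definition of $\phi$; for $y=y_p$ the constraints collapse to $z=z_p$, i.e., the point equals $p$. The third conclusion of the lemma follows from $[0,y_p]\subset[0,T]$ and the choice of $T$.

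The main subtlety, and the reason I cannot simply take $y_p$ to be the endpoint $T$ or a global maximizer of $\phi$, is that upper semicontinuity alone permits arbitrarily sharp left-sided spikes of $\phi$, which would destroy the slope estimate required to find a positive $v$. The device of adding the monotone perturbation $\mu y$ and then picking the \emph{smallest} argmax is exactly what converts USC into the strict one-sided inequality on $[0,y_p)$ and thus produces $v$.
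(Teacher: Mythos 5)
Your proof is correct. Both your argument and the paper's ultimately produce a slanted line in the plane $\{x=0\}$ that supports $B$ from above and touches it at a point $p$ with $y_p>0$; the difference lies in how one guarantees the touch point is off the $z$-axis and why the strict dominance needed for $S(p,v)\cap B=\{p\}$ holds. The paper runs a two-stage geometric sweep: first it lowers horizontal lines $I^+(\cdot)$ until they touch $B$; if the touch point happens to be on the $z$-axis it repeats the sweep with slanted lines $J^+(\cdot)$ of a fixed slope, and rules out a second touch at $y=0$ using the outcome of the first sweep (the intersection of $J(\hat q)$ with the $z$-axis already lies in $I^+(\overline q)$, hence outside $B$). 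You instead make the upper envelope $\phi(y)=\max\{z>0:(0,y,z)\in B\}$ explicit, note it is upper semicontinuous on a small compact interval, perturb by $\mu y$ with $\mu$ chosen large enough that $\psi_\mu(0)<\psi_\mu(T)$, and take the \emph{smallest} maximizer $y_p$; this simultaneously forces $y_p>0$ and gives the one-sided strict inequality $\phi(y)<\phi(y_p)+\mu(y_p-y)$ for $y<y_p$ in one stroke. Your version avoids the two-case split and the dependence of the second stage on the first, at the modest cost of introducing the envelope function and a USC-plus-argmin-of-argmax argument; the paper's version is more elementary in bookkeeping but longer. Both use only that $B$ is a compact neighborhood of the origin, so the two routes are on equal footing in terms of hypotheses, and either yields the lemma as stated.
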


\begin{proof}
The only property of the unit ball $B$ used in this proof is the fact that $B$ is a compact neighborhood of the origin. Let $p_0=(0,y_0,z_0)$ with $y_0, z_0>0$ be a point in the interior of $B$ such that $(0,y,z) \in B$ for all $0\leq y \leq y_0$ and all $0\leq z \leq z_0$. For $q = (0,y_q,z_q)$ with $y_q, z_q>0$, let $$I(q):= \{(0,y,z_q);\; 0\leq y \leq y_q\}$$ denote the horizontal segment from the $z$-axis to $q$ and let $$I^+(q) := \{(0,y,z_q+u);\; 0\leq y \leq y_q, u>0\}$$
denote the two dimensional infinite rectangular strip strictly above $I(q)$ in the plane $\{x=0\}$. Set $$\overline{z}:= \inf\{z>z_0;\; I^+((0,y_0,z)) \cap B = \emptyset\}~.$$
Since $B$ is bounded and $p_0$ belongs to the interior of $B$, we have $z_0 < \overline{z} <+\infty$. We set $\overline q:=(0,y_0,\overline z)$. We have $I^+(\overline q) \cap B = \emptyset$ and, since $B$ is closed, $I(\overline q) \cap B \not = \emptyset$.

If there is some point $p =(0,y_p,\overline z) \in I(\overline q) \cap B$ with $y_p>0$, then, for any $v>0$, we have 
$S(p,v)\setminus \{p\} \subset I^+(\overline q)$ hence $S(p,v)
 \cap B = \{p\}$. Note that $p\in \partial B$.
 
Otherwise $I(\overline q) \cap B  = \{(0,0,\overline z)\}$. For $q = (0,y_0,z_q)$ with $z_q>0$, let $$J(q):= \{(0,(1-t)y_0, z_q + t(\overline{z}-z_0);\; t\in [0,1]\}$$ denote the segment in the plane $\{x=0\}$ from the $z$-axis to $q$ with slope $-(\overline z - z_0)/y_0$ and let $$J^+(q):=\{(0,(1-t)y_0, z_q + t(\overline{z}-z_0)+u;\; t\in [0,1],\; u>0 \} $$ the two dimensional region strictly above $J(q)$ in the plane $\{x=0\}$. 
Set $$\hat{z}:= \inf\{z>z_0;\; J^+((0,y_0,z)) \cap B = \emptyset\}~.$$
Arguing as above, we have $z_0 < \hat{z} \leq \overline z$. We set $\hat{q}:= (0,y_0,\hat z)$. We have $J^+(\hat{q}) \cap B = \emptyset$ and $J(\hat{q}) \cap B \not = \emptyset$. Since $\hat z - z_0>0$, $J(\hat{q})$ meets the $z$-axis at $(0,0, \overline z +\hat z - z_0)$ which belongs to $I^+(\overline q)$. Hence it cannot belong to $B$ and there is some point $p=(0,y_p,z_p)\in J(\hat{q}) \cap B$ with $y_p>0$. Note that $z_p>0$ and that $p$ belongs to $\partial B$. Then for any $v > (\overline z - z_0) y_p/y_0$, we have $S(p,v) \setminus \{p\} \subset J^+(\hat{q})$ hence $S(p,v) \cap B = \{p\}$.

Finally, in both cases, we have $y_p\leq y_0$. Then, for all $0\leq y \leq y_p$, we have $0\leq y\leq y_0$ and, by choice of $p_0$,  we get that $(0,y,z) \in B$ for all $0\leq z\leq z_0$. In particular, $(0,y,z) \in B$ for some $z>0$, which concludes the proof.
\end{proof}

Lemma~\ref{lem:main5} below is a consequence of Lemma~\ref{lem:main3} together with  Lemma~\ref{lem:main4}. Given $p=(0,y_p,z_p)\in \HH$ with $y_p, z_p>0$ and $v > 0$, we set 
\begin{equation*} 
R(p,v) := \left\{\left(-t,(1-s)y_p,z_p+sv -\frac{t(1-s)y_p}{4} + u\right); \; u, t \geq 0, \, s\in [0,1)\right\}~.
\end{equation*}
It is the three dimensional region obtained by the following union. For $s\in [0,1]$, let $p_s:=(0,(1-s)y_p,z_p+sv)$. Note that $\{p_s;\; s\in [0,1]\}$ is the lower boundary of $S(p,v)$. Then, for $s\in [0,1)$,  $D(p_s,(1-s)y_p/4)$ is the intersection of $R(p,v)$ with the plane $\{y=(1-s)y_p\}$ and we have
\begin{equation*} 
R(p,v) = \bigcup_{s\in [0,1)} D(p_s,(1-s)y_p/4)~.
\end{equation*}

\begin{lemma} \label{lem:main5}
There exists a point $p=(0,y_p,z_p)\in \partial B$
with $y_p, z_p>0$ and $v > 0$ such that $R(p,v) \cap B = \{p\}$.
\end{lemma}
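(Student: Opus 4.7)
The plan is to verify that the point $p = (0, y_p, z_p) \in \partial B$ and the constant $v > 0$ provided by Lemma~\ref{lem:main4} already satisfy $R(p,v) \cap B = \{p\}$, with no further modification of $v$. The strategy is to exploit the slice decomposition
\[
R(p,v) = \bigcup_{s\in [0,1)} D(p_s, (1-s)y_p/4), \qquad p_s := (0, (1-s)y_p, z_p + sv),
\]
and apply Lemma~\ref{lem:main3} independently in each horizontal slice $\{y = (1-s)y_p\}$, combining this with the vertical strict separation encoded by $S(p,v)\cap B = \{p\}$.

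For each $s \in [0,1)$, the property in Lemma~\ref{lem:main4} that $(0, y, z) \in B$ for some $z > 0$ for every $0 \leq y \leq y_p$ allows us to define
\[
\overline z_s := \max\{z > 0 : (0, (1-s)y_p, z) \in B\},
\]
the maximum being attained by closedness and boundedness of $B$. Lemma~\ref{lem:main3} applied with $\overline y = (1-s)y_p$ and $w = (1-s)y_p/4 \in (0,\overline y /2)$ then yields
\[
D\bigl((0, (1-s)y_p, \overline z_s), (1-s)y_p/4\bigr) \cap B = \{(0, (1-s)y_p, \overline z_s)\}.
\]

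The key observation is the inequality $\overline z_s \leq z_p + sv$, with equality only when $s = 0$. This comes directly from $S(p,v)\cap B = \{p\}$: taking $u > 0$ at $s = 0$ in the definition of $S(p,v)$ forces $\overline z_0 = z_p$, while taking $u = 0$ at $s \in (0,1)$ shows that $(0,(1-s)y_p,z_p+sv) \in S(p,v)\setminus\{p\}$ is not in $B$, hence $\overline z_s < z_p + sv$. From the explicit description of the planar regions $D(\cdot,w)$, the inequality $z_p + sv \geq \overline z_s$ gives the inclusion $D(p_s, (1-s)y_p/4) \subseteq D((0,(1-s)y_p,\overline z_s),(1-s)y_p/4)$, so $D(p_s, (1-s)y_p/4) \cap B \subseteq \{(0,(1-s)y_p,\overline z_s)\}$. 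The single candidate point $(0,(1-s)y_p,\overline z_s)$ actually lies in $D(p_s, (1-s)y_p/4)$ only if $\overline z_s \geq z_p + sv$, which by the above happens only at $s=0$, where the candidate coincides with $p$ itself. Taking the union over $s \in [0,1)$ then gives $R(p,v) \cap B = \{p\}$.

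The main conceptual step is recognizing that the property $S(p,v)\cap B = \{p\}$ already furnished by Lemma~\ref{lem:main4} is precisely the vertical strict-separation condition needed to make the slicewise application of Lemma~\ref{lem:main3} collapse to the single point $p$; no further geometric construction or refinement of the parameter $v$ is required.
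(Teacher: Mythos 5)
Your proof is correct and follows essentially the same route as the paper's: take $p$ and $v$ directly from Lemma~\ref{lem:main4}, slice $R(p,v)$ over $s\in[0,1)$, apply Lemma~\ref{lem:main3} on each slice at the maximal height $\overline z_s$, and use $S(p,v)\cap B=\{p\}$ to show $\overline z_s < z_p+sv$ for $s\in(0,1)$, so that each slice of $R(p,v)$ misses $B$ entirely for $s>0$ and meets $B$ only at $p$ when $s=0$. The only cosmetic difference is that you absorb the $s=0$ case into the same slicewise framework, whereas the paper isolates it as a separate application of Lemma~\ref{lem:main3} before running the argument for $s\in(0,1)$.
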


\begin{proof}
Let $p=(0,y_p,z_p) \in \partial B$ with $y_p, z_p>0$ and $v>0$ be given by Lemma~\ref{lem:main4}. Since $S(p,v) \cap B = \{p\}$, we have $z_p = \max\{z>0;\; (0,y_p,z) \in B\}$. Then it follows from Lemma~\ref{lem:main3} that 
\begin{equation} \label{e:1}
D(p,y_p/4) \cap B = \{p\}~.
\end{equation}
Recall that $D(p,y_p/4)$ is the intersection of $R(p,v)$ with the plane $\{y=y_p\}$.

For $s\in (0,1)$, set $q_s:=(0,(1-s)y_p,z_s)$ where $z_s := \max\{z>0;\; (0,(1-s)y_p,z) \in B\}$. Note that $z_s$ is well defined since for all $s\in (0,1)$, we have $(0,(1-s)y_p,z) \in B$ for some $z>0$ by Lemma~\ref{lem:main4}. We have $q_s \in \partial B$ and it follows from Lemma~\ref{lem:main3} that $D(q_s,(1-s)y_p/4) \cap B = \{q_s\}$. On the other hand, $q_s \not \in S(p,v)$ since $S(p,v) \cap B = \{p\}$ and hence $z_s < z_p + s v$. It follows that, for all $s\in (0,1)$,
$$D(p_s,(1-s)y_p/4) \cap B \subset (D(q_s,(1-s)y_p/4) \cap B) \setminus \{q_s\}~,$$
where $p_s:=(0,(1-s)y_p,z_p+sv)$. Hence,
\begin{equation} \label{e:2}
D(p_s,(1-s)y_p/4) \cap B = \emptyset~.
\end{equation}
Recalling that $D(p_s,(1-s)y_p/4)$ is the intersection of $R(p,v)$ with the plane $\{y=(1-s)y_p\}$, the lemma finally  follows from \eqref{e:1} and \eqref{e:2}.
\end{proof}

\begin{lemma} \label{lem:main6}
There exists a point $q=(0,y_q,z_q)\in \partial B$
with $y_q, z_q < 0$ and $v > 0$ such that $\hat R(q,v) \cap B_d(q,1) = \{0\}$ where 
\begin{equation} \label{e:hatR}
\hat R(q,v) := \left\{\left(-t,s y_q, sv +\frac{t(3-s)y_q}{4} + u\right); \; u, t \geq 0, \, s\in [0,1)\right\}~.
\end{equation}
\end{lemma}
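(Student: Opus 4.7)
The plan is to deduce Lemma~\ref{lem:main6} directly from Lemma~\ref{lem:main5} by taking group-theoretic inverses and using the left-invariance of $d$.

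First, let $p=(0,y_p,z_p)\in \partial B$, with $y_p, z_p>0$, and $v>0$ be as provided by Lemma~\ref{lem:main5}, so that $R(p,v)\cap B=\{p\}$. Set
\[
q:=p^{-1}=(0,-y_p,-z_p),
\]
where the equality $p^{-1}=(0,-y_p,-z_p)$ comes from the fact that, in exponential coordinates of the first kind, $p^{-1}$ has opposite coordinates to $p$ when $x_p=0$. We thus have $y_q=-y_p<0$ and $z_q=-z_p<0$, as required. By left-invariance of $d$, we have $d(0,q)=d(p,p\cdot q)=d(p,0)=1$, so $q\in\partial B$, and $B_d(q,1)=q\cdot B$.

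Second, I would verify that left-translation by $q$ sends the ``bad region'' for $p$ onto the ``bad region'' for $q$; namely, $q\cdot R(p,v)=\hat R(q,v)$. A generic element of $R(p,v)$ has the form
\[
r=\Bigl(-t,\,(1-s)y_p,\,z_p+sv-\tfrac{t(1-s)y_p}{4}+u\Bigr),\qquad u,t\geq 0,\ s\in[0,1).
\]
Computing $q\cdot r$ with the Heisenberg group law~\eqref{e:grouplaw-Fr2}, the first two coordinates are $-t$ and $-sy_p=sy_q$, and the third coordinate is
\[
sv+u-\tfrac{t(1-s)y_p}{4}-\tfrac{ty_p}{2}=sv+u-\tfrac{t(3-s)y_p}{4}=sv+u+\tfrac{t(3-s)y_q}{4},
\]
which is exactly the general form of an element of $\hat R(q,v)$ as defined in~\eqref{e:hatR}. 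Hence $q\cdot R(p,v)=\hat R(q,v)$.

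Finally, left-translating the identity $R(p,v)\cap B=\{p\}$ by $q$ gives
\[
\hat R(q,v)\cap B_d(q,1)=q\cdot R(p,v)\cap q\cdot B=q\cdot\{p\}=\{0\},
\]
which is precisely the desired conclusion. There is no real obstacle here once the correct candidate $q=p^{-1}$ is identified; the crucial observation is the symmetry between the regions $R(p,v)$ and $\hat R(q,v)$ under left-translation by $p^{-1}$, which explains the appearance of the factor $(3-s)$ (coming from $(1-s)/4+1/2$) in the definition of $\hat R$.
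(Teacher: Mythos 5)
Your proof is correct and follows exactly the paper's approach: taking $q=p^{-1}$, using left-invariance of $d$ to translate the identity $R(p,v)\cap B=\{p\}$, and identifying $p^{-1}\cdot R(p,v)$ with $\hat R(q,v)$. You spell out the group-law computation that the paper simply asserts; the only slip is the reference to~\eqref{e:grouplaw-Fr2} (the $\mathbb F_{r2}$ law) rather than to the Heisenberg group law of Example~\ref{ex:heisenberg}, though for $\HH^1$ the two formulas coincide.
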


\begin{proof}
Let $p=(0,y_p,z_p)\in \partial B$
with $y_p, z_p>0$ and $v>0$ be given by Lemma~\ref{lem:main5} and set $q:= p^{-1}$. By left-invariance of $d$, we have $$ q \cdot R(q^{-1},v) \cap B_d(q,1) = p^{-1} \cdot (R(p,v) \cap B) = \{0\}~.$$
Noting that $q \cdot R(q^{-1},v) = p^{-1} \cdot R(p,v) = \hat R(q,v)$, we get the required conclusion.
\end{proof}

The next lemma gives a geometric property of dilations of the region $\hat R(q,v)$.

\begin{lemma} \label{lem:main7}
Let $q=(0,y_q,z_q)\in \HH$
with $y_q, z_q < 0$, $v > 0$ and let $\hat R(q,v)$ be given by~\eqref{e:hatR}. Then, for all $\hat q=(x_{\hat q},y_{\hat q},z_{\hat q})$ with $x_{\hat q},y_{\hat q}<0$, there exists $\hat\lambda >0$ such that, for all $\lambda >\hat\lambda$, $\hat q \in \delta_\lambda(\hat R(q,v))$.
\end{lemma}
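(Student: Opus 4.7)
The plan is to solve the inclusion $\hat q \in \delta_\lambda(\hat R(q,v))$ explicitly by inverting the parametrization of $\hat R(q,v)$ under the dilation, and then to check that the resulting parameters $(t,s,u)$ lie in the admissible region $t \geq 0$, $s\in [0,1)$, $u \geq 0$ for all sufficiently large $\lambda$. Concretely, since $\delta_\lambda(x,y,z) = (\lambda x, \lambda^\alpha y, \lambda^{\alpha+1} z)$, the condition $\hat q \in \delta_\lambda(\hat R(q,v))$ is equivalent to the existence of $t\geq 0$, $s\in [0,1)$, $u\geq 0$ with
\begin{equation*}
-\lambda t = x_{\hat q}, \qquad \lambda^\alpha s \, y_q = y_{\hat q}, \qquad \lambda^{\alpha+1}\!\left(sv + \tfrac{t(3-s)y_q}{4} + u\right) = z_{\hat q}.
\end{equation*}

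The first two equations force
\begin{equation*}
t = \frac{-x_{\hat q}}{\lambda}, \qquad s = \frac{y_{\hat q}}{\lambda^\alpha y_q},
\end{equation*}
which are both strictly positive since $x_{\hat q}<0$ and $y_{\hat q},y_q$ have the same (negative) sign. Moreover $s\to 0$ as $\lambda\to +\infty$, so there exists $\hat\lambda_0>0$ such that $s\in [0,1)$ for all $\lambda>\hat\lambda_0$. The third equation then fixes $u$ as
\begin{equation*}
u \;=\; \frac{z_{\hat q}}{\lambda^{\alpha+1}} \;-\; \frac{vy_{\hat q}}{\lambda^\alpha y_q} \;-\; \frac{(-x_{\hat q})(3-s) y_q}{4\lambda}.
\end{equation*}

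The main point is to verify that $u\geq 0$ for $\lambda$ large. I would argue by examining the order in $\lambda^{-1}$ of each term, using the assumption $\alpha>1$. The first two terms are $O(\lambda^{-(\alpha+1)})$ and $O(\lambda^{-\alpha})$ respectively, hence negligible compared to the third term, which is of order $\lambda^{-1}$. The third term carries a minus sign in front, and its factor $(-x_{\hat q})(3-s)y_q$ has sign $(+)(+)(-) = -$ (using $s<1$, $x_{\hat q}<0$, $y_q<0$), so the third contribution $-(-x_{\hat q})(3-s)y_q/(4\lambda)$ is \emph{positive} and of order $\lambda^{-1}$. Multiplying the whole expression by $\lambda$ and passing to the limit $\lambda\to+\infty$ gives $\lambda u \to 3(-x_{\hat q})(-y_q)/4>0$, so $u>0$ eventually. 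Choosing $\hat\lambda\geq\hat\lambda_0$ large enough so that both $s<1$ and $u\geq 0$ hold for all $\lambda>\hat\lambda$ completes the proof.

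There is no real obstacle here: the lemma is a direct computation, and the only mildly delicate point is the sign/order comparison, which works precisely because the exponent $\alpha+1>\alpha>1$ causes the first two error terms to vanish faster than the dominant (and correctly signed) third term.
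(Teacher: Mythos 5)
Your proof is correct and follows exactly the same strategy as the paper's: invert the parametrization to solve for $t$, $s$, $u$ in terms of $\lambda$, then verify that $t>0$, $s\in[0,1)$ for $\lambda$ large, and that $u>0$ by a leading-order (in $1/\lambda$) sign analysis with limit $\lambda u \to 3(-x_{\hat q})(-y_q)/4>0$.
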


\begin{proof}
Let $\hat q=(x_{\hat q},y_{\hat q},z_{\hat q})$ with $x_{\hat q},y_{\hat q}<0$ be given. To prove that $\hat q \in \delta_\lambda(\hat R(q,v))$, we have to find $t\geq 0$, $u\geq 0$ and $s\in [0,1)$ such that
\begin{equation} \label{e:main7}
 \begin{cases}
 \, \hat x_q = -t \lambda\\
 \, \hat y_q = \lambda^\alpha s y_q\\
 \, \hat z_q = \lambda^{\alpha+1} (sv +\displaystyle\frac{t(3-s)y_q}{4} + u)~.
 \end{cases}
\end{equation} 
From the first equation, we get $t= - \hat x_q / \lambda >0$ since $\hat x_q <0$. From the second equation, we get $s = \lambda^{-\alpha} \hat y_q y_q^{-1}$. We have  $s>0$ since $\hat y_q <0$ and $y_q <0$. We also have $s<1$ for all $\lambda>0$ large enough. The third equation gives
\begin{equation*}
\begin{split}
u &= - \displaystyle\frac{t(3-s)y_q}{4}- sv + \lambda^{-\alpha-1} \hat z_q \\
&= \displaystyle\frac{\hat x_q y_q}{4 \lambda} \, \left(3-\lambda^{-\alpha} \frac{\hat y_q}{y_q}\right) - \lambda^{-\alpha} \frac{\hat y_q}{y_q} v + \lambda^{-\alpha-1} \hat z_q = \frac{3 \hat x_q y_q}{4} \frac{1}{\lambda}  + o(\frac{1}{\lambda}).
\end{split}
\end{equation*}
It follows that $u>0$ for $\lambda >0$ large enough. All together, we get that, for all $\lambda >0$ large enough, one can find $t\geq 0$, $u\geq 0$ and $s\in [0,1)$ such that \eqref{e:main7} holds as wanted.
\end{proof}

We are now going to conclude the proof of Theorem~\ref{thm:heisenberg-case}.

\begin{proof} [Proof of Theorem~\ref{thm:heisenberg-case}]
Recall that we are arguing by contradiction. We consider a continuous self-similar quasi-distance $d$ on the non-standard Heisenberg group of exponent $\alpha$ and we are assuming that $d$ satisfies WBCP.
To get a contradiction, we are going to construct with the help of Lemma~\ref{lem:main6} and Lemma~\ref{lem:main7} families of Besicovitch balls with arbitrarily large cardinality.

Let us choose $q_1=(x_1,y_1,z_1)$ with $x_1, y_1 <0$ and set $r_1:=d(0,q_1)$. By induction assume that $q_1=(x_1,y_1,z_1), \dots, q_m=(x_m,y_m,z_m)$ have already been chosen so that $x_i, y_i <0$ for all $i=1,\dots,m$ and so that $\{B_d(q_i,r_i)\}_{i=1}^m$ is a family of Besicovitch balls where $r_i:=d(0,q_i)$.

Let $q=(0,y_q,z_q)\in \partial B$
with $y_q, z_q < 0$ and $v > 0$ be given by Lemma~\ref{lem:main6}. For all $k\geq 1$, we have 
$$\delta_{\rho^{-k}} (\hat R(q,v)) \cap B_d(\delta_{\rho^{-k}}(q),\rho^{-k}) = \delta_{\rho^{-k}} (\hat R(q,v) \cap B_d(q,1)) = \{0\}~.$$
On the other hand, it follows from Lemma~\ref{lem:main7} that 
$$q_i \in \delta_{\rho^{-k}} (\hat R(q,v))$$
for all $i=1,\dots,m$ and all $k \geq 1$ large enough. Hence $q_i \not \in B_d(\delta_{\rho^{-k}}(q),\rho^{-k})$ for all $i=1,\dots,m$ and all $k \geq 1$ large enough.

Next, by continuity of $d$ with respect to the manifold topology, we have
$$\lim_{k \rightarrow +\infty}  d(\delta_{\rho^{k}}(q_i), q) = d(0,q)$$
for all $i=1,\dots,m$. It follows that 
$$\lim_{k \rightarrow +\infty} d(q_i,\delta_{\rho^{-k}}(q)) = \lim_{k \rightarrow +\infty} \rho^{-k} d(\delta_{\rho^{k}}(q_i), q) = +\infty$$
for all $i=1,\dots,m$. Hence we can also choose $k\geq 1$ large enough so that
$$d(q_i,\delta_{\rho^{k}}(q)) > r_i$$
for all $i=1,\dots,m$. 

All together, we have proved that we can find $k\geq 1$ large enough so that the balls $B_d(q_1,r_1)$, $\dots$, $B_d(q_m,r_m)$, $B_d(\delta_{\rho^{-k}}(q),\rho^{-k})$ form a family of Besicovitch balls. We have $\delta_{\rho^{-k}}(q) = (0,\rho^{-k\alpha} y_q, \rho^{-k(\alpha+1)} z_q)$ with $\rho^{-k\alpha} y_q <0$, and $\rho^{-k} = d(0,\delta_{\rho^{-k}}(q))$. Then, using Remark~\ref{rmk:Besicovitch:open} below, we can choose $q_{m+1} = (x_{m+1},y_{m+1},z_{m+1})$ with $x_{m+1}, y_{m+1}<0$ close enough to $\delta_{\rho^{-k}}(q)$ so that, setting $r_{m+1} := d(0,q_{m+1})$, the family $\{B_d(q_i,r_i)\}_{i=1}^{m+1}$ is a family of Besicovitch balls. 
\end{proof}

\begin{remark} [Being a family of Besicovitch balls is an open condition] \label{rmk:Besicovitch:open}
Let $\{B_d(q_i,  r_i)\}_{i=1}^m$ be a family of Besicovitch balls where $r_i:=d(e,q_i)$ in a graded group $G$ with identity $e$ and equipped with a continuous self-similar quasi-distance $d$. One can find $U_1,\dots,U_m$ open neighborhoods of $q_1,\dots,q_m$ respectively, such that, for all $(q'_1,\dots,q'_m) \in U_1 \times \cdots \times U_m$, $\{B_d(q_i',  r_i')\}_{i=1}^m$ is a family of Besicovitch balls. Here we have set $r_i':=d(e,q_i')$. Indeed we have $d(q_i, q_j) -  d(e,q_i)>0$ for all $i\not = j$. By continuity of $d$ on $G\times G$ with respect to the manifold topology, one can find $U_1,\dots,U_m$ open neighborhoods of $q_1,\dots,q_m$ respectively such that, if $(q'_1,\dots,q'_m) \in U_1 \times \cdots \times U_m$, then $d(q'_i, q'_j) - d(e,q'_i) >0 $ for all $i\not = j$. Hence $\{B_d(q_i',  r_i')\}_{i=1}^m$ is a family of Besicovitch balls.
\end{remark}

\subsection{Topological properties of self-similar distances} \label{subsect:self-similar-topology}

As already mentioned, one of the main differences between self-similar and homogeneous quasi-distances are their topological properties. One cannot  extend Proposition~\ref{prop:homogeneous-quasi-distance-topology} to self-similar quasi-distances. There are indeed examples of self-similar distances on homogeneous groups such that the distance from the identity $e$ is not continuous at $e$ with respect to the manifold topology. Hence the topology induced by such self-similar distances does not coincide with the manifold topology.

One such example is the following. We consider $\R$ equipped with the usual addition as a group law and the dilations $\delta_\lambda(x) := \lambda x$. We take $(v_i)_{i \in I}$ a basis of $\R$ viewed as a vector space over $\Q$ and we choose it in such a way that some sequence $v_{i_j}$ converges to $0$ as $j$ goes to $+\infty$ for the usual topology of $\R$. For $x\in \R$, we write $x = \sum x_i v_i$ where $x_i \in \Q$ and all but finitely many of the $x_i$'s are $0$ and we consider the left-invariant distance $d$ such that $d(0, x) = \sum |x_i|$. This distance is $\Q$-homogeneous, i.e., $d(\delta_{q}(x),\delta_{q}(y)) = q d(x,y)$ for all $x$, $y\in\R$ and all $q\in \Q$, and hence self-similar. We have $d(0,v_i) = 1$ for all $i\in I$. In particular $d(0,v_{i_j}) =1$ for all $j$ whereas $v_{i_j}$ converges to $0$ as $j$ goes to $+\infty$ for the usual topology of $\R$. Hence $d(0,\cdot)$ is not continuous at $0$ with respect to the manifold topology on $\R$.

However, with the additional assumption of the continuity of $d(e,\cdot)$ at $e$ with respect to the manifold topology, one can extend  Proposition~\ref{prop:homogeneous-quasi-distance-topology} in the following way.

\begin{proposition}\label{prop:self-similar-quasi-distance-topology}
Let $G$ be a graded group with identity $e$. Let $d$ be a self-similar quasi-distance on $G$. Assume that $d(e,\cdot)$ is continuous at $e$ with respect to the manifold topology. Then the topology induced by $d$ coincides with the manifold topology. Moreover, a set is relatively compact 
if and only if it is bounded  with respect to $d$.
\end{proposition}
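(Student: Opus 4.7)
The plan is to follow the structure of the proof of Proposition~\ref{prop:homogeneous-quasi-distance-topology}, adapting it to the self-similar setting where one only has one-homogeneity along the discrete sequence of dilations $(\delta_{\rho^k})_{k\in\Z}$ for some fixed $\rho\in(0,1)$, rather than along a continuous one-parameter family. The continuity of $d(e,\cdot)$ at $e$ with respect to the manifold topology $\mathcal{T}_m$ is now part of the assumptions and thus plays the role of the first half of that earlier proof. What remains is to establish the reverse implications: that $d(e,p_k)\to 0$ forces $p_k\to e$ in $\mathcal{T}_m$, and that $d$-boundedness is equivalent to $\|\cdot\|$-boundedness for the quantity $\|\cdot\|$ defined in~\eqref{e:norm}.

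The key technical ingredient will be a discretized approximation property. Letting $0<d_{\min}\leq d_{\max}$ denote the smallest and largest $t>0$ with $V_t\neq\{0\}$ and using exponential coordinates of the second kind adapted to the grading as in the proof of Proposition~\ref{prop:homogeneous-quasi-distance-topology}, one has $\|\delta_{\rho^m}(p)\|=\sum_i\rho^{md_i}|P_i(p)|$, and consecutive ratios satisfy $\|\delta_{\rho^{m+1}}(p)\|/\|\delta_{\rho^m}(p)\|\in[\rho^{d_{\max}},\rho^{d_{\min}}]\subset(0,1)$. In particular, for any $\eps>0$ and any $p\neq e$ there exists $m=m(p,\eps)\in\Z$ such that $\|\delta_{\rho^m}(p)\|\in(\eps\rho^{d_{\max}},\eps]$. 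An analogous statement for $d(e,\cdot)$ follows at once from $d(e,\delta_{\rho^m}(p))=\rho^m d(e,p)$, with consecutive ratio exactly $\rho$.

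For the first reverse implication, I would argue by contradiction: assume $d(e,p_k)\to 0$ but $p_k\not\to e$ in $\mathcal{T}_m$, so up to a subsequence $\|p_k\|\geq\eta$ for some $\eta>0$. Fix $\eps\in(0,\eta)$ and pick $m_k\in\Z$ with $\|\delta_{\rho^{m_k}}(p_k)\|\in(\eps\rho^{d_{\max}},\eps]$; the inequality $\eps<\eta$ forces $m_k\geq 0$ for all $k$. Setting $q_k:=\delta_{\rho^{m_k}}(p_k)$, by relative compactness in $\mathcal{T}_m$ one can extract a further subsequence with $q_k\to q$ and $\|q\|\geq\eps\rho^{d_{\max}}>0$, so $q\neq e$. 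Since $m_k\geq 0$, $d(e,q_k)=\rho^{m_k}d(e,p_k)\leq d(e,p_k)\to 0$, and from
\[d(e,q)\leq C\bigl(d(e,q_k)+d(e,q^{-1}\cdot q_k)\bigr)\]
together with $q^{-1}\cdot q_k\to e$ in $\mathcal{T}_m$ and the continuity assumption on $d(e,\cdot)$ at $e$, one concludes $d(e,q)=0$, contradicting $q\neq e$.

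The equivalence between relative compactness and $d$-boundedness then reduces, thanks to the coincidence of topologies just established, to the equivalence between $\|\cdot\|$-boundedness and $d$-boundedness, which is handled by the same style of contradiction argument. For instance, assuming $\|p_k\|\leq M$ but $d(e,p_k)\to\infty$, pick $m_k\in\Z$ so that $\rho^{m_k}d(e,p_k)\in(\rho,1]$; then $m_k\to\infty$ and $\|\delta_{\rho^{m_k}}(p_k)\|\leq\rho^{m_k d_{\min}}M\to 0$, so by the first part $d(e,\delta_{\rho^{m_k}}(p_k))\to 0$, contradicting the lower bound $\rho$. The reverse implication is analogous, using the discrete approximation property for $\|\cdot\|$ in place of that for $d$. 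The main subtlety, compared to the homogeneous case, is the need to work with a discrete family of dilations; the ratio bound $\rho^{d_{\max}}\leq\|\delta_{\rho^{m+1}}(p)\|/\|\delta_{\rho^m}(p)\|\leq\rho^{d_{\min}}$ provides the uniform control that compensates for the loss of the continuous scaling.
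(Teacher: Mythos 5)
Your proof is correct and follows essentially the same route as the paper's: discretize the argument from Proposition~\ref{prop:homogeneous-quasi-distance-topology} by choosing integer powers $m_k$ of the self-similarity factor to renormalize $p_k$ into a fixed annulus $\{\eps\rho^{d_{\max}}<\norm{\cdot}\leq\eps\}$, extract a convergent subsequence there, and derive a contradiction via the quasi-triangle inequality together with the assumed continuity of $d(e,\cdot)$ at $e$. The two minor differences are presentational: you state the consecutive-ratio bound $\norm{\delta_{\rho^{m+1}}(p)}/\norm{\delta_{\rho^m}(p)}\in[\rho^{d_{\max}},\rho^{d_{\min}}]$ as a standalone discretized-approximation principle (the paper uses only the lower bound, extracted inline), and you spell out both directions of the boundedness equivalence in full, where the paper dispatches them as ``similar technical modifications.''
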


\begin{proof}
First, the fact that $d(e,\cdot)$ is assumed to be continuous at $e$ with respect to the manifold topology $\mathcal{T}_m$ implies that $\mathcal{T}_d \subset\mathcal{T}_m$ where $\mathcal{T}_d$ denotes the topology induced by $d$.

Then the proof can be completed with technical modifications of the proof of Proposition~\ref{prop:homogeneous-quasi-distance-topology} that we briefly sketch below. In the rest of this proof, the convergence of some sequence of points means convergence with respect to the manifold topology.

Let $0<\lambda<1$ be such that $d(\delta_\lambda(p),\delta_\lambda(q)) = \lambda d(p,q)$ for all $p$, $q\in G$. Let $(p_k)$ be a sequence such that $d(e,p_k)$ goes to 0 and let us prove that $p_k$ converges to $e$. Using the conventions and notations introduced in the proof of Proposition~\ref{prop:homogeneous-quasi-distance-topology}, we argue by contradiction and assume that, up to a subsequence, there exists $\varepsilon>0$ such that $\|p_k\| > \varepsilon$ for all $k$ (see~\eqref{e:norm} for the definition of $\|\cdot\|$).
For each fixed $k$, $\|\delta_{\lambda^l}(p_k)\|$ converges to $0$ when $l$ goes to $+\infty$. Hence one can find a sequence of integers $l_k\geq 1$ such that, for all $k$, $\|\delta_{\lambda^{l_k}}(p_k)\| \leq \varepsilon \leq \|\delta_{\lambda^{l_k-1}}(p_k)\|$. Then 
\begin{equation*}
\|\delta_{\lambda^{l_k}}(p_k)\| = \sum_{i=1}^n \lambda^{l_k d_i} |P_i(p)| \geq \lambda^{\overline\alpha}\sum_{i=1}^n \lambda^{(l_k-1) d_i} |P_i(p)| = \lambda^{\overline\alpha} \|\delta_{\lambda^{l_k-1}}(p_k)\| \geq \varepsilon \lambda^{\overline\alpha}
\end{equation*}
where $\overline\alpha := \max_{1\leq i\leq n} d_i$.  Hence $ \varepsilon \lambda^{\overline\alpha} \leq \|\delta_{\lambda^{l_k}}(p_k)\| \leq \varepsilon$ for all $k$. By compactness with respect to the manifold topology of $\{p\in G;\; \varepsilon \lambda^{\overline\alpha} \leq \|p\| \leq \varepsilon\}$, we get that, up to a subsequence, $\delta_{\lambda^{l_k}}(p_k)$ converges to some $p\in G$ such that $\varepsilon \lambda^{\overline\alpha} \leq \|p\| \leq \varepsilon$. In particular $p\not = e$ and $d(e,p) >0$. On the other hand, we have \begin{eqnarray*}
0<d(e, p)&\leq & C (d(e, \delta_{\lambda^{l_k}}(p_k))  +  d(\delta_{\lambda^{l_k}}(p_k), p)) \\
&= & C (\lambda^{l_k} d(e,  p_k)  +  d(e,p^{-1}\cdot \delta_{\lambda^{l_k}} (p_k)) )\\
&\leq & C (d(e,  p_k) + d(e,p^{-1}\cdot \delta_{\lambda^{l_k}} (p_k)) )~.
\end{eqnarray*}
Since $d(e,p_k)$ converges to $0$ and since $d(e,\cdot)$ is assumed to be continuous at $e$ with respect to the manifold topology and $p^{-1}\cdot \delta_{\lambda^{l_k}} (p_k)$ converges to $e$, we get that the last upper bound in the above inequalities goes to $0$, which gives a contradiction. It follows $\mathcal{T}_m \subset \mathcal{T}_d$ and, all together, we get that $\mathcal{T}_m = \mathcal{T}_d$.

The proof that a set is relatively compact if and only if it is bounded  with respect to $d$ can be achieved with similar technical modifications of the arguments of the proof of Proposition~\ref{prop:homogeneous-quasi-distance-topology}.
\end{proof}

Proposition~\ref{prop:self-similar-quasi-distance-topology} implies the following generalization of Proposition~\ref{prop:submetries-morphism-graded-algebra} to self-similar quasi-distances. In the statement and the proof of Proposition~\ref{prop:submetries-morphism-self-similar} continuity means continuity with respect to the manifold topology.

\begin{proposition} \label{prop:submetries-morphism-self-similar}
Let $\hat{G}$ and $G$ be graded groups with graded Lie algebra $\hat{\g}$ and $\g$  respectively. Assume that there exists a surjective morphism of graded Lie algebras $\phi:\hat{\g}\rightarrow\g$. Let $\varphi:\hat{G}\rightarrow G$ denote the unique Lie group homomorphism such that $\varphi_* = \phi$ and let $\hat{d}$ be a continuous self-similar quasi-distance on $\hat{G}$. Then
$$d(p,q):= \hat{d}(\varphi^{-1}(\{p\}),\varphi^{-1}(\{q\}))$$
defines a continuous self-similar quasi-distance on $G$ and $\varphi:(\hat{G},\hat{d})\rightarrow (G,d)$ is a submetry. 
\end{proposition}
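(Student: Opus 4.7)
The plan is to follow closely the proof of Proposition~\ref{prop:submetries-morphism-graded-algebra}, replacing each appeal to Proposition~\ref{prop:homogeneous-quasi-distance-topology} by an appeal to Proposition~\ref{prop:self-similar-quasi-distance-topology}. The key enabling observation is that the global continuity of $\hat{d}$ on $\hat{G}\times\hat{G}$ implies, in particular, that $\hat{d}(\hat{e},\cdot)$ is continuous at $\hat{e}$ with respect to the manifold topology on $\hat{G}$, so that Proposition~\ref{prop:self-similar-quasi-distance-topology} applies. Consequently, the topology induced by $\hat{d}$ coincides with the manifold topology and a subset of $\hat{G}$ is relatively compact if and only if it is bounded with respect to $\hat{d}$ — these are precisely the two ingredients used in the homogeneous-distance proof.

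First I would verify that for all $p,q\in G$ and all $\hat{p}\in\varphi^{-1}(\{p\})$ there exists $\hat{q}\in\varphi^{-1}(\{q\})$ such that $d(p,q)=\hat{d}(\hat{p},\hat{q})$. By~\cite[Proposition~2.8]{LeDonne_Rigot_rmknobcp} this suffices to ensure that $d$ is a quasi-distance on $G$ (with the same quasi-triangle constant as $\hat{d}$) and that $\varphi$ is a submetry. As in the previous proof, set $\hat{K}:=\ker\varphi$, write $\varphi^{-1}(\{p\})=\hat{K}\cdot\hat{p}$, and use left-invariance of $\hat{d}$ together with the identity $\hat{k}\cdot\varphi^{-1}(\{q\})=\varphi^{-1}(\{q\})$ for $\hat{k}\in\hat{K}$ to conclude that $\hat{p}'\mapsto\hat{d}(\hat{p}',\varphi^{-1}(\{q\}))$ is constant on $\varphi^{-1}(\{p\})$. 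Existence of the minimizer $\hat{q}$ then reduces to showing that the infimum $\inf_{\hat{q}'\in\varphi^{-1}(\{q\})}\hat{d}(\hat{p},\hat{q}')$ is attained, and here the self-similar analogue of compactness is used: a minimizing sequence is bounded with respect to $\hat{d}$, hence relatively compact in the manifold topology by Proposition~\ref{prop:self-similar-quasi-distance-topology}, and continuity of $\hat{d}$ together with closedness of $\varphi^{-1}(\{q\})$ produces a minimizer.

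Next I would verify self-similarity of $d$. Since $\phi$ is a morphism of \emph{graded} Lie algebras, it commutes with dilations, and hence so does $\varphi$, i.e.~$\varphi\circ\delta_\lambda=\delta_\lambda\circ\varphi$ for all $\lambda>0$. This gives $\varphi^{-1}(\{\delta_\lambda(p)\})=\delta_\lambda(\varphi^{-1}(\{p\}))$, and combining with the self-similarity of $\hat{d}$ for the scaling factor $\lambda_0$ afforded by Definition~\ref{def:selfsimilar-dist}, one directly gets $d(\delta_{\lambda_0}(p),\delta_{\lambda_0}(q))=\lambda_0\,d(p,q)$.

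Finally, I would prove global continuity of $d$ on $G\times G$. By left-invariance it is enough to prove that $d(e,\cdot)$ is continuous on $G$. Given a sequence $p_k\to p$ in $G$, pick $\hat{p}_k\in\varphi^{-1}(\{p_k\})$ with $d(e,p_k)=\hat{d}(\hat{e},\hat{p}_k)$: the sequence $(\hat{p}_k)$ is bounded with respect to $\hat{d}$ and hence, again by Proposition~\ref{prop:self-similar-quasi-distance-topology}, relatively compact in the manifold topology. Extracting a subsequence converging to some $\hat{p}\in\varphi^{-1}(\{p\})$ and using continuity of $\hat{d}$ yields $d(e,p)\leq\liminf_k d(e,p_k)$. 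For the reverse inequality, choose $\hat{p}'\in\varphi^{-1}(\{p\})$ realizing $d(e,p)=\hat{d}(\hat{e},\hat{p}')$, use that $\varphi=\exp\circ\phi\circ\exp^{-1}$ is an open map (\cite[p.~104]{Warner}) to lift $p_k$ to a sequence $\hat{p}'_k\in\varphi^{-1}(\{p_k\})$ converging to $\hat{p}'$, and appeal once more to continuity of $\hat{d}$. The hard point — and essentially the only place where the self-similar setting is not a verbatim rerun — is ensuring that the compactness argument used to produce minimizers and limits goes through; this is exactly handled by Proposition~\ref{prop:self-similar-quasi-distance-topology}, whose hypothesis is met because $\hat{d}$ is assumed continuous.
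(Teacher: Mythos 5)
There is a gap, and it sits exactly where you declare the self-similar case to be a "verbatim rerun." You invoke Proposition~\ref{prop:self-similar-quasi-distance-topology} only for $(\hat{G},\hat{d})$, where the extra hypothesis (continuity of $\hat d(\hat e,\cdot)$ at $\hat e$) is indeed met because $\hat d$ is assumed continuous. But in the last paragraph you write: ``pick $\hat{p}_k\in\varphi^{-1}(\{p_k\})$ with $d(e,p_k)=\hat{d}(\hat{e},\hat{p}_k)$: the sequence $(\hat{p}_k)$ is bounded with respect to $\hat{d}$.'' That boundedness is asserted, not proved. In the homogeneous-case proof it is obtained by observing that $(p_k)$ is relatively compact, hence bounded with respect to $d$ by Proposition~\ref{prop:homogeneous-quasi-distance-topology} applied to $(G,d)$, hence $\hat d(\hat e,\hat p_k)=d(e,p_k)$ is bounded. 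For self-similar quasi-distances you would need Proposition~\ref{prop:self-similar-quasi-distance-topology} applied to $(G,d)$ for this step, and its hypothesis is continuity of $d(e,\cdot)$ at $e$ --- which is precisely what you are trying to prove, so the argument as you have ordered it is circular.

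The paper's proof resolves this by first establishing continuity of $d(e,\cdot)$ at $e$ as a preliminary step, using only the open-map lifting argument and the upper bound $d(e,p_k)\leq\hat d(\hat e,\hat p'_k)$ (no compactness or bounded-implies-relatively-compact input needed), and only then invoking Proposition~\ref{prop:self-similar-quasi-distance-topology} for $(G,d)$. After that the homogeneous-case argument runs unchanged. Your proposal contains all the needed ingredients --- the open-map lift and the upper bound appear in your second inequality --- but in the wrong order: you need to prove the $\limsup$ direction first (or at least the boundedness of $d(e,p_k)$ it implies) before you can legitimately extract a convergent subsequence of $(\hat p_k)$ for the $\liminf$ direction. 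As written, the boundedness claim has no justification.
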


\begin{proof}
One proves that $d$ is a self-similar quasi-distance on $G$ and that $\varphi:(\hat{G},\hat{d})\rightarrow (G,d)$ is a submetry with the same arguments as in the  proof of Proposition~\ref{prop:submetries-morphism-graded-algebra}. To prove that $d$ is continuous on $G \times G$, we first prove that $d(e, \cdot)$ is continuous at $e$. Here $e$ denotes the identity in $G$ and below $\hat{e}$ will denote the identity in $\hat{G}$. Let $(p_k)$ be a sequence converging to $e$ (with respect to the manifold topology). Since $\varphi$ is an open map and since $\hat{e}\in \varphi^{-1}(\{e\})$, one can find a sequence $\hat{p}_k \in \varphi^{-1}(\{p_k\})$ converging to $\hat{e}$. We have $d(e,p_k) \leq \hat{d}(\hat{e},\hat{p}_k)$. Since $\hat{d}$ is assumed to be continuous, $\hat{d}(\hat{e},\hat{p}_k)$ converges to $0$. It follows that  $d(e,p_k)$ converges to $0$ as well and hence $d(e,\cdot)$ is continuous at $e$. The proof can now be completed using Proposition~\ref{prop:self-similar-quasi-distance-topology} and following the arguments of the proof of  Proposition~\ref{prop:submetries-morphism-graded-algebra}.
\end{proof}

\subsection{Arbitrary graded groups with two different layers not commuting}
\label{subsect:nobcp-noncommuting-layers}

We conclude in this section the proof of Theorem~\ref{thm:sect-nobcp-self-similar-noncommuting}.

\begin{proof}[Proof of Theorem~\ref{thm:sect-nobcp-self-similar-noncommuting}]
Let $G$ be a graded group with associated positive grading $\g =\oplus_{t>0} V_t$ of its Lie algebra. Let $t<s$ be such that $[V_t,V_s] \not= \{0\}$. Let $d$ be a continuous self-similar quasi-distance on $G$. We argue by contradiction and assume that WBCP holds on $(G,d)$.

By Proposition~\ref{prop:structure-algebra-noncommuting-layers}, there exists a graded subalgebra $\hat{\g}$ of $\g$ and a surjective morphism of graded Lie algebras from $\hat{\g}$ onto $\h$, where $\h$ is the $t$-power of the non-standard Heisenberg Lie algebra of exponent $s/t$.

We denote by $\hat{G}:= \exp(\hat{\g})$ the graded group whose Lie algebra $\hat{\g}$ is endowed with the positive grading induced by the given positive grading of $\g$. The restriction of $d$ to $\hat{G}$, still denoted by $d$, is a continuous self-similar quasi-distance on $\hat{G}$. Since we assume that WBCP holds on $(G,d)$, WBCP also holds on $(\hat{G},d)$ by Proposition~\ref{prop:BCP-subset}. 

Next, it follows from Proposition~\ref{prop:submetries-morphism-self-similar} that there exists a continuous self-similar quasi-distance $d_H$ on the $t$-power, denoted by $H$, of the non-standard Heisenberg group of exponent $s/t$, and a submetry from $(\hat{G},d)$ onto $(H,d_H)$. Since WBCP  holds on $(\hat{G},d)$, WBCP holds on $(H,d_H)$ by Proposition~\ref{prop:bcp-submetry}. Finally, we get that WBCP would hold for the continuous self-similar quasi-distance $(d_H)^t$ on the non-standard Heisenberg group of exponent $s/t$. This contradicts Theorem~\ref{thm:heisenberg-case} and concludes the proof.
\end{proof}

\section{Differentiation of measures} \label{sect:differentiation-measures}

In this section, we give applications to differentiation of measures and we prove Theorem~\ref{thm:intro-preiss-improved}.

Following the terminology of~\cite{Preiss83}, if $(X,d)$ is a metric space, we say that $d$ is \textit{finite dimensional} on a subset $Y\subset X$ if there exist $C^*\in [1,+\infty)$ and $r^* \in (0,+\infty]$ such that $\card \mathcal{B} \leq C^*$ for every family $\mathcal{B} = \{B=B_d(x_B,r_B)\}$ of Besicovitch balls in $(X,d)$ such that $x_B \in Y$ and $r_B<r^*$ for all $B\in \mathcal{B}$ (see Definition~\ref{def:BesicovitchBalls} for the definition of families of Besicovitch balls). If we need to specify the constants $C^*$ and $r^*$, we say that $d$ is finite dimensional on $Y$ with constants $C^*$ and $r^*$. We say $d$ is \textit{$\sigma$-finite dimensional} if $X$ can be written as a countable union of subsets on which $d$ is finite dimensional. Note that WBCP holds on $(X,d)$ if and only if $d$ is finite dimensional on $X$ for some constant $C^*\in [1,+\infty)$ and with $r^* = +\infty$. 

For homogeneous distances on homogeneous groups, we first prove the following strongest equivalence between $\sigma$-finite dimensionality and validity of (W)BCP.

\begin{proposition} \label{prop:BCP-vs-finitedimensional}
Let $G$ be a homogeneous group and let $d$ be a homogeneous distance on $G$. Then BCP holds on $(G,d)$ if and only if $d$ is $\sigma$-finite dimensional. 
\end{proposition}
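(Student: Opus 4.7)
The forward direction is immediate: if BCP holds on $(G,d)$ then in particular WBCP holds (Corollary 3.8), which means $d$ is finite dimensional on $G$ itself with some constant $C^*$ and with $r^*=+\infty$, hence trivially $\sigma$-finite dimensional by taking $Y_1=G$.

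For the converse, my plan has three steps. First, I would prove a \emph{closure stability} lemma: if $d$ is finite dimensional on $Y\subset G$ with constants $C^*$ and $r^*$, then $d$ is finite dimensional on $\overline{Y}$ with constants $C^*$ and, say, $r^*/2$. The idea is to take a family of Besicovitch balls $\{B_d(x_i,s_i)\}_{i=1}^m$ with $x_i\in\overline{Y}$ and $s_i<r^*/2$, pick $x_i'\in Y$ close to $x_i$, and set $s_i':=s_i+\eta$ for small $\eta>0$. By continuity of the homogeneous distance $d$ on $G\times G$ (Corollary 2.31), the strict inequalities $d(x_i,x_j)>s_j$ and the condition that some point $y$ satisfy $d(y,x_i)\le s_i$ persist in the perturbed family after a slight increase of the radii, yielding a Besicovitch family with centers in $Y$ and radii still less than $r^*$. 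Hence $m\le C^*$.

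Second, combine this with the Baire category theorem: writing $G=\bigcup_n Y_n$ with $d$ finite dimensional on each $Y_n$, we have $G=\bigcup_n \overline{Y_n}$. Since $G$ is a locally compact Hausdorff space (hence a Baire space), some $\overline{Y_{n_0}}$ has nonempty interior and thus contains an open ball $B_d(x_0,R)$, on which $d$ is finite dimensional with some constants $C^*$ and $r^*$.

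Third, propagate this local finite dimensionality to global WBCP using the homogeneous group structure. By left-invariance, $d$ is finite dimensional on every translate $B_d(y,R)$ with the same constants. By dilation-invariance (the fact that $\delta_\lambda$ scales $d$ by $\lambda$), it follows that $d$ is finite dimensional on $B_d(y,\lambda R)$ with constants $C^*$ and $\lambda r^*$, for every $\lambda>0$ and every $y\in G$. Now any family of Besicovitch balls in $(G,d)$ is finite by definition, so its radii are bounded by some $S<\infty$ and, because the balls have nonempty common intersection, its centers lie within distance $S$ of a single point; choosing $\lambda$ large enough that $\lambda R>S$ and $\lambda r^*>S$, all centers lie in a single ball $B_d(y,\lambda R)$ and all radii are below $\lambda r^*$, forcing the cardinality to be at most $C^*$. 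This establishes WBCP with constant $C^*$, and BCP then follows from Corollary 3.8. The main obstacle is the first step: one must perturb centers and radii simultaneously to preserve both the separation condition $x_i\notin B_d(x_j,s_j)$ and the nonempty-intersection condition, and this relies crucially on the global continuity of $d$ guaranteed by Corollary 2.31.
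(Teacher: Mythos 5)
Your proof is correct and takes a genuinely different route from the paper. Both arguments share the final ``propagate to global WBCP'' step via left-invariance and blow-up by dilations, and both reduce to the statement that WBCP implies BCP for doubling spaces. Where they diverge is in how they extract a ``good'' set on which $d$ is finite dimensional with usable geometry. The paper works with the $m$-dimensional Hausdorff measure $\mu$ on $G$, which is a doubling Haar measure homogeneous with respect to the dilations; it finds a piece $G_{n_0}$ of the $\sigma$-finite decomposition with $\mu(G_{n_0})>0$, locates a $\mu$-density point of $G_{n_0}$, and then perturbs the centers of an arbitrary (suitably normalized) Besicovitch family into $\delta_{R/r}(G_{n_0})$ using the density estimate together with the observation (Remark~\ref{rmk:Besicovitch:open}) that being a Besicovitch family is an open condition. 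You instead isolate a \emph{closure stability} lemma (if $d$ is finite dimensional on $Y$, it is finite dimensional on $\overline Y$ after halving $r^*$) whose proof is the same perturbation idea but phrased purely in terms of the triangle inequality and the finite-family slack in the strict separation condition, and then replace the density-point/measure argument with Baire category: since $(G,d)$ is a complete (equivalently, locally compact) metric space and $G=\bigcup_n\overline{Y_n}$, some $\overline{Y_{n_0}}$ contains a metric ball. The Baire route avoids Hausdorff measures and density points entirely, so it is more elementary; the paper's route stays within the measure-theoretic framework that motivates the whole section. One small remark: your closure stability step uses only the triangle inequality (plus the topological fact that $\overline Y$ for $\mathcal T_d$ equals $\overline Y$ for the manifold topology), so the reference to ``global continuity of $d$'' as ``crucial'' is a bit overstated, though of course harmless since homogeneous distances are continuous.
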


\begin{proof}
If BCP holds on $(G,d)$ then WBCP holds as well and it follows from the definitions that $d$ is finite dimensional on $G$. To prove the converse, let $m$ denote the Hausdorff dimension of $(G,d)$ and let $\mu$ denote the $m$-dimensional Hausdorff measure on $(G,d)$. It is well-known that $\mu$ is a Haar measure on $G$. Since the distance $d$ from which this $m$-dimensional Hausdorff measure is constructed is homogeneous, $\mu$ is moreover $m$-homogeneous with respect to the associated dilations $(\delta_\lambda)_{\lambda>0}$ on $G$, i.e., $\mu(\delta_\lambda(A)) = \lambda^m \mu(A)$ for all $A\subset G$ and all $\lambda >0$. In particular $\mu$ is a doubling outer measure on $(G,d)$. It follows that for every subset $A\subset G$, $\mu$-a.e. point $p\in A$ is a $\mu$-density point for $A$, i.e., 
\begin{equation*} \label{e:density1points}
\lim_{r\downarrow 0} \frac{\mu(A\cap B_d(p,r))}{\mu(B_d(p,r))} = 1~.
\end{equation*}
Assume that $d$ is $\sigma$-finite dimensional, i.e., $G= \cup_{n \in \N} G_n$ where $d$ is finite dimensional on each $G_n$. Then one can find $n\in\N$ such that $\mu(G_{n})>0$ and we set $A:=G_{n}$. Let $C^*\in [1,+\infty)$ and $r^* \in (0,+\infty]$ be such that $d$ is finite dimensional on $A$ with constant $C^*$ and $r^*$. Since $\mu(A)>0$, one can find a point $p\in A$ that is a $\mu$-density point for $A$. Up to a translation, one can assume that $p=e$ where $e$ denotes the identity in $G$. 
 Next, by homogeneity, for every $\lambda\geq 1$, $d$ is finite dimensional on $\delta_\lambda(A)$ with constants $C^*$ and $r^*$. 

Let us prove that WBCP, and hence BCP by Corollary~\ref{cor:bcp-wbcp-homogeneous-qdist}, holds in $(G,d)$. We consider $\{B_d(p_i,r_i)\}_{i=1}^k$ a family of Besicovitch balls in $(G,d)$. Up to a dilation, one can assume with no loss of generality that $r_i < r^*$ for all $i=1,\dots,k$. Up to a translation, one can further assume that $e\in \cap_{i=1}^k B_d(p_i,r_i)$. Shrinking balls if necessary, one can also assume that $r_i = d(e,p_i)$ for all $i=1,\dots,k$.

By Remark~\ref{rmk:Besicovitch:open} (recall that homogeneous distances on a homogeneous group are continuous, see Corollary~\ref{cor:continuity-homogeneous-distance}), one can find $\varepsilon^* >0$ such that $\{B_d(q_i,s_i)\}_{i=1}^k$ is a family of Besicovitch balls in $(G,d)$ as soon as $d(p_i,q_i) \leq \varepsilon^*$ for all $i=1,\dots,k$ and where $s_i:=d(e,q_i)$. Moreover one can choose $\varepsilon^*$ small enough, namely $\varepsilon^* < r^* - \max_{1\leq i \leq k} r_i$, so that $d(e,q) < r^*$ as soon as $d(q,p_i) \leq \varepsilon^*$ for some $i\in\{1,\dots,k\}$.

Fix $R \geq 1$ such that $R \geq 2 \max_{1\leq i \leq k} r_i$ and $\varepsilon>0$ such that $\varepsilon \leq \min (\varepsilon^*,\max_{1\leq i \leq k} r_i)$. Let $r<1$ be such that 
 \begin{equation*}
  \frac{\mu(A\cap B_d(e,r))}{\mu(B_d(e,r))} > 1 - \left(\frac{\varepsilon}{R}\right)^m~.
  \end{equation*}
Then we claim that $B_d(p_i,\varepsilon) \cap \delta_{R/r}(A) \not= \emptyset$ for all $i=1,\dots,k$. Indeed, arguing by contradiction, assume that $B_d(p_i,\varepsilon) \cap \delta_{R/r}(A) = \emptyset$. By choice of $R$ and $\varepsilon$, we have $B_d(p_i,\varepsilon) \subset B_d(e,R)$. Using the left-invariance and the homogeneity of $\mu$, it follows that
\begin{equation*}
\begin{split}
1 - \left(\frac{\varepsilon}{R}\right)^m &< \frac{\mu(A\cap B_d(e,r))}{\mu(B_d(e,r))} = \frac{\mu(\delta_{R/r}(A)\cap B_d(e,R))}{\mu(B_d(e,R))}  \\
&\leq \frac{\mu(B_d(e,R) \setminus B_d(p_i,\varepsilon))}{\mu(B_d(e,R))} = \frac{\mu(B_d(e,R)) - \mu(B_d(p_i,\varepsilon))}{\mu(B_d(e,R))}\\
 &= 1 - \left(\frac{\varepsilon}{R}\right)^m
\end{split}
\end{equation*}
which gives a contradiction.

Hence one can find $q_i \in B_d(p_i,\varepsilon) \cap \delta_{R/r}(A)$ for all $i=1,\dots,k$. Since $\varepsilon \leq \varepsilon^*$, we get that $\{B_d(q_i,s_i)\}_{i=1}^k$ is a family of Besicovitch balls in $(G,d)$ with $q_i \in \delta_{R/r}(A)$ and $s_i:=d(e,q_i) < r^*$ for all $i=1,\dots,k$. By choice of $R$ and $r$, we have $R/r \geq 1$ and it follows that $k \leq C^*$. Hence WBCP holds in $(G,d)$.
\end{proof}

Recall now that if $(X,d)$ is a metric space and $\mu$ is a locally finite Borel measure on $X$, we say that \textit{the differentiation theorem holds on $(X,d)$ for the measure $\mu$} if
\begin{equation*}
\lim_{r\downarrow 0^+} \frac{1}{\mu(B_d(p,r))} \int_{B_d(p,r)} f(q) \, d\mu(q) = f(p)
\end{equation*}
for $\mu$-almost every $p\in X$ and all $f\in L_{\rm loc}^1(\mu)$.

The connection between $\sigma$-finite dimensionality and measure differentiation in the general metric setting is given by the following result due to D. Preiss.

\begin{theorem} [{\cite{Preiss83}}]\label{thm:preiss-diff}
Let $(X,d)$ be a complete separable metric space. The  differentiation theorem holds on $(X,d)$ for all locally finite Borel measures if and only if $d$ is $\sigma$-finite dimensional.
\end{theorem}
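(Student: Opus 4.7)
The theorem is a biconditional with very different character in the two directions. I would begin with the easier implication: assume $d$ is $\sigma$-finite dimensional and write $X=\bigcup_{n\in\N} X_n$ where $d$ is finite dimensional on $X_n$ with constants $C_n^*$ and $r_n^*$. Fix a locally finite Borel measure $\mu$. It suffices to show that differentiation holds $\mu$-almost everywhere on each $X_n$, since the union of sets of full $\mu$-measure has full $\mu$-measure. On $X_n$, I would establish a weak-type $(1,1)$ inequality for the truncated centered maximal function
\begin{equation*}
M_n f(p) := \sup_{0<r<r_n^*}\; \frac{1}{\mu(B_d(p,r))} \int_{B_d(p,r)} |f(q)|\,d\mu(q),\qquad p\in X_n.
\end{equation*}
The core input is a Besicovitch-type selection theorem with radius bound $r_n^*$: given any family $\mathcal{B}$ of balls centered in a bounded subset of $X_n$ with radii bounded by $r_n^*$ realizing some $M_n f(p) > \lambda$, finite dimensionality of $d$ on $X_n$ yields a countable subfamily of controlled multiplicity (this is the scale-bounded analogue of Remark~\ref{rmk:WBCP:cover}, which holds because the hypotheses of Besicovitch families in $X_n$ with radius bound $r_n^*$ are exactly those used in the definition of finite dimensionality). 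From the weak-type inequality, differentiation follows by the standard density argument: approximate $f \in L^1_{\mathrm{loc}}(\mu)$ by continuous compactly supported functions (using completeness and separability of $X$ to invoke Lusin's theorem and density of $C_c$ in $L^1(\mu)$), for which the differentiation limit is trivial.

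For the converse, I would proceed by contrapositive: assuming $d$ is not $\sigma$-finite dimensional, construct a locally finite Borel measure $\mu$ on $X$ and $f\in L^1_{\mathrm{loc}}(\mu)$ whose averages on balls do not converge $\mu$-a.e. The failure of $\sigma$-finite dimensionality gives, for every Borel decomposition $X = \bigcup_m Y_m$ and every pair of constants $(C,r^*)$, some $Y_m$ carrying families of Besicovitch balls of cardinality $> C$ with all radii below $r^*$. The plan is then an inductive construction at scales $r_k \downarrow 0$: at stage $k$, refine the current finite family of balls using a family of Besicovitch balls produced by the above observation, assign mass to the new centers so that on one hand $\mu$ remains locally finite (by choosing the assigned weights geometrically summable), and on the other hand the density ratios $\mu(A\cap B_d(p,r_k))/\mu(B_d(p,r_k))$ for a target Borel set $A$ oscillate between values bounded away from $0$ and $1$ on a residual subset of positive measure. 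Taking $f=\carset_A$ then yields failure of the differentiation theorem.

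The hard part will be the second direction, because balancing the two competing requirements — local finiteness of $\mu$ and persistent oscillation of density ratios across infinitely many scales at a set of positive measure — is delicate. The natural framework is a tree/martingale construction indexed by the inductive refinements, where the measure is built as a weak-$*$ limit of finitely supported measures, and where at each stage one exploits the failure of finite dimensionality on a subset of the support produced in the previous stage. The complete separability hypothesis is used both to justify the measurability of the iteratively constructed sets and to run a diagonal argument selecting centers densely in the current support. The forward direction, in contrast, is essentially a standard Vitali/Besicovitch maximal function argument and should follow routinely once the scale-restricted covering lemma is proven as sketched.
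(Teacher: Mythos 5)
This theorem is not proved in the paper: it is quoted verbatim as Preiss's result (\cite{Preiss83}), and the paper uses it as a black box in deducing Theorem~\ref{thm:intro-preiss-improved}. So there is no in-paper proof to compare against; what I can do is assess your sketch against Preiss's argument and the internal consistency of its steps.

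On the forward direction: you claim that finite dimensionality of $d$ on $X_n$ (with radius bound $r_n^*$) directly yields a bounded-multiplicity countable subcover, describing this as ``the scale-bounded analogue of Remark~\ref{rmk:WBCP:cover}'' and asserting that its hypotheses are exactly those in the definition of finite dimensionality. That assertion is false: Remark~\ref{rmk:WBCP:cover} carries the additional hypothesis that the radius function $B \mapsto r_B$ either is unbounded above or attains only an isolated set of values, and finite dimensionality alone does not supply that. This is precisely the point of Section~\ref{sect:bcp-vs-wbcp}: Example~\ref{ex:wbcp-but-nobcp} exhibits a metric space satisfying WBCP (indeed with constant $1$) but not BCP, because the radii accumulate. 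Finite dimensionality is a scale-restricted WBCP-type cardinality bound, and passing from it to a usable covering or Vitali-type statement for an arbitrary locally finite Borel measure is a genuine step of Preiss's proof, not something to be dispatched in a parenthetical. The step as written does not go through.

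On the converse direction, you candidly describe it as a plan, and that is an accurate self-assessment. The inductive scale-by-scale construction, the weak-$*$ limit of finitely supported measures, and the invocation of the failure of $\sigma$-finite dimensionality at each refinement stage are all in the right spirit, but you have not addressed the two central difficulties: guaranteeing that a set of positive limit measure survives on which the density ratios fail to converge, and positioning the newly produced large Besicovitch families correctly relative to the measure already built. Those are exactly where the substance of \cite{Preiss83} lies. As it stands, your proposal is a plausible road map for Preiss's theorem, not a proof of it.
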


The proof of Theorem~\ref{thm:intro-preiss-improved} can now be completed using  Theorem~\ref{thm:preiss-diff} and Proposition~\ref{prop:BCP-vs-finitedimensional}.

\section{Sub-Riemannian distances} \label{sec:nobcp-ccdist}

In this section we consider sub-Riemannian distances on stratified groups, and more generally on sub-Riemannian manifolds. We prove Theorem~\ref{thm:intro-no:BCP:Carnot} and Theorem~\ref{thm:intro-subriemannian-diff-measures}. The proofs of these results are independent of the main result in this paper, namely independent of Theorem~\ref{thm:intro-main-result}, but use some of the techniques developed here, in particular Proposition~\ref{prop:submetries-morphism-graded-algebra}.

\subsection{No BCP in sub-Riemannian Carnot groups}
\label{subsect:noBCP-sub-riemannian-groups}

Let $G$ be a stratified group with associated stratification of its Lie algebra given by $\g = V_1 \oplus \dots \oplus V_s$. We say that an absolutely continuous curve $\gamma:I \rightarrow G$ is \textit{horizontal} if, for a.e.~$s\in I$, one has $\dot{\gamma}(s) \in \Span\{X(\gamma(s));\, X\in V_1\}$. For a given scalar product inducing a norm $\|\cdot\|$ on the first layer $V_1$ of the stratification, we define the length, with respect to $\|\cdot\|$, of a horizontal curve $\gamma$ by
$$l_{\|\cdot\|}(\gamma) := \int_I \| \dot{\gamma}(s)\|\, ds~.$$
We say that $d$ is a \textit{sub-Riemannian distance} on $G$ if there exists a norm $\|\cdot\|$ induced by a scalar product on $V_1$ such that $$d(p,q) = \inf \left\{l_{\|\cdot\|}(\gamma); \; \gamma \text{ horizontal curve from } p \text{ to } q \right\}~.$$
It is well-known that such a $d$ defines a homogeneous distance on $G$.

\begin{definition}[Sub-Riemannian Carnot groups] \label{def:sub-riem-carnot-group} We say that $(G,d)$ is a sub-Riemannian Carnot group if $G$ is a stratified group equipped with a sub-Riemannian distance $d$.
\end{definition}

It is proved in~\cite{Rigot} that BCP does not hold on sub-Riemannian Carnot groups of step $\geq 2$ under some assumptions on the regularity of length-minimizing curves and of the sub-Riemannian distance (see~\cite[Theorem~1]{Rigot}). This result applies in particular to sub-Riemannian distances on the stratified first Heisenberg group as we recall now.

\begin{theorem} [\cite{Rigot}] \label{thm:nobcp-heisenberg}
Let $d$ be a sub-Riemannian distance on the stratified first Heisenberg group $\mathbb H^1$. Then BCP does not hold on $(\mathbb H^1,d)$.
\end{theorem}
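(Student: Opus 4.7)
The plan is to construct, for every integer $N \geq 1$, a family of $N$ Besicovitch balls in $(\mathbb H^1,d)$. By Corollary~\ref{cor:bcp-wbcp-homogeneous-qdist} this contradicts WBCP and hence BCP. The approach exploits the fact that the sub-Riemannian sphere in $\mathbb H^1$ carries horizontal tangent directions at every smooth boundary point, a feature absent in the Euclidean setting that will produce the required concentration of ball centers.

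First I would recall the explicit description of sub-Riemannian geodesics in $\mathbb H^1$: normal geodesics issuing from the identity are lifts of uniform circular motions (or straight lines when the ``curvature'' vanishes) in the horizontal plane $V_1$, and the sub-Riemannian exponential map is real-analytic up to the cut locus, which is the $z$-axis. Consequently the unit sphere $S_d(0,1)$ is a real-analytic surface away from the two singular points $(0,0,\pm z_0)$ where it meets the $z$-axis, and $p \mapsto d(0,p)$ is smooth on a dense open subset of $\mathbb H^1 \setminus \{0\}$. I would then fix a smooth point $\bar p = (\bar x,\bar y,\bar z) \in S_d(0,1)$ with $(\bar x,\bar y) \neq (0,0)$. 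At such a point the radial vector $\tfrac{d}{d\lambda}|_{\lambda=1}\delta_\lambda(\bar p)$ is transverse to the sphere, and the two-dimensional tangent plane $T_{\bar p}S_d(0,1) \subset T_{\bar p}\mathbb H^1 \cong \mathbb R^3$ must meet the two-dimensional horizontal plane $L_{\bar p *}V_1$ along at least a line. I would select a nonzero horizontal vector $H \in V_1$ such that $L_{\bar p *}H \in T_{\bar p}S_d(0,1)$; the curve $t \mapsto \bar p \cdot \exp(tH)$, which is a straight line in exponential coordinates of the first kind, then satisfies $d(0,\bar p \cdot \exp(tH)) = 1 + O(t^2)$.

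Next I would combine this horizontal-tangency property with the dilation structure. For a rapidly decreasing sequence $\lambda_1 > \lambda_2 > \cdots > \lambda_N > 0$ and a carefully chosen sequence of small parameters $t_1,\dots,t_N$, I would set $p_k := \delta_{\lambda_k}(\bar p \cdot \exp(t_k H))$ and $r_k := d(0,p_k) = \lambda_k\bigl(1 + O(t_k^2)\bigr)$, so that $0 \in B_d(p_k,r_k)$ for every $k$. The goal is to choose the parameters so that the Besicovitch separation $p_j \notin B_d(p_k,r_k)$ holds for all $j \neq k$. Using the left-invariance and homogeneity of $d$, this reduces to a leading-order expansion of the distance between the group-theoretic quotients $p_k^{-1} \cdot p_j$, obtained from the Baker--Campbell--Hausdorff formula, and estimated via the ball--box theorem near $\bar p$ together with the $C^1$-regularity of $d$ at $\bar p$.

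The main obstacle is this quantitative separation: one must show that with suitable ratios $\lambda_{k+1}/\lambda_k$ and suitable horizontal offsets $t_k$ the leading-order contribution of $t_k H$, transported by $\delta_{\lambda_k}$, dominates the radial difference $\lambda_j - \lambda_k$ strongly enough to force $d(p_j,p_k) > \max(r_j,r_k)$. Once this separation is verified, iterating the construction produces Besicovitch families of arbitrary cardinality $N$ in $(\mathbb H^1,d)$, contradicting WBCP. Since the entire construction depends only on the existence of a horizontal tangent direction at a smooth point of the sphere --- a property enjoyed by every sub-Riemannian distance on $\mathbb H^1$, irrespective of the chosen scalar product on $V_1$ --- the conclusion applies uniformly to all sub-Riemannian distances on $\mathbb H^1$.
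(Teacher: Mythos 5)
The paper offers no proof of this statement: it simply attributes the result to \cite{Rigot}, where it is proved under regularity hypotheses on length-minimizing curves and on the sub-Riemannian distance, and observes that the hypotheses are met in the first Heisenberg group. So there is no in-paper argument to compare against, only the cited reference. That said, your sketch follows, in broad strokes, the same strategy used there (and earlier, for the Cygan--Kor\'anyi distance, by Kor\'anyi--Reimann and Sawyer--Wheeden): locate a smooth boundary point $\bar p$ of the unit ball carrying a horizontal tangent direction, then inflate this degeneracy by dilations into arbitrarily large Besicovitch families. The opening observation --- two $2$-planes in $\R^3$ must meet in a line, so a horizontal tangent vector exists --- is correct, as is the appeal to analyticity of the sub-Riemannian distance in $\HH^1$ away from the center.

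The gap, which you yourself name as ``the main obstacle,'' is genuine and is precisely the entire content of the theorem: you never establish the Besicovitch separation $d(p_j,p_k)>\max(r_j,r_k)$. Worse, the tools you propose for that step cannot deliver it. The ball--box theorem gives only two-sided estimates with multiplicative constants strictly different from $1$, which is useless for verifying a strict inequality whose two sides differ by a higher-order quantity. Your own regularity claims are also not mutually consistent: the expansion $d(0,\bar p\cdot\exp(tH))=1+O(t^2)$ requires $C^2$-smoothness, yet later you invoke only ``$C^1$-regularity of $d$ at $\bar p$,'' which yields merely $1+o(t)$. Moreover, even granting $C^2$, you do not control the \emph{sign} of the quadratic term, and the sign determines whether the perturbed centers fall inside or outside the earlier balls; without fixing it one cannot choose the parameters $\lambda_k,t_k$ as you indicate. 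What the argument actually requires --- and what the cited work supplies through a study of the minimizing geodesic from $0$ to $\bar p$ and its behavior under dilations and horizontal perturbations --- is a sharp, signed asymptotic for quantities such as $d(\bar p,\delta_\lambda(\bar p))-1$. As written, the construction of an arbitrarily large Besicovitch family is asserted, not proved.
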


We extend in Theorem~\ref{thm:intro-no:BCP:Carnot} the results of~\cite{Rigot} to any sub-Riemannian Carnot group of step $\geq 2$ without any further regularity assumptions.

Theorem~\ref{thm:intro-no:BCP:Carnot} will be proved by showing that from any such sub-Riemannian Carnot group, there exists a surjective morphism onto the stratified $n$-th Heisenberg group for some $n\in \N^*$. Moreover, the distance on the $n$-th Heisenberg group induced by this morphism is a sub-Riemannian distance (see Proposition~\ref{prop:submetries-morphism-graded-algebra} for the definition of this induced distance). Then the conclusion will follow from Theorem~\ref{thm:nobcp-heisenberg} and Lemma~\ref{lem:no:BCP:Carnot} below. Lemma~\ref{lem:no:BCP:Carnot} gives the existence of an isometric copy of a sub-Riemannian first Heisenberg group inside every sub-Riemannian $n$-th Heisenberg group. We refer to Example~\ref{ex:heisenberg} for the definition of the $n$-th Heisenberg Lie algebra $\h_n$ and the stratified $n$-th Heisenberg group $\mathbb{H}^n$. We first prove the following preliminary result.

\begin{proposition}\label{prop:no:BCP:Carnot}
Let $(G,d)$ be a step 2 sub-Riemannian Carnot group with associated stratification of its Lie algebra given by $\g = V_1 \oplus V_2$. Assume that $\dim V_2=1$ and that $V_2$ is the center of $\g$. Then $\g$ is the $n$-th Heisenberg Lie algebra $\h_n$. Moreover there exist positive real numbers $a_1,\dots, a_n$ and a standard basis $(X_1, \ldots, X_n, Y_1, \ldots, Y_n, Z)$ of $\h_n$ such that $d$ is the sub-Riemannian distance with respect to the norm induced by the scalar product on $V_1$ for which
$(a_i X_i, a_i Y_i)_{1\leq i \leq n}$ is orthonormal.
\end{proposition}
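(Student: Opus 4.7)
The plan is to turn the step-$2$ stratified Lie algebra into a symplectic-plus-metric object on $V_1$ and then simultaneously normalize the two structures. Here is the sketch.

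First I would identify $V_2$ with $\R$ by fixing once and for all an arbitrary nonzero element $Z_0 \in V_2$ (possible since $\dim V_2 = 1$), and define the antisymmetric bilinear form $\omega:V_1 \times V_1 \to \R$ by $[X,Y] = \omega(X,Y) Z_0$. The crucial preliminary claim is that $\omega$ is nondegenerate. Indeed, if $X \in V_1$ satisfies $\omega(X,Y)=0$ for all $Y\in V_1$, then $[X,V_1]=\{0\}$; combined with the hypothesis that $V_2$ is central (so $[X,V_2]=\{0\}$), this forces $X$ to lie in the center of $\g$. By assumption the center equals $V_2$, and since $V_1\cap V_2=\{0\}$ we conclude $X=0$. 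Therefore $\omega$ is a nondegenerate antisymmetric form on $V_1$, which forces $\dim V_1$ to be even, say $\dim V_1 = 2n$.

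Next I would simultaneously diagonalize $\omega$ with respect to the scalar product $\langle \cdot,\cdot\rangle$ defining the sub-Riemannian distance $d$. Let $A:V_1 \to V_1$ be the unique linear operator with $\omega(X,Y) = \langle AX,Y\rangle$. The antisymmetry of $\omega$ gives $A^* = -A$, and the nondegeneracy of $\omega$ gives $\ker A = \{0\}$. By the spectral theorem for skew-symmetric operators on a real inner product space, there exist an $\langle\cdot,\cdot\rangle$-orthonormal basis $(e_1,f_1,\dots,e_n,f_n)$ of $V_1$ and positive reals $\lambda_1,\dots,\lambda_n$ such that $A e_i = \lambda_i f_i$ and $A f_i = -\lambda_i e_i$. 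Evaluating $\omega$ on this basis yields $\omega(e_i,f_j) = \lambda_i \delta_{ij}$ and $\omega(e_i,e_j)=\omega(f_i,f_j)=0$, i.e., $[e_i,f_j] = \lambda_i\delta_{ij}\, Z_0$ and all other brackets of basis vectors in $V_1$ vanish.

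Finally I would rescale to exhibit a standard Heisenberg basis satisfying the orthogonality condition in the statement. Set $a_i := \sqrt{\lambda_i}$, $X_i := e_i/a_i$, $Y_i := f_i/a_i$, and $Z := Z_0$. Then
\begin{equation*}
[X_i,Y_i] = \frac{1}{\lambda_i}[e_i,f_i] = Z, \qquad [X_i,Y_j] = 0 \text{ for } i\neq j, \qquad [X_i,X_j]=[Y_i,Y_j]=0,
\end{equation*}
so $(X_1,\dots,X_n,Y_1,\dots,Y_n,Z)$ is a standard basis of $\h_n$ in the sense of Example~\ref{ex:heisenberg}. This proves $\g \simeq \h_n$. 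Moreover, by construction $a_iX_i = e_i$ and $a_iY_i = f_i$, so $(a_iX_i, a_iY_i)_{1\leq i \leq n}$ is exactly the chosen orthonormal basis of $(V_1,\langle\cdot,\cdot\rangle)$. Since $d$ is by hypothesis the sub-Riemannian distance associated to this scalar product, the conclusion follows. The only slightly delicate point is the argument that $\omega$ is nondegenerate, which is where the hypothesis that $V_2$ is \emph{the entire} center (not merely contained in the center) is used; everything else is a standard simultaneous normalization of a symplectic form and an inner product.
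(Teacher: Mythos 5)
Your proof is correct and takes essentially the same approach as the paper: fix a generator $Z$ of $V_2$, observe that the bracket on $V_1\times V_1$ is a nondegenerate skew form (nondegeneracy using that the center equals $V_2$), use the spectral theorem for skew-symmetric operators on $(V_1,\langle\cdot,\cdot\rangle)$ to produce an orthonormal basis in symplectic normal form, and rescale. You simply spell out the linear-algebra steps that the paper cites as "classical."
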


\begin{proof}
Let $Z\in V_2$ be fixed so that $V_2=\R Z$. Since $V_2$ is the center of $\g$, the Lie bracket restricted to $V_1 \times V_1$ can be identified with a non degenerate skew-symmetric bilinear form on $V_1$. Then the following facts follow from classical results of linear algebra. First, $\dim V_1$ is even, $\dim V_1 = 2n$ for some $n\in \N^*$. Second, considering $V_1$ equipped with the scalar product with respect to which the sub-Riemannian $d$ is defined, one can find an orthonormal basis $(\tilde X_1, \ldots, \tilde X_n, \tilde Y_1, \ldots, \tilde Y_n)$ of $V_1$ such that $[X_i,Y_j] = \delta_{ij}\alpha_i Z$ for some positive real numbers $\alpha_1 , \dots , \alpha_n$. We set $X_i :=  (\sqrt{\alpha_i})^{-1}\tilde X_i$. Then $(X_1,\dots,X_n,Y_1,\dots,Y_n,Z)$ is a basis of $\g$ with the only non-trivial bracket relations $[X_i,Y_i] = Z$ for $1\leq i \leq n$. It follows that $\g$ is the $n$-th Heisenberg Lie algebra $\h_n$ and $(X_1,\dots,X_n,Y_1,\dots,Y_n,Z)$ is a standard basis of $\h_n$. Moreover, setting $a_i:= \sqrt{\alpha_i}$, we get that the sub-Riemannian distance $d$ is the sub-Riemannian distance for which $(a_i X_i, a_i Y_i)_{1\leq i \leq n}$ is orthonormal.
\end{proof}

Lemma~\ref{lem:no:BCP:Carnot} below relies on classical results from sub-Riemannian geometry and more specifically on a classical study of length-minimizing curves in the sub-Riemannian Heisenberg groups. In particular length-minimizing curves can be explicitly computed using Pontryagin's maximum principle and the fact that there are no strictly abnormal curves. Also each horizontal curve can be recovered from its projection on the first layer of the stratification via the lift of the spanned area in each coordinate plane.

\begin{lemma}\label{lem:no:BCP:Carnot}
Let $(X_1, \ldots, X_n, Y_1, \ldots, Y_n, Z)$ be a standard basis of $\h_n$. Let 
$0<a_1\leq \ldots\leq a_n$ be positive real numbers and let $d$ denote the sub-Riemannian distance on $\mathbb{H}^n$ for which
$(a_i X_i, a_i Y_i)_{1\leq i\leq n}$ is orthonormal.
Then $H:=\exp({\Span}\{X_n, Y_n, Z\})$ is geodesically closed and the distance $d$ restricted to $H$ is the sub-Riemannian distance on $H$ for which $(a_nX_n,a_nY_n)$ is orthonormal.
\end{lemma}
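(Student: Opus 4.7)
The plan is to reduce to the case $p=\mathbf 0$ by left-invariance of both $d$ and $d_H$ (left-translation by an element of $H$ preserves $H$), and to show, for every $q\in H$, the existence of a length-minimizer for $d$ from $\mathbf 0$ to $q$ entirely contained in $H$. Once such a curve is produced, its length equals $d(\mathbf 0,q)$ and, since the anisotropic norm on $V_1$ and the norm defining $d_H$ agree on $\Span\{X_n,Y_n\}$, it is also a horizontal curve in $H$ of the same length, so $d_H(\mathbf 0,q)\le d(\mathbf 0,q)$. Combined with the trivial $d\le d_H$ on $H$ this yields both the equality $d|_H=d_H$ and geodesic closedness.

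For the reduction I would introduce the isotropic sub-Riemannian distance $d_0$ on $\HH^n$ for which $(X_i,Y_i)_{1\le i\le n}$ is orthonormal. Since $a_i\le a_n$, the anisotropic norm satisfies $\|v\|^2 = \sum_i(u_i^2+v_i^2)/a_i^2 \ge a_n^{-2}\|v\|_{\mathrm{iso}}^2$, whence $d\ge d_0/a_n$ on $\HH^n$. On $H$, however, the same rescaling gives $d_H = d_{H,0}/a_n$, where $d_{H,0}$ is the standard sub-Riemannian distance on $H\cong\HH^1$ with $(X_n,Y_n)$ orthonormal. Hence the statement reduces to the \emph{isotropic} assertion: for every $q\in H$ some $d_0$-minimizer from $\mathbf 0$ to $q$ lies in $H$; the general case then follows by combining this with the two displayed identities.

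For the isotropic case the natural tool is Pontryagin's maximum principle. The horizontal distribution on $\HH^n$ is fat, so there are no non-trivial abnormal extremals and every length-minimizer for $d_0$ is a normal geodesic obtained by projecting a solution of Hamilton's equations for $H_0 = \tfrac12\sum_i(P_{X_i}^2+P_{Y_i}^2)$. Setting $w_i=x_i+iy_i$ and $\pi_i=P_{X_i}+iP_{Y_i}$, and using the conservation of the vertical momentum $p_z$, these equations yield $\pi_i(t) = e^{ip_z t}\,\pi_i(0)$ and, starting from $\mathbf 0$,
\begin{equation*}
 w_i(t) \;=\; c(t)\,\pi_i(0),\qquad c(t) \;=\; \frac{e^{ip_z t}-1}{ip_z}
\end{equation*}
(with $c(t)=t$ when $p_z=0$). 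The crucial feature is that the same complex scalar $c(t)$ multiplies all $\pi_i(0)$: at each time, the horizontal part of the geodesic lies on the complex line of $\mathbb C^n$ spanned by $(\pi_1(0),\ldots,\pi_n(0))$.

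To finish, a short case analysis. For $q\in H$ one needs $w_i(T)=0$ for $i<n$. If $w_n(T)\ne 0$, necessarily $c(T)\ne 0$ and then $\pi_i(0)=0$ for all $i<n$, so the whole geodesic lies in $\Span\{X_n,Y_n,Z\}$, i.e.\ in $H$. Otherwise $q$ has zero horizontal part; then $c(T)=0$ and the length of the geodesic depends only on $(p_z,\|\pi(0)\|)$ and not on the direction of $\pi(0)\in\mathbb C^n$, so one may take $\pi(0)\in\Span\{X_n,Y_n\}$ and again obtain a minimizer in $H$. The main obstacle I expect is the Pontryagin/fatness step guaranteeing that every minimizer is normal and hence captured by the above formula; modulo this classical input, the rest is a direct computation with the explicit expression of $c(t)$.
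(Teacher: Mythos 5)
Your argument is correct, and it takes a somewhat different route from the paper's. The paper works directly with the \emph{anisotropic} normal geodesic equations for the specific target $\exp(Z)$: it parametrizes normal geodesics by the centers $(v_i,v_{n+i})$ of their circular projections in the $(x_i,y_i)$-planes, writes the endpoint constraint as the ellipsoid $\sum_i a_i^2\pi(v_i^2+v_{n+i}^2)=1$, and observes that minimizing the length $2\pi(\sum_i v_i^2)^{1/2}$ over that ellipsoid picks out the smallest axis, i.e.\ the $n$-th coordinate plane; dilations, rotations about the $z$-axis, and a separate (trivial) treatment of $H\cap\exp(V_1)$ then cover the rest of $H$. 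You instead push the anisotropy into the comparison $d\ge d_0/a_n$ (an equality on the horizontal distribution of $H$, so $d_H=d_{H,0}/a_n$), which reduces everything to the \emph{isotropic} $\HH^n$; there the complex formula $w_i(t)=c(t)\pi_i(0)$ makes it transparent that the horizontal projection of a normal geodesic from the origin lies in a single complex line of $\mathbb C^n$, so a minimizer to a point of $H$ can be taken inside $H$. Both proofs rest on the same classical inputs — absence of strictly abnormal minimizers in $\HH^n$ and the explicit description of normal geodesics — so the difference is where the inequality $a_i\le a_n$ is used: the paper spends it on an ellipsoid optimization for one target and then scales, whereas you spend it on the comparison lemma and get a uniform treatment of all targets. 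Your route is a clean alternative; the one thing you should spell out in a final write-up is that in the vertical case not only the length but also the vertical coordinate of the endpoint depends on $(p_z,T,\lVert\pi(0)\rVert)$ alone (this does hold, by the same rotational symmetry), so that rotating $\pi(0)$ into $\Span\{X_n,Y_n\}$ preserves the endpoint and not just the length.
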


\begin{proof}
In the statement $H$ is geodesically closed means that for any two points $p$, $q \in H$ there exists a length-minimizing curve, i.e., a horizontal curve $\gamma$ such that $d(p,q) = l_{\|\cdot\|}(\gamma)$, joining $p$ and $q$ and whose image is contained in $H$. Here $\|\cdot\|$ denotes the norm induced by the scalar product for which $(a_i X_i, a_i Y_i)_{1\leq i\leq n}$ is orthonormal. Then the fact that the distance $d$ restricted to $H$ is the sub-Riemannian distance on $H$ for which $(a_nX_n,a_nY_n)$ is orthonormal follows immediately.

We use in this proof exponential coordinates $(x_1,\dots,x_n,y_1,\dots,y_n,z)$ of the first kind  with respect to the basis $(a_1 X_1, \ldots, a_n X_n, a_1 Y_1, \ldots, a_n Y_n, Z)$ and $V_1$ denotes the first layer of the stratification, $V_1=\Span\{X_i,Y_i;\; 1\leq i \leq n\}$. By left-invariance, it is enough to show that given any point $p\in H$, there exists a  length-minimizing curve joining $0$ to $p$ whose image is contained in $H$. As already said, the proof below relies on classical results from sub-Riemannian geometry. We refer to e.g.~\cite{Monti-CCballsHeisenberg} or~\cite{Lerario-Rizzi} for more details.

Let us first consider $p=\exp(Z)=(0,\dots,0,1)$. Writing down the normal geodesic equation, one can see that if $\gamma:I\to \mathbb{H}^n$ is a length-minimizing curve from $0$ to $p$ with $\gamma(s) = (x_1(s),\dots,x_n(s),y_1(s),\dots,y_n(s),z(s))$ then its projections $s \mapsto (x_i(s),y_i(s))$ on each $(x_i,y_i)$-plane is a circle passing through the origin. Moreover, if we denote by $(v_i,v_{n+i})$ the center of each one of these circles, we have 
$$l_{\|\cdot\|}(\gamma) = 2\pi \left(\sum _{i=1}^{2n}  v_{ i}^2 \right)^{1/2}~.$$
On the other hand, since $\gamma$ is a horizontal curve, the last coordinate of its end point is equal to $\sum _{i=1}^n a_i^2\pi(v_{i}^2+v_{n+i}^2)$. Hence we must have $$\sum _{i=1}^n a_i^2\pi(v_{i}^2+v_{n+i}^2)=1~.$$
Notice that the minimum of  $\sqrt{\sum _{i=1}^{2n}  v_{ i}^2 }$ on the ellipsoid
$\{  \sum _{i=1}^n a_i^2\pi(v_{i}^2+v_{n+i}^2) = 1\}$ is attained on the smallest axis of the ellipsoid. It follows that the horizontal curve $\gamma$ contained in $H$ and whose projection on the $(x_n,y_n)$-plane is a circle passing through the origin and centered at $(1/a_n\sqrt{\pi},0)$ is a length-minimizing curve between $0$ and $p$.

Next, from the above length-minimizing curve between $0$ and $\exp(Z)$ and using dilations and rotations in $H$ around the $z$-axis, it can be seen that one can join the origin to any point in $H\setminus \exp(V_1)$ with a length-minimizing curve that stays in $H$. 

Finally, it can be checked that for $p\in H \cap \exp(V_1)$, the segment from $0$ to $p$ is the length minimizing curve from $0$ to $p$ and that this segment stays obviously inside $H$.
\end{proof}

We now conclude this section with the proof of the non-validity of BCP on sub-Riemannian Carnot groups of step $\geq 2$.

\begin{proof}[Proof of Theorem \ref{thm:intro-no:BCP:Carnot}]
  We argue by contradiction and we assume that $(G,d)$ is a sub-Riemannian Carnot group of step $\geq 2$ on which BCP hold. We prove that it is enough to consider the case of a sub-Riemannian distance on the stratified first Heisenberg group.
  
To prove this claim, we will perform a series of quotients. At the level of the Lie algebras, these quotients are surjective morphisms of stratified Lie algebras. Moreover, if we start from a sub-Riemannian distance, the sub-Riemannian distance  induced by each one of these morphisms (as in Proposition~\ref{prop:submetries-morphism-graded-algebra}), and for which BCP also holds by Propositions~\ref{prop:bcp-submetry} and~\ref{prop:submetries-morphism-graded-algebra}, will still be a sub-Riemannian distance. Indeed, if the stratification of the source Lie algebra is given by $V_1 \oplus \dots \oplus V_s$, we will only consider  morphisms of stratified Lie algebras whose kernel is contained either in $V_2\oplus \dots \oplus V_s$ or in the center of the source Lie algebra.
  
By a first quotient we can assume that the step of $G$ is exactly 2. Note that in view of Corollary~\ref{cor:intro-homogeneous-groups}, we could have skipped this first step. However this first quotient gives a proof of Theorem~\ref{thm:intro-no:BCP:Carnot} that is independent of our other results about more general graded groups. By a second quotient we 
can assume that the second layer of $G$ is one-dimensional. By a third quotient we can assume that the center of the Lie algebra of $G$ is the second layer of its stratification.

By Proposition~\ref{prop:no:BCP:Carnot} we know that the Lie algebra of $G$ is the $n$-th Heisenberg Lie algebra. We also know that there exist a standard basis $(X_1, \dots, X_n, Y_1, \dots, Y_n, Z)$ of $\h_n$ and positive real numbers $a_1,\dots,a_n$ such that $d$ is the sub-Riemannian distance for which $(a_i X_i, a_i Y_i)_{1\leq i \leq n}$ is orthonormal. Up to reordering the $a_j$'s, it follows from Lemma~\ref{lem:no:BCP:Carnot} that there is a stratified subgroup $H$ of $G$ isomorphic to the stratified first Heisenberg group and such that the sub-Riemannian distance $d$ restricted to $H$ is a sub-Riemannian distance on $H$. Since we started from a sub-Riemannian distance on $G$ for which BCP holds, it follows from Proposition~\ref{prop:BCP-subset} that BCP should hold for some sub-Riemannian distance on the stratified first Heisenberg group. This contradicts Theorem~\ref{thm:nobcp-heisenberg} and concludes the proof.
\end{proof}

\subsection{Differentiation of measures on sub-Riemannian manifolds}
\label{subsect:diff-mesaures-subriemannian-manifolds}

In this section we prove that sub-Riemannian distances on sub-Riemannian manifolds are not $\sigma$-finite dimensional, see Theorem~\ref{thm:subriemannian-not-finite-dimensional}. Then Theorem~\ref{thm:intro-subriemannian-diff-measures} follows from Theorem~\ref{thm:subriemannian-not-finite-dimensional} and Theorem~\ref{thm:preiss-diff}. We refer to Section~\ref{sect:differentiation-measures} for the definition of $\sigma$-finite dimensionality and its connection with measure differentiation. The proof of Theorem~\ref{thm:subriemannian-not-finite-dimensional} follows from the fact that at regular points, the metric tangent space to a sub-Riemannian manifold is isometric to a sub-Riemannian Carnot group of step $\geq 2$ together with Theorem~\ref{thm:intro-no:BCP:Carnot}.

To state our result, we first recall well-known facts about sub-Riemannian manifolds (see e.g.~\cite{Montgomery}). A sub-Riemannian manifold is a smooth Riemannian $n$-manifold $(M,g)$ equipped with a smooth distribution $\Delta$ of $k$-planes where $k<n$. We also assume that $M$ is connected and that the distribution satisfies H\"{o}rmander's condition. Namely, for all $p\in M$,  we assume that, if $X_1, \dots, X_k$ is a local basis of vector fields for the distribution near $p$, these vector fields, along with all their commutators, span $T_p M$. We denote by $V_i(p)$ the subspace of $T_p M$ spanned by all the commutators of the $X_j$'s of order $\leq i$ evaluated at $p$. Then the distribution satisfies H\"ormander's condition if, for all $p\in M$, there is some $i$ such that $V_i(p) = T_p M$. Next, an absolutely continuous curve in $M$ is said to be horizontal if it is a.e.~tangent to the distribution $\Delta$. Then the sub-Riemannian distance $d$ between two points $p$, $q\in M$ is defined by 
$$d(p,q) : = \inf \left\{\operatorname{length}_g (\gamma); \; \gamma \text{ horizontal curve from } p \text{ to } q \right\}~.$$
We say that $p\in M$ is a regular point if, for each $i$, $\operatorname{dim} V_i(p)$ remains constant near $p$. Regular points form an open dense set in $M$.

Our main result in this section reads as follows.

\begin{theorem} \label{thm:subriemannian-not-finite-dimensional}
Let $M$ be a sub-Riemannian manifold and let $d$ be its sub-Riemannian distance. Then $d$ is not $\sigma$-finite dimensional.
\end{theorem}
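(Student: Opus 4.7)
The plan is to argue by contradiction, reduce to the local picture near a regular point, blow up, and contradict Theorem~\ref{thm:intro-no:BCP:Carnot}.

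Suppose $d$ is $\sigma$-finite dimensional. Write $M = \bigcup_{n\in\N} M_n$ where $d$ is finite dimensional on $M_n$ with some constants $C_n^*\in[1,\infty)$ and $r_n^*\in(0,\infty]$. Since $M$ is a complete, locally compact, separable metric space (sub-Riemannian distances induce the manifold topology), the Baire category theorem applied to any relatively compact open set meeting the open dense set of regular points produces an $n$ and an open subset $U\subset M$ of regular points in which $M_n$ is dense. Shrinking $U$, we may assume that $M_n$ is dense in $U$ and that $U$ is contained in the set of regular points near which the standard anisotropic dilations with respect to a privileged system of coordinates are well defined.

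Fix a regular point $p$ in $U$. By the classical work of Mitchell and Bella\"{\i}che (\cite{Mitchell}, \cite{bellaiche}), the pointed metric space $(M,\lambda\, d,p)$ converges, as $\lambda\to+\infty$, to a metric tangent $(G_p,d_p,e)$, where $(G_p,d_p)$ is a sub-Riemannian Carnot group with the same growth vector as $M$ at $p$. Since by definition the horizontal distribution has rank strictly less than $\dim M$, this Carnot group has step $\geq 2$. Concretely, using a privileged system of coordinates centered at $p$, one has local dilations $\delta_\lambda$ defined in a neighborhood of $p$ for $\lambda>1$ large, and
\begin{equation*}
\lim_{\lambda\to+\infty} \lambda\, d(\delta_{1/\lambda}(x),\delta_{1/\lambda}(y)) = d_p(x,y)
\end{equation*}
uniformly on compact subsets, where on the left we view $x,y$ in exponential coordinates around $p$.

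The key step, and the main obstacle, is to transfer a family of Besicovitch balls from $(G_p,d_p)$ to a family of Besicovitch balls of arbitrarily small radius in $(M,d)$ centered inside $M_n$. Let $\mathcal{F}=\{B_{d_p}(q_i,r_i)\}_{i=1}^N$ be any family of Besicovitch balls in $(G_p,d_p)$ with $e\in\bigcap_i B_{d_p}(q_i,r_i)$ and $r_i=d_p(e,q_i)$. The strict inequalities $d_p(q_i,q_j)>r_j$ for $i\neq j$ and the uniform convergence above give $\eta>0$ such that for all $\lambda$ large enough, the rescaled points $p_i^{(\lambda)}:=\delta_{1/\lambda}(q_i)$ satisfy $\lambda\, d(p_i^{(\lambda)},p_j^{(\lambda)}) > \lambda\, d(p,p_j^{(\lambda)}) + \eta$ for all $i\neq j$. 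Since $d_p$ is continuous on $G_p\times G_p$, Remark~\ref{rmk:Besicovitch:open}-type stability holds: one can perturb each $p_i^{(\lambda)}$ within a small $d$-ball, staying in $M_n$ by density, so as to obtain a genuine family of Besicovitch balls in $(M,d)$ centered in $M_n$ and with radii $\leq C/\lambda$, hence $<r_n^*$ for $\lambda$ large. Finite dimensionality of $d$ on $M_n$ then forces $N\leq C_n^*$.

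Thus $(G_p,d_p)$ satisfies WBCP with the uniform bound $C_n^*$. Since $(G_p,d_p)$ is doubling, Corollary~\ref{cor:bcp-wbcp-homogeneous-qdist} yields that $(G_p,d_p)$ satisfies BCP. As $(G_p,d_p)$ is a sub-Riemannian Carnot group of step $\geq 2$, this contradicts Theorem~\ref{thm:intro-no:BCP:Carnot}, completing the proof.
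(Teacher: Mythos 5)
Your proof is correct, but it takes a genuinely different route from the one in the paper, and the difference is worth highlighting. In the paper, after restricting to a generic open piece where $\dim V_i$ is constant, one invokes Mitchell's result that the $m$-dimensional Hausdorff measure is doubling on compact sets, produces a $\mu$-density point $p$ of some piece $M_n$, and then cites Proposition~3.1 of \cite{LeDonne6} to conclude that $(M_n,d)$ itself admits the Carnot group $(G,d_\infty)$ as a metric tangent at $p$; the transfer of finite dimensionality is then a clean application of Lemma~\ref{lem:tangent-finite-dimensionality}, since the approximating maps $\phi_i$ already land in $M_n$. You avoid the Hausdorff measure entirely: you use Baire category to find an open set $U$ of regular points in which $M_n$ is dense, blow up at an arbitrary regular point $p\in U$ (not necessarily in $M_n$), and then exploit the slack $\eta$ in the strict Besicovitch inequalities, together with the density of $M_n$ and the coincidence of the $d$-topology with the manifold topology, to perturb the rescaled centers $\delta_{1/\lambda}(q_i)$ into $M_n$ at scale $o(1/\lambda)$. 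This replaces the density-point mechanism with a purely topological one and replaces Lemma~\ref{lem:tangent-finite-dimensionality} with an ad hoc but essentially equivalent openness argument. A further minor structural difference: you deduce WBCP for $(G_p,d_p)$ directly (taking $\lambda\to\infty$ kills the radius restriction) and then pass to BCP via Corollary~\ref{cor:bcp-wbcp-homogeneous-qdist}, whereas the paper passes through $\sigma$-finite dimensionality and Proposition~\ref{prop:BCP-vs-finitedimensional}. Both work; yours is arguably more elementary, the paper's localizes the key step into a quotable lemma. One small point of care: when you write that Baire produces an open set of regular points where $M_n$ is dense, make explicit that density persists upon intersecting with the open dense set of regular points (an open subset of a set in which $M_n$ is dense again has $M_n$ dense in it); and be explicit that the perturbation size must be chosen $\ll \eta/\lambda$ so as to preserve both the common point $p\in\bigcap_i B_d(\tilde p_i, d(p,\tilde p_i))$ and the separation $d(\tilde p_i,\tilde p_j)>\max_i d(p,\tilde p_i)$.
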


Notice that our definition of sub-Riemannian manifolds does not includes Riemannian ones. We recall that it is known that the Riemannian distance on a Riemannian manifold of class $C^2$ is $\sigma$-finite dimensional (see\cite[2.8]{Federer}).

Sub-Riemannian manifolds equipped with their sub-Riemannian distance include sub-Riemannian Carnot groups of step $\geq 2$. Besides, as said before, the following fact plays a crucial role for our purposes.

\begin{theorem} [{\cite[Theorem~7.36]{bellaiche}, \cite[Theorem~1]{Mitchell}}] \label{thm:tangent-subriemannian}
A sub-Riemannian manifold equipped with its sub-Riemannian distance admits a metric tangent space in Gromov's sense at every point. At regular points, this space is isometric to a sub-Riemannian Carnot group of step $\geq 2$.
\end{theorem}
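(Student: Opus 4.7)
The plan is to prove this local-to-tangent statement by constructing, at each point, a nilpotent Lie group that carries a natural sub-Riemannian distance and then showing that the rescaled pointed metric spaces $(M, \tfrac{1}{r}d, p)$ converge, as $r \downarrow 0$, to this group in the pointed Gromov--Hausdorff sense.

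Fix $p \in M$ and let $\Delta = V_1 \subset V_2 \subset \cdots \subset V_s = TM$ be the flag obtained from iterated Lie brackets of the distribution $\Delta$. If $p$ is regular, the dimensions $n_i(q) := \dim V_i(q)$ are locally constant and each $V_i$ is a smooth subbundle near $p$. I would first form the associated graded vector space $\g_p := \bigoplus_{i=1}^s V_i(p)/V_{i-1}(p)$; because $p$ is regular, the Lie bracket of vector fields descends to a well-defined bilinear map on $\g_p$ that vanishes whenever the sum of weights exceeds~$s$, turning $\g_p$ into a graded nilpotent Lie algebra generated by its degree-one layer $V_1(p)$, i.e.\ a stratified Lie algebra. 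Let $G_p$ be the corresponding connected simply connected stratified group, equipped with the sub-Riemannian distance $d_p$ whose horizontal bundle is the degree-one layer endowed with the scalar product inherited from $\Delta(p)$. Since $\Delta \neq TM$ by the hypothesis $k<n$, the step $s$ is at least $2$, so $(G_p,d_p)$ is a sub-Riemannian Carnot group of step $\geq 2$. This is the candidate metric tangent.

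The main technical step is to build \emph{privileged coordinates} at $p$ in the sense of Bella\"iche: a chart $\varphi:U\to\R^n$, $\varphi(p)=0$, in which each coordinate $x_j$ has a well-defined weight $w_j$ equal to the smallest $i$ with $\partial_{x_j}|_0\in V_i(p)$, and in which a chosen local orthonormal frame $X_1,\dots,X_k$ of $\Delta$ admits Taylor expansions whose principal parts with respect to the anisotropic dilations $\delta_\lambda(x)=(\lambda^{w_1}x_1,\dots,\lambda^{w_n}x_n)$ are exactly the left-invariant horizontal vector fields $\hat X_1,\dots,\hat X_k$ of $G_p$, read in exponential coordinates with respect to a basis of $\g_p$ adapted to the grading. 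From this construction I would deduce the \emph{Ball--Box Theorem}: there is $C\geq 1$ such that in these coordinates
\[
\prod_{j=1}^n\bigl[-C^{-1}r^{w_j},C^{-1}r^{w_j}\bigr]\ \subset\ B_d(p,r)\ \subset\ \prod_{j=1}^n\bigl[-Cr^{w_j},Cr^{w_j}\bigr]
\]
for all sufficiently small $r>0$. Combined with left-translations (approximated in these coordinates by the group law of $G_p$ up to higher-order corrections), this yields that the rescaled distances $d_r(x,y):=\tfrac{1}{r}d(\delta_r(x),\delta_r(y))$ are uniformly locally Lipschitz in $(x,y)$ and equicontinuous in $r$, hence relatively compact on compact subsets.

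The final step is to identify any limit of $d_r$ as $d_p$. Using the fact that $\delta_{1/r}$ conjugates the vector field $X_j$ to $r^{-1}\delta_{1/r}^*X_j$, which converges uniformly on compact sets to $\hat X_j$, I would show that any length-parametrized horizontal curve for $d_r$ converges, up to subsequences, to a horizontal curve on $G_p$ of no greater length, and conversely that any horizontal curve on $G_p$ can be lifted to a near-horizontal curve for $d_r$ whose length converges to that of the original. This two-sided approximation forces every subsequential limit of $d_r$ to agree with $d_p$, which gives the pointed Gromov--Hausdorff convergence $(M,\tfrac{1}{r}d,p)\to(G_p,d_p,e_{G_p})$, i.e.\ the asserted metric tangent. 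The hard part will be the curve-lifting with controlled length defect: one has to estimate precisely how much length is lost when composing flows of the true fields $X_j$ in place of their principal parts $\hat X_j$, and this is the step where regularity of $p$ is used crucially (it ensures that the error terms are genuinely of higher weight and so vanish in the rescaling limit). For non-regular $p$ the same scheme gives a metric tangent, but $\g_p$ need not be a Lie algebra and one only obtains a homogeneous metric space, so the statement about Carnot groups is asserted only at regular points; the non-regular case requires a separate compactness argument in the space of graded nilpotent Lie algebras, which I would carry out using the Ball--Box bounds alone.
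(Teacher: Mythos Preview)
The paper does not prove this theorem: it is stated as a citation to Bella\"iche \cite[Theorem~7.36]{bellaiche} and Mitchell \cite[Theorem~1]{Mitchell}, with no proof given in the text, and is then used as a black box in the proof of Theorem~\ref{thm:subriemannian-not-finite-dimensional}. So there is no ``paper's own proof'' to compare against.

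That said, your outline is essentially the classical route taken in those references: build the nilpotent approximation $\g_p$ at a regular point, introduce privileged coordinates, prove the Ball--Box estimate, and deduce pointed Gromov--Hausdorff convergence of the rescalings to the sub-Riemannian Carnot group $(G_p,d_p)$. The overall architecture is correct. A couple of points deserve care if you intend this as more than a sketch. First, the claim that at a non-regular point ``the same scheme gives a metric tangent'' is not quite right as stated: at singular points the tangent cone need not be a group, and even its existence (uniqueness of the limit) is a genuinely delicate matter treated separately in Bella\"iche's work; your ``compactness argument in the space of graded nilpotent Lie algebras'' is not the mechanism used there. Second, the curve-lifting step you flag as ``the hard part'' is indeed where the work lies, and making the two-sided length comparison quantitative requires the uniform estimates on the remainder terms in privileged coordinates that Bella\"iche develops; your description is accurate in spirit but would need those estimates spelled out to be a proof.
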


A metric tangent space in Gromov's sense at a point $p$ in a metric space $(X,d)$ is defined as a Hausdorff limit of some sequence of pointed metric spaces $(X,d/\lambda_i,p)$ with $\lambda_i\rightarrow 0$ as $i\rightarrow +\infty$. We refer to~\cite{Gromov} for more details about metric tangent spaces.

\begin{proof}[Proof of Theorem~\ref{thm:subriemannian-not-finite-dimensional}]
We first prove that one only needs to consider the case where all points in $M$ are regular and moreover, for each $i$, $\operatorname{dim} V_i(p)$ remains constant in $M$. Indeed, since regular points form a nonempty open set, one can find an connected open set $O$ in $M$ such that, for each $i$, $\operatorname{dim} V_i(p)$ remains constant in $O$. We equip $O$ with the distribution of $k$-planes which is the restriction to $O$ of the given distribution on $M$, thus making $O$ a sub-Riemannian manifold. We denote by $d_O$ the associated sub-Riemannian distance on $O$. By definition we have for all $p$, $q \in O$,
$$d_O(p,q) = \inf \left\{\operatorname{length}_g (\gamma); \; \gamma \text{ horizontal curve from } p \text{ to } q \text{ in } O\right\}~.$$
In particular, $d(p,q)\leq d_O(p,q)$ for all $p$, $q \in O$, with possibly strict inequality. However it can easily be checked that, for all $p\in O$ and all $r<\operatorname{dist}(p, M\setminus O) /3$, one has $B_{d}(p,r) = B_{d_O}(p,r)$. It follows that if $d$ is $\sigma$-finite dimensional on $M$, then $d_O$ is $\sigma$-finite dimensional on $O$ as well. Indeed, assume that $M$ can be written as $M= \cup_{n\in \N} M_n$ where $d$ is finite dimensional on each $M_n$ with constants $C_n^*$ and $r_n^*$. Set $O_k := \{p\in O;\; \operatorname{dist}(p, M\setminus O) >1/k\}$. Then $B_{d}(p,r) = B_{d_O}(p,r)$ for all $p\in O_k$ and all $r<1/3k$. It follows that, for each $k$ and $n$, $d_O$ is finite dimensional on $O_k \cap M_n$ with constants $C_n^*$ and $\min(r_n^*, 1/3k)$. Since $O = \cup_{k,n\in \N} (O_k \cap M_n)$, this shows that $d_O$ is $\sigma$-finite dimensional on $O$.

Hence, from now on in this proof, let us assume that the distribution on $M$ is such that, for each $i$, $\operatorname{dim} V_i(p)$ remains constant in $M$ and let $\operatorname{dim} V_i$ denote its constant value. Such distributions are called generic in~\cite{Mitchell}. Arguing by contradiction we also assume that the sub-Riemannian distance $d$ on $M$ is $\sigma$-finite dimensional, $M= \cup_{n\in \N} M_n$ where $d$ is finite dimensional on each $M_n$.

We let $m:=\sum_i i(\operatorname{dim} V_i - \operatorname{dim} V_{i-1})$ denote the Hausdorff dimension of $(M,d)$ (see~\cite[Theorem 2]{Mitchell}) and let $\mu$ denote the $m$-dimensional Hausdorff measure in $(M,d)$. It is proved in~\cite{Mitchell} that $\mu$ is doubling on each compact set. This implies in particular that for any subset $A\subset M$, $\mu$-a.e.~point $p\in A$ is a $\mu$-density point for $A$. 
On the other hand since $\mu(M)>0$ and $M= \cup_{n\in \N} M_n$, one can find $n\in \N$ such that $\mu(M_n) >0$, and hence one can find some $\mu$-density point $p\in M_n$ for $M_n$. Then Proposition~3.1 in~\cite{LeDonne6} and Theorem~\ref{thm:tangent-subriemannian} imply that $(M_n,d)$ admits a metric tangent space in Gromov's sense at $p$ and this space is isometric to a sub-Riemannian Carnot group $(G,d_\infty)$ of step $\geq 2$. In particular, by definition of a metric tangent space, there exist a sequence $(\lambda_i)$ with $\lambda_i \rightarrow 0$ as $i\rightarrow +\infty$ and maps $\phi_i: G \rightarrow M_n$ such that, for all $p$, $q \in G$, 
$$ \frac{1}{\lambda_i} d(\phi_i(p),\phi_i(q)) \underset{i \rightarrow + \infty}{\longrightarrow} d_\infty(p,q)~.$$
Since $d$ is finite dimensional on $M_n$, Lemma~\ref{lem:tangent-finite-dimensionality} below implies that $d_\infty$ is a finite dimensional sub-Riemannian distance on $G$. This contradicts Proposition~\ref{prop:BCP-vs-finitedimensional} and Theorem~\ref{thm:intro-no:BCP:Carnot} and concludes the proof.
\end{proof}

\begin{lemma} \label{lem:tangent-finite-dimensionality}
Let $(X,d)$ and $(X_\infty,d_\infty)$ be metric spaces. Assume that there exist a sequence $(\lambda_i)$ with $\lambda_i \rightarrow 0$ as $i\rightarrow +\infty$ and maps $\phi_i: X_\infty \rightarrow X$ such that for all $x$, $y \in X_\infty$,
$$ \frac{1}{\lambda_i} d(\phi_i(x),\phi_i(y)) \underset{i \rightarrow + \infty}{\longrightarrow} d_\infty(x,y)~.$$
Assume that $d$ is finite dimensional on $X$. Then $d_\infty$ is finite dimensional on $X_\infty$.
\end{lemma}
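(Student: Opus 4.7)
The plan is to transport any family of Besicovitch balls in $(X_\infty,d_\infty)$ through the maps $\phi_i$, for $i$ sufficiently large, into a family of Besicovitch balls in $(X,d)$ with small radii; the bound on the cardinality will then follow directly from the finite dimensionality of $d$ on $X$. So I would fix a family $\{B_{d_\infty}(x_l,r_l)\}_{l=1}^N$ of Besicovitch balls in $(X_\infty,d_\infty)$ and pick $y\in\bigcap_{l=1}^N B_{d_\infty}(x_l,r_l)$. By definition, $d_\infty(x_l,x_{l'})>r_{l'}$ for all $l\neq l'$ and $d_\infty(y,x_l)\leq r_l$ for all $l$.

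Since only finitely many pairs are involved, one can choose $\epsilon>0$ small enough so that $r_{l'}+\epsilon<d_\infty(x_l,x_{l'})$ for every $l\neq l'$. Applying the pointwise convergence $\lambda_i^{-1} d(\phi_i(\cdot),\phi_i(\cdot))\to d_\infty(\cdot,\cdot)$ to the finitely many relevant pairs $(x_l,x_{l'})$ and $(y,x_l)$, one obtains, for $i$ large enough, the simultaneous estimates $d(\phi_i(x_l),\phi_i(x_{l'}))>\lambda_i(r_{l'}+\epsilon)$ for $l\neq l'$ and $d(\phi_i(y),\phi_i(x_l))\leq\lambda_i(r_l+\epsilon)$ for all $l$. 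Setting $\tilde r_l:=\lambda_i(r_l+\epsilon)$, this means precisely that $\{B_d(\phi_i(x_l),\tilde r_l)\}_{l=1}^N$ is a family of Besicovitch balls in $(X,d)$ with common point $\phi_i(y)$ and pairwise distinct centers.

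Since $\lambda_i\to 0$, the radii $\tilde r_l$ can be made smaller than the constant $r^*$ provided by the finite dimensionality of $d$ on $X$ by taking $i$ large enough. Finite dimensionality then yields $N\leq C^*$. As the initial family was arbitrary and no upper bound was imposed on the $r_l$'s, this shows that $d_\infty$ is finite dimensional on $X_\infty$ with constants $C^*$ and $r_\infty^*=+\infty$ (in fact $(X_\infty,d_\infty)$ satisfies WBCP). The only delicate point is preserving the strict inequality in the Besicovitch condition when passing to the limit, which is why one must enlarge the radii slightly by a common $\epsilon$; this is the one step where the finiteness of the family $\{(x_l,x_{l'})\}$ is used essentially.
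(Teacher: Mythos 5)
Your proof is correct and takes essentially the same approach as the paper's: transport a Besicovitch family through $\phi_i$ for $i$ large, using the pointwise convergence on the finitely many relevant pairs together with a common $\epsilon$-fattening of the radii to preserve the strict Besicovitch separation. The only small difference is that, by exploiting $\lambda_i\to 0$ to make the transported radii fall below $r^*$ (rather than restricting from the start to $r_l<r^*$ as the paper does), you obtain the marginally stronger conclusion that $(X_\infty,d_\infty)$ in fact satisfies WBCP.
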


\begin{proof} Assume that $d$ is finite dimensional on $X$ with constant $C^*\in [1,+\infty)$ and $r^*\in [0,+\infty]$. Let $\{B_{d_\infty} (x_1, r_1),\ldots,
B_{d_\infty} (x_N, r_N)\}$, $N\in \N$, 
be a family of Besicovitch balls in $(X_\infty,d_\infty)$ with $r_l<r^*$ for all $l=1,\dots,N$. Let $x_0 \in \cap_{1\leq l \leq N} B_{d_\infty} (x_l, r_l)$. Let $\varepsilon >0$ be small enough so that, for all $l,k = 1,\dots, N$ with $l\not= k$, $d_\infty(x_k,x_l) > \max (r_l,r_k)+2\varepsilon$ and so that $r_l+\varepsilon < r^*$ for all $l = 1,\dots, N$. Let $i$ be large enough so that $\lambda_i \leq 1$ and $$|\frac{1}{\lambda_i} d(\phi_i(x_l),\phi_i(x_k)) - d_\infty(x_l,x_k)| < \varepsilon$$ for all $l,k = 0,\dots,N$. Then for all $l,k = 1,\dots, N$ with $l\not= k$, we have
$$\frac{1}{\lambda_i} d(\phi_i(x_l),\phi_i(x_k)) > d_\infty(x_l,x_k) - \varepsilon > \max (r_l,r_k) + \varepsilon$$
and, for all $l=1,\dots,N$,
$$\frac{1}{\lambda_i} d(\phi_i(x_0),\phi_i(x_l)) < d_\infty(x_0,x_l) + \varepsilon \leq r_l + \varepsilon~.$$
Hence $\{B_{d} (\phi_i(x_1), \lambda_i (r_1+\varepsilon)),\ldots,B_{d} (\phi_i(x_N), \lambda_i (r_N+\varepsilon))\}$ is a family of Besicovitch balls in $(X,d)$ with radii $< r^*$. It follows that $N\leq C^*$ which proves that $d_\infty$ is finite dimensional on $X_\infty$.
\end{proof}

\begin{remark} One can relax the hypothesis of constant rank for the distribution $\Delta$ and the proof of Theorem~\ref{thm:subriemannian-not-finite-dimensional} can be generalized provided there exists a regular point $p \in M$ such that $\dim \Delta_p < \dim M$.
\end{remark}

\section{Final remarks} \label{sect:finalremarks}

\subsection{Assouad's embedding theorem and BCP}
Assouad's embedding Theorem for snowflakes of doubling metric spaces has the following consequence in connection with the Besicovitch Covering Property.

\begin{proposition}\label{prop:Assouad}
Let $(X,d)$ be a doubling metric space. Then there exists a continuous quasi-distance $\rho$ on $X$ that is biLipschitz equivalent to $d$ and such that $(X,\rho)$ satisfies BCP.
\end{proposition}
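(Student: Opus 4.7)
The plan is to invoke Assouad's embedding theorem, which asserts that if $(X,d)$ is a doubling metric space, then for any $\epsilon\in (0,1)$ there exist an integer $N\geq 1$ and a biLipschitz embedding $f:(X,d^{1-\epsilon})\hookrightarrow (\R^N,\|\cdot\|)$. Fix such an $\epsilon$ and embedding, and set $\rho_0(x,y):=\|f(x)-f(y)\|$. Then $\rho_0$ is a distance on $X$ that is biLipschitz equivalent to $d^{1-\epsilon}$, so there exist constants $0<C_1\leq C_2<\infty$ with $C_1 d^{1-\epsilon}\leq \rho_0\leq C_2 d^{1-\epsilon}$.

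Now define $\rho:=\rho_0^{1/(1-\epsilon)}$. Raising the previous inequalities to the power $1/(1-\epsilon)>1$, we obtain $C_1^{1/(1-\epsilon)} d\leq \rho\leq C_2^{1/(1-\epsilon)} d$, so $\rho$ is biLipschitz equivalent to $d$. In particular $\rho$ separates points and, being a power with exponent $>1$ of a distance, it is symmetric and satisfies the quasi-triangle inequality (with multiplicative constant $2^{1/(1-\epsilon)-1}$ or similar), hence $\rho$ is a quasi-distance on $X$. Continuity of $\rho$ on $X\times X$ with respect to the topology it induces (which coincides with the topology of $d$ by biLipschitz equivalence) follows from continuity of $\rho_0$ together with continuity of $t\mapsto t^{1/(1-\epsilon)}$ on $[0,+\infty)$.

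It remains to verify BCP for $(X,\rho)$. Since $(\R^N,\|\cdot\|)$ is a finite-dimensional normed vector space, BCP holds on it. By Proposition~\ref{prop:BCP-subset} applied to the subset $f(X)\subset \R^N$, BCP holds on $(f(X),\|\cdot\|)$, and therefore, via the isometry $f:(X,\rho_0)\to (f(X),\|\cdot\|)$, BCP holds on $(X,\rho_0)$. Finally, for every $x\in X$ and $r>0$ one has
\begin{equation*}
B_\rho(x,r)=\{y\in X;\; \rho_0(x,y)\leq r^{1-\epsilon}\}=B_{\rho_0}(x,r^{1-\epsilon}),
\end{equation*}
so every $\rho$-ball is a $\rho_0$-ball with the same center. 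By the simple observation recalled at the beginning of Section~\ref{subsec:presersing-BCP}, BCP is transferred from $(X,\rho_0)$ to $(X,\rho)$, concluding the argument.

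The only non-routine ingredient is Assouad's theorem itself; once granted, the proof proceeds by the standard "snowflake, embed into Euclidean space, unsnowflake" trick, and the price to pay for undoing the snowflake is precisely that the resulting $\rho$ is only a quasi-distance rather than a distance.
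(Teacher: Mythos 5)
Your proof is correct and follows essentially the same route as the paper's: invoke Assouad's embedding theorem for a snowflake of $(X,d)$, transfer BCP from $\R^N$ to the image and hence to the snowflaked space, and then undo the snowflake by raising the pulled-back distance to a power, at the cost of producing a quasi-distance rather than a distance. The only cosmetic difference is that the paper fixes the snowflake exponent to $1/2$ (so that $\rho = \norm{F(\cdot)-F(\cdot)}^2$) whereas you keep a general $\epsilon\in(0,1)$.
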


\begin{proof}
By Assouad's embedding theorem (see~\cite{Assouad83}), the snowflaked metric space $(X,d^{1/2})$ admits a biLipschitz embedding $F:(X,d^{1/2})\to \R^n$ into some Euclidean space. Since Euclidean spaces satisfy BCP, it follows that $F(X)$ equipped with the restriction of the Euclidean distance satisfies BCP (see Proposition~\ref{prop:BCP-subset}). Set $\rho(x,y):=\norm{F(x)-F(y)}^2$ for $x$, $y\in X$. This defines a continuous quasi-distance on $X$ that satisfies BCP. Finally, if $L$ is the biLipschitz constant of the embedding $F$, then $\rho$ is $L^2$-biLipschitz equivalent to the distance $d$.
\end{proof}

In particular, if $(G,d)$ is a homogeneous group equipped with a homogeneous distance, Proposition~\ref{prop:Assouad} gives the existence of a continuous quasi-distance biLipschitz equivalent to $d$ and for which BCP holds. In case $G$ is a homogenous group for which there are two different layers of the grading that do not commute, it follows from Theorem~\ref{thm:intro-main-result} that such a quasi-distance cannot be homogeneous, i.e., cannot be both left-invariant and one-homogeneous with respect to the associated dilations. What is not known is whether one can find one such $\rho$ with one of these properties. It is not known either whether one can find a distance, rather than a quasi-distance, biLipschitz equivalent to $d$ and for which BCP holds.

Notice that there are examples of non doubling metric spaces  satisfying BCP. Hence the doubling assumption in Proposition~\ref{prop:Assouad} is not a necessary condition to get the existence of biLipschitz equivalent (quasi-)distances satisfying BCP.

\subsection{Metric spaces of negative type}
For the stratified Heisenberg groups $\HH ^n$, J.~Lee and A.~Naor proved in~\cite{Lee_Naor} that the homogeneous distance $d$ such that 
$$d(0,p)= \left( \left(\sum_{j=1}^n x_j^2 + y_j^2\right)^2 + \sqrt{\left(\sum_{j=1}^n x_j^2 + y_j^2\right)^2 +16\, z^2}\right)^{1/4}$$
in exponential coordinates of the first kind relative to a standard basis of $\h_n$  is of negative type (see Example~\ref{ex:heisenberg} for the definition of standard basis of $\h_n$). Up to a multiplicative constant, this distance turns out to coincide with the Hebisch-Sikora's distance $d_R$ when $R=2$, namely, $d_2 = 8^{-1/4} d$ (see Example~\ref{ex:hebisch-sikora-distance} for the definition of $d_R$).

Recall that a metric space $(X,d)$ is said to be of negative type if $(X,\sqrt{d})$ is isometric to a subset of a Hilbert space. One could wonder whether the validity of BCP and the property of being of negative type may have some connections. One can for instance wonder whether Corollary~\ref{cor:intro-homogeneous-groups} could give some hints towards the existence of  homogeneous distances of negative type on homogeneous groups with commuting different layers, such as stratified groups of step 2. One can also wonder whether Corollary~\ref{cor:intro-homogeneous-groups} could give some hints towards the non existence of homogeneous distances of negative type on a homogeneous group for which there are two different layers of the grading that do not commute, such as stratified groups of step 3 or higher. Unfortunately, it turns out that one can always find subsets in a Hilbert space that, when equipped with the restriction of the Hilbert norm, are doubling metric spaces for which BCP does not hold. Hence it is not clear whether one can easily find connections between the validity of BCP and the property of being of negative type.

\subsection{Finite topological dimension and an open problem}
It is simple to show that if a  metric space $X$ satisfies BCP then $X$ has finite topological dimension. 
Here,  topological dimension means
Lebesgue covering dimension.
Indeed,
let $N$ be the constant in the definition of BCP, see~Definition~\ref{def:bcp}.
Without loss of generality we may assume that $X$ is bounded.
Then, given an open cover $\mathcal U$ of $X$, we shall prove that there is a refinement of $\mathcal U$ with multiplicity at most $N$, deducing that the  topological dimension is at most $N-1$.
 For every point $a$ in $X$ take a ball of small enough radius so that it is included in one element of $\mathcal U$. By the BCP this family of balls has a subfamily with multiplicity $N$ covering the space $X$.
 
If a metric space is assumed to be separable, then by the last argument, together with Remark \ref{rmk:WBCP:cover}, we have that 
WBCP  implies finite topological dimension. With a longer argument, in \cite{Nagata64}
J.~Nagata showed that also WBCP implies finite topological dimension, even if the space is not separable.

To our knowledge, there is no known example of a metric space $(X,d)$ of finite topological dimension for which there is no distances biLipschitz equivalent to $d$ satisfying BCP.

\bibliography{general_bibliography_TEMP} 
\bibliographystyle{amsalpha}

\end{document}